\numberwithin{equation}{section}
\newtheorem{theorem}{Theorem}
\newtheorem{Theorem}[theorem]{Theorem}
\newtheorem{lemma}{Lemma}[section]
\newtheorem{Lemma}[lemma]{Lemma}
\newtheorem*{NNLemma}{Lemma}
\newtheorem{corollary}{Corollary}[section]
\newtheorem{Corollary}[corollary]{Corollary}
\newtheorem{proposition}{Proposition}
\newtheorem{Proposition}[proposition]{Proposition}
\newtheorem{problem}{Problem}
\newtheorem{Problem}[problem]{Problem}
\theoremstyle{definition}
\theoremstyle{remark}
\newtheorem{remark}{Remark}[section]
\newtheorem{Remark}[remark]{Remark}
\newtheorem*{NNRemark}{Remark}
\renewcommand{\Im}{\operatorname{Im}}
\renewcommand{\Re}{\operatorname{Re}}
\renewcommand{\epsilon}{\varepsilon}
\renewcommand{\phi}{\varphi}
\newcommand{\field}[1]{\mathbb{#1}}
\newcommand{\Proj}{\field{P}}
\newcommand{\R}{\field{R}}
\newcommand{\N}{\field{N}}
\newcommand{\C}{\field{C}}
\newcommand{\Z}{\field{Z}}
\newcommand{\Cal}{\mathcal}
\newcommand{\cI}{\mathcal I}
\newcommand{\M}{\mathcal M}
\renewcommand{\>}{{\rangle}}
\newcommand{\pref}[1]{(\ref{#1})}
\newcommand{\cB}{{\mathcal B}}
\newcommand{\cH}{{\mathcal H}}
\newcommand{\cK}{{\mathcal K}}
\newcommand{\cL}{{\mathcal L}}
\newcommand{\cM}{{\mathcal M}}
\newcommand{\cQ}{{\mathcal Q}}
\newcommand{\cR}{{\mathcal R}}
\newcommand{\modN}{\ (\textrm{ mod } N)}
\newcommand{\MNa}{M_N(a_1,a_2,a_3,a_4)}
\newcommand{\SL}{\operatorname{SL}(2,{\mathbb R})}
\newcommand{\SO}{\operatorname{SO}(2,{\mathbb R})}
\newcommand{\GL}{\operatorname{GL}(2,{\mathbb R})}
\newcommand{\CP}{{\mathbb C}\!\operatorname{P}^1}
\newcommand{\rk}{\operatorname{rank}}
\newcommand{\Ann}{\operatorname{Ann}}
\newcommand{\Tr}{\operatorname{Tr}}
\newcommand{\const}{\mathit{const}}
\newcommand{\PcH}{\operatorname{P}\!{\mathcal H}}
\newlength{\halfbls}\setlength{\halfbls}{.5\baselineskip}
\begin{document}
\title[Equivariant subbundles of Hodge bundle]{Lyapunov spectrum of invariant subbundles of the Hodge bundle}
\author{Giovanni Forni}
\address{Giovanni Forni: Department of Mathematics, University of Maryland, College Park, MD 20742-4015, USA}
\email{gforni@math.umd.edu.}
\author{Carlos Matheus}
\address{Carlos Matheus: CNRS, LAGA, Institut Galil\'ee, Universit\'e Paris 13, 99, avenue Jean-Baptiste Cl\'ement, 93430, Villetaneuse, France}
\email{matheus@impa.br, matheus@math.univ-paris13.fr.}
\author{Anton Zorich}
\address{Anton Zorich: IRMAR, Universit\'e de Rennes 1, Campus de Beaulieu, 35042, Rennes, France}
\email{anton.zorich@univ-rennes1.fr.}

\date{\today}

\begin{abstract}


We  study  the Lyapunov spectrum of the Kontsevich--Zorich cocycle on
$\SL$-invariant  subbundles of the Hodge bundle over the support of
$\SL$-invariant  probability  measures  on the moduli space of Abelian
differentials.

In  particular,  we  prove  formulas  for  partial  sums  of Lyapunov
exponents   in   terms   of   the   second   fundamental   form   (the
Kodaira--Spencer   map)   of   the   Hodge  bundle  with  respect  to
Gauss--Manin  connection  and  investigate  the relations between the
central  \mbox{Oseledets}  subbundle  and  the  kernel  of  the second
fundamental form. We illustrate our conclusions in two special cases.
\end{abstract}

\maketitle

\setcounter{tocdepth}{2}
\tableofcontents


\section{Introduction}
Consider  a  billiard  on the plane with $\mathbb{Z}^{2}$-periodic rectangular
obstacles as in Figure~\ref{fig:windtree}.

\begin{figure}[htb]
\includegraphics{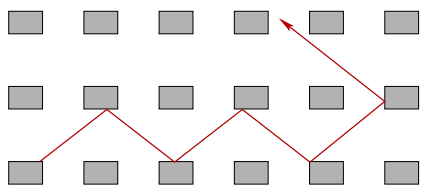}
\vspace{70bp}
\caption{
\label{fig:windtree}
Billiard in the plane with periodic rectangular obstacles.
}
\end{figure}

In~\cite{DHL}, it is shown  that  for  almost  all  parameters  $(a,b)$ of  the
obstacle (i.e., lenghts $0<a,b<1$ of the sides of the rectangular obstacles),  for  almost  all  initial directions $\theta$, and for any starting
point $x$ the billiard trajectory $\{\phi^{\theta}_t(x)\}_{t\in\mathbb{R}}$ escapes
to infinity with a rate $t^{2/3}$
(unless it hits the corner):
$$
\limsup_{t\to+\infty}
\frac{\log(\text{distance between }x \textrm{ and } \phi^{\theta}_t(x))}
{\log t} = \frac{2}{3}\,.
$$

Note  that  changing  the  height  and the width of the obstacle (see
Figure~\ref{fig:different:billiards})    we   get   quite   different
billiards, but this does not change the diffusion rate.

\begin{figure}[htb]
\includegraphics{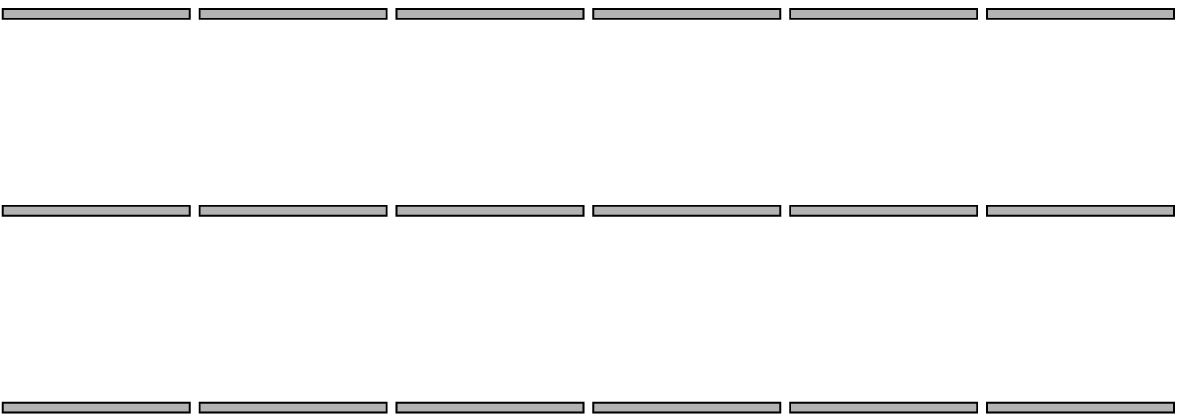}
\includegraphics{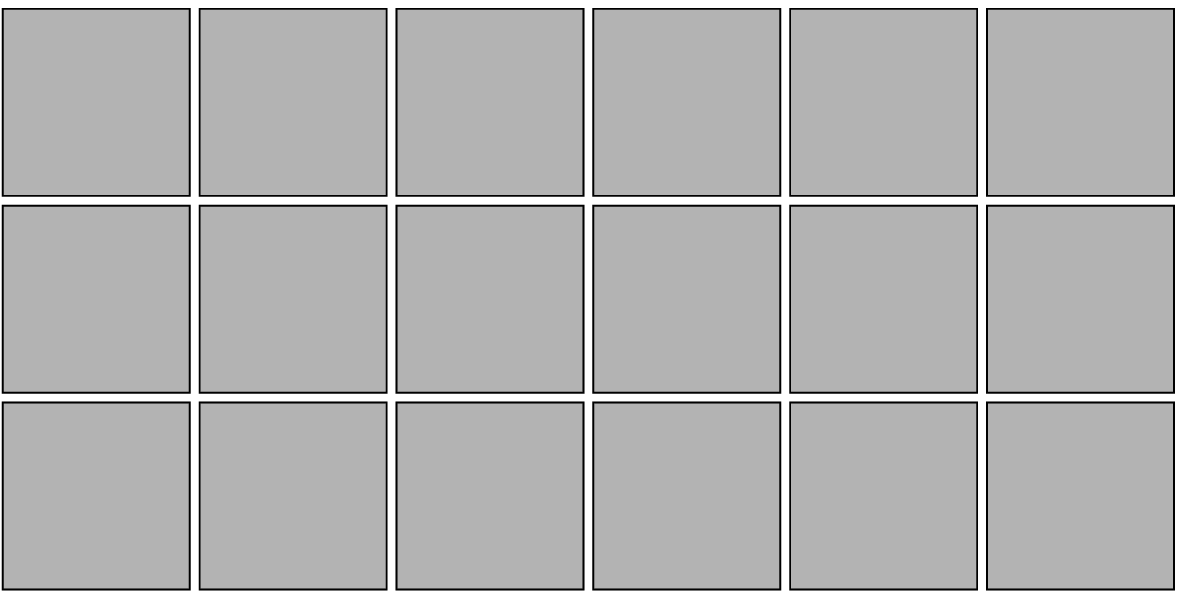}
\vspace{75bp}
\caption{
\label{fig:different:billiards}
The escape rate does not depend on the size of the obstacles!}
\end{figure}

The number ``$\frac{2}{3}$'' here is the Lyapunov exponent of a certain
renormalizing  dynamical  system  associated to the initial one. More
precisely,  it  is  the Lyapunov exponent of certain subbundle of the
Hodge bundle under the Kontsevich--Zorich cocycle.

The  Lyapunov exponents of the Hodge bundle also govern the deviation
spectrum   for   the  asymptotic  cycle  of  an  orientable  measured
foliation, as well as the rate of convergence of ergodic averages for
interval  exchange  transformations  and  for certain area preserving
flows  on  surfaces see~\cite{Forni2}, \cite{Zorich:asymptotic:flag},
\cite{Zorich:how:do}.  The  range  of  phenomena  where the Lyapunov
exponents of the Hodge bundle are extremely helpful keeps growing: nowadays, it
includes, in  particular, the evaluation of volumes of the moduli spaces
of quadratic differentials on $\CP$, see~\cite{Athreya:Eskin:Zorich},
and  the  classification of commensurability classes of all presently
known non-arithmetic ball quotients~\cite{Kappes:Moeller}.

In  this  paper  we  develop  the  study  of the Lyapunov spectrum of
invariant subbundles of the Hodge bundle under the Kontsevich--Zorich
cocycle  (with  respect  to  general  $\SL$-invariant  measures).  We
revisit  variational  formulas  of G.~Forni from~\cite{Forni2} for the
Hodge   norm  interpreting  them  in  more  geometric  terms  and  we
generalize  them  to  invariant  subbundles.  We  generalize  Forni's
formulas   for   partial  sums  of  the  Lyapunov  exponents  of  the
Kontsevich--Zorich  cocycle  in terms of geometric characteristics of
the      Hodge      bundle      (Theorem~\ref{thm:partialsum}      in
\S~\ref{ssec:formulas}).  We  establish the reduciblity of the second
fundamental  form  with  respect  to  any  decomposition  into  Hodge
star-invariant,   Hodge   orthogonal  subbundles  and  generalize  the
Kontsevich   formula  for  the  sum  of  all  non-negative  exponents
(Corollary~\ref{cor:KZformgen}     in     \S~\ref{ssec:reduc}).    We
investigate  the  occurrence  of  zero  exponents  with  a particular
emphasis  on the relation between the central Oseledets subbundle and
the  kernel  of the second fundamental form. Our main theorem in this
direction (Theorem~\ref{th:Ann} in \S~\ref{ssec:central}) establishes
sufficient  conditions for the inclusion of one into the other, hence
for their equality.

We illustrate our conclusions with two examples. The first model case,  inspired by recent work of  A.~Eskin, M.~Kontsevich and A.~Zorich \cite{Eskin:Kontsevich:Zorich:cyclic}, is given by  arithmetic
Teichm\"uller curves of \textit{square-tiled cyclic covers} (introduced in \cite{Forni:Matheus:Zorich1});
the second model case is a certain $\SL$-invariant locus $\mathcal{Z}$ (inspired by a paper of C.~McMullen~\cite{McMullen}) supporting an $\SL$-invariant ergodic probability measure with some
zero exponents.

The  study  of  square-tiled  cyclic  covers  was  motivated  by  the
discovery  of  two  arithmetic  Teichm\"uller  curves of square-tiled
cyclic  covers  with maximally degenerate Kontsevich--Zorich spectrum
(see        \cite{ForniSurvey},        \cite{Forni:Matheus}       and
\cite{Forni:Matheus:Zorich1}).   Conjecturally  there  are  no  other
$\SL$-invariant   probability   measures  with  maximally  degenerate
spectrum. Progress on this conjecture has been made by M.~M\"oller~\cite{Moeller} in
the    case    of    Teichm\"uller    curves    and,   recently,   by
D.~Aulicino~\cite{Aulicino} in  the general case. A conditional proof
of the conjecture in sufficiently high genera can also be derived from
a quite explicit   formula  for   the  sum  of  all  non-negative  exponents
(see~\cite{Eskin:Kontsevich:Zorich}).

The  case  of square-tiled cyclic covers is especially rigid. In this
case,  the  central  Oseledets  subbundle  is always $\SL$-invariant,
smooth   and  in  fact  coincides  with  the  kernel  of  the  second
fundamental form. This pictures does not hold in general. In fact, in
our  second  model  case  the  central  Oseledets  subbundle does not
coincide  with  the kernel of the second fundamental form even though
it has the same rank. We emphasize that the rank of the kernel of the
second  fundamental  form  at  generic points can be explained in all
examples  by  symmetries  (automorphisms) of the underlying surfaces.
Indeed, the discovery of the two above-mentioned maximally degenerate
examples  was  based on a symmetry criterion for the vanishing of the
second  fundamental  form  on  the  complement  of  the  tautological
subbundle    (see    \cite{ForniSurvey},   \cite{Forni:Matheus}   and
\S~\ref{symmetry} of this paper).

This  article  is  organized  as  follows.  In \S 2, we introduce the
Kontsevich--Zorich  cocycle on the Hodge bundle over the moduli space
of  Abelian  differentials  and  we  compute  the  relevant geometric
tensors  of  the  bundle,  endowed  with the Hodge Hermitian product,
namely,  the  second  fundamental  form  and  the  curvature  of  the
Hermitian  connection  with respect to the Gauss-Manin connection. We
then  prove  first and second variational formulas for the Hodge norm
in  terms  of the second fundamental form and of the curvature. In \S
3,  we  derive  formulas  for  partial  sums  of  the \mbox{Lyapunov}
exponents of the restriction of the Kontsevich--Zorich cocycle to any
$\SL$-invariant subbundle of the Hodge bundle. In \S 4 we investigate
the  presence  of zero exponents and we prove results on the relation
between the central Oseledets subbundle of the cocycle and the kernel
of        the        second        fundamental        form.        In
Appendix~\ref{s:rk:B:cyclic:covers}   we   describe   the   case   of
(arithmetic)   \mbox{Teichm\"uller}  curves  of  square-tiled  cyclic
covers.  Finally,  in  Appendix \ref{s:rk:B:Z}, we present our second
model  case.  Conjecturally, this second example is representative of
the  general  features  related  to the presence of zero exponents on
invariant subbundles of the Hodge bundle.

\section{The Hodge bundle}


\subsection{The Kontsevich--Zorich cocycle}

The  moduli  space of Abelian differentials $\cH_g$ has the structure
of a complex vector bundle over the moduli space $\cM_g$ of Riemann
surfaces  of genus $g$. The fiber over a point of $\cM_g$ represented
by  a  Riemann surface $S$ corresponds to the complex $g$-dimensional
vector space of all holomorphic 1-forms $\omega$ on $S$.

The  space  $\cH_g$  admits a natural action of the group $\GL$ (see,
for instance,~\cite{Masur:Tabachnikov} or~\cite{Zorich:Houches}    for   an
elementary  description  of  this  action). It is well-known that the
orbits of the diagonal subgroup
$
\left(
\begin{array}{cc}
e^t & 0 \\
0 &  e^{-t}
\end{array}\right)
$
of  $\GL$  project on $\cM_g$  to  the  geodesics  with respect to the Teichm\"uller
metric. For this reason the flow on $\cH_g$ defined by the action of  the diagonal
subgroup is called the \textit{Teichm\"uller geodesic flow}.

The    real   (respectively  complex)   Hodge   bundle   $H^1_\R$
(respectively $H^1_\C$)  over  the  moduli  space $\cM_g$ is the
vector    bundle    having    the    first   cohomology   $H^1(S,\R)$
(respectively  $H^1(S,\C)$) as its fiber over a point represented by
the   Riemann   surface  $S$.  By identifying the  lattices  $H^1(S,\Z)$
(respectively $H^1(S,\Z\oplus  i\Z)$) in  the fibers of these vector bundles
it is possible to canonically  identify  fibers  over  nearby  Riemann  surfaces.
This identification is called the \textit{Gauss--Manin} connection.

Let us consider now the pullback of the Hodge bundle to $\cH_g$ with respect
to    the    natural    projection    $\cH_g\to\cM_g$.  We can lift the Teichm\"uller
geodesic flow to the Hodge bundle by parallel transport
of cohomology classes with respect to the \textit{Gauss--Manin}  connection, thus   getting   a   cocycle   $G_t^{KZ}$
called the \textit{Kontsevich-Zorich cocycle}.

 The  Lyapunov exponents of this cocycle were proved
to  be  responsible  for  some  fine dynamical properties of flows on
individual  Riemann surfaces,  see~\cite{Forni2}, \cite{Zorich:how:do},
which  motivated  the  study  of  these  exponents for all known
Teichm\"uller flow-invariant (and especially  $\SL$-invariant) ergodic
measures on $\cH_g$.

In  order  to  analyze  the  Lyapunov  spectrum  (i.e., collection of
Lyapunov   exponents)  of  $G_t^{KZ}$,  we  need  to  understand  the
evolution  of  cohomology  classes  $[c]\in  H^1(S,\mathbb{R})$ under
$G_t^{KZ}$.   A  particularly  useful  tool  for  this  task  is  the
\textit{Hodge norm}, the main object of the next subsection.

\subsection{The Hodge product}
   %
 The  natural  pseudo-Hermitian intersection  form  on  the  complex  cohomology
 $H^1(S,\C{})$ of a Riemann  surface $S$ can  be  defined  on  any pair
 $(\omega_1, \omega_2)$ of complex-valued closed
 $1$-forms on $S$ representing cohomology classes in  $H^1(S,\C{})$ as
\begin{equation}
\label{eq:Intform}
(\omega_1,\omega_2):=
\frac{i}{2}\int_S\omega_1\wedge\bar\omega_2\ .
\end{equation}
Restricted to the subspace $H^{1,0}(S)$ of holomorphic $1$-forms, the
intersection   form   induces  a  positive-definite  Hermitian  form;
restricted   to   the   subspace   $H^{0,1}(S)$  of  anti-holomorphic
$1$-forms,  it  induces a negative-definite Hermitian form, so on the
entire      complex     cohomology     space  the   pseudo-Hermitian
form~\eqref{eq:Intform} has signature $(g,g)$.

For later use, we define (with the aid of the Hodge norm)
\begin{equation}
\label{eq:Ab1}
\cH_g^{(1)}:=\{\omega\in\cH_g: \|\omega\|^2=(\omega,\omega)=1\},
\end{equation}
that is, $\cH_g^{(1)}$ is the moduli space of \emph{unit area} Abelian differentials on Riemann surfaces of genus $g$.

The \textit{Hodge  representation  theorem}  affirms  that  for  any
cohomology   class   $c$   in   the  \textit{real  }cohomology  space
$H^1(S,\R)$   of   a  Riemann  surface  $S$  there  exists  a  unique
holomorphic  form $h(c)$ such that $c$ is the cohomology class of the
closed  1-form  $\Re  h(c)$  in  $H^1_{\mathit{de  Rham}}(S,\R)\simeq
H^1(S,\R)$.

By  the Hodge representation theorem, the positive-definite Hermitian
form~\eqref{eq:Intform}  on  $H^{1,0}$  induces  a  positive-definite
bilinear  form  on  the  cohomology  $H^1(S,\R)$: for any $c_1$, $c_2\in H^1(S,\R)$,
$$
( c_1, c_2) := \Re\left( h(c_1), h (c_2) \right)\,.
$$
The  Hodge  bundle
$H^1_{\R{}}$  is  thus endowed with an inner product, called the {\it
Hodge inner product} and a norm, called the {\it Hodge norm}.

Given a cohomology class $c\in H^1(S,\R)$, let $h(c)$ be the unique holomorphic $1$-form such that $c=[\Re h(c)]$. Define $\ast c$ to be the real cohomology class $[\Im h(c)]$. The Hodge norm $\Vert c\Vert$ satisfies
$$
\Vert c\Vert^2=\frac{i}{2}\int_S h(c)\wedge\overline{h(c)}\ =
\int_S \Re h(c) \wedge\Im h(c)\ ,
$$
or, in other words, $\Vert c\Vert^2$ is the value of $c\cdot\ast c$ on the fundamental cycle. The
operator $c\mapsto\ast c$ on the real cohomology $H^1(S,\R)$ of a Riemann surface $S$ is called
the \textit{Hodge star operator}.

We will denote the Hodge inner product of cycles $c_1, c_2\in H^1(S,\R)$ by
round brackets: $(c_1,c_2)=(c_2,c_1)$, and their symplectic
product by angular brackets: $\langle c_1, c_2\rangle =
-\langle c_2, c_1 \rangle$. By definition~\eqref{eq:Intform}, the spaces
$H^{1,0}(S)$ and $H^{0,1}(S)$ are Hodge-orthogonal, hence the following relations hold:

\begin{align}
\label{eq:ast:ast:equals:minus:id}
\ast(\ast c) &= -c\\
\langle c_1, \ast c_2\rangle &= -\langle \ast c_1, c_2 \rangle\\
\label{eq:c1:scalar:c2}
( c_1, c_2) &= \langle c_1, \ast c_2\rangle\\
\label{eq:h:c1:h:c2}
\big(h(c_1),h(c_2)\big) &= (c_1, c_2) +
i \langle c_1,  c_2\rangle
\end{align}

%
\subsection{Second fundamental form}
%

Consider the complex Hodge bundle $H^1_\C{}$ over the moduli
space $\cM_g$ of complex structures having the complex
cohomology space $H^1(S,\C)$ as a fiber over the point of
$\cM_g$ represented by a Riemann surface $S$. This complex
$2g$-dimensional vector bundle is endowed with the flat
Gauss--Manin connection $D_{H_{\C}^1}$ which preserves the Hermitian
form~\eqref{eq:Intform} of signature $(g,g)$.

The bundle $H^1_{\C}$ admits a decomposition into a direct sum
of two orthogonal subbundles $H^1_\C{}=H^{1,0}\oplus H^{0,1}$
with respect to the Hermitian form~\eqref{eq:Intform}. This
decomposition is \textit{not} invariant with respect
to either the flat connection on $H^1_{\C}$ or with respect to the
Teichm\"uller flow. The decomposition defines an orthogonal
projection map $\pi_1$ of the vector bundles $\pi_1:
H^1_{\C{}}\to H^{1,0}$.

The subbundle $H^{1,0}$ is a Hermitian vector bundle with
respect to the Hermitian form~\eqref{eq:Intform} restricted to
$H^{1,0}$. Consider the unique connection $D_{H^{1,0}}$ on
$H^{1,0}$ compatible with the Hermitian metric in the fiber and
with the complex structure on the base of the bundle. This
(nonflat) connection coincides with the connection defined as a
composition of the restriction of $D_{H_{\C}^1}$ to the
subbundle $H^{1,0}$ composed with the projection $\pi_1$:
$$
D_{H^{1,0}}=\pi_1\circ D_{H_{\C}^1}\big\vert_{H^{1,0}}\,,
$$
(see, for example, \cite{Griffiths:Harris}, page 73).

The \textit{second fundamental form} $A_{H^{1,0}}$ defined as
\begin{equation}
\label{eq:sec:fund:form}
A_{H^{1,0}}:=D_{H_{\C}^1}\big\vert_{H^{1,0}} - D_{H^{1,0}} = (I-\pi_1)\circ D_{H_{\C}^1}
\big\vert_{H^{1,0}}
\end{equation}
is a differential form  of type $(1,0)$ with values in the bundle of complex-linear maps from $H^{1,0}$
 to $H^{0,1}$, hence  $A_{H^{1,0}}$ can be written as a matrix-valued differential form of type
 $(1,0)$ (see, for example, \cite{Griffiths:Harris}, page 78). In the literature, $A_{H^{1,0}}$ is also known as the \textit{Kodaira-Spencer map}.

Note that we work with the pullbacks of the vector bundles $H^1_{\C}$, $H^{1,0}$ and $H^{0,1}$ to the moduli spaces $\cH_g$  or $\cQ_g$ of Abelian (correspondingly quadratic) differentials with respect to the natural projections $\cH_g\to\cM_g$ (correspondingly $\cQ_g\to\cM_g$).

We recall that there is a canonical identification between the tangent bundle of the moduli space of Riemann surfaces, which can be naturally described as the bundle $\cB_g$ of equivalence classes of Beltrami differentials, and its cotangent bundle, which is naturally identified to the bundle $\cQ_g$ of holomorphic quadratic differentials. This identification follows from the existence of a canonical pairing between the bundle $\cQ_g$ of quadratic differentials and  the bundle of Beltrami differentials given by integration. For any quadratic differential $q$ and any Beltrami  differential $\mu$ on a Riemann surface $S$, the pairing is given by the formula:
$$
<q,\mu> := \int_S  q\cdot \mu  \,.
$$
In fact, quadratic differentials are tensors of type $(2,0)$ while Beltrami differentials are tensors
of type $(-1,1)$, hence the product of a Beltrami and a quadratic differential is a tensor of type
$(1,1)$ which can be integrated.
Beltrami differentials corresponding to trivial deformations of the complex structure are exactly those which are orthogonal to all quadratic differentials \cite{Nag}, hence the pairing between Beltrami and quadratic differentials induces a non-degenerate pairing between the tangent bundle to the moduli space $\cM_g$ of Riemann surfaces and the bundle of holomorphic quadratic differentials. The bundle of quadratic differentials is thus identified to the cotangent bundle of the moduli space of Riemann surfaces. There exists a natural map $\cI: \cQ_g \to \cB_g$ defined as follows,
$$
 \cI (q) :=   [\frac{ \vert q \vert }{ q}] \in \cB_g  \,, \quad \text{ for all } \, q\in \cQ_g\,,
$$
which yields a canonical identification between the bundles of quadratic and Beltrami differentials,
that is, between the cotangent and the tangent bundles to the moduli space $\cM_g$ of Riemann surfaces.
Taking into account this canonical identification,  the differential form $A_{H^{1,0}}$ with values in the bundle of complex-linear maps from $H^{1,0}$ to $H^{0,1}$ defines a vector bundle map $H^{1,0} \to
H^{0,1}$ over the moduli space of quadratic differentials. In other terms, for any $(S, q)\in \cQ_g$
by evaluating the form $A_{H^{1,0}}$ at the tangent vector $v=q$ under the identification between the tangent bundle and the bundle of quadratic differentials, we get a complex-linear map
\begin{equation}
\label{eq:def:second:fundamental:form}
A_q:H^{1,0}(S) \to H^{0,1}(S)\,.
\end{equation}
For any Abelian differential $\omega\in \cH_g$, let  $A_\omega:= A_q$ be
the complex-linear map corresponding to the quadratic differential $q=\omega^2\in \cQ_g$.

%
\subsection{Curvature}
The curvature tensor of the metric connections of the holomorphic  Hermitian bundles $H^{1,0}$,
$H^{1,0}$ are differential forms $\Theta_{H^{1,0}}$, $\Theta_{H^{0,1}}$ of type $(1,1)$ with values in
the bundle of complex-linear endomorphisms of $H^{1,0}$, $H^{0,1}$ respectively, hence they can be written, with respect to pseudo-unitary frames, as skew-Hermitian matrices of differentials forms of type $(1,1)$ on the moduli space $\M_g$ (see~\cite{Griffiths:Harris}, page 73).

Let $\{e_1,\dots,e_{2g}\}\subset H^1_{\C}$ be a pseudo-unitary frame with respect to the pseudo-Hermitian intersection form~\eqref{eq:Intform}, that is, a frame which verifies the pseudo-orthonormality conditions
$$
\begin{aligned}
(e_i,e_j)&=0 \,, \qquad  \text{ \rm for }\,i\neq j\,; \\
 (e_i,e_i)&=1\,, \qquad  \text{ \rm for }\,1\leq i\leq g\,; \\
(e_j,e_j)&=-1\,, \quad  \text{\rm for } g+1\leq j\leq 2g\,.
\end{aligned}
$$
Let us consider the connection matrix $\theta:=\theta_{H^1_{\C}}$ associated to the Gauss-Manin connection $D_{H^1_{\C}}$. By the identities
\begin{equation*}
\begin{aligned}
0&= d(e_i,e_j) = (De_i,e_j)+ (e_i,De_j) \\ &= (\sum\limits_{k}\theta_{ik}e_k,e_j)
+(e_i,\sum\limits_{k}\theta_{jk}e_k)\\ &= \theta_{ij}(e_j,e_j)+\bar{\theta}_{ji} (e_i,e_i)
\end{aligned}
\end{equation*}
it follows that $\theta$ has a block structure
$$\theta=\left(\begin{array}{cc}\theta_1 & B \\ A & \theta_2\end{array}\right)$$
with $g\times g$ blocks $\theta_1$, $\theta_2$, $A$ and $B$ that verify the relations
$$\theta_1=-\overline{\theta_1}^t, \quad \theta_2=-\overline{\theta_2}^t,  \quad B=\overline{A}^t.$$

Observe that  for any unitary frame $\{\omega_1,\dots,\omega_g\}
\subset H^{1,0}$, there is an associated  pseudo-unitary frame $\{e_1, \dots, e_{2g}\} \subset
H^1_\C$, with respect to the intersection form \eqref{eq:Intform}, defined as follows,
$$
\begin{aligned}
e_i&=\omega_i \,, \qquad \text{\rm  for }1\leq i\leq g\,,\\  e_j&=\bar \omega_{j-g}
 \,, \quad \text{\rm  for }g+1\leq j\leq 2g\,,
 \end{aligned}
 $$
and, with respect to the above pseudo-unitary frame, the blocks  $\theta_1$, $\theta_2$ are
equal to the connection matrices $\theta_{H^{1,0}}$, $\theta_{H^{0,1}}$ of the connections
$D_{H^{1,0}}$ and $D_{H^{0,1}}$,  respectively, that is, $\theta_1=\theta_{H^{1,0}}$ and
$\theta_2=\theta_{H^{0,1}}$, and $A=A_{H^{1,0}}$ is the matrix of the second fundamental form
(see \cite{Griffiths:Harris}, page 76).

Let us now consider the curvatures $\Theta_{H^1_{\C}}$,  $\Theta_{H^{1,0}}$ and $\Theta_{H^{0,1}}$ of the connections $D_{H_{\C}^1}$, $D_{H^{1,0}}$  and  $D_{H^{0,1}}$ on the vector bundles $H^1_{\C}$,
$H^{1,0}$ and $H^{0,1}$ respectively. It follows from the above relations that
$$
\theta_{H^1_{\C}}\wedge \theta_{H^1_{\C}} =
\left(\begin{array}{cc}  \theta_{H^{1,0}} \wedge  \theta_{H^{1,0}} + \overline{A}^t \wedge
A& \ast \\ \ast &  \theta_{H^{0,1}} \wedge  \theta_{H^{0,1}} +
A\wedge \overline{A}^t\end{array}\right)\,.
$$
By Cartan's structure equation (see for instance~\cite{Griffiths:Harris}, page 75) we have the
identity $\Theta_{H^1_{\C}}= d\theta_{H^1_{\C}} - \theta_{H^1_{\C}}\wedge \theta_{H^1_{\C}}$,
hence we conclude that
$$
\Theta_{H^1_{\C}}= \left(\begin{array}{cc} \Theta_{H^{1,0}} - \overline{A}^t_{H^{1,0}}\wedge A_{H^{1,0}}& \ast \\ \ast & \Theta_{H^{0,1}}- A_{H^{1,0}}\wedge \overline{A}^t_{H^{1,0}}\end{array}\right)
$$
It follows that (compare with \cite{Griffiths:Harris}, page 78)
$$
\Theta_{H^{1,0}}=
\Theta_{H^1_{\C}}\big\vert_{H^{1,0}}
+ A^\ast_{H^{1,0}} \wedge A_{H^{1,0}}\ .
$$
Note that $\Theta_{H^1_{\C}}$ is the curvature of the Gauss--Manin connection, which is flat. So
$\Theta_{H^1_{\C}}$ is null, and the curvature $\Theta_{H^{1,0}}$ can be written as
\begin{equation}
\label{eq:theta:H:1:0}
\Theta_{H^{1,0}}= A_{H^{1,0}}^\ast\wedge A_{H^{1,0}}\ .
\end{equation}
Similarly to the case of the  second fundamental form, we pull back the bundle $H^{1,0}$ to a point
$(S,q)$ of $\cQ_g$ and take the value of the curvature form on the tangent vectors $v, \bar v$, where the tangent vector  $v= q$ under the identification discussed above between the tangent bundle of the moduli space and the bundle of quadratic differentials. Thus, instead of a differential form of type $(1,1)$ with values in the bundle of complex endomorphisms of the bundle $H^{1,0}$,  we get a section of that bundle over $\cQ_g$, that is, we get a complex endomorphism $\Theta_q$  of the space $H^{1,0}(S)$  for any pair
 $(S, q)\in \cQ_g$.  For any Abelian differential $\omega\in \cH_g$, let $\Theta_\omega:= \Theta_q$  denote the complex endomorphism corresponding to the quadratic differential $q=\omega^2\in \cQ_g$.
For any Riemann surface $S$ and any orthonormal basis $\Omega:=\{\omega_1, \dots, \omega_g\}$ of the space $H^{1,0}(S)$, the system $\bar \Omega=\{\bar \omega_1, \dots, \bar \omega_g\}$ is a pseudo-orthonormal basis of the space  $H^{0,1}(S)$. Let $\Theta$ be the matrix of the complex endomorphism $\Theta_\omega$ with respect to the basis $\Omega$ and $A$ be the matrix of the second fundamental form operator $A_\omega$ with respect to the bases $\Omega$ and $\bar \Omega$. Formula~\eqref{eq:theta:H:1:0} can be written in matrix form as follows:
\begin{equation}
\label{eq:curvatureformula}
\Theta = - A^\ast \cdot A\ .
\end{equation}
It is immediate from the above formulas that the matrix $\Theta$ of the curvature
of the Hodge bundle is a negative-semidefinite Hermitian matrix.

%
\subsection{Evaluation of the second fundamental form}
\label{ss:eval:sec:fund:form}

A  formula for the second fundamental form $A_{H^{1,0}}$ was implicitly computed in
 \cite{Forni2}, \S 2. We state such a formula below.
We remark that all formulas in~\cite{Forni2} are written with different notational conventions,
which we now explain for the convenience of the reader. Any Abelian differential $\omega$ on a Riemann surface $S$ induces an isomorphism between the space $H^{1,0}(S)$ of all Abelian differentials on $S$, endowed with the Hodge norm, and the subspace of all square integrable meromorphic functions (with respect to the area form of the Abelian differential $\omega$ on $S$).
In~\cite{Forni2}, \cite{ForniSurvey} variational formulas are written in the language of
meromorphic functions. In this paper we will adopt  the language of Abelian differentials.

\smallskip
Let $(S,\omega)$ be a pair (Riemann surface $S$, holomorphic $1$-form $\omega$ on $S$).
Following \cite{Forni2}, \S 2, for any $\alpha,\beta\in H^{1,0}(S)$ we define:
\begin{equation}
\label{eq:B}
B_\omega(\alpha,\beta):=
\frac{i}{2}\int_S \frac{\alpha\, \beta}{\omega}\,\bar\omega\,.
\end{equation}
The complex bilinear form $B_\omega$  depends continuously, actually
(real) analytically, on the Abelian differential $\omega \in \cH_g$.
\begin{lemma}
\label{lemma:secfundform}
For any $\omega\in \cH_g$, the second fundamental form $A_\omega$
can be written in terms of the complex bilinear form $B_\omega$, namely
$$
(A_\omega(\alpha), \bar \beta) = - B_\omega(\alpha,\beta) \,, \quad \text{ \rm for all }\,
\alpha, \beta \in H^{1,0}(S)\,.
$$
In particular, for any orthonormal basis $\{\omega_1,\dots,\omega_g\}$ of
holomorphic forms in $H^{1,0}(S)$ and any $\alpha\in H^{1,0}(S)$,
$$
A_\omega(\alpha)=\sum_{j=1}^g B_\omega(\alpha,\omega_j) \bar\omega_j\,.
$$
\end{lemma}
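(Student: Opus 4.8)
The plan is to compute the Hermitian product $(A_\omega(\alpha),\bar\beta)$ directly from the definition of the second fundamental form and to identify it with $-B_\omega(\alpha,\beta)$; the ``in particular'' statement is then immediate from the pseudo-orthonormality relations $(\bar\omega_j,\bar\omega_k)=-\delta_{jk}$ in $H^{0,1}(S)$. Recall that $A_\omega=A_q$ is the value of $A_{H^{1,0}}=(I-\pi_1)\circ D_{H^1_\C}\big\vert_{H^{1,0}}$ on the tangent vector $v=q=\omega^2$. Write $D_q\alpha\in H^1(S,\C)$ for the value at $v=q$ of the Gauss--Manin covariant derivative of any local section of $H^1_\C$ through $\alpha$; by tensoriality of the second fundamental form, $A_\omega(\alpha)=(I-\pi_1)(D_q\alpha)$. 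Since $H^{1,0}$ and $H^{0,1}$ are orthogonal for the form~\eqref{eq:Intform}, for every $\beta\in H^{1,0}(S)$ we get
$$
(A_\omega(\alpha),\bar\beta)=(D_q\alpha,\bar\beta)=\frac{i}{2}\int_S (D_q\alpha)\wedge\beta\,,
$$
using $\overline{\bar\beta}=\beta$; since $\beta$ is closed, the last integral depends only on the de Rham class of $D_q\alpha$. So it suffices to exhibit a closed $1$-form representing $D_q\alpha$ and perform the integration.

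The next step is to write down a convenient representative. Under the identification $\cI(q)=[\,|q|/q\,]$ of quadratic differentials with Beltrami differentials, the tangent vector $q=\omega^2$ corresponds to $\mu:=\bar\omega/\omega$ (in a local coordinate $z$ with $\omega=dz$, i.e.\ away from the zeros, simply $\mu=d\bar z/dz$). I would then invoke the classical description of the Gauss--Manin derivative of a period: the image of $D_q\alpha$ in $H^1(S,\C)/H^{1,0}$, identified with the Dolbeault group $H^{0,1}_{\bar\partial}(S)$, is represented by the $(0,1)$-form $\mu\cdot\alpha=(\alpha/\omega)\,\bar\omega$, where $\alpha/\omega$ is the meromorphic function attached to $\alpha$ by $\omega$. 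Concretely --- and this is how the formula is ``implicitly'' contained in~\cite{Forni2},~\S2 --- one differentiates at $t=0$ a smooth family $\alpha_t$ of holomorphic $1$-forms on the surface deformed with Beltrami differential $t\mu+o(t)$, pulls everything back to $S$ along the deformation, and checks that the $(0,1)$-component of the resulting closed $1$-form is $\mu\cdot\alpha$ modulo a $\bar\partial$-exact term. (Some care is needed because $\alpha/\omega$ has poles at the zeros of $\omega$; but $\mu$ has modulus one a.e., the deformation argument is standard, and the relevant integrals converge, exactly as in~\cite{Forni2}.)

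Granting this, take any closed representative $\eta=\eta^{1,0}+\eta^{0,1}$ of $D_q\alpha$. Then $\eta^{1,0}\wedge\beta=0$ for type reasons on a curve, while $\eta^{0,1}-\mu\cdot\alpha=\bar\partial g$ for some smooth function $g$, and $\int_S(\bar\partial g)\wedge\beta=\int_S d(g\beta)=0$ by Stokes (here $\beta$ holomorphic). Hence
$$
(A_\omega(\alpha),\bar\beta)=\frac{i}{2}\int_S (\mu\cdot\alpha)\wedge\beta=\frac{i}{2}\int_S \frac{\alpha}{\omega}\,\bar\omega\wedge\beta=-\frac{i}{2}\int_S \frac{\alpha\,\beta}{\omega}\,\bar\omega=-B_\omega(\alpha,\beta)\,,
$$
where the sign comes from $\bar\omega\wedge\beta=-\beta\wedge\bar\omega$ and the tensorial identity $(\alpha/\omega)\cdot\beta=(\alpha\beta)/\omega$. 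For the last assertion, writing $A_\omega(\alpha)=\sum_{j=1}^{g} c_j\,\bar\omega_j$ and pairing with $\bar\omega_k$ using $(\bar\omega_j,\bar\omega_k)=-\delta_{jk}$ gives $c_k=-(A_\omega(\alpha),\bar\omega_k)=B_\omega(\alpha,\omega_k)$, which is the claimed expansion.

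The main obstacle is the identification in the second step: that the $H^{0,1}$-component of the Gauss--Manin derivative of $\alpha$ in the direction $q=\omega^2$ is represented by $(\alpha/\omega)\,\bar\omega$. Everything else is orthogonality bookkeeping, Stokes' theorem and elementary Hodge theory. This identification is exactly the (notationally translated) content of the variational computation in~\cite{Forni2},~\S2; alternatively, it is the special case for weight-one variations of Hodge structure over $\cM_g$ (pulled back to $\cH_g$ and reparametrized by $\cQ_g$) of the standard fact that the second fundamental form of the Hodge bundle equals cup product with the Kodaira--Spencer class. Either route also requires the routine verification that the zeros of $\omega$ cause no trouble.
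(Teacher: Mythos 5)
Your proof is correct and follows essentially the same route as the paper: reduce $(A_\omega(\alpha),\bar\beta)$ to $\frac{i}{2}\int_S \frac{\alpha}{\omega}\bar\omega\wedge\beta$ by showing that the $(1,0)$-part and the exact part of the Gauss--Manin derivative do not contribute, then conclude by the pseudo-orthonormality of $\{\bar\omega_j\}$. The only cosmetic difference is that the paper obtains the key identification of the $(0,1)$-component of $D_q\alpha$ with $(\alpha/\omega)\bar\omega$ by the explicit one-line product rule applied to $\alpha_t=(\alpha_t/\omega_t)\,\omega_t$ together with $\tfrac{d\omega}{dt}(0)=\bar\omega$, whereas you quote it as the Kodaira--Spencer cup-product fact (while sketching the same differentiation argument), which is an acceptable substitute.
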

\begin{proof}
The argument is a simplified version of the proof of Lemma 2.1
in~\cite{Forni2} rewritten in the language of holomorphic
differentials. As everywhere in this paper we use the same
notation for holomorphic (antiholomorphic)
forms and their cohomology classes, while for other closed 1-forms
we use square brackets to denote the cohomology class.
Let $\{(S_t, \omega_t)\}$ denote a Teichm\"uller deformation, that
is, a trajectory of the Teichm\"uller flow. Let $\alpha$ be any given
holomorphic differential on the Riemann surface $S_0$. There exists
$\epsilon>0$ such that there is a natural identification $H^1(S_t,
\C) \simeq  H^1(S_0, \C)$ by parallel transport for all
$|t|<\epsilon$, so that  locally constant sections are parallel for
the Gauss-Manin connection.

Let $\{\alpha_t\}$  be  a smooth one-parameter family of  closed
$1$-forms such that $\alpha(0) =\alpha$ and $\alpha_t$ is holomorphic
on $S_t$ for all  $|t|<\epsilon$, that is,  $\{\alpha_t\}$ is a
smooth local section of the bundle $H^{1,0}$.  Let $\pi_1:
H^1_{\C}\to H^{1,0}$ denote the natural projection (see
formula~\eqref{eq:sec:fund:form}). By definition,
\begin{equation}
\label{eq:Adef}
A_{\omega}(\alpha)=(I-\pi_1)\circ D_{H^1_{\C}}\big|_{H^{1,0}}(\alpha)
 = (I-\pi_1)\left(\left[\frac {d\alpha}{dt} (0)\right]\right)\,.
\end{equation}
Thus, for any $1$-form $\beta\in H^{1,0}$, in order to compute the
pseudo-Hermitian intersection $(A_\omega(\alpha),  \bar \beta)$, it
is sufficient to compute the derivative $d\alpha/dt (0)$ \emph{up to
exact $1$-forms and up to $1$-forms of type $(1,0)$}. In fact, it
follows from formula~\eqref{eq:Adef} that the cohomology class
$A_{\omega}(\alpha)-[d\alpha/dt (0)] \in H^1(S, \mathbb C)$ is
holomophic.
Hence,
by the definition of the pseudo-Hermitian
intersection form,
it is orthogonal to
$\bar\beta$ for any holomorphic form $\beta$,
$$
\left(A_{\omega}(\alpha)-\left[\frac{d\alpha}{dt}(0)\right]\,,\,
\bar\beta\right)=0\,,
$$
   %
   %
which implies
\begin{equation}
\label{eq:Aid1}
(A_\omega(\alpha), \bar \beta)=
\left(\left[\frac{d\alpha}{dt}(0)\right]\,,\,\bar\beta\right)
=  \frac{i}{2} \int_S \frac {d\alpha}{dt} (0) \wedge \beta\,.
\end{equation}
It is immediate from the definition of the Teichm\"uller deformation
that
\begin{equation}
\label{eq:Teichdef}
\frac{d\omega}{dt}(0) = \bar \omega\,.
\end{equation}
By writing $\alpha_t = \left(\cfrac{\alpha_t}{\omega_t}\right)
\omega_t$  and differentiating, taking~\eqref{eq:Teichdef} into
account, we derive the following formula:
\begin{equation}
\label{eq:alpha_der1}
\frac{d\alpha}{dt}(0) =
\left(\frac{d}{dt}\frac{\alpha_t}{\omega_t}(0)\right) \cdot \omega
+ \frac{\alpha}{\omega}\, \bar \omega \,.
\end{equation}
By formula~\eqref{eq:alpha_der1}, the $1$-form $d\alpha/dt (0) - (\alpha/\omega) \bar \omega$
is of type $(1,0)$, hence (taking into account that $\beta$ is holomorphic)  we have
\begin{equation}
\label{eq:Aid2}
 \frac{i}{2}  \int_S \frac {d\alpha}{dt} (0) \wedge \beta = \frac{i}{2}
\int_S \frac{\alpha}{\omega}\, \bar \omega \wedge \beta = -B_\omega(\alpha, \beta)\,.
\end{equation}
The first formula in the statement follows from
formulas~\eqref{eq:Aid1} and~\eqref{eq:Aid2}.

Finally, let $\{\omega_1, \dots, \omega_g\} \subset H^{1,0}(S)$ be any orthonormal basis (in the sense that $(\omega_i,\omega_j)=\delta_{ij}$). The system $\{\bar \omega_1, \dots, \bar \omega_g\} \subset H^{0,1}(S)$ is pseudo-orthonormal (in the sense that $(\bar \omega_i,\bar \omega_j)=-\delta_{ij}$), hence
\begin{equation}
A_\omega(\alpha)= - \sum_{j=1}^g (A_\omega(\alpha), \bar\omega_j)  \bar\omega_j
= \sum_{j=1}^g B_\omega(\alpha, \omega_j)  \bar\omega_j \,.
\end{equation}
Thus the second formula in the statement is proved.
\end{proof}
\begin{remark}  We warn the reader that  in general  (unless $\alpha \in \mathbb C \cdot \omega$)
$$
A_\omega(\alpha) \not = \frac{\alpha}{\omega} \bar \omega\,.
$$
In fact, the $1$-form $A_\omega(\alpha)$ is closed by definition,  while the $1$-form $(\alpha/\omega) \bar \omega$ is in general not closed.  In order to compute $A_\omega(\alpha)$ directly, it is necessary to consider the appropriate projection of  the $1$-form $(\alpha/\omega) \bar \omega$ onto the subspace of closed $1$-forms. We carry out such a direct calculation below, following Lemma
2.1 in \cite{Forni2}. We stress that this calculation, although not needed for first variation formulas,
is important for the correct derivation of second variation formulas along the Teichm\"uller
flow.

 Let $\partial$ and $\bar \partial$ denote respectively the type $(1,0)$ and the type $(0,1)$
 exterior differentials on the Riemann  surface $S$, defined as the projections of the (total) exterior
 differential $d$ on the subspaces of $1$-forms of type $(1,0)$ and $(0,1)$ respectively. By definition,
for all $v\in C^\infty(S)$, the $1$-form $\partial v$ is of type $(1,0)$, the $1$-form $\bar \partial v$
is of type $(0,1)$ and the following formula holds
$$
d v= \partial  v +  \bar \partial v  \,.
$$
Note that the $1$-form $(\alpha/\omega) \bar \omega$ is $\bar \partial$-closed
(but not $d$-closed, unless $\alpha \in \mathbb C\cdot\omega$), hence its
$\bar \partial$-cohomology class has a unique anti-holomorphic representative
$p_\omega(\alpha) \in H^{0,1}(S)$. In other words, there exist  a unique
anti-holomorphic form $p_\omega(\alpha) \in H^{0,1}(S)$ and a complex-valued
function $v\in C^\infty(S)$ (unique up to additive constants) such that
\begin{equation}
\label{eq:pi_omega}
\frac{\alpha}{\omega} \bar \omega = p_\omega(\alpha) + \bar \partial v
\end{equation}
(the linear operator $p_\omega: H^{1,0}(S) \to H^{0,1}(S)$ is equivalent to the
restriction to the subspace of  meromorphic functions of the orthogonal projection
from the space of square-integrable functions on $S$ onto
the subspace of anti-meromorphic functions, which appears in the formulas
of \cite{Forni1} and \cite{Forni2}).

Since $d\alpha/dt(0)$ and $p_\omega(\alpha)$  are closed forms and any closed form
of type $(1,0)$ is holomorphic, by formulas~\eqref{eq:alpha_der1}
and~\eqref{eq:pi_omega} it follows that
\begin{equation}
\label{eq:alpha_der2}
\frac{d\alpha}{dt}(0) - (p_\omega(\alpha) + dv) \in H^{1,0}(S) \,,
\end{equation}
hence, by formulas~\eqref{eq:Adef} and~\eqref{eq:alpha_der2}, we conclude that the second fundamental form has the following expression:
\begin{equation}
\label{eq:A}
A_{\omega}(\alpha) = p_\omega(\alpha) \,, \quad \text{ for all } \alpha \in H^{1,0}(S)\,.
\end{equation}
In conclusion, the form $A_{\omega}(\alpha) $ is equal to $(\alpha/\omega)\bar\omega$
only up to a $ \bar \partial$-exact  correction term. If such a correction term were
identically zero, the theory of the Kontsevich--Zorich cocycle would be much simpler!
\end{remark}

\smallskip
The second fundamental form of the Hodge bundle is related to the derivative
of the period matrix along the Teichm\"uller flow. We recall the definition of
the period matrix. Let $\{a_1, \dots, a_g, b_1,\dots, b_g\}$
be a canonical basis of the first homology group $H_1(S, \R)$ of a Riemann
 surface $S$ and let
$\{\theta_1, \dots, \theta_g\} \subset H^{1,0}(S)$ be the dual basis of holomorphic
$1$-forms, that is, the unique basis with the property that
$$
\theta_i(a_j)=\delta_{ij} \,, \quad \text{ for all }  i, j \in \{1, \dots, g\}\,.
$$
The period matrix $\Pi_{ij} (S)$ is the $g\times g$ complex symmetric matrix with
positive-definite imaginary part defined as follows:
\begin{equation}
\Pi_{ij}(S) := \theta_i(b_j)   \,\quad \text{ for all } \, i, j \in \{1, \dots, g\}\,.
\end{equation}
\begin{lemma} Let $\Cal L$ denote the Lie derivative along the Teichm\"uller flow on the space of Abelian differentials. The following formula  holds:
$$
\Cal L \Pi_{ij} (S, \omega) =   B_\omega (\theta_i, \theta_j)\,, \quad \text{ for all }\, i,j \in  \{1, \dots, g\}\,.
$$
\end{lemma}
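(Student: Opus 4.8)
The plan is to exhibit $\Pi_{ij}$ as a matrix of $b$-periods of the holomorphic frame dual to a Gauss--Manin parallel homology frame, differentiate along the Teichm\"uller flow, convert the resulting period into a surface integral via the Riemann bilinear relations, and then invoke formula~\eqref{eq:alpha_der1} from the proof of Lemma~\ref{lemma:secfundform}. Concretely, fix a Teichm\"uller deformation $\{(S_t,\omega_t)\}$ with $(S_0,\omega_0)=(S,\omega)$ and carry the canonical basis $\{a_1,\dots,a_g,b_1,\dots,b_g\}$ of $H_1$ to nearby surfaces by the Gauss--Manin connection, i.e.\ keep it locally constant. Then the dual holomorphic frame $\{\theta_1^{(t)},\dots,\theta_g^{(t)}\}\subset H^{1,0}(S_t)$, characterized by $\int_{a_k}\theta_i^{(t)}=\delta_{ik}$, depends real-analytically on $t$ (since $H^{1,0}$ is a holomorphic subbundle of the Hodge bundle and $\varphi\mapsto(\int_{a_k}\varphi)_k$ is a real-analytically varying linear isomorphism $H^{1,0}(S_t)\xrightarrow{\ \sim\ }\C^g$), with $\theta_i^{(0)}=\theta_i$. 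Since $\Pi_{ij}(S_t)=\int_{b_j}\theta_i^{(t)}$ and the cycle $b_j$ is flat for the Gauss--Manin connection,
\[
\mathcal L\Pi_{ij}(S,\omega)=\frac{d}{dt}\Big|_{0}\int_{b_j}\theta_i^{(t)}=\int_{b_j}\dot\theta_i,\qquad
\dot\theta_i:=\Big[\tfrac{d\theta_i}{dt}(0)\Big]=D_{H^1_\C}\big|_{H^{1,0}}(\theta_i)
\]
(the covariant derivative in the Teichm\"uller direction). The key structural observation is that $\dot\theta_i$ has \emph{vanishing $a$-periods}, because $\int_{a_k}\dot\theta_i=\frac{d}{dt}\big|_0\delta_{ik}=0$.

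Next I would apply the Riemann bilinear relations to the closed $1$-forms $\theta_j$ and $\dot\theta_i$: using that $\theta_j$ has $a$-periods $\delta_{jk}$ and $b$-periods $\Pi_{jk}$ while $\dot\theta_i$ has vanishing $a$-periods, they collapse to
\[
\int_S\theta_j\wedge\dot\theta_i=\sum_{k=1}^g\Big(\int_{a_k}\theta_j\int_{b_k}\dot\theta_i-\int_{b_k}\theta_j\int_{a_k}\dot\theta_i\Big)=\int_{b_j}\dot\theta_i=\mathcal L\Pi_{ij}(S,\omega).
\]
Since $\{\theta_i^{(t)}\}$ is a smooth one-parameter family of closed forms with $\theta_i^{(t)}$ holomorphic on $S_t$, formula~\eqref{eq:alpha_der1} applies verbatim with $\alpha_t=\theta_i^{(t)}$ and gives $\dot\theta_i=f_i\,\omega+\frac{\theta_i}{\omega}\,\bar\omega$ for some $f_i\in C^\infty(S)$. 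On a Riemann surface the exterior product of two forms of type $(1,0)$ vanishes, so $\theta_j\wedge(f_i\,\omega)=0$, and therefore
\[
\mathcal L\Pi_{ij}(S,\omega)=\int_S\theta_j\wedge\frac{\theta_i}{\omega}\,\bar\omega=\int_S\frac{\theta_i\,\theta_j}{\omega}\,\bar\omega,
\]
which by the definition~\eqref{eq:B} of $B_\omega$ is $B_\omega(\theta_i,\theta_j)$ up to the universal normalizing constant; that constant is fixed by the normalization~\eqref{eq:Teichdef} of the Teichm\"uller deformation together with the sign conventions of the bilinear relations, and can be pinned down explicitly on the torus $\C/(\Z+\tau\Z)$ with $\omega=dz$, $\theta_1=dz$.

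The genuinely delicate points are bookkeeping. First, the identity $\frac{d}{dt}\int_{b_j}\theta_i^{(t)}=\int_{b_j}\dot\theta_i$ has to be justified from flatness of $b_j$ under Gauss--Manin together with the real-analytic dependence of the dual holomorphic frame on $t$ (equivalently, one must know that differentiating the period equals integrating the covariant derivative of the section $\theta_i^{(t)}$ of $H^{1,0}\subset H^1_\C$). Second, the single universal constant must be tracked consistently through the Riemann bilinear relations and through~\eqref{eq:B}. The geometric content --- that differentiating the $b$-periods of the normalized holomorphic frame along the Teichm\"uller flow reproduces exactly the quadratic-differential pairing $\int_S\theta_i\theta_j\,\bar\omega/\omega$ --- is the classical Rauch variational formula, and the argument above is simply its derivation in the present notation, with the appearance of $B_\omega$ made transparent by formula~\eqref{eq:alpha_der1}.
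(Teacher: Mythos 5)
Your argument is correct in structure but takes a genuinely different route from the paper's. The paper disposes of the lemma in three lines by quoting Rauch's variational formula from \cite{IT} and observing that the Teichm\"uller flow is tangent to the Beltrami differential $\bar\omega/\omega$; you instead re-derive the variational formula from scratch inside the paper's own framework: writing $\Pi_{ij}$ as the $b$-periods of the dual frame normalized against a Gauss--Manin-flat homology basis, reducing $\mathcal L\Pi_{ij}$ to $\int_{b_j}\dot\theta_i$, noting that $\dot\theta_i$ has vanishing $a$-periods so the Riemann bilinear relations collapse to $\int_S\theta_j\wedge\dot\theta_i$, and then killing the $(1,0)$-component of $\dot\theta_i$ via~\eqref{eq:alpha_der1}. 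This buys self-containedness and makes transparent why the same tensor $\theta_i\theta_j\bar\omega/\omega$ governs both the Kodaira--Spencer map and the derivative of the period matrix; what it costs is that you now own the normalization constant, which a citation of Rauch would otherwise absorb (modulo checking the source's conventions).

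That constant is the one place where your writeup is not yet a proof: you assert it "can be pinned down on the torus" but do not do it, and if you do it you will find it is not $1$. On $\C/(\Z+\tau\Z)$ with $\omega=\theta_1=dz$, $a=1$, $b=\tau$, the flow normalized by~\eqref{eq:Teichdef} gives $\Pi_{11}(t)=\Re\tau+ie^{-2t}\Im\tau$, hence $\mathcal L\Pi_{11}=-2i\,\Im\tau$, while $B_\omega(\theta_1,\theta_1)=\frac{i}{2}\int_S dz\wedge d\bar z=\Im\tau$. Indeed your chain of equalities literally produces $\mathcal L\Pi_{ij}=\int_S\frac{\theta_i\,\theta_j}{\omega}\wedge\bar\omega=\frac{2}{i}\,B_\omega(\theta_i,\theta_j)$, the factor $\frac{2}{i}$ coming from the prefactor $\frac{i}{2}$ in definition~\eqref{eq:B}. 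So your method establishes the identity only up to this factor relative to the statement as printed; the discrepancy is a normalization issue in the quoted form of Rauch's formula (of the same harmless kind the paper itself flags for formula $(4.3)$ of \cite{Forni2}), and is invisible elsewhere since the lemma is not used in the sequel. The fix is simply to carry out the torus computation you propose and record the resulting constant explicitly rather than asserting it equals $1$.
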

\begin{proof}
By Rauch's formula (see for instance \cite{IT}, Prop. A.3), for any  Beltrami differential $\mu$
on $S$, we have
\begin{equation}
\frac{  d \Pi_{ij}}{d\mu} (S) =  \frac{i}{2} \int_S  \theta_i \theta_j \mu  \,,
\quad \text{ for all }\, i,j \in  \{1, \dots, g\}\,.
\end{equation}
 By definition, at any holomorphic quadratic differential $q=\omega^2$  on $S$
 the Teichm\"uller flow is tangent  to the equivalence class of Beltrami differentials represented
 by the Beltrami differential
$$
\mu = \frac{ \vert q\vert}{q} = \frac{ \bar \omega}{\omega}\,.
$$
The statement then follows immediately from Rauch's formula.
\end{proof}

For any $\omega\in \cH_g^{(1)}$ (that is, for any Abelian differential $\omega \in \cH_g$ of unit
total area, see formula~\eqref{eq:Ab1}) , the second fundamental form $B_\omega$ satisfies a uniform upper bound and a spectral gap bound, proved below.

\begin{lemma}
\label{lemma:apriori_B_bounds}
For any Abelian differential $\omega\in \cH_g^{(1)}$ on a Riemann surface $S$, the following
uniform bound holds:
\begin{equation}
\label{eq:apriori_B_bounds}
\Vert B_\omega \Vert :=\max \left\{ \frac{\vert B_\omega (\alpha, \beta) \vert}
{ \Vert \alpha \Vert \Vert \beta \Vert} : \alpha, \, \beta
 \in H^{1,0}(S)\setminus \{0\}  \right\}\, =\,1 \,,
 \end{equation}
and the maximum is achieved at $(\alpha, \beta) = (\omega, \omega)$, in fact we have
\begin{equation}
\label{eq:B_max}
B_\omega(\omega, \omega)=\Vert \omega\Vert^2=1 \,.
\end{equation}
Let $\langle \omega\rangle^\perp \subset H^{1,0}(S)$ be Hodge orthogonal
complement of the complex line $\langle \omega\rangle=\C \cdot \omega$. The
following spectral gap bound holds:
\begin{equation}
\label{eq:B_spectr_gap}
\max \left\{ \frac{\vert B_\omega (\alpha, \beta) \vert}
{ \Vert \alpha \Vert \Vert \beta \Vert} : \alpha, \, \beta
 \in H^{1,0}(S)\setminus \{0\} \text{ and } \alpha \in \langle \omega\rangle^\perp\right\}\, <\,1 \,.
 \end{equation}
\end{lemma}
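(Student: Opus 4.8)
The claims split naturally into three pieces: the uniform bound $\|B_\omega\|\le 1$, the evaluation $B_\omega(\omega,\omega)=\|\omega\|^2$, and the strict spectral gap on $\langle\omega\rangle^\perp$. For the uniform bound, the key observation is that $\alpha/\omega$ and $\beta/\omega$ are meromorphic functions on $S$ which are square-integrable with respect to the area form $\frac{i}{2}\,\omega\wedge\bar\omega$ associated to $\omega$. Writing $f=\alpha/\omega$, $h=\beta/\omega$, one has $\|\alpha\|^2=\frac{i}{2}\int_S|f|^2\,\omega\wedge\bar\omega$, similarly for $\beta$, and
$$
B_\omega(\alpha,\beta)=\frac{i}{2}\int_S f\,h\,\omega\wedge\bar\omega\,,
$$
since $\alpha\beta/\omega\cdot\bar\omega = f h\,\omega\wedge\bar\omega$ as $2$-forms (here I use $\omega^2/\omega=\omega$ formally, more precisely $\alpha\beta\bar\omega/\omega = fh\,\omega\bar\omega$). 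Then the Cauchy--Schwarz inequality in $L^2(S,\frac{i}{2}\omega\wedge\bar\omega)$ gives $|B_\omega(\alpha,\beta)|\le \|f\|_{L^2}\|h\|_{L^2}=\|\alpha\|\,\|\beta\|$, which is exactly \eqref{eq:apriori_B_bounds} (and shows $\|B_\omega\|\le 1$; the value $1$ will follow from \eqref{eq:B_max}). For \eqref{eq:B_max}: taking $\alpha=\beta=\omega$ gives $f=h\equiv 1$, so $B_\omega(\omega,\omega)=\frac{i}{2}\int_S\omega\wedge\bar\omega=\|\omega\|^2=1$ by the normalization $\omega\in\cH_g^{(1)}$.

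For the spectral gap \eqref{eq:B_spectr_gap}, the point is to analyze the equality case in Cauchy--Schwarz. One has $|B_\omega(\alpha,\beta)|=\|\alpha\|\,\|\beta\|$ if and only if $h=\bar\lambda\bar f$ almost everywhere for some constant $\lambda\in\C$ (the equality case for the bilinear, as opposed to sesquilinear, pairing), i.e. $\beta/\omega = \bar\lambda\,\overline{\alpha/\omega}$ pointwise. But $\beta/\omega$ is meromorphic while $\overline{\alpha/\omega}$ is anti-meromorphic, so both sides must be constant; hence $\alpha/\omega$ is constant, i.e. $\alpha\in\C\cdot\omega=\langle\omega\rangle$. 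Therefore, whenever $\alpha\in\langle\omega\rangle^\perp$ with $\alpha\neq 0$, we cannot have $\alpha\in\langle\omega\rangle$ (as $\langle\omega\rangle\cap\langle\omega\rangle^\perp=\{0\}$), so the inequality $|B_\omega(\alpha,\beta)|<\|\alpha\|\,\|\beta\|$ is strict for every such $\alpha$ and every $\beta\neq 0$. To upgrade this pointwise-strict inequality to a strict bound on the supremum over the set $\{\alpha\in\langle\omega\rangle^\perp\setminus\{0\},\ \beta\neq 0\}$, I would use a compactness argument: by bilinearity one may restrict to the compact set $\{\|\alpha\|=\|\beta\|=1,\ \alpha\in\langle\omega\rangle^\perp\}$, on which the continuous function $(\alpha,\beta)\mapsto|B_\omega(\alpha,\beta)|$ attains its maximum; since that maximum is $<1$ by the above, the strict bound \eqref{eq:B_spectr_gap} follows.

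The main obstacle is getting the equality analysis exactly right — in particular being careful that the relevant Cauchy--Schwarz is for the $\C$-bilinear pairing $\langle f,h\rangle = \int fh$ rather than the Hermitian one $\int f\bar h$, so the equality condition reads $h = c\,\bar f$ (equivalently $fh = c|f|^2$ up to a unimodular factor absorbed into $c$) rather than $h=cf$; and then invoking the meromorphic/anti-meromorphic dichotomy to conclude both are constant. One should also double-check that $\alpha/\omega$ is genuinely meromorphic (it has poles only at zeros of $\omega$) and square-integrable — this is the standard fact recalled just before the lemma, that $\omega$ induces an isometry $H^{1,0}(S)\to L^2_{\mathrm{mero}}(S,\omega)$. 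Once these points are pinned down, the rest is routine Cauchy--Schwarz and compactness.
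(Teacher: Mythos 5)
Your proposal is correct and follows essentially the same route as the paper: the Cauchy--Schwarz inequality in $L^2(S,\tfrac{i}{2}\,\omega\wedge\bar\omega)$ applied to the meromorphic functions $\alpha/\omega$ and $\beta/\omega$, the evaluation $B_\omega(\omega,\omega)=\Vert\omega\Vert^2=1$, and the equality analysis (equality forces $\beta/\omega$ to be a constant multiple of $\overline{\alpha/\omega}$, hence constant by the meromorphic/anti-meromorphic dichotomy, hence $\alpha\in\C\cdot\omega$). Your explicit compactness step to pass from pointwise strictness to a strict supremum bound is left implicit in the paper but is exactly the right way to finish.
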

\begin{proof}
By the Cauchy-Schwarz inequality in the space $L^2(S, \frac{i}{2}\, \omega\wedge \bar\omega)$,
it follows that, for all $\alpha$, $\beta\in H^{1,0}(S)$,
\begin{equation}
\label{eq:CS_ineq}
\begin{aligned}
\vert B_\omega (\alpha, \beta) \vert &= \vert \frac{i}{2} \int_S  \frac{\alpha}{\omega}\,
 \frac{\beta}{\omega}\, \omega\wedge \bar\omega\vert  \\
 &\leq \left( \frac{i}{2} \int_S \vert \frac{\alpha}{\omega}\vert ^2\, \omega\wedge \bar\omega \right)^{1/2}
\left(\frac{i}{2} \int_S \vert \frac{\beta}{\omega}\vert ^2  \,\omega\wedge \bar\omega\right)^{1/2} \\
&= \left(\frac{i}{2} \int_S \alpha\wedge \bar\alpha \right)^{1/2} \left(\frac{i}{2} \int_S \beta\wedge \bar\beta \right)^{1/2}=\Vert \alpha\Vert \Vert\beta\Vert\,.
\end{aligned}
\end{equation}
The uniform upper bound in formula~\eqref{eq:apriori_B_bounds} is therefore proved.
The bound is achieved at $(\omega, \omega)$ since it is immediate by the definition that
$$
B_\omega(\omega, \omega) =\Vert \omega \Vert^2\,.
$$
In fact, $\Vert \omega \Vert^2$ is by definition equal to the area of surface $S$ with respect to the flat metric associated to the Abelian differential $\omega \in \cH_g^{(1)}$, which, by definition of
$\cH_g^{(1)}$, is normalized (equal to $1$).

The spectral gap bound in formula~\eqref{eq:B_spectr_gap} is proved as follows. By a fundamental property of the Cauchy-Schwarz inequality, {\it equality }holds in formula~\eqref{eq:CS_ineq}  if and
only if there exists a constant $\const\in\C$ such that
$$
\frac{\alpha}{\omega} =
 \const\cdot   \frac{\overline {\beta}}{\overline{\omega}}\,.
$$
The functions $\alpha/\omega$ and $\beta/\omega$ are meromorphic on the Riemann surface $S$, hence $\bar\beta/\bar\omega$ is anti-meromorphic. Since the only meromorphic functions which are also anti-meromorphic are the constant functions, if follows that equality holds in the Cauchy-Schwarz
inequality if and only if  $\alpha$ and $\beta$ belong to the complex line  $ \langle\omega\rangle \subset H^{1,0}(S)$.  Thus,  if $\alpha\in \langle\omega\rangle^\perp \setminus \{0\}$, the Cauchy-Schwarz inequality is strict and the spectral gap bound stated above on the second fundamental form $B_\omega$  is proved.
\end{proof}

The  curvature  form  of  the  Hodge  bundle  appears  in  the second variation  formulas for the Hodge norm computed in \cite{Forni2}, \S\S 2-5,  which  we  will recall in the next section. For consistence with
the notations of that paper,  we adopt  below a sign convention for the curvature matrix which is opposite to that of formula~\pref{eq:curvatureformula}. For any Abelian differential $\omega\in \cH_g$,  let  $H_\omega$  be  the Hermitian curvature form  defined as follows: for all $\alpha, \beta \in H^{1,0}(S)$,
\begin{equation}
\label{eq:curvatureform}
H_\omega (\alpha, \beta) = -(A_\omega(\alpha), A_\omega(\beta)) = (A^\ast_\omega A_\omega(\alpha),
\beta)\,.
\end{equation}
It follows from Lemma~\ref{lemma:apriori_B_bounds} that, for any Abelian differential $\omega\in
\cH_g^{(1)}$ on a Riemann surface $S$, the second fundamental form operator (the Kodaira-Spencer map) $A_\omega: H^{1,0}(S) \to H^{0,1}(S)$ is a contraction with respect to the Hodge norm and, as a consequence, the Hermitian positive semi-definite curvature form $H_\omega$ is uniformly bounded.
In fact, the following result holds:

\begin{lemma}
\label{lemma:apriori_curv_bounds}
For any Abelian differential $\omega\in \cH_g^{(1)}$ on a Riemann surface $S$, the following
bounds hold:
\begin{equation}
\label{eq:apriori_curv_bounds}
\begin{aligned}
\Vert A _\omega \Vert  &:=  \max \left\{ \frac{\Vert A_\omega (\alpha) \Vert}{ \Vert \alpha \Vert } :
\alpha \in H^{1,0}(S)\setminus \{0\} \right\} \,=\,1\,; \\
\Vert H _\omega \Vert  &:=  \max \left\{ \frac{\vert H_\omega (\alpha,\beta) \vert}{ \Vert \alpha
\Vert \Vert \beta\Vert} : \alpha, \beta \in H^{1,0}(S)\setminus \{0\} \right\} \,=\,1\,;
\end{aligned}
\end{equation}
and the maximum is achieved at $(\alpha, \beta) = (\omega, \omega)$, in fact we have
\begin{equation}
\label{eq:curv_max}
 A_\omega (\omega) =\bar \omega \quad \text{ and }  \quad
 H_\omega(\omega, \omega) =\Vert \omega\Vert^2=1 \,.
\end{equation}
The following spectral gap result holds:
\begin{equation}
\label{eq:curv_spectr_gap}
\begin{aligned}
 &\max  \left\{\frac{  \Vert A_\omega(\alpha) \Vert }{ \Vert \alpha \Vert} :  \alpha \in
 \langle\omega \rangle^\perp\setminus \{0\}\subset H^{1,0}(S) \right\} <1\,; \\
 &\max  \left\{\frac{  \vert H_\omega(\alpha,\beta) \vert }{ \Vert \alpha \Vert \Vert \beta\Vert} :   \alpha, \beta\in  H^{1,0}(S) \setminus \{0\} \text{ and }\alpha \in \langle\omega \rangle^\perp \right\} <1\,.
 \end{aligned}
 \end{equation}
\end{lemma}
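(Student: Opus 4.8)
The plan is to reduce Lemma~\ref{lemma:apriori_curv_bounds} to Lemma~\ref{lemma:apriori_B_bounds} via the identity $(A_\omega(\alpha),\bar\beta)=-B_\omega(\alpha,\beta)$ from Lemma~\ref{lemma:secfundform}, together with formula~\eqref{eq:curvatureform} which expresses $H_\omega$ in terms of $A_\omega$. First I would observe that, since $\{\bar\omega_1,\dots,\bar\omega_g\}$ is a pseudo-orthonormal basis of $H^{0,1}(S)$ for any orthonormal basis $\{\omega_1,\dots,\omega_g\}$ of $H^{1,0}(S)$, the Hodge norm of $A_\omega(\alpha)$ is computed by
$$
\Vert A_\omega(\alpha)\Vert^2 = \sum_{j=1}^g \vert (A_\omega(\alpha),\bar\omega_j)\vert^2 = \sum_{j=1}^g \vert B_\omega(\alpha,\omega_j)\vert^2\,,
$$
using the explicit expansion $A_\omega(\alpha)=\sum_j B_\omega(\alpha,\omega_j)\bar\omega_j$ from Lemma~\ref{lemma:secfundform}. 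Dually, $\Vert A_\omega(\alpha)\Vert = \sup\{\vert(A_\omega(\alpha),\bar\beta)\vert/\Vert\beta\Vert:\beta\in H^{1,0}(S)\setminus\{0\}\} = \sup\{\vert B_\omega(\alpha,\beta)\vert/\Vert\beta\Vert\}$, so
$$
\Vert A_\omega\Vert = \sup_{\alpha,\beta\neq 0}\frac{\vert B_\omega(\alpha,\beta)\vert}{\Vert\alpha\Vert\Vert\beta\Vert} = \Vert B_\omega\Vert\,.
$$
This identifies $\Vert A_\omega\Vert$ with the quantity bounded in Lemma~\ref{lemma:apriori_B_bounds}, giving $\Vert A_\omega\Vert=1$ immediately. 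The identity $A_\omega(\omega)=\bar\omega$ follows from the explicit expansion together with $B_\omega(\omega,\omega_j)=\frac{i}{2}\int_S \omega\,\omega_j/\omega\cdot\bar\omega = (\omega_j,\omega)=\delta_{j,1}$ (taking $\omega_1=\omega$, which we may after normalization); alternatively it follows directly from formula~\eqref{eq:alpha_der1} applied to $\alpha=\omega$, since then $d\alpha/dt(0)=\bar\omega$ is already closed and antiholomorphic.

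Next, for the curvature form, formula~\eqref{eq:curvatureform} reads $H_\omega(\alpha,\beta)=-(A_\omega(\alpha),A_\omega(\beta))$, and since $-(\cdot,\cdot)$ restricted to $H^{0,1}(S)$ is a positive-definite Hermitian form (equal to $\sum_j$ of products of $B_\omega$-coefficients), $H_\omega$ is Hermitian positive semi-definite with associated quadratic form $H_\omega(\alpha,\alpha)=\Vert A_\omega(\alpha)\Vert^2$. By Cauchy--Schwarz for $H_\omega$ (valid since $H_\omega\geq 0$), $\vert H_\omega(\alpha,\beta)\vert \leq \Vert A_\omega(\alpha)\Vert\,\Vert A_\omega(\beta)\Vert$, hence
$$
\Vert H_\omega\Vert = \sup_{\alpha\neq 0}\frac{H_\omega(\alpha,\alpha)}{\Vert\alpha\Vert^2} = \sup_{\alpha\neq 0}\frac{\Vert A_\omega(\alpha)\Vert^2}{\Vert\alpha\Vert^2} = \Vert A_\omega\Vert^2 = 1\,.
$$
The equality $H_\omega(\omega,\omega)=\Vert A_\omega(\omega)\Vert^2=\Vert\bar\omega\Vert^2=\Vert\omega\Vert^2=1$ then records that the maximum is attained at $(\omega,\omega)$.

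Finally, the spectral gap statements. The inequality $\sup\{\Vert A_\omega(\alpha)\Vert/\Vert\alpha\Vert:\alpha\in\langle\omega\rangle^\perp\setminus\{0\}\}<1$ is, by the dual characterization above, exactly the statement that $\sup\{\vert B_\omega(\alpha,\beta)\vert/(\Vert\alpha\Vert\Vert\beta\Vert):\alpha\in\langle\omega\rangle^\perp\setminus\{0\},\ \beta\neq 0\}<1$, which is formula~\eqref{eq:B_spectr_gap} of Lemma~\ref{lemma:apriori_B_bounds}. For the second spectral gap inequality, I would use the polarization/Cauchy--Schwarz bound $\vert H_\omega(\alpha,\beta)\vert\leq\Vert A_\omega(\alpha)\Vert\,\Vert A_\omega(\beta)\Vert\leq\Vert A_\omega(\alpha)\Vert\cdot\Vert\beta\Vert$; for $\alpha\in\langle\omega\rangle^\perp\setminus\{0\}$ the first factor is $<\Vert\alpha\Vert$ by the first spectral gap bound, so $\vert H_\omega(\alpha,\beta)\vert<\Vert\alpha\Vert\Vert\beta\Vert$, and taking the supremum over the compact set of normalized pairs (continuity of $B_\omega$, hence of $A_\omega$ and $H_\omega$, in $\omega\in\cH_g^{(1)}$) keeps the inequality strict. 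The main point to be careful about — and the only place the argument is not completely formal — is the passage from the pointwise strict inequality to a strict supremum: one must either invoke compactness of the unit sphere in the finite-dimensional fiber $H^{1,0}(S)$ (for fixed $S$, which suffices for the statement as phrased), or, if a bound uniform over $\cH_g^{(1)}$ were wanted, note that Lemma~\ref{lemma:apriori_B_bounds} as stated is already only pointwise, so no extra uniformity is being claimed here either.
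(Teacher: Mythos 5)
Your proposal is correct and follows essentially the same route as the paper: both reduce the bounds on $A_\omega$ and $H_\omega$ to Lemma~\ref{lemma:apriori_B_bounds} via the dual characterization $\Vert A_\omega(\alpha)\Vert=\max_{\beta\neq 0}\vert B_\omega(\alpha,\beta)\vert/\Vert\beta\Vert$ from Lemma~\ref{lemma:secfundform} and the Cauchy--Schwarz estimate $\vert H_\omega(\alpha,\beta)\vert\leq\Vert A_\omega(\alpha)\Vert\,\Vert A_\omega(\beta)\Vert$, and both obtain $A_\omega(\omega)=\bar\omega$ from $B_\omega(\omega,\cdot)=(\cdot,\omega)$. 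Your extra remarks (the sum-of-squares formula for $\Vert A_\omega(\alpha)\Vert^2$ and the compactness point for strictness of the maximum on the fixed finite-dimensional fiber) are sound but not needed beyond what Lemma~\ref{lemma:apriori_B_bounds} already supplies.
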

\begin{proof}
By Lemma~\ref{lemma:secfundform}, it follows that, for all $\alpha \in H^{1,0}(S)$, we have
$$
\Vert A_\omega (\alpha) \Vert  = \max_{\beta\not =0}\frac{\vert (A_\omega(\alpha), \bar\beta)\vert}
{\Vert \beta \Vert}  =  \max_{\beta\not =0}\frac{\vert B_\omega(\alpha, \beta)\vert}
{\Vert \beta \Vert} \,,
$$
and by the definition of the curvature form $H_\omega = A_\omega^{\ast} \cdot A_\omega$, we also have
$$
\vert H_\omega(\alpha, \beta) \vert = \vert \left(A_\omega(\alpha), A_\omega(\beta)\right)\vert
\leq \Vert A_\omega(\alpha)\Vert \Vert A_\omega(\beta)\Vert\,.
$$
The upper bounds in formulas~\eqref{eq:apriori_curv_bounds} and~\eqref{eq:curv_spectr_gap}  therefore follow from the corresponding results for the form $B_\omega$
established in Lemma~\ref{lemma:apriori_B_bounds}.

The identities~\eqref{eq:curv_max} follow from formula
\eqref{eq:B_max} in Lemma~\ref{lemma:apriori_B_bounds}, which states that
$B_\omega(\omega, \omega)=1$. In fact, by Lemma~\ref{lemma:secfundform}
we have
$$
A_\omega(\omega) =B_\omega(\omega, \omega) \bar \omega =\bar\omega\,.
$$
Finally, by the definition of the curvature form it follows that
$$H_\omega(\omega, \omega) = -(A_\omega(\omega), A_\omega(\omega))=
-(\bar \omega, \bar\omega)= \Vert \omega\Vert^2 =1\,.$$
The argument is complete.
\end{proof}

For any Abelian differential $\omega\in \cH_g$ on a Riemann surface $S$
the  matrix  $H$  of the Hermitian curvature form $H_\omega$ with respect to any
Hodge-orthonormal  basis $\Omega:= \{\omega_1, \dots, \omega_g\}$ can
be  written  as  follows.

Let $B$ be the matrix of the bilinear form
$B_\omega$  on  $H^{1,0}(S)$ with respect to the basis $\Omega$, that
is:
$$
B_{jk}:=
\frac{i}{2}\int_S \frac{\omega_j\, \omega_k}{\omega}\,\bar\omega\,.
$$
By formula \eqref{eq:curvatureformula} and Lemma \ref{lemma:secfundform}, the matrix
$H$ of the Hermitian curvature form $H_\omega$ of the vector bundle $H^{1,0}$ over
$\omega\in\cH_g$  in the orthonormal basis $\Omega$ can be written as follows:
\begin{equation}
\label{eq:Hform}
H =B\cdot B^\ast\ .
\end{equation}
(The above formula is the corrected version of the formula $H= B^\ast B$ which appears
as formula $(4.3)$ in \cite{Forni2} and as formula $(44)$ in \cite{ForniSurvey}. This mistake
there is of no consequence).
In particular, since the form $B_\omega$ is symmetric, the forms $H_\omega$ and
$B_\omega$ have the same rank and their eigenvalues are related. Let $ \text{\rm EV}(H_\omega)$
and  $\text{\rm EV}(B_\omega)$ denote the set of eigenvalues of the forms $H_\omega$
and $B_\omega$ respectively. The following identity holds:
$$
\text{\rm EV}(H_\omega) =
\big\{ \vert \lambda\vert^2\  \big\vert\  \lambda \in  \text{\rm EV}(B_\omega)\big\}\,.
$$
For every Abelian differential $\omega\in\cH_g^{(1)}$, the eigenvalues  of the positive-semidefinite form
$H_\omega$ on $H^{1,0}(S)$ will be denoted as follows:
\begin{equation}
\label{eigenvalues}
\Lambda_1(\omega) \equiv 1
> \Lambda_2(\omega) \geq \dots \geq \Lambda_g(\omega)\geq 0\,.
\end{equation}
 We  remark  that  the  top eigenvalue $\Lambda_1(\omega)$ is equal to $1$ and
 the second eigenvalue $\Lambda_2(\omega)<1$ for any Abelian differentials $\omega\in \cH_g^{(1)}$
 as a consequence of Lemma~\ref{lemma:apriori_curv_bounds}, in particular all of the above eigenvalues give well-defined, continuous, non-negative, bounded  functions  on  the
 moduli space $\cH_g^{(1)}$ of all (normalized) Abelian differentials.

By  the  Hodge  representation theorem for Riemann surfaces, the forms
$H_\omega$  and  $B_\omega$ induce \textit{complex-valued}
bilinear forms $H_\omega^\R$ and $B_\omega^\R$ on the real cohomology
$H^1(S,\R)$: for all $c_1$, $c_2\in H^1(S,\R)$,
\begin{align}
\notag
H_\omega^\R(c_1,c_2)&:=H_\omega(h(c_1),h(c_2))\\
\label{eq:B:R}
B_\omega^\R(c_1,c_2)&:=B_\omega(h(c_1),h(c_2))\,.
\end{align}

The  forms  $H_\omega^\R$  and  $B_\omega^\R$ on $H^1(S,\R)$ have the
same  rank,  which  is  equal  to  twice the common rank of the forms
$H_\omega$  and  $B_\omega$  on  $H^{1,0}(S)$. The bilinear form $H_\omega^\R$
induces a   real-valued, positive semi-definite quadratic form,  while the quadratic
form induced by the bilinear form $B_\omega^\R$  is complex-valued.

\subsection{Variational formulas for the Hodge norm}
\label{sec:varfor}
We recall below some basic variational formulas from~\cite{Forni2},~\S2,~\S 3 and~\S 5,
reformulated in geometric terms. Such formulas generalize the fundamental
Kontsevich formula for the sum of all non-negative Lyapunov exponents of the Hodge
bundle~\cite{Kontsevich}.
\subsubsection{First Variation}
The second fundamental form of the Hodge bundle measures the first variation of the Hodge norm
along a parallel (locally constant) section for the Gauss-Manin connection. In fact, the
formula given below (implicit in the computation of \cite{Forni2}, \S 2) holds.
Let $(S,\omega)$ be a pair (Riemann surface $S$, holomorphic $1$-form $\omega$ on $S$).
For any cohomology class $c\in H^1(S,\R)$, let $h_\omega(c)$ be the unique holomorphic
$1$-form such that $c$ is the cohomology class of the closed $1$-form $\Re h_\omega(c)$
in the de Rham cohomology $H^1_{deRham}(S, \R)$. We remark that for any given $c\in H^1(S,\R)$
the holomorphic $1$-form $h_\omega(c)$ only depends on the Riemann surface $S$. However, the
Riemann surface $S$ underlying a given holomorphic $1$-form $\omega$ is unique
and it will be convenient to write below the harmonic representative as a function of the
holomorphic $1$-form $\omega$ on $S$.

\begin{lemma}
\label{lemma:varfor1}
The Lie derivative $\cL$ of the Hodge inner product  $(c_1, c_2)_\omega$ of parallel (locally
 constant) sections  $c_1, \, c_2 \in H^1(S,\R)$ in the direction of the Teichm\"uller flow can be written
 as follows:
 \begin{equation}
 \label{eq:varfor1}
\cL ( c_1, c_2)_\omega  =  2 \Re ( A_\omega (h_\omega(c_1)),  \overline{h_\omega(c_2)})  \,.
\end{equation}
 \end{lemma}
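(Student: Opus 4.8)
The plan is to compute the Lie derivative of the Hodge inner product $(c_1, c_2)_\omega$ by expressing it in terms of the holomorphic representatives and differentiating, using the first variation formula for the second fundamental form already established (formula~\eqref{eq:Aid1} in the proof of Lemma~\ref{lemma:secfundform}). Since $c_1$, $c_2$ are parallel for the Gauss--Manin connection, along a Teichm\"uller deformation $\{(S_t, \omega_t)\}$ we may represent them by locally constant sections $[\Re h_t(c_1)]$, $[\Re h_t(c_2)]$ of the real Hodge bundle, where $h_t(c_i) \defin h_{\omega_t}(c_i)$ is the holomorphic representative on $S_t$. By formula~\eqref{eq:h:c1:h:c2} (or directly from the definition $(c_1,c_2)_\omega = \Re(h_\omega(c_1), h_\omega(c_2))$), we have $(c_1,c_2)_\omega = \Re (h_\omega(c_1), h_\omega(c_2))$, so it suffices to differentiate the pseudo-Hermitian pairing $t \mapsto (h_t(c_1), h_t(c_2))$ and take the real part.

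First I would set $\alpha_t \defin h_t(c_1)$ and $\beta_t \defin h_t(c_2)$; these are smooth one-parameter families of closed $1$-forms with $\alpha_t$, $\beta_t$ holomorphic on $S_t$, i.e.\ smooth local sections of $H^{1,0}$ whose cohomology classes in $H^1_\C$ are \emph{not} locally constant (only their real parts, the classes $c_1$, $c_2$, are fixed). The key point is the relation between $[d\alpha/dt(0)]$ and $A_\omega(\alpha)$: since $c_1 = [\Re \alpha_t]$ is constant in $t$, the class $[\Re(d\alpha/dt(0))] = 0$, so $[d\alpha/dt(0)]$ lies in the subspace of classes with vanishing real part, which is precisely $i \cdot H^1(S,\R)$ under the identification; more usefully, $[d\alpha/dt(0)] + [\overline{d\alpha/dt(0)}] = 0$ in cohomology. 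Combining the holomorphic part (whose projection to $H^{0,1}$ gives $A_\omega(\alpha)$ by formula~\eqref{eq:Adef}) with its conjugate yields that the $H^{0,1}$-component of $[d\alpha/dt(0)]$ equals $A_\omega(\alpha)$ while its $H^{1,0}$-component equals $-\overline{A_\omega(\alpha)}$ (reading the latter as an element of $H^{1,0}$ via conjugation), the two being forced to be conjugate because the total class is purely imaginary.

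Then I would differentiate: $\frac{d}{dt}\big|_{0}(\alpha_t, \beta_t) = (\frac{d\alpha}{dt}(0), \bar\beta) + (\alpha, \frac{d\beta}{dt}(0))$, where the pairing is extended to the relevant cohomology classes. Using the decomposition above, $(\frac{d\alpha}{dt}(0), \bar\beta)$: the $H^{1,0}$-part of $\frac{d\alpha}{dt}(0)$ pairs to zero against $\bar\beta \in H^{0,1}$... actually $\bar\beta$ is antiholomorphic so it is the $H^{1,0}$-part that pairs nontrivially — I must be careful here and pair each piece against the correct factor. The cleanest route: write $(\alpha_t,\beta_t) = \frac{i}{2}\int_{S_t}\alpha_t \wedge \bar\beta_t$; since the forms are closed this integral depends only on cohomology classes, and differentiating the cup product pairing gives $\frac{i}{2}\int_S \frac{d\alpha}{dt}(0)\wedge\bar\beta + \frac{i}{2}\int_S \alpha \wedge \overline{\frac{d\beta}{dt}(0)}$ (the surface and the wedge structure are topological, so only the forms vary). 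The first term equals $(A_\omega(\alpha), \bar\beta)$ by formula~\eqref{eq:Aid1}, i.e.\ $(A_\omega(h_\omega(c_1)), \overline{h_\omega(c_2)})$; the second term, by the analogous computation with the roles reversed and a conjugation, equals $\overline{(A_\omega(h_\omega(c_2)), \overline{h_\omega(c_1)})} = \overline{(A_\omega(h_\omega(c_1)), \overline{h_\omega(c_2)})}$ using that $A_\omega$ is (the adjoint structure of) a symmetric bilinear form $B_\omega$ via Lemma~\ref{lemma:secfundform} (indeed $(A_\omega(\alpha),\bar\beta) = -B_\omega(\alpha,\beta) = -B_\omega(\beta,\alpha) = (A_\omega(\beta),\bar\alpha)$). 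Taking real parts, $\cL(c_1,c_2)_\omega = \Re\big[(A_\omega(h_\omega(c_1)),\overline{h_\omega(c_2)}) + \overline{(A_\omega(h_\omega(c_1)),\overline{h_\omega(c_2)})}\big] = 2\Re(A_\omega(h_\omega(c_1)), \overline{h_\omega(c_2)})$, which is~\eqref{eq:varfor1}.

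The main obstacle I anticipate is bookkeeping the effect of the varying complex structure: the representatives $h_t(c_i)$ depend on $S_t$ in a way that is only smooth (not explicit), and one must justify that the term $\frac{d}{dt}(\alpha_t,\beta_t)$ only sees $\frac{d\alpha}{dt}(0)$ and $\frac{d\beta}{dt}(0)$ through their \emph{cohomology classes} — equivalently, that the symplectic cup-product pairing $(c,c') \mapsto \frac{i}{2}\int \eta \wedge \bar\eta'$ is topological, so that the $t$-dependence of the integration domain $S_t$ contributes nothing. Once that reduction is in place — and it is exactly the observation already used to pass to~\eqref{eq:Aid1} — the rest is the symmetry of $B_\omega$ and taking real parts. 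A secondary point to state carefully is why differentiating $c_i = [\Re h_t(c_i)]$ being constant lets us replace the a priori unknown $[d\alpha/dt(0)]$-contribution by the second fundamental form; this is handled precisely as in the proof of Lemma~\ref{lemma:secfundform}, where only the component of $\frac{d\alpha}{dt}(0)$ modulo holomorphic and exact forms matters for pairing against an antiholomorphic class.
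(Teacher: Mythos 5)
Your argument is essentially the paper's proof of Lemma~\ref{lemma:varfor1}, written out concretely for the one-parameter families $\alpha_t=h_{\omega_t}(c_1)$, $\beta_t=h_{\omega_t}(c_2)$: the three ingredients you use --- topological invariance of the intersection pairing (i.e.\ compatibility of Gauss--Manin with the form), the relation $[\tfrac{d\alpha}{dt}(0)]+\overline{[\tfrac{d\alpha}{dt}(0)]}=0$ forced by parallelism of $c_1$ (the paper's~\eqref{eq:par_transp}), and the definition of $A_\omega$ --- are exactly the paper's, and your conclusion is correct. One bookkeeping slip should be fixed: the first term $\tfrac{i}{2}\int \tfrac{d\alpha}{dt}(0)\wedge\bar\beta$ is \emph{not} the quantity appearing in~\eqref{eq:Aid1}, which is $\tfrac{i}{2}\int \tfrac{d\alpha}{dt}(0)\wedge\beta$. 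Wedging against $\bar\beta$ (type $(0,1)$) annihilates the $H^{0,1}$-component $A_\omega(\alpha)$ of $[\tfrac{d\alpha}{dt}(0)]$ and only sees its $H^{1,0}$-component $-\overline{A_\omega(\alpha)}$, so this term equals $-\tfrac{i}{2}\int\overline{A_\omega(\alpha)}\wedge\bar\beta=\overline{(A_\omega(\alpha),\bar\beta)}$; symmetrically, the second term equals $(A_\omega(\beta),\bar\alpha)=-B_\omega(\beta,\alpha)=(A_\omega(\alpha),\bar\beta)$ by the symmetry of $B_\omega$. Your two terms are therefore each the complex conjugate of what you labeled them, but since their sum is $2\Re(A_\omega(\alpha),\bar\beta)$ either way, the stated formula~\eqref{eq:varfor1} still follows.
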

 \begin{proof}
 The argument is just a rephrasing of  the proof of Lemma  2.1' of \cite{Forni2}
 in the language of differential geometry.
Let us  recall that by definition, for any pair $c_1, c_2 \in H^1(S, \R)$, the Hodge
   inner product is defined as
$$
( c_1, c_2)_\omega = \Re\left( h_\omega(c_1), h_\omega (c_2) \right)\,.
$$
Since the Gauss-Manin connection is compatible with the Hermitian intersection
form,  we can compute
\begin{equation}
\label{eq:compatible}
\begin{aligned}
\cL ( c_1, c_2)_\omega &=       \Re\left( D_{H^1_\C} h_\omega(c_1),
h_\omega(c_2)\right)   \\ &+    \Re\left(  h_\omega(c_1),
D_{H^1_\C} h_\omega(c_2)\right)\,,
\end{aligned}
\end{equation}
and, since $(h_\omega(c_1), \overline {h_\omega(c_2)})= 0$, we also have
\begin{equation}
\label{eq:adjoint}
 (D_{H^1_\C} h_\omega(c_1), \overline{ h_\omega(c_2)}) +  (h_\omega(c_1),  D_{H^1_\C}
 \overline{h_\omega(c_2)}) =0\,.
\end{equation}
Any cohomology class $c\in H^1(S,\R)$ can be interpreted as a parallel
(constant) local section of the bundle $H^1_\C$. Since by definition of the differential
$h_\omega(c)\in H^{1,0}(S)$ we have $c = [h_\omega(c)+ \overline{h_\omega(c)}]/2$
and since the Gauss-Manin connection
is real (on real tangent vectors of the moduli space it commutes with the complex
conjugation), it follows that
\begin{equation}
\label{eq:par_transp}
D_{H^1_\C} h_\omega(c)= - D_{H^1_\C} \overline{h_\omega(c)}
 = - \overline {D_{H^1_\C}  h_\omega(c)} \,.
\end{equation}
From formulas~\eqref{eq:adjoint} and~\eqref{eq:par_transp} we can derive the identities
\begin{equation*}
\begin{aligned}
 (D_{H^1_\C} h_\omega(c_1), h_\omega(c_2)) &= -  (\overline{ D_{H^1_\C} h_\omega(c_1)},
  h_\omega(c_2)) =  \overline{(D_{H^1_\C} h_\omega(c_1), \overline{ h_\omega(c_2)})} \,; \\
  ( h_\omega(c_1), D_{H^1_\C} h_\omega(c_2)) &= -  ( h_\omega(c_1),
 \overline{ D_{H^1_\C} h_\omega(c_2)}) =  (D_{H^1_\C} h_\omega(c_1), \overline{ h_\omega(c_2)}) \,.
  \end{aligned}
\end{equation*}
In conclusion, from formula~\eqref{eq:compatible}, by the above identities and by the definition~(\ref{eq:sec:fund:form}) of the second fundamental form, it follows that
\begin{equation}
\begin{aligned}
\cL ( c_1, c_2)_\omega &= 2\Re (D_{H^1_\C} h_\omega(c_1), \overline{ h_\omega(c_2)}) \\
&= 2\Re ((I-\pi_1)D_{H^1_\C} h_\omega(c_1), \overline{ h_\omega(c_2)}) \\
&=  2\Re  \left(A_\omega( h_\omega(c_1)), \overline{ h_\omega(c_2)}\right) \,.
\end{aligned}
\end{equation}
The stated first variation formula is therefore proved.
\end{proof}
The fundamental variational formula, computed in \cite{Forni2}, Lemma 2.1',
 for the Lie derivative of the Hodge norm  of a parallel (locally constant) section
 $c\in H^1(S,\R)$ in the direction of the Teichm\"uller flow can now be derived from
 Lemma \ref{lemma:varfor1} and Lemma \ref{lemma:secfundform}.
\begin{lemma}
\label{lemma:varfor2}
The following variational formula holds:
\begin{equation}
\label{eq:varfor2}
\cL (c_1, c_2)_\omega =  -2 \Re B_\omega (h_\omega(c_1),h_\omega(c_2)) =
-2 \Re B^\R_\omega (c_1,c_2) \,.
\end{equation}
\end{lemma}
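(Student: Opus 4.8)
The plan is to read off formula~\eqref{eq:varfor2} directly from the two preceding lemmas, with no further computation on the Riemann surface. All the substantive work is already packaged in Lemma~\ref{lemma:varfor1}, which expresses $\cL(c_1,c_2)_\omega$ through the second fundamental form, and in Lemma~\ref{lemma:secfundform}, which rewrites the second fundamental form in terms of the bilinear form $B_\omega$.

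First I would invoke Lemma~\ref{lemma:varfor1} to write
\begin{equation*}
\cL(c_1,c_2)_\omega = 2\,\Re\bigl(A_\omega(h_\omega(c_1)),\,\overline{h_\omega(c_2)}\bigr)\,.
\end{equation*}
Then I would apply the first identity of Lemma~\ref{lemma:secfundform} with $\alpha=h_\omega(c_1)$ and $\beta=h_\omega(c_2)$, both of which lie in $H^{1,0}(S)$ by the Hodge representation theorem; this replaces the pseudo-Hermitian pairing $(A_\omega(h_\omega(c_1)),\overline{h_\omega(c_2)})$ by $-B_\omega(h_\omega(c_1),h_\omega(c_2))$. Substituting into the previous display yields the first equality of~\eqref{eq:varfor2}.

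For the second equality I would simply unwind the definition~\eqref{eq:B:R} of $B^\R_\omega$, namely $B^\R_\omega(c_1,c_2)=B_\omega(h(c_1),h(c_2))$, observing that the holomorphic representative $h(c)$ depends only on the underlying Riemann surface $S$ and hence coincides with $h_\omega(c)$ for the distinguished $1$-form $\omega$ on $S$. This identifies $-2\,\Re B_\omega(h_\omega(c_1),h_\omega(c_2))$ with $-2\,\Re B^\R_\omega(c_1,c_2)$, completing the argument.

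I do not expect any genuine obstacle here: the only point that requires attention is bookkeeping — taking real parts correctly and keeping the (harmless) notational distinction between $h$ and $h_\omega$ straight — since the analysis has already been carried out in Lemmas~\ref{lemma:varfor1} and~\ref{lemma:secfundform}.
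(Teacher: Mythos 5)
Your proposal is correct and is exactly the derivation the paper intends: the text introducing Lemma~\ref{lemma:varfor2} states that it "can now be derived from Lemma~\ref{lemma:varfor1} and Lemma~\ref{lemma:secfundform}," which is precisely your substitution of $(A_\omega(h_\omega(c_1)),\overline{h_\omega(c_2)})=-B_\omega(h_\omega(c_1),h_\omega(c_2))$ into the first variation formula, followed by unwinding definition~\eqref{eq:B:R}. No gaps.
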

\begin{remark} Lemma~\ref{lemma:secfundform} can also be derived from the variational
formulas of Lemma~\ref{lemma:varfor1} (proved above) and Lemma \ref{lemma:varfor2}
(proved as part of Lemma 2.1' in \cite{Forni2}). In fact, by comparison of the variational formulas
of Lemmas~\ref{lemma:varfor1} and~\ref{lemma:varfor2}, for any cohomology classes
$c_1, \, c_2\in H^1(S,\R)$,
$$
\Re \left(A_\omega(h_\omega(c_1)), \overline{h_\omega(c_2)}\right)  =
 - \Re B_\omega (h_\omega(c_1),h_\omega(c_2)) \,,
$$
which implies the main identity of Lemma \ref{lemma:secfundform} since the operator
$A_\omega$ is complex linear, the intersection form is Hermitian by definition and
the form $B_\omega$ is complex bilinear.
\end{remark}
The variational formula of Lemma \ref{lemma:varfor2} implies a uniform bound
and a spectral gap result on the exponential growth of Hodge norms based on the uniform bound and on the spectral gap estimate of
Lemma~\ref{lemma:apriori_B_bounds} (see  \cite{Forni2}, Lemma~2.1'
and Corollary~2.2).

Let $\Lambda:\cH_g^{(1)} \to \R^+$ be the function defined as follows: for all $\omega\in \cH_g^{(1)}$,
\begin{equation}
\label{eq: Lambda}
\Lambda(\omega) :=  \max \left\{ \frac{  \vert B_\omega(\alpha,\alpha) \vert }{ \Vert \alpha \Vert^2} :
\alpha\in \langle\omega\rangle^\perp\setminus\{0\} \subset H^{1,0}(S) \right\}\,.
\end{equation}
By definition  $\Lambda$ is a continuous function on the moduli space of normalized (unit area) Abelian differentials
and by Lemma~\ref{lemma:apriori_B_bounds} it is everywhere strictly less than $1$,
hence it achieves its maximum (strictly less than $1$) on every compact subset. It is proved
in \cite{Forni2} that  $\Lambda$ has supremum equal to $1$ on every connected component
of every stratum of the moduli space.

For any Abelian differential $\omega \in \cH_g$ on a Riemann
surface $S$, let $\Vert c\Vert_\omega$ denote the Hodge norm of  a real cohomology
class $c\in H^1(S,\R)$, that is, the Hodge norm of the holomorphic $1$-form
$h_\omega(c) \in H^{1,0}(S)$.
\begin{corollary}
\label{cor:universal_1}
The Lie derivative of the Hodge norm along the Teichm\"uller flow admits the following bounds:  for  any
Abelian differential $\omega \in \cH_g^{(1)}$ on a Riemann surface $S$
and for any cohomology class $c\in H^1(S,\R)$,
\begin{equation}
\vert \Cal L \log  \Vert c \Vert_\omega  \vert \leq 1\,;
\end{equation}
for any cohomology class $c\in \langle[\Re(\omega)], [\Im(\omega)]\rangle^\perp$,
\begin{equation}
\vert \Cal L \log  \Vert c \Vert_\omega  \vert \leq \Lambda(\omega) <1\,.
\end{equation}
\end{corollary}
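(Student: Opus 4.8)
The plan is to reduce the statement to the first variation formula of Lemma~\ref{lemma:varfor2} together with the uniform bound and the spectral gap bound for the bilinear form $B_\omega$ established in Lemma~\ref{lemma:apriori_B_bounds}. First I would record that, for any parallel (locally constant) section $c\in H^1(S,\R)$ with $\Vert c\Vert_\omega\neq 0$, the chain rule and the identity $(c,c)_\omega=\Vert c\Vert_\omega^2=\Vert h_\omega(c)\Vert^2$ give
$$
\cL\log\Vert c\Vert_\omega=\frac{1}{2}\,\frac{\cL(c,c)_\omega}{(c,c)_\omega}=-\,\frac{\Re B_\omega(h_\omega(c),h_\omega(c))}{\Vert h_\omega(c)\Vert^2}\,,
$$
where the last equality is Lemma~\ref{lemma:varfor2} applied with $c_1=c_2=c$.

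The first bound then follows immediately: since $\vert\Re B_\omega(h_\omega(c),h_\omega(c))\vert\leq\vert B_\omega(h_\omega(c),h_\omega(c))\vert\leq\Vert B_\omega\Vert\,\Vert h_\omega(c)\Vert^2$ and $\Vert B_\omega\Vert=1$ by Lemma~\ref{lemma:apriori_B_bounds}, we obtain $\vert\cL\log\Vert c\Vert_\omega\vert\leq 1$ for every $\omega\in\cH_g^{(1)}$.

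For the second bound the only point that requires some care is translating the orthogonality hypothesis on $c$ into a statement about $h_\omega(c)$ inside $H^{1,0}(S)$. Since $\Re\omega$ and $\Re(-i\omega)=\Im\omega$ represent the classes $[\Re\omega]$ and $[\Im\omega]$, one has $h_\omega([\Re\omega])=\omega$ and $h_\omega([\Im\omega])=-i\omega$, so the $\R$-linear isomorphism $h_\omega$ carries the tautological plane $\langle[\Re\omega],[\Im\omega]\rangle$ onto the complex line $\C\cdot\omega$. Moreover this plane is invariant under the Hodge star operator ($\ast[\Re\omega]=[\Im\omega]$ and~\eqref{eq:ast:ast:equals:minus:id}), so by~\eqref{eq:c1:scalar:c2} its Hodge-orthogonal complement and its symplectic-orthogonal complement coincide; in either reading $c\in\langle[\Re\omega],[\Im\omega]\rangle^\perp$ forces $(h_\omega(c),\omega)=0$, i.e. $h_\omega(c)\in\langle\omega\rangle^\perp\subset H^{1,0}(S)$. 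The definition~\eqref{eq: Lambda} of $\Lambda(\omega)$ then yields $\vert B_\omega(h_\omega(c),h_\omega(c))\vert\leq\Lambda(\omega)\Vert h_\omega(c)\Vert^2$, and $\Lambda(\omega)<1$ by the spectral gap bound~\eqref{eq:B_spectr_gap} of Lemma~\ref{lemma:apriori_B_bounds}. Combining with the identity above, $\vert\cL\log\Vert c\Vert_\omega\vert\leq\Lambda(\omega)<1$, which is the second assertion.

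I do not expect a genuine obstacle: the corollary is a formal consequence of Lemmas~\ref{lemma:varfor2} and~\ref{lemma:apriori_B_bounds}. The one step deserving attention is the identification, via the Hodge representation map $h_\omega$, of the orthogonal complement of the tautological plane $\langle[\Re\omega],[\Im\omega]\rangle$ with the complement $\langle\omega\rangle^\perp\subset H^{1,0}(S)$, which is precisely what makes the definition of $\Lambda(\omega)$ applicable.
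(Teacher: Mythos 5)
Your argument is correct and follows essentially the same route as the paper: the identity $\cL\log\Vert c\Vert_\omega=-\Re B^\R_\omega(c,c)/\Vert c\Vert_\omega^2$ from Lemma~\ref{lemma:varfor2}, combined with the uniform bound and spectral gap of Lemma~\ref{lemma:apriori_B_bounds}, plus the observation that $h_\omega$ carries $\langle[\Re\omega],[\Im\omega]\rangle^\perp$ into $\langle\omega\rangle^\perp\subset H^{1,0}(S)$. Your extra remark that the Hodge-orthogonal and symplectic-orthogonal complements of the tautological plane agree is a harmless (and correct) elaboration of the equivalence the paper states without proof.
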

\begin{proof} By Lemma~\ref{lemma:varfor2}, for any Abelian differential $\omega \in \cH_g$ on a Riemann surface $S$ and for any
cohomology class $c\in H^1(S, \R)$ we have
$$
\Cal L \log  \Vert c \Vert_\omega = - \frac{\Re B^\R_\omega(c,c)}{\Vert c \Vert_\omega^2}\,;
$$
hence the statement follows from Lemma~\ref{lemma:apriori_B_bounds}. In fact, for any cohomology
class $c\in H^1(S,\R)$, the Abelian differential $h_\omega(c)$ belongs to the Hodge orthogonal
complemement $ \langle\omega\rangle^\perp \subset H^{1,0}(S)$ if and only if $c$ belongs
to the Hodge orthogonal complemement $\langle[\Re(\omega)], [\Im(\omega)]\rangle^\perp
\subset H^1(S,\R)$.
\end{proof}

The above universal bound and spectral gap estimate immediately extends to all exterior powers
of the Hodge bundle. For every Abelian differential $\omega\in \cH_g$ on a Riemann surface $S$ and for every $k\in \{1, \dots, 2g\}$,  the Hodge norm $\Vert \cdot \Vert_\omega$ on $H^1(S, \R)$ induces a natural norm (also called the Hodge norm) $\Vert c_1\wedge \dots \wedge c_k\Vert_\omega$ on polyvectors $c_1\wedge \dots \wedge c_k \in \Lambda^k(H^1(S, \R))$.

\begin{corollary}
\label{cor:universal_2}
The Lie derivative of the Hodge norm along the Teichm\"uller flow admits the following bounds: for  any  Abelian differential $\omega \in \cH_g^{(1)}$ on a Riemann surface $S$, for any $k\in \{1, \dots, g\}$ and for any polyvector $c_1 \wedge \dots \wedge c_k \in \Lambda^k(H^1(S, \R)) $ such that the span
$\<c_1, \dots, c_k\> \subset H^1(S, \R)$ is  an isotropic subspace,
we have:
 \begin{equation}
\vert \Cal L \log  \Vert  c_1 \wedge \dots \wedge c_k  \Vert_\omega  \vert \leq k\,;
\end{equation}
for any $k \geq 2$ the following stronger bound holds:
 \begin{equation}
\vert \Cal L \log  \Vert c_1 \wedge \dots \wedge c_k  \Vert_\omega  \vert \leq 1 + (k-1) \Lambda(\omega) < k\,;
\end{equation}
\end{corollary}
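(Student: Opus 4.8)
The plan is to reduce the estimate on the exterior power to the one-dimensional estimate of Corollary~\ref{cor:universal_1} by differentiating the Hodge norm of a decomposable polyvector directly. First I would fix the Abelian differential $\omega \in \cH_g^{(1)}$ and an isotropic subspace $V = \langle c_1, \dots, c_k \rangle \subset H^1(S,\R)$. By the $\SO$-action one may rotate $\omega$ so that at the point in question $\<[\Re\omega],[\Im\omega]\>$ meets $V$ in a subspace of dimension $0$ or $1$ (since $V$ is isotropic it cannot contain both $[\Re\omega]$ and $[\Im\omega]$); more precisely I would arrange a Hodge-orthonormal basis $e_1, \dots, e_k$ of $V$ such that $e_1$ is the (normalized) component of $V$ along $\<[\Re\omega]\>$ when this intersection is one-dimensional, and $e_2, \dots, e_k$ lie in $\<[\Re\omega],[\Im\omega]\>^\perp$. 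Then $c_1\wedge\dots\wedge c_k = \lambda\, e_1 \wedge \dots \wedge e_k$ for a nonzero scalar $\lambda$ constant along the flow (the $e_i$ being constant sections), so it suffices to bound $\Cal L \log \Vert e_1 \wedge \dots \wedge e_k\Vert_\omega$.

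The key computational step is the classical formula for the derivative of the logarithm of the volume (Gram determinant) of a moving $k$-plane inside a bundle with metric connection: if $G(t)$ is the Gram matrix $\big((e_i,e_j)_{\omega_t}\big)_{i,j=1}^k$, then $\Vert e_1\wedge\dots\wedge e_k\Vert_\omega^2 = \det G$, hence
\begin{equation*}
\Cal L \log \Vert e_1 \wedge \dots \wedge e_k\Vert_\omega = \tfrac12\, \Tr\big(G^{-1}\, \Cal L G\big)\,.
\end{equation*}
Diagonalizing at the chosen point so that $G = \I$ in the basis $\{e_i\}$, this becomes $\tfrac12 \sum_{i=1}^k \Cal L (e_i, e_i)_\omega = \sum_{i=1}^k \Cal L \log \Vert e_i\Vert_\omega$. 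Now I would apply Corollary~\ref{cor:universal_1} termwise: the term $i=1$ is bounded by $1$ in absolute value, and each term $i \geq 2$, since $e_i \in \<[\Re\omega],[\Im\omega]\>^\perp$, is bounded by $\Lambda(\omega)$. Summing gives $|\Cal L \log \Vert e_1\wedge\dots\wedge e_k\Vert_\omega| \leq 1 + (k-1)\Lambda(\omega)$ when the intersection with $\<[\Re\omega],[\Im\omega]\>$ is one-dimensional, and $\leq k\Lambda(\omega) \leq 1 + (k-1)\Lambda(\omega)$ otherwise; since $\Lambda(\omega) < 1$ by Lemma~\ref{lemma:apriori_B_bounds}, this is $< k$. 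The first (weaker) bound $\leq k$ is immediate from the same termwise estimate using only the universal bound $|\Cal L \log \Vert c\Vert_\omega| \leq 1$.

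The main obstacle I anticipate is the bookkeeping at a point where the isotropic span $V$ does intersect $\<[\Re\omega]\>$: one must be careful that the orthogonal decomposition $V = \<e_1\> \oplus (V \cap \<[\Re\omega],[\Im\omega]\>^\perp)$ is preserved well enough along the flow to validate the termwise application of Corollary~\ref{cor:universal_1}. In fact this is not literally true for $t \neq 0$, but the derivative identity above is evaluated only at the single instant $t = 0$, and there one is free to choose the Hodge-orthogonal splitting at that instant; the diagonalization of $G$ and the decomposition of $V$ need only hold at $t=0$, which is always achievable. A secondary point is justifying $\Vert e_1\wedge\dots\wedge e_k\Vert^2 = \det G$ for the Hodge norm on $\Lambda^k$ — this is the standard definition of the induced inner product on exterior powers, and the trace formula for $\Cal L \log \det G$ is elementary. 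Everything else reduces to Corollary~\ref{cor:universal_1}.
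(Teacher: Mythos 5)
Your reduction of $\cL \log \Vert c_1\wedge\dots\wedge c_k\Vert_\omega$ to $\tfrac12\Tr(G^{-1}\cL G)=\sum_i \cL\log\Vert e_i\Vert_\omega$ at the instant where the Gram matrix is the identity is correct, and it does give the weak bound $\le k$ immediately. The gap is in the reduction you use for the stronger bound: it is \emph{not} true that an isotropic $k$-plane $V$ admits a Hodge-orthonormal basis with $e_2,\dots,e_k\in \<[\Re\omega],[\Im\omega]\>^\perp$. The obstruction is not $V\cap P$ (where $P$ denotes the tautological plane), which is indeed at most one-dimensional, but the orthogonal \emph{projection} of $V$ onto $P$, which can be two-dimensional even when $V\cap P=\{0\}$; the $\SO$-rotation preserves $P$ and cannot remedy this. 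Concretely, pick $\alpha\in\<\omega\>^\perp$ with $\Vert\alpha\Vert=1$ and set $c_1=[\Re(\omega+\alpha)]$, $c_2=[\Re(-i\omega+i\alpha)]$. Then $\big(h(c_1),h(c_2)\big)=\overline{(-i)}\Vert\omega\Vert^2+\overline{i}\Vert\alpha\Vert^2=0$, so $V=\<c_1,c_2\>$ is isotropic and $c_1\perp c_2$; but a real combination $x(\omega+\alpha)+y(-i\omega+i\alpha)$ has $\omega$-component $(x-iy)\omega$, which vanishes only for $x=y=0$, so $V\cap P^\perp=\{0\}$. Every Hodge-orthonormal basis of this $V$ has both vectors with component of norm $1/\sqrt 2$ along $\omega$, and your termwise estimate then yields only the trivial bound $2$, not $1+\Lambda(\omega)$.

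The fix keeps your framework but distributes the single ``unit of mass along $\omega$'' among all basis vectors via Bessel's inequality, instead of concentrating it on $e_1$. Since $V$ is isotropic and $\{e_i\}$ is Hodge-orthonormal, the forms $\omega_i:=h_\omega(e_i)$ are unitary in $H^{1,0}(S)$ by~\eqref{eq:h:c1:h:c2}; write $\omega_i=a_i\omega+\omega_i'$ with $a_i=(\omega_i,\omega)$ and $\omega_i'\in\<\omega\>^\perp$. Because $B_\omega(\omega,\beta)=\frac{i}{2}\int_S\beta\wedge\bar\omega=(\beta,\omega)$ vanishes for $\beta\in\<\omega\>^\perp$, the cross terms drop and $B_\omega(\omega_i,\omega_i)=a_i^2+B_\omega(\omega_i',\omega_i')$, whence
\begin{equation*}
\Big\vert\sum_{i=1}^k B_\omega(\omega_i,\omega_i)\Big\vert\;\le\;\sum_{i=1}^k\vert a_i\vert^2+\Lambda(\omega)\sum_{i=1}^k\big(1-\vert a_i\vert^2\big)\;\le\;1+(k-1)\Lambda(\omega)\,,
\end{equation*}
using $\sum_i\vert a_i\vert^2\le\Vert\omega\Vert^2=1$ (Bessel) and $1-\Lambda(\omega)>0$. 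Combined with your identity, which by Lemma~\ref{lemma:varfor2} reads $\cL\log\Vert e_1\wedge\dots\wedge e_k\Vert_\omega=-\sum_i\Re B^\R_\omega(e_i,e_i)$, this gives the stated bound, and the strict inequality $<k$ follows from $\Lambda(\omega)<1$. Everything else in your write-up (constancy of $\lambda$, the Gram-determinant formula, the evaluation at $t=0$) is fine.
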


\subsubsection{Second Variation}

The $\SL$-orbit of any holomorphic Abelian differential $\omega_0\in \cH_g$
is isomorphic to the unit tangent bundle of a hyperbolic
surface (generically a copy of the Poincar\'e disk). Thus the
left quotient $\SO\backslash\big(\SL\,\omega_0\big)$ is
a hyperbolic surface, called a \textit{Teichm\"uller disk}.

There  is  a natural action of $\mathbb{C}^\ast$ on
the  space  $\cH_g$  by  multiplication  of Abelian
differentials   by    nonzero  complex  numbers.  The  corresponding
projectivization  $\PcH_g:=\cH_g/\mathbb{C}^\ast$ is foliated by
Teichm\"uller  disks  endowed  with  the hyperbolic metric. We remark
that   for   consistency   with  a  standard  normalization  for  the
Teichm\"uller  geodesic flow adopted in the literature the hyperbolic
metric is normalized to have curvature equal to $-4$.
We   have   the   following   basic   variational   formula  for  the
\textit{leafwise}  hyperbolic Laplacian $\triangle$ of the Hodge norm
$\Vert  c\Vert_\omega$  at  a ``point'' $\omega$ of the projectivized
moduli space $\PcH_g$ (see \cite{Forni2},
Lemmas~2.1' and~3.2,  \cite{ForniSurvey}, Lemma~4.3):
\begin{lemma}
\label{lemma:varlapl}
The following variational formula for the Hodge norm holds:
\begin{equation}
\label{eq:variational:formulas}
\triangle \log \Vert c \Vert_\omega
  = 4\, \frac{ H^\R_\omega (c,c)}{\Vert c \Vert_\omega^2}  -
   2\frac{ \vert B^\R_\omega (c,c)\vert^2}{\Vert c \Vert_\omega^4 }  \geq 0 \,.
\end{equation}
\end{lemma}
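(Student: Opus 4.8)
The plan is to realize the leafwise hyperbolic Laplacian as an average over geodesic directions of the second derivative along the Teichm\"uller flow, and to feed in the first variation formula of Lemma~\ref{lemma:varfor2} twice. On a Teichm\"uller disk the Teichm\"uller flow $g_t$ is the unit--speed geodesic flow of the (curvature $-4$) metric, and $r_\theta\omega$ and $\omega$ define the same point of the disk; hence the curves $t\mapsto g_tr_\theta\omega$, $\theta\in[0,2\pi)$, parametrize the unit--speed geodesics through that point, and the normal--coordinate expression of the Laplace--Beltrami operator gives
$$
\triangle\log\Vert c\Vert_\omega=\frac1\pi\int_0^{2\pi}\frac{d^2}{dt^2}\Big|_{t=0}\log\Vert c\Vert_{g_tr_\theta\omega}\,d\theta=\frac1\pi\int_0^{2\pi}\big(\cL^2\log\Vert c\Vert\big)(r_\theta\omega)\,d\theta,
$$
where $c$ is transported along $g_t$ by the flat Gauss--Manin connection.

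By Lemma~\ref{lemma:varfor2}, for a parallel section $c$ and any $\omega'$ one has $\cL\log\Vert c\Vert_{\omega'}=-\Re B^\R_{\omega'}(c,c)/\Vert c\Vert_{\omega'}^2$ and $\cL\Vert c\Vert_{\omega'}^2=-2\Re B^\R_{\omega'}(c,c)$, so differentiating once more,
$$
\cL^2\log\Vert c\Vert_{\omega'}=-\,\frac{\Re\,\cL B^\R_{\omega'}(c,c)}{\Vert c\Vert_{\omega'}^2}-2\,\frac{\big(\Re B^\R_{\omega'}(c,c)\big)^2}{\Vert c\Vert_{\omega'}^4}.
$$
Since $r_\theta$ multiplies an Abelian differential by $e^{i\theta}$ without altering the underlying Riemann surface, the Hodge norm and $h_\omega(c)$ are $r_\theta$--invariant while by~\eqref{eq:B} the bilinear form scales as $B_{r_\theta\omega}=e^{-2i\theta}B_\omega$; hence $\Vert c\Vert_{r_\theta\omega}=\Vert c\Vert_\omega$, $B^\R_{r_\theta\omega}(c,c)=e^{-2i\theta}B^\R_\omega(c,c)$, and $\int_0^{2\pi}\big(\Re B^\R_{r_\theta\omega}(c,c)\big)^2\,d\theta=\pi\,|B^\R_\omega(c,c)|^2$. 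Thus the second summand contributes exactly $-2\,|B^\R_\omega(c,c)|^2/\Vert c\Vert_\omega^4$, the second term of the assertion.

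The crux is the first summand: one must show $\tfrac1\pi\int_0^{2\pi}\Re\,\cL B^\R_{r_\theta\omega}(c,c)\,d\theta=-4\,H^\R_\omega(c,c)$, i.e.\ the \emph{second} variation of the second fundamental form along $g_t$. Writing $h:=h_\omega(c)$ and $B^\R_\omega(c,c)=-\big(A_\omega(h),\bar h\big)$ (Lemma~\ref{lemma:secfundform}), one differentiates this pairing along $g_t$ using the compatibility of the Gauss--Manin connection with the intersection form, its block form with off--diagonal entries $A_{H^{1,0}}$, $A_{H^{1,0}}^\ast$ relative to $H^1_\C=H^{1,0}\oplus H^{0,1}$, the Teichm\"uller deformation identity~\eqref{eq:Teichdef}, and the flatness of the connection---equivalently the curvature identity $\Theta_{H^{1,0}}=A_{H^{1,0}}^\ast\wedge A_{H^{1,0}}$ of~\eqref{eq:theta:H:1:0}. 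The main obstacle is exactly the subtlety stressed in the Remark after Lemma~\ref{lemma:secfundform}: $A_\omega(h)$ differs from $(h/\omega)\bar\omega$ by the $\bar\partial$--exact correction of~\eqref{eq:pi_omega}, so the derivative of $B$ is not purely algebraic and the correction term must be tracked (equivalently one projects onto closed forms to extract the curvature). This computation is carried out, in the language of meromorphic functions and explicit integrals, in \cite{Forni2}, \S\S3 and 5 (see also \cite{ForniSurvey}, Lemma~4.3); although $\Re\,\cL B^\R_{r_\theta\omega}(c,c)$ need not be independent of $\theta$, its average over $\theta$ is the curvature term $-2H^\R_\omega(c,c)$.

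Assembling the pieces gives $\triangle\log\Vert c\Vert_\omega=4\,H^\R_\omega(c,c)/\Vert c\Vert_\omega^2-2\,|B^\R_\omega(c,c)|^2/\Vert c\Vert_\omega^4$, and the nonnegativity follows from the pointwise estimate $|B^\R_\omega(c,c)|^2\le H^\R_\omega(c,c)\,\Vert c\Vert_\omega^2$: indeed $B^\R_\omega(c,c)=-\big(A_\omega(h),\bar h\big)$ and, by~\eqref{eq:curvatureform}, $H^\R_\omega(c,c)=\Vert A_\omega(h)\Vert^2$, so Cauchy--Schwarz in the Hodge metric yields $|B^\R_\omega(c,c)|\le\Vert A_\omega(h)\Vert\,\Vert h\Vert=\big(H^\R_\omega(c,c)\big)^{1/2}\Vert c\Vert_\omega$; hence $\triangle\log\Vert c\Vert_\omega\ge 2\,H^\R_\omega(c,c)/\Vert c\Vert_\omega^2\ge 0$.
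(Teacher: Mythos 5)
Your overall strategy is sound and, for what it is worth, more explicit than the paper itself, which proves nothing here and simply cites \cite{Forni2}, Lemmas~2.1' and 3.2, and \cite{ForniSurvey}, Lemma~4.3. The steps you actually carry out are correct: the normal--coordinate identity $\triangle f(\omega)=\frac1\pi\int_0^{2\pi}\frac{d^2}{dt^2}\big|_{t=0}f(g_tr_\theta\omega)\,d\theta$ is the right one for a surface of (any) constant curvature with unit--speed geodesics $t\mapsto g_tr_\theta\omega$; the second logarithmic derivative $\cL^2\log\Vert c\Vert=-\Re\,\cL B^\R(c,c)/\Vert c\Vert^2-2(\Re B^\R(c,c))^2/\Vert c\Vert^4$ follows correctly from Lemma~\ref{lemma:varfor2}; the equivariance $B_{r_\theta\omega}=e^{-2i\theta}B_\omega$ together with $\SO$-invariance of the Hodge norm does give $\frac1\pi\int_0^{2\pi}(\Re B^\R_{r_\theta\omega}(c,c))^2d\theta=|B^\R_\omega(c,c)|^2$, producing the second term; and the closing Cauchy--Schwarz estimate $|B^\R_\omega(c,c)|\le\Vert A_\omega(h)\Vert\,\Vert h\Vert=(H^\R_\omega(c,c))^{1/2}\Vert c\Vert_\omega$ correctly yields $\triangle\log\Vert c\Vert_\omega\ge 2H^\R_\omega(c,c)/\Vert c\Vert_\omega^2\ge0$. (As a sanity check, for the tautological class $c=[\Re\,\omega]$ one computes $\Vert c\Vert^2_{g_tr_\theta\omega}=e^{-2t}\cos^2\theta+e^{2t}\sin^2\theta$ directly, and your decomposition reproduces $4-2=2$ term by term.)

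The genuine gap is that the entire new content of the lemma --- the appearance of the curvature $H^\R_\omega$ --- is concentrated in the identity $\frac1\pi\int_0^{2\pi}\Re\,\cL B^\R_{r_\theta\omega}(c,c)\,d\theta=-4H^\R_\omega(c,c)$, and you do not prove it; you list the ingredients and cite \cite{Forni2}. Everything up to that point is a formal consequence of the first variation formula, so a self-contained proof must actually compute $\cL B_{\omega_t}(h_{\omega_t}(c),h_{\omega_t}(c))$, and this is exactly where the subtlety you yourself flag becomes unavoidable: differentiating $B_\omega(\alpha,\beta)=\frac{i}{2}\int_S(\alpha\beta/\omega)\bar\omega$ along the flow requires the derivative $d\alpha/dt(0)$ of a holomorphic section, which by~\eqref{eq:pi_omega}--\eqref{eq:A} equals $p_\omega(\alpha)+dv$ modulo $H^{1,0}$, not the non-closed form $(\alpha/\omega)\bar\omega$; only after substituting $A_\omega(\alpha)=p_\omega(\alpha)$ and expanding in an orthonormal basis via Lemma~\ref{lemma:secfundform} does the sum $\sum_j|B_\omega(h,\omega_j)|^2=H_\omega(h,h)$ (formula~\eqref{eq:Hform}) emerge, while the naive algebraic differentiation would give a different (and wrong) answer. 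Since the paper's own ``proof'' is the same citation, your write-up is not worse than the source it is meant to reproduce, but as a proof of the lemma it is incomplete at precisely the one step that cannot be reduced to Lemmas~\ref{lemma:secfundform} and~\ref{lemma:varfor2}.
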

\begin{Remark}
\label{rm:SO:invariant}
In  fact, given a cohomology class $c$ in $H^1(S,\R)$, the Hodge norm
$\|c\|_\omega$  at  a  point  $(S,\omega)$  of  $\cH_g$ is completely
determined by the complex structure of the underlying Riemann surface
$S$.  Thus,  for  any holomorphic form $\omega'=const\cdot\omega$ one
has     $\|c\|_\omega=\|c\|_{\omega'}$.     Whenever,     in     addition
$const=\exp(ix)$  with  real  $x$,  then  $H_\omega=H_{\omega'}$  and
$\vert B_\omega\vert=\vert B_{\omega'}\vert$.  Thus,  all  the  quantities in the above
formula  are  $\SO$-invariant,  which makes it legitimate to consider
them defined on a Teichm\"uller disk in the projectivized moduli space $\PcH_g$.
It will often be convenient to pull back to $\cH_g$ the functions defined
on the projectivized moduli space $\PcH_g$.
When operating with the leafwise hyperbolic Laplacian
$\triangle$, we will always tacitly verify the $\SO$-invariance
of the functions involved.
\end{Remark}

\subsection{Variational formulas for exterior powers}
The above variational formulas can be generalized to all the
exterior powers of the Kontsevich--Zorich cocycle. For any $k\in\{1, \dots, g\}$,
let us denote by $G_k(H^1_\R)$ the total space of the Grassmannian bundle of isotropic
$k$-dimensional subspaces of the Hodge bundle. By definition, the fiber $G_k(H^1_\R)_\omega$
of the bundle $G_k(H^1_\R)$ at any Abelian differential $\omega \in \cH_g$
on a Riemann surface $S$ is the Grassmannian manifold of all $k$-dimensional
isotropic  subspaces of the symplectic vector space $H^1(S, \R)$.

Let $\Phi_k$ denote the function  on the Grassmannian bundle $G_k(H^1_\R)$
defined as follows (see \cite{Forni2}, \S 5).
Let $\omega\in \cH_g$ be an Abelian differential on a Riemann surface $S$ and let $I_k \subset H^1(S,\R)$ be any isotropic subspace of dimension $k\in\{1, \dots, g\}$. Let $\{c_1, \dots, c_k\} \subset I_k$ be any Hodge-orthonormal basis and let  $\{c_1, \dots, c_k, c_{k+1}, \dots,c_g\} \subset H^1(S,\R)$ be any
Hodge-orthonormal Lagrangian completion. Let
\begin{equation}
\label{eq:Phi_k}
\Phi_k (\omega, I_k) :=  2\sum_{i=1}^k H^\R_\omega(c_i,c_i) - \sum_{i,j=1}^k
\vert B^\R_\omega(c_i, c_j) \vert ^2\,.
\end{equation}
\begin{Lemma}[Forni~\cite{Forni2}, Lemma~5.2']
\label{lm:Fi:k}
The function $\Phi_k (\omega, I_k)$ depends only on $\omega\in \cH_g$
and on the isotropic subspace $I_k\subset H^1(S,\R)$ and is independent of the
choice of the orthonormal basis $\{c_1, \dots, c_k\} \subset I_k$ and of its Hodge-orthonormal Lagrangian completion $\{c_1, \dots, c_k, c_{k+1}, \dots,c_g\}$.
It can be also expressed as
\begin{equation}
\label{eq:Phi:alternative}
\Phi_k (\omega, I_k) =  \sum_{i=1}^g \Lambda_i(\omega) - \sum_{i,j=k+1}^g
\vert B^\R_\omega(c_i, c_j) \vert ^2\,
\end{equation}
(for $k=g$ the second sum on the right hand side is defined to be null).
For any normalized (unit area) Abelian differential $\omega\in\cH_g^{(1)}$ on a surface $S$
and for any $k$-dimensional isotropic subspace $I_k\subset H^1(S,\R)$, the following bound holds:
\begin{equation}
\label{eq:bound:for:Phi}
|\Phi_k (\omega, I_k)|\le \min(2k,g) \quad \text{and the inequality is strict for  $k\geq 2$}.
\end{equation}

 \end{Lemma}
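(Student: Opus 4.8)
The plan is to reduce all three assertions to elementary linear algebra for the matrix of $B_\omega$ in a single well-chosen unitary frame of $H^{1,0}(S)$. First I fix an isotropic subspace $I_k\subset H^1(S,\R)$, a Hodge-orthonormal basis $\{c_1,\dots,c_k\}$ of $I_k$, and a Hodge-orthonormal Lagrangian completion $\{c_1,\dots,c_g\}$; such a completion exists because every isotropic subspace is contained in a Lagrangian one, on which the Hodge inner product restricts to a genuine positive-definite inner product. The key point is that for a Lagrangian basis $\langle c_i,c_j\rangle=0$, so formula~\eqref{eq:h:c1:h:c2} gives $(h_\omega(c_i),h_\omega(c_j))=(c_i,c_j)+i\langle c_i,c_j\rangle=\delta_{ij}$; hence $\{h_\omega(c_1),\dots,h_\omega(c_g)\}$ is an honest Hodge-orthonormal basis of $H^{1,0}(S)$. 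Let $B$ be the (symmetric) matrix of the bilinear form $B_\omega$ in this basis; by~\eqref{eq:Hform} the matrix $H=BB^{\ast}$ represents the Hermitian curvature form $H_\omega$, and $\tr H=\sum_{i,j=1}^{g}|B_{ij}|^{2}=\sum_{i=1}^{g}\Lambda_i(\omega)$. Since $B_{ij}=B^\R_\omega(c_i,c_j)$ and $H_{ii}=H^\R_\omega(c_i,c_i)$, the definition~\eqref{eq:Phi_k} reads $\Phi_k(\omega,I_k)=2\sum_{i=1}^{k}H_{ii}-\sum_{i,j=1}^{k}|B_{ij}|^{2}$.

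For the first two claims I use $H_{ii}=(BB^{\ast})_{ii}=\sum_{j=1}^{g}|B_{ij}|^{2}$ and split the inner sum at $j=k$:
\begin{equation*}
\Phi_k(\omega,I_k)=\sum_{i,j=1}^{k}|B_{ij}|^{2}+2\sum_{i=1}^{k}\sum_{j=k+1}^{g}|B_{ij}|^{2}=\sum_{i,j=1}^{g}|B_{ij}|^{2}-\sum_{i,j=k+1}^{g}|B_{ij}|^{2}\,.
\end{equation*}
The middle expression is manifestly non-negative, giving incidentally $\Phi_k\ge0$, and the right-hand expression is precisely the alternative formula~\eqref{eq:Phi:alternative} once $\sum_{i,j}|B_{ij}|^{2}=\tr H=\sum_i\Lambda_i(\omega)$ is recalled. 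For independence of the choices, let $W_k\subset H^{1,0}(S)$ be the complex-linear span of $h_\omega(I_k)$ and let $P$ be the Hodge-orthogonal projection onto $W_k$; since $\{h_\omega(c_1),\dots,h_\omega(c_k)\}$ is an orthonormal basis of $W_k$, one has $\sum_{i=1}^{k}H_{ii}=\tr(PH_\omega P)$, while $\sum_{i,j=1}^{k}|B_{ij}|^{2}$ is the squared Hilbert--Schmidt norm of the restriction of $B_\omega$ to $W_k$. Both quantities depend only on $\omega$ and on $W_k$, and $W_k$ itself depends only on $\omega$ and $I_k$ --- not on the chosen orthonormal basis of $I_k$, and not at all on the Lagrangian completion. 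Hence $\Phi_k(\omega,I_k)$ is well defined.

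For the bound~\eqref{eq:bound:for:Phi} I restrict to $\cH_g^{(1)}$. Discarding the non-negative second sum in $\Phi_k=2\sum_{i=1}^{k}H_{ii}-\sum_{i,j=1}^{k}|B_{ij}|^{2}$ and using $\|H_\omega\|=1$ from Lemma~\ref{lemma:apriori_curv_bounds} (so that $H_{ii}\le\|h_\omega(c_i)\|^{2}=1$) yields $\Phi_k\le2k$; discarding the non-negative second sum in the alternative formula and using $\Lambda_i(\omega)\le1$ yields $\Phi_k\le\sum_i\Lambda_i(\omega)\le g$. Hence $\Phi_k\le\min(2k,g)$, and together with $\Phi_k\ge0$ this controls $|\Phi_k|$. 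Suppose now $k\ge2$ and $\Phi_k=\min(2k,g)$; then either $\Phi_k=2k$ or $\Phi_k=g$. In the first case every $H_{ii}=H_\omega(h_\omega(c_i),h_\omega(c_i))$ equals $1=\Lambda_1(\omega)$, so each unit vector $h_\omega(c_i)$ lies in the top eigenspace of $H_\omega$; but that eigenspace is one-dimensional because $\Lambda_2(\omega)<1$ (noted after~\eqref{eigenvalues}) and it equals $\langle\omega\rangle$ since $H_\omega(\omega,\omega)=\|\omega\|^{2}=1$ by~\eqref{eq:curv_max}, which is incompatible with $h_\omega(c_1),h_\omega(c_2)$ being orthonormal. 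In the second case $\sum_i\Lambda_i(\omega)=g$ forces $\Lambda_2(\omega)=1$ (here $g\ge k\ge2$), again contradicting $\Lambda_2(\omega)<1$. Therefore $\Phi_k<\min(2k,g)$ when $k\ge2$.

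The computation is mostly bookkeeping once the adapted frame is in place; the genuinely load-bearing facts are both already available, namely that $h_\omega$ carries a Hodge-orthonormal Lagrangian basis to a Hodge-orthonormal basis of $H^{1,0}(S)$ (which turns~\eqref{eq:Phi_k} into a clean matrix identity via~\eqref{eq:Hform}) and the strict spectral gap $\Lambda_2(\omega)<1$, together with the identification of the maximal eigenspace of $H_\omega$ with $\langle\omega\rangle$. The only mild obstacle I anticipate is arranging the argument so that basis-independence is manifest rather than verified by hand; the intrinsic reformulation in terms of the projection onto $W_k$ (the complex-linear span of $h_\omega(I_k)$) takes care of exactly that, after which everything else follows from the earlier lemmas.
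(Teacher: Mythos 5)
Your proof is correct and follows essentially the same route as the paper's: the same adapted unitary frame $\{h_\omega(c_1),\dots,h_\omega(c_g)\}$, the same identity $H=BB^\ast$ from \eqref{eq:Hform}, and the same splitting of $\sum_{j}|B_{ij}|^2$ at $j=k$ to pass between \eqref{eq:Phi_k} and \eqref{eq:Phi:alternative}. The only (harmless) divergence is in how well-definedness is established — you reformulate both sums intrinsically via the projection onto $W_k$ and Hilbert--Schmidt norms, whereas the paper observes that \eqref{eq:Phi_k} is independent of the Lagrangian completion while \eqref{eq:Phi:alternative} is independent of the basis of $I_k$, so their equality kills both dependences — and you spell out the strictness for $k\geq 2$ (via the one-dimensionality of the top eigenspace of $H_\omega$) in more detail than the paper's appeal to the spectral gap.
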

\begin{proof}
We reproduce here the proof of equivalence of definitions~\eqref{eq:Phi_k} and~\eqref{eq:Phi:alternative} since the same kind of calculations will be repeatedly used in
the sequel. Let $\{\omega_1,\dots,\omega_g\}\subset H^{1,0}(S)$ be an orthonormal basis of
Abelian differentials representing the orthonormal basis $\{c_1,\dots,c_g\}\subset H^1(S,\R)$.
By the definition  of $H^\R_\omega$ and $B^\R_\omega$ we have that
$H^\R_\omega(c_i,c_j):=H_\omega(\omega_i,\omega_j)$ and
$B^\R_\omega(c_i,c_j):=B_\omega(\omega_i,\omega_j)$.
By formula~\eqref{eq:Hform} we also have the relation $H_\omega=B_\omega\cdot B_\omega^\ast$,
which implies that
\begin{equation}
\label{eq:Hii:in:terms:of:B}
H_\omega(\omega_i,\omega_i)=
\sum_{j=1}^g B_\omega(\omega_i,\omega_j) \overline{ B_\omega(\omega_i,\omega_j)} =
\sum_{j=1}^g |B_\omega(\omega_i,\omega_j)|^2\,.
\end{equation}
By definition~\eqref{eigenvalues} one has
\begin{equation}
\label{eq:traceH}
\Lambda_1(\omega)+\dots+\Lambda_g(\omega)=\Tr H_\omega=
\sum_{i=1}^g\sum_{j=1}^g |B_\omega(\omega_i,\omega_j)|^2
\,.
\end{equation}
By formulas~\eqref{eq:Hii:in:terms:of:B} and~\eqref{eq:traceH}
 and by taking into account that $B$ is symmetric, we
transform formula~\eqref{eq:Phi_k} as follows:
\begin{equation}
\label{eq:Pi_k:various:formulae}
\begin{aligned}
\Phi_k (\omega, I_k)  &:= 2\sum_{i=1}^k H_\omega(\omega_i,\omega_i) - \sum_{i,j=1}^k
\vert B_\omega(\omega_i, \omega_j) \vert ^2\\
&=\,
2\sum_{i=1}^k\sum_{j=1}^g |B_\omega(\omega_i,\omega_j)|^2-
\sum_{i,j=1}^k |B_\omega(\omega_i,\omega_j)|^2\\
&=\,
\sum_{i=1}^k\sum_{j=1}^g |B_\omega(\omega_i,\omega_j)|^2+
\sum_{i=1}^k\sum_{j=k+1}^g |B_\omega(\omega_i,\omega_j)|^2 \\
&=\,
\sum_{i,j=1}^g |B_\omega(\omega_i,\omega_j)|^2
- \sum_{i,j=k+1}^g \vert B_\omega(\omega_i, \omega_j) \vert ^2\\
&=\,
 \sum_{i=1}^g \Lambda_i(\omega) - \sum_{i,j=k+1}^g
\vert B_\omega(\omega_i, \omega_j) \vert ^2\,.
\end{aligned}
\end{equation}
 Formula~\eqref{eq:Phi:alternative} is therefore established.
 Note that  formula~\eqref{eq:Phi_k} does not depend
 on the choice of the system $\{c_{k+1},\dots,c_g\}$ in the orthonormal Lagrangian completion,
 while formula~\eqref{eq:Phi:alternative} does not depend on the choice
 of the orthonormal basis $\{c_1,\dots,c_k\}$. Hence, the equality between
 formulas~\eqref{eq:Phi_k} and~\eqref{eq:Phi:alternative} proves that neither
 formula depends on the choice of the orthonormal basis of $I_k$
and of its orthonormal Lagrangian completion.

 The bound in formula~\eqref{eq:bound:for:Phi} follows
 from formulas~\eqref{eq:Phi_k} and~\eqref{eq:Phi:alternative},  by taking
 into account that $H_\omega(\omega_i,\omega_i) \leq \Vert \omega_i\Vert^2=1$
 and $\Lambda_i(\omega)\leq 1$, for all $i\in \{1, \dots, g\}$,
 according to the upper bounds in formulas  \eqref{eq:apriori_curv_bounds}
 and \eqref{eigenvalues} respectively. Finally,  by the spectral gap
 bound in Lemma~\ref{lemma:apriori_curv_bounds} and by the consequent
 strict bound for the second curvature eigenvalue  in formula~\eqref{eigenvalues},
 for $k\geq 2$ the inequality in formula~\eqref{eq:bound:for:Phi} is strict .
  \end{proof}


The  formulas below, computed in \cite{Forni2}, Lemma 5.2 and
5.2', extend the formula in Lemma~\ref{lemma:varlapl}
to all exterior powers of the Hodge bundle.
Let $\{c_1, \dots, c_k\} \subset I_k$ be any Hodge-orthonormal
basis of an isotropic subspace $I_k\subset H^1(S,\R)$ on a
Riemann surface $S$. Recall that the Euclidean structure defined by the Hodge scalar
product on $H^1(S,\R)$ defines the natural norm
$\Vert c_1 \wedge \dots \wedge c_k\Vert_\omega$
of any polyvector which we also call the Hodge norm.
Similarly to the case of the Hodge norm,
it is defined only by the complex structure of the underlying
Riemann surface. It follows from the definition~\eqref{eq:Phi_k} of
$\Phi_k(\omega,I_k)$ that this function is $\SO$-invariant
(compare to Remark~\ref{rm:SO:invariant}). Thus, for any
$(S_0,\omega_0)$ in $\cH_g$ the Hodge norm $\Vert c_1 \wedge \dots
\wedge c_k\Vert_\omega$ defines a smooth function on the
hyperbolic surface obtained as a left quotient
$\SO\backslash \SL \cdot\omega_0$  of the orbit of $\omega_0$.

\begin{lemma}
\label{lemma:var2}
For all $k\in \{1, \dots, g\}$ the following formula holds:
$$
\triangle \log \Vert c_1 \wedge \dots \wedge c_k\Vert_\omega =
2 \Phi_k (\omega, I_k)  \geq 0 \,.
$$
\end{lemma}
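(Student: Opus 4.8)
The plan is to reduce the computation of $\triangle \log \Vert c_1\wedge\dots\wedge c_k\Vert_\omega$ to a sum of "diagonal" Laplacian terms plus an off-diagonal correction, then recognize the result as $2\Phi_k(\omega,I_k)$ using the explicit form~\eqref{eq:Phi_k}. First I would fix the point $(S_0,\omega_0)$ and recall that, by the remark preceding the lemma (and Remark~\ref{rm:SO:invariant}), the function $\Vert c_1\wedge\dots\wedge c_k\Vert_\omega$ is $\SO$-invariant and hence descends to the Teichm\"uller disk, so the leafwise hyperbolic Laplacian $\triangle$ is well-defined on it. Since $\{c_1,\dots,c_k\}$ is a \emph{parallel} (locally constant) family of sections for the Gauss--Manin connection, I would work throughout with their fixed cohomology classes and express $\Vert c_1\wedge\dots\wedge c_k\Vert_\omega^2$ as the Gram determinant $\det\big((c_i,c_j)_\omega\big)_{1\le i,j\le k}$.

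The key step is to differentiate $\log\det G(\omega)$, where $G(\omega)=\big((c_i,c_j)_\omega\big)$, along the hyperbolic Laplacian. For a positive-definite matrix-valued function, $\triangle\log\det G = \Tr\big(G^{-1}\triangle G\big) - \sum$ (quadratic terms coming from $\triangle$ acting as a second-order operator on a composite of functions), and the Bochner-type identity for $\triangle=\partial\bar\partial$-type operators on the disk gives cross-terms of the form $\Tr\big(G^{-1}(\partial G)G^{-1}(\bar\partial G)\big)$. To evaluate the first-order pieces $\partial G$, $\bar\partial G$ and the second-order piece $\triangle G$, I would use the first variation formula of Lemma~\ref{lemma:varfor2}, which gives $\cL(c_i,c_j)_\omega = -2\Re B^\R_\omega(c_i,c_j)$, together with its $\SO$-rotated version (replacing $\omega$ by $e^{i\vartheta}\omega$, which multiplies $B_\omega$ by $e^{-2i\vartheta}$) to recover the full complex first derivatives along the disk; and for the second derivative I would differentiate once more, exactly as in the scalar case that produced Lemma~\ref{lemma:varlapl}. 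Evaluating everything at a point where the basis is chosen Hodge-orthonormal, so $G=\I$, $G^{-1}=\I$, this collapses the matrix expression to $\triangle\log\Vert c_1\wedge\dots\wedge c_k\Vert_\omega = \sum_{i=1}^k \triangle\log\Vert c_i\Vert_\omega + (\text{off-diagonal terms})$, where each diagonal term is $4 H^\R_\omega(c_i,c_i) - 2|B^\R_\omega(c_i,c_i)|^2$ by Lemma~\ref{lemma:varlapl}, and the off-diagonal terms assemble into $-2\sum_{i\ne j}|B^\R_\omega(c_i,c_j)|^2 + 4\sum_{i\ne j}\Re(\text{something involving }H)$; the isotropy hypothesis $\langle c_i,c_j\rangle=0$ is what kills the imaginary/symplectic contributions and forces the curvature cross-terms to combine correctly. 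Comparing with $\Phi_k(\omega,I_k) = 2\sum_{i=1}^k H^\R_\omega(c_i,c_i) - \sum_{i,j=1}^k |B^\R_\omega(c_i,c_j)|^2$ then yields the claimed identity $\triangle\log\Vert c_1\wedge\dots\wedge c_k\Vert_\omega = 2\Phi_k(\omega,I_k)$, and nonnegativity follows either from the already-proven representation~\eqref{eq:Phi:alternative} via $H_\omega = B_\omega B_\omega^\ast$ and Lemma~\ref{lemma:apriori_B_bounds}, or directly from convexity of $\log$ applied fiberwise.

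The main obstacle I expect is bookkeeping the off-diagonal second-order terms: when one differentiates $\log\det G$ twice, the cross-derivatives $G^{-1}(\partial G)G^{-1}(\bar\partial G)$ genuinely involve $B^\R_\omega(c_i,c_j)$ for $i\ne j$, and one must check that the quadratic terms produced this way \emph{exactly} match the $-\sum_{i\ne j}|B^\R_\omega(c_i,c_j)|^2$ in $\Phi_k$ rather than something proportional to it, while the second-order term $\Tr(G^{-1}\triangle G)$ supplies the curvature piece $2\sum_{i=1}^k\big(2H^\R_\omega(c_i,c_i)\big)$ together with diagonal $B$-corrections. Getting the numerical constants right (the $2$ vs.\ $4$, and the cancellation of the $\Re$ versus full complex contributions, which relies on isotropy and on the Hermitian symmetry of the intersection form) is the delicate point; everything else is a routine transcription of the scalar second variation formula of \cite{Forni2}, \S 5, into the Gram-determinant framework. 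Since the computation underlying Lemma~\ref{lemma:varlapl} and the Gram-matrix manipulation in~\eqref{eq:Pi_k:various:formulae} are both already available in the excerpt, the cleanest writeup is to cite \cite{Forni2}, Lemma 5.2', for the variational computation and then invoke Lemma~\ref{lm:Fi:k} to identify the right-hand side, which is presumably the route the authors take.
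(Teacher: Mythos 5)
Your proposal is correct and ends up exactly where the paper does: the paper's proof of this lemma consists solely of the citation to \cite{Forni2}, Lemmas 5.2 and 5.2', which is the route you identify as the cleanest writeup. Your Gram-determinant sketch of the underlying computation (first variation via Lemma~\ref{lemma:varfor2}, second variation producing the curvature term, evaluation at a Hodge-orthonormal basis, identification with $\Phi_k$ via~\eqref{eq:Phi_k}) is a faithful reconstruction of the argument in the cited reference, so there is nothing to add beyond noting that the off-diagonal curvature contributions must in fact drop out entirely rather than merely ``combine correctly.''
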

\begin{proof}
See   Lemma 5.2 and 5.2' in \cite{Forni2}.
\end{proof}

Recall  that  our  hyperbolic Laplacian $\triangle$ is written in the
hyperbolic metric of constant curvature $-4$. Any different choice of
the constant negative curvature would change the RHS in the above
formula by a constant  factor.

For $k=1$ the formula of Lemma \ref{lemma:var2} reduces to that of
Lemma \ref{lemma:varlapl}.

\begin{Remark}
\label{rm:Phi:g}
Note that for $k=g$
the Lagrangian subspace $I_g$ is not present in the right-hand side
of definition~\eqref{eq:Phi_k} of $\Phi_g(\omega,I_g)$:
$$
\Phi_g (\omega, I_k) :=  \sum_{i=1}^g \Lambda_i(\omega) \,.
$$
Thus,  the  function  $\Phi_g$  is  the pull-back to the Grassmannian
bundle  of  Lagrangian  subspaces  of  a function on the moduli space
$\cH_g$. Moreover, by definition~\eqref{eigenvalues} the above sum is
the trace of the Hermitian form $H_\omega$, hence it is by definition
the  curvature  of  the  Hermitian bundle $H^{1,0}$. This fundamental
fact  discovered  in~\cite{Kontsevich} is crucial for the validity of
the Kontsevich formula for the sum of exponents. A version of
this formula is stated in Corollary~\ref{cor:KZformula} below.
\end{Remark}

\section{The Kontsevich--Zorich exponents}

In this section we derive formulas for the Lyapunov exponents of the Kontsevich--Zorich
cocycle on the Hodge bundle in terms of the second fundamental form and curvature
of the Hodge bundle.

\subsection{Lyapunov exponents} Let $(T_t)_{t\in\mathbb{R}}:X\to X$ be a flow preserving a Borel ergodic probability measure $\mu$ on a locally compact topological space $X$.
Let $\pi:M\to X$ be a real or complex $d$-dimensional vector bundle. In other words,
the fiber $M_x:=\pi^{-1}(x)$ of the vector bundle above any $x\in X$ is a real or complex vector
space  isomorphic to $\mathbb{R}^d$  or $\mathbb{C}^d$ respectively. A \emph{real or complex linear cocycle} $(F_t)_{t\in\mathbb{R}}:M\to M$ over the flow $(T_t)$ is a flow on the total space $M$ of the vector bundle such that, for all $(x,t)\in X\times \mathbb R$, the map $F_t: M_x \to M_{T_t(x)}$ is well-defined and linear over $\mathbb R$ or $\mathbb C$ respectively. Suppose that there exists a family of norms $\{\vert \cdot \vert_x\}_{x\in X}$ on the fibers  of the vector bundle and, for all $(x,t)\in X\times
\mathbb R$,  let $\Vert F_t \Vert_x$ denote the operator norm of the linear map $F_t$ with respect
to the norm $\vert \cdot \vert_x$ on $M_x$ and $\vert \cdot \vert_{T_t(x)}$ on $M_{T_t(x)}$.
Under the condition of $\log$-\emph{integrability} of the cocycle $(F_t)$, that is, under the condition
that
$$
\int_X  \log\|F_{\pm1}\|_x d\mu(x)    \, < +\infty \,,
$$
 the so-called \emph{Oseledets theorem} states that there exists a collection of real numbers
 $\lambda_1>\dots>\lambda_k$, $1\leq k\leq d$, such that, for $\mu$-almost every $x\in X$, one
 has a splitting
$$M_x=E^{\mu}_{\lambda_1}(x)\oplus\dots\oplus E^{\mu}_{\lambda_k}(x)$$
 with
$$\lim\limits_{t\rightarrow\pm\infty}\frac{1}{n}\log\vert F_t(v_i)\vert_{T_t(x)}=\lambda_i$$
for every $v_i\in E_{\lambda_i}^{\mu}(x)-\{0\}$. Moreover, the subspaces $E_{\lambda_i}^{\mu}(x)$ depend measurably on $x\in X$. In the literature, the numbers $\lambda_i$ are called \emph{Lyapunov exponents} and the subspaces $E_{\lambda_i}^{\mu}(x)$ are called \emph{Oseledets subspaces}.

For the sake of convenience, one writes the list of Lyapunov exponents as $\lambda_1\geq\dots\geq\lambda_d$ (where $d$ is the dimension of the fibers of the vector bundle) by \emph{repeating}  each exponent $\lambda_i$ a number of times equal to the real or complex dimension
of the corresponding Oseledets space $E^{\mu}_{\lambda_i}(x)) \subset M_x$ (which by ergodicity
is constant almost everywhere). We will loosely refer to both the list $\lambda_1>\dots>\lambda_k$ and
$\lambda_1\geq\dots\geq\lambda_d$ as the \emph{Lyapunov spectrum}  of the linear cocycle $(F_t)_{t\in\mathbb{R}}$, although the second list also contains the information about the real or complex multiplicities of Lyapunov exponents (that is, about the real or complex dimensions of the Oseledets subspaces).

The following general facts will be relevant in this paper:
\begin{itemize}
\item the natural \emph{complexification} of any real linear cocycle has the \emph{same Lyapunov spectrum} of the original real linear cocycle; in particular, the complexified cocycle has complex multiplicities equal to the real multiplicities of the original real cocycle;
\item the Lyapunov spectrum of a real \emph{symplectic} cocycle, that is, a real cocycle preserving a family of symplectic forms on the fibers $M_x\simeq\mathbb{R}^d$, $d=2n$, of the vector bundle $\pi:M\to X$, has the form $$\lambda_1\geq\dots\geq\lambda_n\geq-\lambda_n\geq\dots\geq-\lambda_1\,;$$ in other words, the Lyapunov spectrum of a symplectic cocycle is symmetric with respect to the origin
$0\in\mathbb{R}$.
\end{itemize}

For more details on Oseledets theorem and the general theory of Lyapunov exponents, see the books \cite{BDV} and \cite{HK} (and references therein).

Coming back to the Kontsevich-Zorich cocycle, let us recall that, by Corollary~\ref{cor:universal_1},
for any (Teichm\"uller) flow-invariant Borel probability ergodic measure $\mu$ on $\cH_g^{(1)}$, the cocycle is $\log$-integrable with respect to the Hodge norm, hence it has well-defined
Lyapunov spectrum, with top exponent $\lambda_1=1$; since the cocycle on $H^1_\R$ preserves the symplectic intersection form on the ($2g$-dimensional) fibers $H^1(S,\mathbb{R})$ of $H^1_\R$, its Lyapunov spectrum is symmetric. Thus the Lyapunov spectrum of the Kontsevich-Zorich cocycle
with respect to any (Teichm\"uller) flow-invariant Borel probability ergodic measure $\mu$ on
$\cH_g^{(1)}$ has the following form:
$$
\lambda_1^\mu =1 \geq \lambda^\mu_2 \geq \dots \geq \lambda^\mu_g \geq
-\lambda^\mu_g \geq \dots \geq -\lambda^\mu_2 \geq -\lambda^\mu_1=-1\,.
$$

In particular,  the Kontsevich--Zorich spectrum always has $g$
non-negative and $g$ non-positive exponents. Let
\begin{equation}
\label{eq:no:multiplicities}
\lambda^\mu_{(1)} >  \dots > \lambda^\mu_{(n)} > -\lambda^\mu_{(n)} > \dots
 > -\lambda^\mu_{(1)}\,.
\end{equation}
be the Kontsevich--Zorich spectrum of all \textit{distinct non-zero}
Lyapunov exponents of the Hodge bundle $H^1_\R$. Applying the
Teichm\"uller flow both in forward and backward directions we get
the corresponding Oseledets decomposition
\begin{equation}
\label{eq:Oseledets:direct:sum}
E^{\mu}_{\lambda^{\mu}_{(1)}}\oplus\dots\oplus E^{\mu}_{\lambda^{\mu}_{(n)}}
\oplus E^{\mu}_{(0)}
\oplus E^{\mu}_{-\lambda^{\mu}_{(n)}}\oplus\dots\oplus E^{\mu}_{-\lambda^{\mu}_{(1)}}
\end{equation}
at $\mu$-almost every point $(S,\omega)$ of $\cH_g^{(1)}$, where
$E^{\mu}_{(0)}$ is omitted if the Lyapunov spectrum of $\mu$
does not contain zero. By definition all nonzero vectors of
each subspace $E^{\mu}_{\lambda^{\mu}_{(k)}}$ or $E^{\mu}_{(0)}$ share
the same Lyapunov exponent $\lambda^{\mu}_{(k)}$ (correspondingly
$0$) which changes sign under the time reversing.

\begin{Remark}
\label{rm:invariant:versus:SL:invariant}

By convention, when saying that a measure (function, line
subbundle, etc) is \textit{``invariant''} we mean that it is
\textit{``invariant with respect to the Teichm\"uller flow''}.
If the corresponding object is \textit{``invariant with respect
to the $\SL$-action''}, we explicitly indicate that it is
\textit{``$\SL$-invariant''}. In particular,
decomposition~\eqref{eq:Oseledets:direct:sum} is defined by any
probability measure invariant and ergodic with respect to the
Teichm\"uller flow.
\end{Remark}

\begin{Lemma}\label{l.symp-orth}
Every subspace $E^\mu_{\lambda^{\mu}_{(i)}}$ of the Oseledets direct sum
decomposition~\eqref{eq:Oseledets:direct:sum} except
$E^\mu_{(0)}$ is isotropic.
Any pair of subspaces $E^\mu_{\lambda^{\mu}_{(i)}}$, $E^\mu_{\lambda^{\mu}_{(j)}}$
such that $\lambda^{\mu}_{(j)}\neq-\lambda^{\mu}_{(i)}$ is symplectic orthogonal.
The restriction of the symplectic form to each subspace
$E^\mu_{\lambda^{\mu}_{(i)}}\oplus E^\mu_{-\lambda^{\mu}_{(i)}}$, where $i\neq 0$,
and on $E^\mu_{(0)}$ is nondegenerate.
\end{Lemma}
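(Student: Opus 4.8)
The plan is to use two facts about the Kontsevich--Zorich cocycle $G_t^{KZ}$ on $H^1_\R$. First, as recalled above it is a \emph{symplectic} cocycle: it preserves the symplectic intersection form on the fibers, so $\langle G_t^{KZ}v,G_t^{KZ}w\rangle=\langle v,w\rangle$ for all $t$ and all $v,w\in H^1(S,\R)$. Second, the intersection form is dominated by the Hodge norm \emph{uniformly over the moduli space}: for every $\omega\in\cH_g^{(1)}$ and all $v,w\in H^1(S,\R)$ one has $|\langle v,w\rangle|\le\|v\|_\omega\|w\|_\omega$. I would derive the latter from the relations of \S2: applying \eqref{eq:c1:scalar:c2} with $\ast w$ in place of $w$ and using \eqref{eq:ast:ast:equals:minus:id} gives $\langle v,w\rangle=-(v,\ast w)_\omega$, and the same relations give $\|\ast w\|_\omega^2=(\ast w,\ast w)_\omega=\langle\ast w,\ast(\ast w)\rangle=\langle w,\ast w\rangle=\|w\|_\omega^2$, so $\ast$ is a Hodge isometry; Cauchy--Schwarz for the positive-definite Hodge inner product then yields $|\langle v,w\rangle|=|(v,\ast w)_\omega|\le\|v\|_\omega\|w\|_\omega$.

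With these in hand the argument is a growth-rate comparison. Fix a point $(S,\omega)$ of full $\mu$-measure where the two-sided Oseledets decomposition \eqref{eq:Oseledets:direct:sum} holds (log-integrability is supplied by Corollary~\ref{cor:universal_1}), and adopt the convention $\lambda^\mu_{(0)}=0$, $E^\mu_{\lambda^\mu_{(0)}}=E^\mu_{(0)}$, so that \eqref{eq:Oseledets:direct:sum} reads $H^1(S,\R)=\bigoplus_a E^\mu_a$ over the exponents $a\in\{\pm\lambda^\mu_{(1)},\dots,\pm\lambda^\mu_{(n)},0\}$. For $v\in E^\mu_a\setminus\{0\}$ and $w\in E^\mu_b\setminus\{0\}$ the two facts give
\[
|\langle v,w\rangle|=|\langle G_t^{KZ}v,G_t^{KZ}w\rangle|\le\|G_t^{KZ}v\|_{T_t\omega}\,\|G_t^{KZ}w\|_{T_t\omega}\qquad\text{for all }t\in\R,
\]
while $\tfrac1t\log\!\big(\|G_t^{KZ}v\|\,\|G_t^{KZ}w\|\big)\to a+b$ as $t\to\pm\infty$ by the Oseledets theorem. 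Hence if $a+b<0$ the right-hand side tends to $0$ as $t\to+\infty$, and if $a+b>0$ it tends to $0$ as $t\to-\infty$; in either case $\langle v,w\rangle=0$. Thus $a+b\neq0$ implies $\langle E^\mu_a,E^\mu_b\rangle=0$.

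From this the three assertions drop out. Taking $a=b=\lambda^\mu_{(i)}$ with $i\neq0$, one has $2\lambda^\mu_{(i)}\neq0$, so each $E^\mu_{\lambda^\mu_{(i)}}$ (and likewise each $E^\mu_{-\lambda^\mu_{(i)}}$) is isotropic. Taking $a=\lambda^\mu_{(i)}$, $b=\lambda^\mu_{(j)}$ with $\lambda^\mu_{(j)}\neq-\lambda^\mu_{(i)}$ gives $a+b\neq0$, hence symplectic orthogonality; the pairing of any $E^\mu_{\pm\lambda^\mu_{(i)}}$ with $E^\mu_{(0)}$ vanishes too, since there $a+b=\pm\lambda^\mu_{(i)}\neq0$. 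Therefore, setting $W_i:=E^\mu_{\lambda^\mu_{(i)}}\oplus E^\mu_{-\lambda^\mu_{(i)}}$ for $i\geq1$ and $W_0:=E^\mu_{(0)}$, the splitting \eqref{eq:Oseledets:direct:sum} is a $\langle\cdot,\cdot\rangle$-orthogonal direct sum $H^1(S,\R)=W_0\oplus W_1\oplus\dots\oplus W_n$; and if $u\in W_i$ had $\langle u,W_i\rangle=0$, then $\langle u,W_j\rangle=0$ for all $j$ by orthogonality, so $\langle u,H^1(S,\R)\rangle=0$ and $u=0$ by non-degeneracy of the intersection form on $H^1(S,\R)$. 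Hence $\langle\cdot,\cdot\rangle$ restricts nondegenerately to each $W_i$, in particular to $E^\mu_{(0)}$.

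I expect no real obstacle: once the uniform bound $|\langle\cdot,\cdot\rangle|\le\|\cdot\|\,\|\cdot\|$ is established the proof is formal. The one point demanding attention is that the \emph{two-sided} Oseledets theorem must be used — the same decomposition \eqref{eq:Oseledets:direct:sum} governing both time directions — so that $\tfrac1t\log\|G_t^{KZ}v\|\to a$ as $t\to-\infty$ for $v\in E^\mu_a$; this is precisely what lets the pairing be killed when $a+b>0$, whereas for $t\to+\infty$ alone one only obtains decay in the case $a+b<0$.
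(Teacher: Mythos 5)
Your proof is correct and follows essentially the same route as the paper's: symplectic invariance of the cocycle, domination of the intersection form by the Hodge norm, and the two-sided Oseledets growth rates, followed by the same formal deduction of isotropy, orthogonality, and nondegeneracy. The only difference is that you establish the sharp uniform bound $|\langle v,w\rangle|\le\|v\|_\omega\|w\|_\omega$ everywhere on moduli space via $\langle v,w\rangle=-(v,\ast w)_\omega$ and the fact that $\ast$ is a Hodge isometry, whereas the paper only asserts a bound with a constant depending on a compact set $\cK$ and then uses ergodic recurrence to $\cK$ to extract a subsequence of times along which the product of Hodge norms decays; your version streamlines that step without changing the substance of the argument.
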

\begin{proof}
The absolute value of the symplectic product of any two cocycles $c_1,c_2$ in
$H^1(S,\R)$ is uniformly bounded on any compact part $\cK$ of $\cH_g^{(1)}$
by the product of their Hodge norms,
$$
|\langle c_1,c_2\rangle|\le
\const(\cK)\cdot \|c_1\|_\omega\!\cdot\!\|c_2\|_\omega
\quad
\text{ for any }\omega\in\cK\,.
$$
By ergodicity of the flow, it returns infinitely often to the compact
part $\cK$.

Consider a pair of cocycles $c_i,c_j$ such that
$c_i\in E_{\lambda^{\mu}_{(i)}}$,
$c_j\in E_{\lambda^{\mu}_{(j)}}$ and such that
$\lambda^{\mu}_{(i)}\neq-\lambda^{\mu}_{(j)}$.
By definition of $E_{\lambda^{\mu}_{(i)}}$, we have
$$
\|G_t^{KZ}(c_1)\|_\omega\cdot\|G_t^{KZ}(c_2)\|_\omega\sim
\exp\big((\lambda^{\mu}_{(i)}+\lambda^{\mu}_{(j)})\, t\big)
$$
When $\lambda^{\mu}_{(i)}+\lambda^{\mu}_{(j)}<0$ the latter expression
tends to zero when $t\to+\infty$; when
$\lambda^{\mu}_{(i)}+\lambda^{\mu}_{(j)}>0$ the latter expression tends to
zero when $t\to-\infty$. In both cases, we conclude that for
a subsequence of positive or negative times $t_k$ (chosen when
the trajectory visits the compact set $\cK$) the symplectic
product $\langle G_{t_k}^{KZ}(c_1),G_{t_k}^{KZ}(c_2)\rangle$ tends to
zero. Since the symplectic product is preserved by the flow
this implies that it is equal to zero, in particular $\langle
c_1,c_2\rangle=0$. Thus, we have proved that every subspace
$E^\mu_{\lambda^{\mu}_{(i)}}$ except $E^\mu_{(0)}$ is isotropic, and
that any pair of subspaces $E^\mu_{\lambda^{\mu}_{(i)}}$,
$E^\mu_{\lambda^{\mu}_{(j)}}$ such that $\lambda^{\mu}_{(j)}\neq-\lambda^{\mu}_{(i)}$ is
symplectic orthogonal. Hence, the cohomology space decomposes
into a direct sum of pairwise symplectic-orthogonal subspaces
$E^\mu_{(0)}$, $E^\mu_{\lambda^{\mu}_{(1)}}\oplus E^\mu_{-\lambda^{\mu}_{(1)}}$,
etc, where we couple all pairs $E^\mu_{\lambda^{\mu}_{(i)}}$ and
$E^\mu_{-\lambda^{\mu}_{(i)}}$. Since the symplectic form is
nondegenerate and the summands are symplectic-orthogonal, it is
nondegenerate on any summand.
\end{proof}

For any $k\in \{1, \dots, g\}$ such that $\lambda^{\mu}_k>\lambda^{\mu}_{k+1}\ge 0$,
let $i(k)$ be the index of the exponent $\lambda^{\mu}_k$ in the ordering without
multiplicities~\eqref{eq:no:multiplicities}, $\lambda^{\mu}_{(i(k))}=\lambda^{\mu}_k$.
Let us define the $k$-th unstable Oseledets subbundle as
$$
E^+_k:=E^{\mu}_{\lambda^{\mu}_{(1)}}\oplus\dots\oplus E^{\mu}_{\lambda^{\mu}_{(i(k))}}
$$

\subsection{Formulas for the Kontsevich--Zorich exponents}
\label{ssec:formulas}

Let  $V  \subseteq H^1_\R$ be any $r$-dimensional flow-invariant measurable subbundle
of  the  Hodge  bundle,  almost  everywhere defined with respect to a
flow-invariant  ergodic probability measure $\mu$ on the moduli space
$\cH_g^{(1)}$.  Let us denote
$$
\lambda_1^{V,\mu} \geq  \dots  \geq \lambda_r^{V,\mu}
$$
the  Lyapunov  spectrum  of the restriction of the Kontsevich--Zorich
cocycle   to  the  subbundle  $V\subset  H^1_\R$  with  respect  to  the
Teichm\"uller geodesic flow and the invariant measure $\mu$ on $\cH_g^{(1)}$.
Let us also denote
\begin{equation}
\label{eq:nomultiplicities:V}
\lambda_{(1)}^{V,\mu} >  \dots  > \lambda_{(s)}^{V,\mu}
\end{equation}
the Lyapunov spectrum of all distinct Lyapunov exponents and let
\begin{equation}
\label{eq:Oseledets:direct:sum:V}
V = V^\mu_1 \oplus \dots \oplus V^\mu_s
\end{equation}
be   the  corresponding  Oseledets  decomposition.  It  follows  from
Oseledets theorem that the Lyapunov exponents on $V$ form a subset of
the  Lyapunov spectrum \eqref{eq:no:multiplicities} of the cocycle on
the Hodge bundle, that is,
$$
\{\lambda_{(1)}^{V,\mu}, \dots  , \lambda_{(s)}^{V,\mu} \} \subset
\{\lambda^\mu_{(1)}, \dots, \lambda^\mu_{(n)}, -\lambda^\mu_{(1)},
\dots, -\lambda^\mu_{(n)}\} \cup \{0\}\,,
$$
and  the  Oseledets  subspaces  $V^{\mu}_1, \dots, V^{\mu}_s$ are the
non-trivial  intersections of Oseledets spaces for the cocycle on the
full  Hodge  bundle  $H^1_\R$, as in~\eqref{eq:Oseledets:direct:sum},
and the subbundle $V\subset H^1_\R$, that is,
$$
\{ V^\mu_1, \dots, V^\mu_s\} =
\bigcup_{i=1}^n \left\{ E^\mu_{\lambda^{\mu}_{(i)}} \cap V,
E^\mu_{-\lambda^{\mu}_{(i)}} \cap V\right\} \cup
\left\{ E^\mu_{(0)} \cap V\right\}\!
 \setminus \big\{\!\{0\}\!\big\},
$$
where  $E^{\mu}_{(0)}$  is  omitted if the Lyapunov spectrum of $\mu$
does not contain zero.

For any $k\in\{1, \dots, g\}$ denote by $G_k(H^1_\R)$ the total space
of  the Grassmannian bundle of isotropic $k$-dimensional subspaces of
the  real Hodge bundle $H^1_\R$. Let us denote by $\Cal N_k(\mu)$ the
space  of  all  Borel  probability  measures  on  $G_k(H^1_\R)$ which project
onto any probability measure, \textit{absolutely continuous} with respect to the
flow-invariant ergodic probability measure $\mu$ on $\cH_g^{(1)}$ under the
canonical projection.

The   Kontsevich--Zorich cocycle on the Hodge bundle $H^1_\R$ preserves
the  symplectic  form in the fibers. Hence, it defines a natural action on the
Grassmannian bundle $G_k(H^1_\R)$. Since the subbundle $V$   is   flow-invariant,
the  measurable Grassmannian subbundle   $G_k(V)$   is  also flow-invariant. Let  $\Cal  I_k(\mu)\subset \Cal N_k(\mu)$ be the subset of those
measures  in $\Cal N_k(\mu)$ that are invariant with respect to the
Kontsevich--Zorich  cocycle  $\{G^{KZ}_t\}$  on  $G_k(H^1_\R)$.
Note that all measures $\nu\in \Cal  I_k(\mu)$ project onto the flow-invariant ergodic
probability measure $\mu$ on $\cH_g^{(1)}$ under the canonical projection.
It follows that the set $\Cal  I_k(\mu)$ is a compact subset of the set
of all Borel probability measures on the locally compact space
$G_k(H^1_\R)$ endowed with the weak-star topology.

Consider  a  subbundle  $V$  of  the  Hodge  bundle
satisfying    the    properties    stated   at   the   beginning   of
section~\ref{ssec:formulas}. Ergodicity of the measure and invariance
of  the  intersection  form with respect to the flow implies that the
restrictions  of the symplectic form to $\mu$-almost any fiber of $V$
have  the  same  rank.  We  do  not  exclude  the  situation when the
resulting  form  is  degenerate.  Denote  by  $2p$  the  rank  of the
restriction  of  the  symplectic  form  to  $V$; denote by $l$ the
difference $l=r-p$ between the dimension of the fiber of $V$ and $p$.
The restriction of the   symplectic   form  to $\mu$-almost  any  fiber  of  $V$
is nondegenerate  if and only if $l=p$, otherwise the form is degenerate
and  $p<l$.  Note  also  that  $l$  is  the  maximal  dimension of an
isotropic subspace in the fiber of $V$.

Let $G_k(V)$ denote the total space of the Grassmannian bundle of all
$k$-dimensional  isotropic  subspaces contained in the fibers of $V$.
By  definition,  for  $\mu$-almost  all $\omega \in \cH_g^{(1)}$, the fiber
$G_k(V)_\omega$  of  the bundle $G_k(V)$ is equal to the space of all
$k$-dimensional isotropic subspaces of $V_\omega$.
Clearly,  for  each individual fiber, one has $G_k(V)_\omega\subseteq
G_k(H^1_\R)_\omega$,   so   the  Grassmanian  bundle  $G_k(V)$  is  a
measurable subbundle of $G_k(H^1_\R)$.

Let  $\Cal  N^V_k(\mu)\subset  \mathcal N_k(\mu)$ be the subset of all
Borel probability measures on $G_k(H^1_\R)$ essentially supported on
a subset of the measurable Grassmannian bundle $G_k(V)\subset G_k(H^1_\R)$.
Note that there is no flow-invariance assumption in the definition of
the  sets  $\Cal  N_k(\mu)$  and  $\Cal N^V_k(\mu)$. In fact, we will
prove existence of flow-invariant probability measures essentially supported
on the Grassmannian $G_k(V) \subset G_k(H^1_\R)$, which project to any
given flow-invariant ergodic probability measure $\mu$ on $\cH_g^{(1)}$, in
Lemma~\ref{lemma:inv_meas_Grass}   below.

\smallskip
We   start  with  an elementary preparatory Lemma.

\begin{lemma}
\label{lemma:bundle_meas}
Let $\bar \mu$ be any Borel probability measure absolutely continuous
with respect to $\mu$ on $\cH_g^{(1)}$. Let $V\subset H^1_\R$ be a measurable
subbundle defined $\bar \mu$-almost everywhere.
For any $k$ such that $G_k(V)$ is non-empty (i.e., for any $k\le l$),
there exist measures on $G_k(V)$ which project onto  $\bar \mu$
on $\cH_g^{(1)}$, under the canonical projection. In particular,
the space $\Cal N^V_k(\mu)$ is also non-empty.
\end{lemma}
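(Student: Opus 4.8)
This is a measurable selection statement phrased in bundle language: we must show that the Grassmannian subbundle $G_k(V)\subset G_k(H^1_\R)$, which has non-empty fibers over $\cH_g^{(1)}$ exactly when $k\le l$, admits a $\bar\mu$-measurable section; the desired probability measure is then the push-forward of $\bar\mu$ along such a section. The plan is to trivialize $V$ measurably, apply a standard closed-valued measurable selection theorem, and push forward.

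First I would choose a measurable Hodge-orthonormal frame of $V$. Since $V\subset H^1_\R$ is a measurable subbundle of constant rank $r$ defined $\bar\mu$-almost everywhere, there is a measurable family of sections spanning $V_\omega$ (for instance, compose the measurable map $\omega\mapsto V_\omega$ into the Grassmannian bundle with local trivializations of $H^1_\R$); applying the Gram--Schmidt process with respect to the Hodge inner product then produces measurable sections $f_1(\omega),\dots,f_r(\omega)$ forming a Hodge-orthonormal basis of $V_\omega$ for $\bar\mu$-almost every $\omega$. This identifies $V_\omega$ measurably with $\R^r$; in this frame the restriction to $V_\omega$ of the (flat) symplectic form $\langle\cdot,\cdot\rangle$ becomes a skew-symmetric matrix of constant rank $2p$ depending measurably on $\omega$, and the fiber $G_k(V)_\omega$ of $k$-dimensional isotropic subspaces of $V_\omega$ is carried into the fixed compact Grassmannian $G_k(\R^r)$ of all $k$-planes in $\R^r$.

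Now consider the set-valued map $\omega\mapsto G_k(V)_\omega\subset G_k(\R^r)$. For $k\le l$ each value is non-empty, since $l$ is by definition the maximal dimension of an isotropic subspace of $V_\omega$, and closed (hence compact), since isotropy is a closed condition; moreover its graph $\{(\omega,P):P\in G_k(V)_\omega\}$ is measurable in $\cH_g^{(1)}\times G_k(\R^r)$, because the isotropy condition on a $k$-plane depends on $\omega$ only through the measurably varying matrix of $\langle\cdot,\cdot\rangle$ in the frame $f_\bullet$. By the Kuratowski--Ryll-Nardzewski measurable selection theorem, applied to this closed-valued measurable multifunction into the Polish space $G_k(\R^r)$ (completing the measure if necessary), there is a $\bar\mu$-measurable section $\omega\mapsto\sigma(\omega)\in G_k(V)_\omega$; transported back through $f_\bullet$, it is a measurable section of $G_k(V)\subset G_k(H^1_\R)$ over $\cH_g^{(1)}$. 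Finally $\nu:=\sigma_\ast\bar\mu$ is a Borel probability measure on $G_k(H^1_\R)$ which is essentially supported on $G_k(V)$ and projects onto $\bar\mu$; since $\bar\mu$ is absolutely continuous with respect to $\mu$ this gives $\nu\in\Cal N^V_k(\mu)$, and taking $\bar\mu=\mu$ shows $\Cal N^V_k(\mu)\neq\emptyset$. The only genuine point to check is that ``$V$ is a measurable subbundle'' really does supply a measurable frame, hence a measurable multifunction $\omega\mapsto G_k(V)_\omega$; once that is established, the rest is a routine application of measurable selection and push-forward of measures.
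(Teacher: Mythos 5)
Your proof is correct, but the second half takes a genuinely different (and heavier) route than the paper's. Both arguments begin the same way, by trivializing the measurable bundle $V$ over a set of full $\bar\mu$-measure: the paper simply cites the general fact that a Borel measurable bundle can be trivialized off a null set, while you construct the trivialization explicitly from a measurable Gram--Schmidt frame. The divergence is in how the measure is then produced. The paper observes that once $G_k(V)$ is identified over a full-measure set $\Cal E_g$ with a product $\Cal E_g\times G_k(\R^r)$ (with the fixed model fiber of isotropic $k$-planes), the product measure $\bar\mu\times\eta$, for \emph{any} probability measure $\eta$ on the model fiber, already projects onto $\bar\mu$ and is supported on $G_k(V)$ --- no selection theorem is needed. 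You instead keep the fiberwise isotropic Grassmannians as a measurably varying closed subset of the full Grassmannian $G_k(\R^r)$ and invoke Kuratowski--Ryll-Nardzewski to extract a measurable section, then push $\bar\mu$ forward along it. Your route has the minor advantage of not requiring the trivialization to put the (rank $2p$) restricted symplectic form into a standard fiberwise model --- a point the paper glosses over when it identifies all the isotropic Grassmannian fibers with a single fixed space --- at the cost of invoking a measurable selection theorem where an elementary product-measure construction suffices. Both constructions yield measures in $\Cal N^V_k(\mu)$; yours have Dirac conditionals on the fibers, the paper's have conditionals equal to a fixed $\eta$, and for the later applications (Lemma~\ref{lemma:inv_meas_Grass} and Theorem~\ref{th:Ann}) either family works.
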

\begin{proof}
Any Borel measurable bundle can be trivialized on the complement of a
subset  of  measure zero with respect to any given Borel measure. So,
there  exists  a  set  $\Cal E_g \subset \cH_g^{(1)}$ of full $\bar \mu$-measure
such   that   the   restriction  $V\vert_{\Cal  E_g}$  is  measurably
isomorphic  to  the  product  bundle $\Cal E_g \times \R^r$. It follows
that the restriction $G_k(V)\vert_{\mathcal E_g}$ of the Grassmannian
bundle of isotropic subspaces is isomorphic to the product bundle $\Cal
E_g \times G_k(\R^r)$. By definition,  for any Borel probability measure
$\eta$  on  $G_k(\R^r)$, the product measure $\bar\mu \times \eta$
on the space $\Cal E_g \times G_k(\R^r)$ induces (by push-forward under
the bundle isomorphism) a Borel probability measure $\nu$ on $G_k(V)$,
which projects onto $\bar \mu$ on $\cH_g^{(1)}$ under the canonical projection.
\end{proof}
Note that we do not assume that the measure $\nu\in\Cal N^V_k(\mu)$ constructed above
is flow-invariant. Let  $\Cal  I^V_k(\mu)=\Cal I_k(\mu) \cap \Cal N^V_k(\mu) $ be
the subset of those measures  in $\Cal N^V_k(\mu)$ that are invariant with respect to the
restriction of the Kontsevich--Zorich  cocycle  $\{G^{KZ}_t\}$  to  $V$.
We are going to show that
the  set  $\Cal  I^V_k(\mu)$  is  non-empty  whenever  the  set $\Cal
N^V_k(\mu)$ is non-empty.

For any $\nu \in \Cal N_k(\mu)$, let $I(\nu)\subseteq\Cal N_k(\mu)$
be  the  set of all \textit{weak limits} of the family of probability
measures
\begin{equation}
\label{eq:I(nu)}
\left\{ \nu_T:=\frac{1}{T} \int_0^T
   (G^{KZ}_t )_\ast (\nu)  dt\,\Big\vert\,\, T>0\right\}
\end{equation}
in the space of all Borel probability measures on the locally compact space
$G_k(H^1_\R)$.  Here by a ``weak limit'' we mean any limit in the weak-star topology
along some diverging sequence of positive times $T_1,T_2, \dots, T_n, \dots$.
Of course, in general distinct sequences may lead to different weak limits.

\begin{lemma}
\label{lemma:inv_meas_Grass}
For  any  $\nu  \in \Cal N^V_k(\mu)$, the set $I(\nu)$ is a non-empty
compact  subset  of the set $\Cal I^V_k(\mu)$ of probability measures
in $\Cal N^V_k(\mu)$ which are invariant under the Kontsevich--Zorich
cocycle.
\end{lemma}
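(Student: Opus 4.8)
The plan is to run the standard Krylov--Bogolyubov averaging argument, while keeping track of the two extra constraints that must survive in the limit: the measures must stay supported on the (flow-invariant) Grassmannian subbundle $G_k(V)$, and they must still project to $\mu$ on $\cH_g^{(1)}$.

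First I would check non-emptiness of $I(\nu)$. Since $\nu\in\Cal N^V_k(\mu)$ projects to $\mu$ on the \emph{locally compact} space $\cH_g^{(1)}$, and $\mu$ is a genuine probability measure, the family $\{\nu_T\}_{T>0}$ defined in~\eqref{eq:I(nu)} is \emph{tight}: each $\nu_T$ projects to $\mu$ (the projection commutes with the $G^{KZ}_t$-flow, and $\mu$ is flow-invariant, so $(G^{KZ}_t)_\ast\nu$ still projects to $\mu$, and the time-average does too), and the fibers $G_k(H^1_\R)_\omega$ are compact Grassmannian manifolds; hence a large compact set in the base carrying most of $\mu$ pulls back to a large compact set in $G_k(H^1_\R)$ carrying most of each $\nu_T$. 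By Prokhorov's theorem, along some sequence $T_n\to+\infty$ the measures $\nu_{T_n}$ converge weakly to some probability measure $\nu_\infty$; thus $I(\nu)\neq\emptyset$. The same tightness argument shows every element of $I(\nu)$ is a probability measure (no mass escapes to infinity), and $I(\nu)$ is closed in the weak-star topology (a weak limit of a sequence of weak limits of $\{\nu_T\}$ is again such a limit by a diagonal argument), hence compact.

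Next I would verify the three defining properties of $\Cal I^V_k(\mu)$ for any $\nu_\infty\in I(\nu)$. \emph{Flow-invariance}: this is the classical computation — for fixed $s$, $(G^{KZ}_s)_\ast\nu_T - \nu_T = \frac1T\big(\int_T^{T+s} - \int_0^s\big)(G^{KZ}_t)_\ast\nu\,dt$, which has total variation $\le 2s/T\to 0$; passing to the limit along $T_n$ gives $(G^{KZ}_s)_\ast\nu_\infty=\nu_\infty$, so $\nu_\infty\in\Cal I_k(\mu)$. \emph{Projection to $\mu$}: each $\nu_T$ projects to $\mu$ and the projection map is continuous, so $\nu_\infty$ projects to $\mu$; in particular $\nu_\infty\in\Cal N_k(\mu)$ (projection absolutely continuous — in fact equal — to $\mu$). \emph{Support in $G_k(V)$}: here one uses that $V\subset H^1_\R$ is flow-invariant, hence $G_k(V)$ is a flow-invariant (measurable) subset of $G_k(H^1_\R)$; since $\nu$ is essentially supported on $G_k(V)$, so is each $(G^{KZ}_t)_\ast\nu$, hence so is $\nu_T$ for every $T$, hence (the set of measures essentially supported on a fixed measurable subbundle being closed under weak limits, using that it is defined by $\int \mathbf 1_{G_k(V)^c}\,d(\cdot)=0$ and that $G_k(V)$ can be taken Borel after discarding a $\mu$-null set) so is $\nu_\infty$. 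Therefore $\nu_\infty\in\Cal N^V_k(\mu)\cap\Cal I_k(\mu)=\Cal I^V_k(\mu)$, and combining with the previous paragraph, $I(\nu)$ is a non-empty compact subset of $\Cal I^V_k(\mu)$.

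The main subtlety — and the only place where care beyond the textbook Krylov--Bogolyubov argument is needed — is the stability of the support condition ``essentially supported on $G_k(V)$'' under weak limits, because $G_k(V)$ is only a \emph{measurable} (not closed) subbundle: a weak limit of measures supported on a Borel set need not be supported on that set. The fix is to trivialize as in the proof of Lemma~\ref{lemma:bundle_meas}: on a set $\Cal E_g$ of full $\mu$-measure, $G_k(V)\vert_{\Cal E_g}\cong \Cal E_g\times G_k(\R^r)$ sits inside $G_k(H^1_\R)\vert_{\Cal E_g}\cong\Cal E_g\times G_k(\R^{2g})$ fiberwise-closedly after a measurable change of frame; since every $\nu_T$ and $\nu_\infty$ projects to $\mu$ and hence gives full mass to $\Cal E_g$, the condition becomes a fiberwise-closed condition $\mu$-a.e., which \emph{is} preserved under weak limits of the disintegrations. (Alternatively, one observes that ``essentially supported on $G_k(V)$'' for a measure projecting to $\mu$ is equivalent to the vanishing of a fixed nonnegative measurable function's integral, and lower semicontinuity of this integral functional under weak convergence — combined with the reverse inequality from each $\nu_{T_n}$ giving it value $0$ — forces the limit value to be $0$ as well.) Once this point is handled, the rest is routine.
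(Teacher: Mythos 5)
You correctly reduce the lemma to the standard Krylov--Bogolyubov averaging scheme and you correctly isolate the one genuinely delicate point: that ``essentially supported on $G_k(V)$'' is a condition on a merely \emph{measurable} subbundle and is therefore not automatically stable under weak-star limits. Unfortunately, neither of the two fixes you offer for this point is valid, so the proof has a genuine gap exactly where you flagged the subtlety. The trivialization argument fails because a measurable trivialization of $V$ is only a measurable change of frame: pushing forward by a merely measurable map is not continuous for the weak-star topology, so the fact that $G_k(V)$ becomes ``$\Cal E_g\times G_k(\R^r)$, fiberwise closed'' in the new coordinates says nothing about weak limits taken in the original topology. Moreover, the phrase ``preserved under weak limits of the disintegrations'' presupposes that weak convergence $\nu_{T_n}\to\hat\nu$ forces the fiberwise conditional measures to converge, which is false in general (mass can migrate between nearby fibers). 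The alternative argument fails as well: $\nu\mapsto\int \mathbf 1_{G_k(V)^c}\,d\nu$ (or the integral of the fiberwise distance to $G_k(V)_\omega$) is lower semicontinuous for weak convergence only when the integrand is lower semicontinuous and bounded below; for a merely measurable, non-closed set the functional can jump up in the limit (e.g.\ $f=\mathbf 1_{\{x_0\}}$, $\delta_{x_0+1/n}\to\delta_{x_0}$), which is precisely the failure mode one must rule out here. The paper closes this gap with Luzin's theorem: one chooses compact sets $K_\epsilon\subset\cH_g^{(1)}$ of measure $>1-\delta(\epsilon)$ on which $V$ is \emph{continuous}, so that $G_k(V)\vert_{K_\epsilon}$ is a genuinely compact (hence closed) subset of the total space $G_k(H^1_\R)$; then each $\nu_T$ gives it mass $\geq 1-\epsilon$, and the portmanteau inequality for closed sets (equivalently, testing against continuous $\phi\geq\mathbf 1_{G_k(V)\vert_{K_\epsilon}}$) transfers this bound to every weak limit $\hat\nu$, whence $\hat\nu(G_k(V))=1$. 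Some regularization of this kind is unavoidable; fiberwise closedness of $G_k(V)_\omega$ alone does not make the total set closed.

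A secondary imprecision: you repeatedly assert that $\nu$ and each $\nu_T$ project to $\mu$, but by definition $\nu\in\Cal N^V_k(\mu)$ only projects to some $\bar\mu\ll\mu$, and $\nu_T$ projects to $\frac1T\int_0^T(G_t)_\ast\bar\mu\,dt$. The correct statements are that these projections are uniformly absolutely continuous with respect to $\mu$ (with a $\delta(\epsilon)$ independent of $t$, by flow-invariance of $\mu$), which is what both the tightness argument and the Luzin step actually require, and that they converge weakly to $\mu$ by the Birkhoff ergodic theorem, which is how the paper shows the limit is a probability measure projecting onto $\mu$. The remaining steps of your proposal (existence of weak limits from compactness of the Grassmannian fibers, the $2s/T$ total-variation estimate for flow-invariance, closedness of $I(\nu)$) match the paper's argument and are fine.
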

\begin{proof}
The  fiber  of  the  Grassmannian  bundle  $G_k(H^1_\R)$ is the space
$G_k(\R^{2g})$  of  all  isotropic  subspaces  of  dimension $k$ in a
symplectic  space  of  dimension $2g$, so that the fiber is a compact
manifold.  Also,  the  full  Grassmannian  bundle  $G_k(H^1_\R)$ is a
\textit{continuous} bundle. Thus, the space of all Borel measures
on $G_k(H^1_\R)$ of finite total mass is a Montel space (in the
sense that all closed bounded sets are compact) with respect to
the weak-star topology. Hence,  for any diverging sequence of (positive)
times $(T_n)$ we can extract from the sequence $\{\nu_{T_n}\}$ of measures in
$\Cal  N_k(\mu)$ a  converging  subsequence. The limit
measure is a probability measure since, by the Birkhoff ergodic theorem,
the projection of the sequence $\{\nu_{T_n}\}$ under the canonical
projection converges weakly to the flow-invariant ergodic probability
measure $\mu$ on $\cH_g^{(1)}$. It follows that the  subset
$I(\nu)$  of  $\Cal  I_k(\mu)$ is nonempty.  Since  the set of all accumulation
points of any given set in  a  topological space  is closed and the set
$\Cal  I_k(\mu)$ is compact, we get that $I(\nu)$ is a nonempty compact  subset
of  $\Cal  I_k(\mu)$. As  the flow $\{G_t^{KZ}\}$  is  continuous on $G_k(H^1_\R)$,
by the usual (relative) Bogolyubov--Krylov  argument (see, e.g., page 135 of the book \cite{HK}),  one  has  that any measure
$\hat\nu\in I(\nu)$  is  $G^{KZ}_t$-invariant. We reproduce the argument below
for the convenience of the reader. For any (fixed) $s\in \R$ and for all $T>0$, we have
$$
\begin{aligned}
(G^{KZ}_s)_\ast (\nu_T) -\nu_T &= \frac{1}{T}\left( \int_0^T (G^{KZ}_{t+s})_\ast(\nu)dt
- \int_0^T (G^{KZ}_t)_\ast (\nu)dt \right)\\
&= \frac{1}{T}\left(\int_s^{T+s} (G^{KZ}_t)_\ast(\nu)dt
- \int_0^T (G^{KZ}_t)_\ast(\nu)dt \right) \\
&=  \frac{1}{T}\left(\int_T^{T+s} (G^{KZ}_t)_\ast(\nu)dt  - \int_0^s (G^{KZ}_t)_\ast(\nu)dt \right) \,.
\end{aligned}
$$
It follows that (for fixed $s\in \R$) the total mass $\Vert (G^{KZ}_s)_\ast (\nu_T) -\nu_T \Vert$ of
the signed measure
$(G^{KZ}_s)_\ast (\nu_T) -\nu_T$ converges to  zero as $T\to \infty$. In fact,
$$
\Vert (G^{KZ}_s)_\ast (\nu_T) -\nu_T \Vert \leq \frac{2s}{T}  \to 0\,.
$$
Let then $\{\nu_{T_n}\}$ be a sequence converging weakly to a measure $\hat \nu\in I(\nu)$.
Since  the map $G^{KZ}_s$ is continuous on $G_k(H^1_\R)$, the sequence
$\{(G^{KZ}_s)_\ast(\nu_{T_n})\}$ converges weakly to $(G^{KZ}_s)_\ast(\hat \nu)$, hence,
for all $s\in \R$, we have
$$
(G^{KZ}_s)_\ast (\hat \nu) -\hat \nu =\lim_{n\to+\infty}  (G^{KZ}_s)_\ast (\nu_{T_n}) -\nu_{T_n} = 0\,.
$$
We conclude that any measure $\hat\nu\in I(\nu)$  is  $\{G^{KZ}_t\}$-invariant  as stated.

It  remains  only  to show that $I(\nu)\subset \Cal N^V_k(\mu)$, that is,  any  measure
$\hat\nu\in  I(\nu)$  is essentially supported on $G_k(V)$, in the sense that $\hat\nu(G_k(V))=1$.

Let $\bar{\mu}$ denote the projection of $\nu\in\Cal N_k(\mu)$ on $\Cal H_g$. By definition, $\bar{\mu}$ is absolutely continuous with respect to $\mu$: in particular, given $\varepsilon>0$, we can choose $\delta(\varepsilon)>0$ such that $\mu(A)<\delta(\varepsilon)$ implies $\bar{\mu}(A)<\varepsilon$ for all measurable $A\subset\Cal H_g$. On the other hand, by Luzin's theorem (see, e.g., page 2 of the book \cite{Mane}), given $\epsilon>0$, we can fix $K_\epsilon\subset\cH_g^{(1)}$ a compact subset such that $\mu(K_\epsilon)>1-\delta(\epsilon)$ and $V|_{K_{\epsilon}}$ is a \textit{continuous} subbundle of the measurable bundle $V$. In particular, $G_k(V)|_{K_{\epsilon}}$ is a compact subset of $G_k(H^1_\R)$. Let $\phi$ be any real-valued continuous function on $G_k(H^1_\R)$ such that $0\leq \phi\leq 1$ and $\phi$ is identically
equal to $1$ on $G_k(V)|_{K_{\epsilon}}$. Since, by definition, $\nu\in\Cal N^V_k(\mu)$ means that $\nu$ is supported on $G_k(V)$ and it projects to $\bar{\mu}\ll\mu$ on $\cH_g^{(1)}$, it follows from our choice of $\delta(\varepsilon)>0$ above that, for any $T>0$,
$$\int_{G_k(H^1_\R)} \phi d\nu_T \geq 1-\epsilon\,.$$
Hence, for any weak limit $\hat\nu\in I(\nu)$, one has
$$\int_{G_k(H^1_\R)} \phi d\hat\nu\geq 1-\epsilon\,.$$
Because this holds for every $\phi$ as above, we conclude that
$$\hat\nu\left(G_k(V)|_{K_{\epsilon}}\right) \geq 1-\epsilon$$
and hence $\hat\nu(G_k(V))=1$, as claimed.
\end{proof}

\smallskip
For any measure $\nu \in \Cal N^V_k(\mu)$ we define below the average
Lyapunov  exponent  $\Lambda^{(k)}(\nu)$ over the Grassmannian bundle
$G_k(V)$. Let us consider  an  isotropic subspace $I$ in the fiber of the Hodge
bundle over  some  point  $\omega \in \cH_g^{(1)}$.  Let  $\{c_1, \dots, c_k\}$ and
$\{c'_1, \dots,  c'_k\}$  be  a  pair  of  bases  in it. Let us consider the parallel
transport of these vectors to a neighborhood of $\omega$. Clearly, at
any point of the neighborhood of $\omega$ the polyvectors $c_1 \wedge
\dots  \wedge  c_k$  and  $c'_1  \wedge  \dots  \wedge  c'_k$  remain
proportional  with  the  same  constant coefficient. Hence, the Hodge
norms  of  these  polyvectors are proportional with the same constant
coefficient.  This  implies,  in  particular,  that  the  logarithmic
derivative  of the Hodge norm of a polyvector along the Teichm\"uller
geodesic  flow,  depends only on the isotropic subspace $I$,
$$
\Cal L \log  \Vert c_1 \wedge \dots \wedge c_k  \Vert_\omega=
\Cal L \log  \Vert c'_1 \wedge \dots \wedge c'_k  \Vert_\omega\,.
$$
Thus,  slightly  abusing  notations,  in the following we  shall  sometimes denote the
logarithmic derivative as above by
$$
\frac{d\log\| G^{KZ}_t(\omega, I) \|}{dt} \vert_{t=0}:=\Cal L\log\|I\|_\omega:=
\Cal L \log  \Vert c_1 \wedge \dots \wedge c_k  \Vert_\omega\,.
$$
The  \mbox{Oseledets} theorem establishes that, for $\mu$-almost every Abelian
differential $\omega \in \cH_g^{(1)}$ on a Riemann surface $S$ and for every polyvector
$c_1\wedge\dots\wedge c_k$ in $\Lambda^k(H^1(S,\R))$, the Lyapunov exponent
$$
\lambda^\mu_\omega(c_1, \dots,c_k) := \lim_{T\to +\infty}
\frac{1}{T}  \log \Vert  G^{KZ}_T (c_1\wedge\dots\wedge c_k)\Vert
$$
is  well defined. Let us assume that the vectors $c_1,\dots,c_k$ form a
basis  of  an  isotropic subspace $I_k\subset V_\omega$, and that the
norm   is   the   Hodge   norm.   Then, according to the above discussion
about the logarithmic derivative of the Hodge norm of an isotropic
subspace, for $\mu$-almost all $\omega\in \cH_g^{(1)}$, the Lyapunov exponent
$\lambda_\omega^\mu(c_1, \dots,c_k)$ depends only on the
isotropic subspace $I_k\subset V$ and can be written as follows:
\begin{multline*}
\lim_{T\to +\infty}
\frac{1}{T}  \log \Vert  G^{KZ}_T (c_1\wedge\dots\wedge c_k)\Vert\\
=\lim_{T\to +\infty}
\frac{1}{T} \int_0^T
\frac{d}{dt}\log \Vert  G^{KZ}_t (c_1\wedge\dots\wedge c_k)\Vert\,dt\\
=\lim_{T\to +\infty}
\frac{1}{T} \int_0^T
\frac{d}{dt}\log \Vert  G^{KZ}_t (\omega,I_k)\Vert\,dt\,.
\end{multline*}
By  Corollary~\ref{cor:universal_2}  the  function
$|\frac{d}{dt}\log\Vert   G^{KZ}_t   (\omega,I_k)\Vert|$
is  bounded  above by  $k\in \N$ for any point $(\omega,I_k)$  of the Grassmannian
$G_k(H^1_\R)$.   Hence,   for   any  $(\omega,I_k) \in G_k(H^1_\R)$
 and any $T>0$, we get the following uniform estimate:

\begin{equation}
\label{eq:uniform:estimate:integral}
-k\le \frac{1}{T} \int_0^T
\frac{d}{dt}\log \Vert  G^{KZ}_t (\omega,I_k)\Vert\,dt\le k\,.
\end{equation}

By averaging  the Lyapunov  exponent  $\lambda_\omega^\mu(c_1, \dots,c_k)$ over $G_k(V)$
with respect to measure $\nu\in\mathcal N^V_k(\mu)$, we define the \textit{average Lyapunov exponent}:

\begin{equation}
\begin{aligned}
\label{eq:av_Lyap_exp}
\Lambda^{(k)}(\nu)&:=\int_{G_k(V)} \lim_{T\to +\infty} \left(\frac{1}{T} \int_0^T
\frac{d}{dt}\log \Vert  G^{KZ}_t (\omega,I_k)\Vert\,dt \right)d\nu\\
&=\lim_{T\to +\infty}\int_{G_k(V)}\left(\frac{1}{T} \int_0^T
\frac{d}{dt}\log \Vert  G^{KZ}_t (\omega,I_k)\Vert\,dt\right) d\nu\\
&=\lim_{T\to +\infty}\frac{1}{T}\int_0^T\int_{G_k(V)}
\frac{d}{dt}\log \Vert  G^{KZ}_t (\omega, I_k)\Vert  d\nu dt \,.
\end{aligned}
\end{equation}

Note that  the interchange of the limit with the integral and the change in the
order of integration (Fubini theorem) in formula \eqref{eq:av_Lyap_exp} are
justified by the uniform upper bound established above in
formula~\eqref{eq:uniform:estimate:integral}. Also, note that the definition of $\Lambda^{(k)}(\nu)$ doesn't assume the flow-invariance of $\nu\in \mathcal N^V_k(\mu)$.

\smallskip
Suppose  now  that  $\mu$  is  an ergodic $\SL$-invariant probability
measure  on  $\cH_g^{(1)}$ (and not just flow-invariant as above).
 Let  $V\subset  H^1_\R$  be  any $\SO$-invariant measurable subbundle
defined  $\mu$-almost everywhere. A measure $\nu \in \Cal N^V_k(\mu)$
will  be called \textit{$\SO$-invariant} if it is invariant under the
natural  lift  of  the  action of the group $\SO$ to the Grassmannian
bundle $G_k(V)$. The subset $\Cal O^V_k(\mu) \subset \Cal N^V_k(\mu)$
consisting  of  $\SO$-invariant  probability  measures  is  non-empty
whenever  $\Cal  N^V_k(\mu)$  is.  In  fact,  since by assumption the
measure   $\mu$   is   $\SL$-invariant   and   the   bundle   $V$  is
$\SO$-invariant,  and since $\SO$ is an amenable (compact) group, the
$\SO$-average  of  any  probability measure in $\Cal N^V_k(\mu)$ is a
well-defined probability measure in $\Cal O^V_k(\mu)$.

\begin{theorem}
\label{thm:partialsum} Let $\mu$ be any $\SL$-invariant Borel probability ergodic
measure on the moduli space $\cH_g^{(1)}$ of normalized Abelian differentials. Let
$V\subset H^1_\R$ be any $\SL$-invariant measurable subbundle defined $\mu$-almost everywhere.
For any  $\SO$-invariant probability measure $\nu \in {\Cal O}^V_k(\mu)$, the following formula holds:
\begin{equation}
\label{eq:partial:sum:V}
\Lambda^{(k)}(\nu)   =
 \int_{G_k(V)}  \Phi_k(\omega, I_k ) d \hat \nu  \,, \quad \text{ \rm for any }
 \, \hat \nu \in I(\nu)\,.
\end{equation}
\end{theorem}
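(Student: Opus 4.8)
The plan is to split the proof into an elementary transport step and a genuinely geometric step, the latter being where the $\SL$-invariance of $\mu$ and $V$ and the $\SO$-invariance of $\nu$ enter.

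\smallskip
\noindent\emph{Step 1: reduction to the first variation.} First I would rewrite $\Lambda^{(k)}(\nu)$ as the $\hat\nu$-integral of the first variation of the Hodge norm. Put $\psi(\omega,I_k):=\frac{d}{dt}\big|_{t=0}\log\Vert G^{KZ}_t(\omega,I_k)\Vert=\Cal L\log\Vert I_k\Vert_\omega$, a continuous function on $G_k(H^1_\R)$ with $|\psi|\le k$ by Corollary~\ref{cor:universal_2}. Using Fubini (legitimate thanks to \eqref{eq:uniform:estimate:integral}), for every $T>0$,
\[
\frac1T\int_0^T\!\!\int_{G_k(V)}\frac{d}{dt}\log\Vert G^{KZ}_t(\omega,I_k)\Vert\,d\nu\,dt=\int\psi\,d\nu_T ,\qquad \nu_T:=\frac1T\int_0^T(G^{KZ}_t)_\ast\nu\,dt .
\]
Letting $T=T_n\to\infty$ with $\nu_{T_n}\to\hat\nu$ weakly-$\ast$ (by Lemma~\ref{lemma:inv_meas_Grass} these limits are exactly the elements of $I(\nu)$, and each is flow-invariant, carried by $G_k(V)$, and projects to $\mu$), continuity and boundedness of $\psi$ and the definition~\eqref{eq:av_Lyap_exp} give $\Lambda^{(k)}(\nu)=\int_{G_k(V)}\psi\,d\hat\nu$ for every $\hat\nu\in I(\nu)$ (the time-limit in \eqref{eq:av_Lyap_exp} exists, so all subsequential limits agree).

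\smallskip
\noindent\emph{Step 2: from the first variation to $\Phi_k$ by $\SO$-averaging.} It remains to prove $\int_{G_k(V)}\psi\,d\hat\nu=\int_{G_k(V)}\Phi_k\,d\hat\nu$. Since along each $\SL$-orbit the Kontsevich--Zorich cocycle is trivial, the $\SO$-action there reads $(\omega,I_k)\mapsto(r_\theta\omega,I_k)$ with $r_\theta$ fixing the cohomology subspace, and by Remark~\ref{rm:SO:invariant} the restriction of $\log\Vert I_k\Vert_{(\cdot)}$ to an $\SL$-orbit is an $\SO$-invariant function, hence a function $\bar\Psi_p$ on the Teichm\"uller disk through $[\omega]$ with $\triangle\bar\Psi_p=2\Phi_k$ by Lemma~\ref{lemma:var2}. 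On that disk the Teichm\"uller flow is the unit-speed geodesic flow and $r_\theta$ rotates the geodesic direction by $2\theta$, so $\psi\big(G^{KZ}_t(r_\theta p)\big)=\frac{d}{ds}\big|_{s=t}\bar\Psi_p(\gamma_{p,\theta}(s))$, where $\gamma_{p,\theta}$ is the geodesic from $[\omega]$ in the direction of $\omega$ rotated by $2\theta$. Averaging over $\theta$ turns the left side into the $t$-derivative of the spherical mean of $\bar\Psi_p$ over the circle of radius $t$; the hyperbolic divergence theorem (curvature $-4$, with $h(s)=\tfrac12\sinh(2s)$ the circumference density) together with $\triangle\bar\Psi_p=2\Phi_k$ then gives
\[
\frac1{2\pi}\int_0^{2\pi}\psi\big(G^{KZ}_t(r_\theta p)\big)\,d\theta=\frac{2}{h(t)}\int_0^{t}h(s)\,\Big(\frac1{2\pi}\int_0^{2\pi}\Phi_k\big(G^{KZ}_s(r_\theta p)\big)\,d\theta\Big)\,ds .
\]
Integrating in $p$ against $\nu$ and using $\SO$-invariance of $\nu$ on both sides yields, with $A(t):=\int\psi\,d((G^{KZ}_t)_\ast\nu)$ and $\Phi^\nu(s):=\int\Phi_k\,d((G^{KZ}_s)_\ast\nu)$, the identity $A(t)=\tfrac{2}{h(t)}\int_0^{t}h(s)\Phi^\nu(s)\,ds$.

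\smallskip
\noindent\emph{Step 3: Ces\`aro comparison and conclusion.} Finally, $\int\psi\,d\hat\nu=\lim_n\frac1{T_n}\int_0^{T_n}A(t)\,dt$ and $\int\Phi_k\,d\hat\nu=\lim_n\frac1{T_n}\int_0^{T_n}\Phi^\nu(t)\,dt$ (the second by weak-$\ast$ convergence and boundedness of $\Phi_k$, Lemma~\ref{lm:Fi:k}). Exchanging the order of integration, $\frac1T\int_0^T A(t)\,dt=\frac1T\int_0^T h(s)K(s,T)\Phi^\nu(s)\,ds$ with $K(s,T)=\int_s^T 2h(t)^{-1}\,dt$; an elementary estimate shows $h(s)K(s,T)$ is uniformly bounded on $\{0\le s\le T\}$ and tends to $1$ outside a window of bounded length near $s=T$ (for instance, if $\Phi^\nu\equiv L$ then $A(t)=L\tanh t\to L$). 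Since $|A|\le k$ and $|\Phi^\nu|\le\min(2k,g)$, it follows that $\frac1T\int_0^T(h(s)K(s,T)-1)\Phi^\nu(s)\,ds\to0$, hence $\int\psi\,d\hat\nu=\int\Phi_k\,d\hat\nu$; with Step~1 this is precisely \eqref{eq:partial:sum:V}.

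\smallskip
\noindent The main obstacle is Step~2: converting the pointwise second-variation identity $\triangle\log\Vert\cdot\Vert=2\Phi_k$ into an identity between integrals over the \emph{non-compact} moduli space. One cannot argue that ``the integral of a Laplacian vanishes'', because $\log\Vert I_k\Vert_\omega$ is unbounded (it diverges at the cusps); the device is to keep only the bounded functions $\psi$ and $\Phi_k$ in the final identity, to exploit the $\SO$-invariance of $\nu$ and the expanding-circle structure of $\hat\nu=\lim\nu_{T_n}$, and to track the curvature-$(-4)$ normalization---in particular the factor $2$ between the rotation parameter and the geometric angle---throughout the spherical-mean computation; all interchanges of limits and integrals rely on the uniform bounds of Corollary~\ref{cor:universal_2} and Lemma~\ref{lm:Fi:k}.
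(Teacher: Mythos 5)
Your proof is correct and follows essentially the same route as the paper's: Lemma~\ref{lemma:var2} plus Green's formula on the curvature $-4$ Poincar\'e disk to express the circle-averaged radial derivative as the weighted radial integral of $\Phi_k$, then $\SO$-invariance of $\nu$ and a Ces\`aro/concentration argument to pass to the weak limits $\hat\nu\in I(\nu)$. The only differences are organizational (isolating the identity $\Lambda^{(k)}(\nu)=\int\psi\,d\hat\nu$ as a separate step, and proving the Ces\`aro comparison via the explicit kernel $h(s)K(s,T)$ rather than the paper's uniform convergence of $\frac{\tanh t}{\sinh^2 t}\int_0^t\phi\circ G^{KZ}_\tau\,d(\sinh^2\tau)-\phi\circ G^{KZ}_t$ to zero), and both are sound.
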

\begin{proof} Let $D$ denote the Poincar\'e disk and let  $(t,\theta)\in D$  denote  the geodesic
polar coordinates on the Teichm\"uller   disk  $\SO\backslash  \SL\cdot  \omega$  centered  at
an Abelian differential $\omega\in \cH_g^{(1)}$ on a Riemann surface $S$, defined as follows.
Let $\SO:=\{R_\theta\,  \vert\,  \theta \in [0, 2\pi)\}$ and let
$$
(S_{(t,\theta)}, \omega_{(t,\theta)}):=   (G_t \circ R_\theta)(S,\omega) \,, \quad \text{ \rm for all } (t,\theta)
\in \R^+\times [0,2\pi)\,.
$$
For $\mu$-almost all Abelian differential $\omega\in \cH_g^{(1)}$ on a Riemann surface $S$ and for any  $k$-dimensional isotropic  subspace  $I_k  \subset  V_{\omega}$, let $\{c_1,  \dots,  c_k\} \subset I_k$
be any Hodge orthonormal basis  at  $(S,  \omega)$  and  let  $\Vert  c_1  \wedge \dots \wedge c_k\Vert_{(\omega,t,\theta)}$ denote the Hodge norm of the polyvector $c_1   \wedge  \dots  \wedge  c_k\in \Lambda^k(H^1(S,\R))$  at  the  point  $(S_{(t,\theta)}, \omega_{(t,\theta)})\in  \SO\backslash  \SL \cdot \omega$ of coordinates
$(t,\theta) \in D$.

From    the    variational    formulas    of Lemma~\ref{lemma:var2} for the hyperbolic Laplacian
of the Hodge norm of a polyvector on a Teichm\"uller disk, by the Green formula (or,  equivalently,
by explicit  integration  of the Poisson equation for the  hyperbolic Laplacian on the Poincar\'e disk $D$), we derive the  formula stated below (see formula $(5.10)$ in \cite{Forni2}).   Let $D_t$ denote the disk of hyperbolic  radius  $t>0$  centered  at  the origin of the Poincar\'e disk,  let  $\vert D_t\vert$ denote its hyperbolic area and let $\Cal A_P$  denote  the  Poincar\'e  area  form.  Let  us  also adopt  the
convention, introduced above, on the logarithmic derivative of the Hodge norm of isotropic subspaces.
We have

\begin{multline}
\label{eq:integral1}
\frac{1}{2\pi} \int_0^{2\pi}  \frac{\partial}{\partial t} \log
   \Vert (G^{KZ}_t \circ R_\theta) (\omega, I_k)  \Vert \,d\theta\\
   =\frac{1}{2\pi} \int_0^{2\pi}  \frac{\partial}{\partial t}
\log \Vert c_1 \wedge \dots \wedge c_k  \Vert_{(\omega,t,\theta)} \,d\theta \\
=\frac{\tanh t}{\vert D_t\vert} \int_{D_t}  (\Phi_k \circ G^{KZ}_\tau \circ R_\theta )
(\omega, I_k)\,  d\Cal A_P(\tau,\theta) \,.
\end{multline}

 Let  us  now integrate formula~\eqref{eq:integral1} over the Grassmannian
$G_k(V)$ with respect to the $\SO$-invariant probability measure $\nu
\in     \mathcal     O^V_k(\mu)$.     Note    that    by    Corollary
\ref{cor:universal_2} all the integrands are uniformly bounded, hence
it  is  possible to exchange the order of integrations.

On the left-hand side (LHS for short) of formula~\eqref{eq:integral1},  by the $\SO$-invariance of the measure $\nu$ on $G_k(V)$, we compute as follows:

\begin{equation}
\label{eq:LHS}
\begin{aligned}
&\int_{G_k(V)} \frac{1}{2\pi} \int_0^{2\pi}   \frac{\partial}{\partial t}
\log \Vert (G^{KZ}_t \circ R_\theta) (\omega, I_k)  \Vert \, d\theta \, d\nu \\
&=\frac{1}{2\pi} \int_0^{2\pi}  \int_{G_k(V)}  \frac{\partial}{\partial t}
\log \Vert (G^{KZ}_t \circ R_\theta) (\omega, I_k)  \Vert \, d\nu \, d\theta \\
& =  \frac{1}{2\pi}  \int_0^{2\pi}  \int_{G_k(V)}  \frac{d}{dt}
\log \Vert G^{KZ}_t  (\omega, I_k)  \Vert \,d\nu d\theta \\
& =  \int_{G_k(V)}
\frac{d}{dt}  \log \Vert G^{KZ}_t  (\omega, I_k)  \Vert \,d\nu  \,.
\end{aligned}
\end{equation}
On the right-hand side (RHS for short) of formula~\eqref{eq:integral1} we compute as follows. Let
us  recall  that  the  Poincar\'e  area form can be written as $d\Cal
A_P(\tau,\theta)  =  d(\sinh^2  \tau)  d\theta$  in geodesic polar
coordinates  $(\tau,\theta)  \in  \R^+  \times  [0,  2\pi)$ and, as a
consequence, the Poincar\'e area of the disk $D_t$ of geodesic radius
$t>0$ is $\vert D_t\vert= 2\pi \sinh^2 t$.

By taking into account the uniform bound~\eqref{eq:bound:for:Phi}
for $|\Phi_k(\omega, I_k)|$, and by the   above  elementary  formulas  of
hyperbolic  geometry, and  the $\SO$-invariance  of  the  measure  $\nu$
on $G_k(V)$, we proceed as follows:

\begin{equation}
\label{eq:RHS}
\begin{aligned}
&  \frac{1}{2\pi} \int_{G_k(V)}
\int_{D_t}  (\Phi_k \circ G^{KZ}_\tau\circ R_\theta )
(\omega, I_k)\,  d\Cal A_P(\tau,\theta)\,  d\nu\\
&=  \frac{1}{2\pi} \int_0^{2\pi}  \int_{G_k(V)}
\int_0^t (\Phi_k \circ G^{KZ}_\tau \circ R_\theta)
(\omega, I_k)  d(\sinh^2 \tau)\,  d\nu  d\theta\\
&=\int_{G_k(V)}  \int_0^t  (\Phi_k \circ G^{KZ}_\tau)
(\omega, I_k)  d(\sinh^2 \tau)\, d\nu\,.
\end{aligned}
\end{equation}

To sum up our computations so far, by  integration of formula~\eqref{eq:integral1} over the Grassmannian $G_k(V)$ with respect to the $\SO$-invariant probability measure $\nu \in
\mathcal O^V_k(\mu)$ we have:
\begin{multline}
\label{eq:integral2}
 \int_{G_k(V)} \, \frac{d}{dt} \log \Vert G^{KZ}_t (\omega, I_k) \Vert \, d \nu\\
  =\frac{\tanh t}{\sinh^2 t}\, \int_{G_k(V)}  \int_0^t \Phi_k \circ G^{KZ}_\tau
 \, d (\sinh^2 \tau)\, d\nu    \,.
\end{multline}
Let us then average the above formula over the interval $[0,T]\subset
\R$  and take the limit as $T \to +\infty$. The average of the LHS of
formula~\eqref{eq:integral2}  converges, by the definition in formula
\eqref{eq:av_Lyap_exp},  to  the  average  Lyapunov  exponent  of the
$\SO$-invariant measure $\nu \in \mathcal O^V_k(\mu)$, that is,
\begin{equation}
 \label{eq:LHS_lim}
\Lambda^{(k)} (\nu) =
\lim_{T\to +\infty} \frac{1}{T} \int_0^T \int_{G_k(V)} \frac{d}{dt}
\log \Vert G^{KZ}_t (\omega, I_k) \Vert \, d \nu\, dt \,.
\end{equation}
We claim that for any probability measure $\hat \nu \in I(\nu)$ there
exists  a  diverging  sequence  $\{T_n\}$  such that the average over
$[0,T_n]$  of  the RHS  of  formula~\eqref{eq:integral2} converges to the
integral
\begin{equation}
\label{eq:RHS_lim}
 \int_{G_k(V)} \Phi_k (\omega,I_k)  d\, \hat \nu\,,
\end{equation}
so that, taking into account the limit in formula~\eqref{eq:LHS_lim},
the  theorem  follows  from  formula  \eqref{eq:integral2}. The above
claim  is  proved as follows. For any continuous function $\phi$ with
compact support on $G_k(H^1_\R)$, the function
$$
 \frac{\tanh t}{\sinh^2 t}    \int_0^t \phi \circ G^{KZ}_\tau
  d (\sinh^2 \tau)   -  \phi \circ  G^{KZ}_t
$$
converges to zero uniformly as $t\to + \infty$. In fact, the hyperbolic tangent
converges  to $1$, the function $\phi$ is  uniformly
continuous, and for any $\epsilon >0$, the mass assigned by
 the probability measure $d(\sinh^2 \tau)/\sinh^2 t$ over $[0,t]$  to the interval $[0, t-\epsilon]$
converges to zero as $t\to +\infty$.

 It follows that the measure
$$
\frac{1}{T} \int_0^T \frac{\tanh t}{\sinh^2 t}    \int_0^t ( G^{KZ}_\tau)_\ast (\nu)
  d (\sinh^2 \tau) \, - \,  \frac{1}{T} \int_0^T  ( G^{KZ}_t)_\ast (\nu)
$$
converges to zero weakly as $T\to +\infty$.
Thus for any $\hat \nu \in I(\nu)$  there exists a diverging sequence $\{T_n\}$ such that the
sequence of measures
 $$
 \frac{1}{T_n} \int_0^{T_n} \frac{\tanh t}{\sinh^2 t}    \int_0^t ( G^{KZ}_\tau)_\ast (\nu)
  d (\sinh^2 \tau)
 $$
 converges weakly to the measure $\hat \nu$ on $G_k(H^1_\R)$, essentially supported on
 $G_k(V) \subset G_k(H^1_\R)$. Since the function $\Phi_k$ is continuous
 and bounded on $G_k(H^1_\R)$ it follows that the average over $[0,T_n]$ of  the RHS of
 formula~\eqref{eq:integral2} converges to the integral in formula~\eqref{eq:RHS_lim},
 as claimed, and the proof of the theorem is completed.
  \end{proof}

For any $k\in \{1, \dots, r-1\}$ such that $\lambda_k^{V,\mu}  > \lambda_{k+1}^{V,\mu}$, let $j(k)$ be the index such that $\lambda_{(j(k))}^{V,\mu}= \lambda_k^{V,\mu}$. Let us define
$$
V^+_k := V^\mu_1 \oplus \dots \oplus V^\mu_{j(k)}\,.
$$
In general, the subbundle $V^+_k$ does not need to be a  bundle of isotropic
subspaces, that is, a measurable section of the Grassmannian $G_k(V)$. However,  note
that if $\lambda_{k+1}^{V,\mu}\geq 0$, then the bundle $V^+_k$  is a subbundle of the unstable Oseledets bundle which is isotropic (since the Kontsevich--Zorich cocycle is symplectic),
hence it is itself isotropic.

\begin{corollary}
\label{cor:partialsum1}
Let $\mu$ be any $\SL$-invariant Borel probability ergodic measure on the moduli space $\cH_g^{(1)}$ of normalized Abelian differentials. Let $V\subset H^1_\R$ be any $\SL$-invariant measurable subbundle
defined $\mu$-almost everywhere.  Assume that there exists $k\in\{1, \dots, \text{dim}(V)-1\}$
such that $\lambda_k^{V,\mu}  > \lambda_{k+1}^{V,\mu}$ and that the subbundle
$V^+_k$ is a bundle of $k$-dimensional isotropic subspaces (that is, it defines a measurable section of the Grassmanian $G_k(V)$). Then the following formula holds:
\begin{equation}
\label{eq:partial:sum:equals:Pi:k1}
\lambda^{V,\mu}_1 + \dots +\lambda^{V,\mu}_k =
 \int_{\cH_g^{(1)}}  \Phi_k\left(\omega,V^+_k(\omega)\right) d \mu(\omega) \,.
\end{equation}
\end{corollary}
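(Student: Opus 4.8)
The plan is to deduce this from Theorem~\ref{thm:partialsum} applied to the $\SO$-average of the probability measure carried by the section $V^+_k$ of the Grassmannian bundle $G_k(V)$. Since $V^+_k$ is by hypothesis a measurable section of $G_k(V)$ and, being the sum $V^\mu_1\oplus\dots\oplus V^\mu_{j(k)}$ of Oseledets subbundles, is flow-invariant, the push-forward $\nu_0:=(V^+_k)_\ast\mu$ of $\mu$ under $\omega\mapsto(\omega,V^+_k(\omega))$ is a flow-invariant measure in $\mathcal N^V_k(\mu)$. Because $\mu$ is $\SL$-invariant (hence $\SO$-invariant) and $V$ is $\SL$-invariant, averaging $\nu_0$ over the compact group $\SO$ produces a measure $\nu\in\mathcal O^V_k(\mu)$, and Theorem~\ref{thm:partialsum} gives $\Lambda^{(k)}(\nu)=\int_{G_k(V)}\Phi_k(\omega,I_k)\,d\hat\nu$ for every $\hat\nu\in I(\nu)$.

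The first thing to check is that the left-hand side equals $\lambda^{V,\mu}_1+\dots+\lambda^{V,\mu}_k$. By the definition~\eqref{eq:av_Lyap_exp} of $\Lambda^{(k)}$, and since a polyvector spanning the isotropic subspace $V^+_k(\omega)$ with respect to a Hodge-orthonormal basis has Hodge norm $1$, one has $\Lambda^{(k)}(\nu_0)=\int_{\cH_g^{(1)}}\lambda^\mu_\omega\,d\mu(\omega)$, where $\lambda^\mu_\omega$ denotes the Lyapunov exponent of that polyvector. Oseledets' theorem for the $k$-th exterior power of the restriction of the cocycle to $V$, together with the fact that $\lambda^{V,\mu}_k>\lambda^{V,\mu}_{k+1}$ forces $V^+_k$ to be exactly the sum (of total dimension $k$) of the Oseledets subbundles of $V$ with exponent $\ge\lambda^{V,\mu}_k$, shows this exponent is $\lambda^{V,\mu}_1+\dots+\lambda^{V,\mu}_k$ for $\mu$-a.e.\ $\omega$; hence $\Lambda^{(k)}(\nu_0)=\lambda^{V,\mu}_1+\dots+\lambda^{V,\mu}_k$. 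One then has to see that the $\SO$-average does not change this value: since the underlying Riemann surface is unchanged along $\SO$-orbits (cf.\ Remark~\ref{rm:SO:invariant}), the $\SO$-action on the fibers of $H^1_\R$ along such orbits is trivial, so the fiber of $\nu$ over $\omega$ is the average over $\theta$ of the Dirac masses at $V^+_k(R_{-\theta}\omega)$, and one must check that for $\mu$-a.e.\ $\omega$ and a.e.\ $\theta$ this $k$-plane still realizes the maximal polyvector growth rate $\lambda^{V,\mu}_1+\dots+\lambda^{V,\mu}_k$ along the Teichm\"uller geodesic through $\omega$.

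For the right-hand side I would show $\nu_0\in I(\nu)$: under the forward cocycle an arbitrary $k$-plane in $V$ is attracted to the unstable Oseledets flag $V^+_k$, so the fibers of $(G^{KZ}_t)_\ast\nu$ converge (for $\mu$-a.e.\ base point) to $\delta_{V^+_k(\cdot)}$ as $t\to+\infty$, whence $\frac1T\int_0^T(G^{KZ}_t)_\ast\nu\,dt$ converges weakly to $\nu_0$ along a suitable sequence $T_n\to+\infty$. As $\Phi_k$ is continuous and bounded on $G_k(H^1_\R)$, the identity of Theorem~\ref{thm:partialsum} with $\hat\nu=\nu_0$ then reads $\Lambda^{(k)}(\nu)=\int_{G_k(V)}\Phi_k\,d\nu_0=\int_{\cH_g^{(1)}}\Phi_k\bigl(\omega,V^+_k(\omega)\bigr)\,d\mu(\omega)$, which together with the previous step is formula~\eqref{eq:partial:sum:equals:Pi:k1}. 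The main obstacle is the transversality statement shared by the last two paragraphs --- that for $\mu$-a.e.\ $\omega$ and a.e.\ rotation angle the unstable flag $V^+_k(R_{-\theta}\omega)$ is in general position with respect to the codimension-$k$ contracting filtration of $V$ along the geodesic through $\omega$; this is where one must use that the Oseledets subbundles genuinely move under the $\SO$-action (for instance, on the full Hodge bundle the top Oseledets line at $e^{i\theta}\omega$ rotates inside the tautological plane as $\theta$ varies), together with the a priori polyvector-growth bounds of Corollary~\ref{cor:universal_2}.
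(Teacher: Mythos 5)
Your route is genuinely different from the paper's, and it has a gap that you yourself flag but do not close. You start from the Dirac measure $\nu_0=(V^+_k)_\ast\mu$ carried by the Oseledets section and then average over $\SO$ to land in $\mathcal O^V_k(\mu)$; both of your key claims --- that $\Lambda^{(k)}(\nu)=\Lambda^{(k)}(\nu_0)$ and that $\nu_0\in I(\nu)$ --- reduce to the assertion that for $\mu$-a.e.\ $\omega$ and Lebesgue-a.e.\ $\theta$ the rotated unstable flag $V^+_k(R_{-\theta}\omega)$ is transverse to the sum of the Oseledets subspaces of $V$ with exponent $\leq\lambda^{V,\mu}_{k+1}$ at $\omega$. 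This is not a soft fact. The one-sided inequality $\Lambda^{(k)}(\nu)\leq\lambda^{V,\mu}_1+\dots+\lambda^{V,\mu}_k$ is automatic, but the reverse inequality is an average over $\theta$ of quantities each bounded above by the right-hand side, so knowing it holds at $\theta=0$ gives nothing; and ``the Oseledets bundles move under $\SO$'' (which is true --- see the example in Appendix~\ref{s:rk:B:Z}, where $E^\mu_{(0)}$ is not $\SO$-invariant) does not by itself rule out a positive-measure set of degenerate angles. Statements of this type (Oseledets genericity along $\SO$-orbits) require a separate argument and are nowhere established in this paper. Likewise, your claim that ``an arbitrary $k$-plane in $V$ is attracted to the unstable flag'' is false without transversality, so the identification of $I(\nu)$ with $\{\nu_0\}$ rests on the same unproved step.

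The paper avoids this entirely by running the argument in the opposite direction: it takes $\nu_k\in\mathcal O^V_k(\mu)$ with fiberwise conditionals \emph{equivalent to Lebesgue} on $G_k(V)_\omega$. Such a measure is automatically $\SO$-invariant (the cocycle over $\SO$ is trivial on cohomology and the Hodge metric depends only on the underlying Riemann surface), and transversality to the complementary Oseledets filtration holds for Lebesgue-a.e.\ isotropic $k$-plane in each fiber, since the non-transverse planes form a proper subvariety of the Grassmannian. Hence $\Lambda^{(k)}(\nu_k)=\lambda^{V,\mu}_1+\dots+\lambda^{V,\mu}_k$ and $I(\nu_k)=\{\hat\nu_k\}$ with $\hat\nu_k$ your $\nu_0$, both directly from Oseledets and the spectral gap $\lambda^{V,\mu}_k>\lambda^{V,\mu}_{k+1}$, with no statement about individual rotated flags needed. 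I recommend you adopt this choice of initial measure; your computation of $\Lambda^{(k)}(\nu_0)$ and your description of the limit measure then survive as the identification of $I(\nu_k)$, and the proof closes.
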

\begin{proof}  Let $\nu_k\in \mathcal O^V_k(\mu)$ be an $\SO$-invariant probability
measure on $G_k(V)$ such that all of its conditional measures on the
fibers $G_k(V)_\omega$ of the Grassmannian bundle are equivalent to the Lebesgue
measure, for $\mu$-almost all $\omega\in \cH_g^{(1)}$. Theorem~\ref{thm:partialsum} in this special
case  implies formula~\eqref{eq:partial:sum:equals:Pi:k1}. In fact, by the assumption that
$\lambda_k^{V,\mu}  > \lambda_{k+1}^{V,\mu}$  the family of measures given in formula~\eqref{eq:I(nu)} converges to the unique probability measure $\hat \nu_k$  on $G_k(V)$  given by the condition
that for  $\mu$-almost all $\omega\in \cH_g^{(1)}$ the conditional measure $\nu_k \vert G(V)_\omega$
is the Dirac measure  at  the  point  $V^+_k(\omega)$ (in other terms, the measure
$\nu_k$  is  defined  as  the  push-forward  of  the measure $\mu$ on $\cH_g^{(1)}$  under  the section
$V^+_k: \cH_g^{(1)} \to G_k(V)$). In other words, the set $I(\nu_k)$ of all weak limits of the family of measures given in formula~\eqref{eq:I(nu)} is equal to $\{\hat \nu_k\}$. By the Oseledets theorem, the average Lyapunov exponent $\Lambda^{(k)}(\nu_k)$  of the Kontsevich--Zorich cocycle  with respect to the measure
$\nu_k$ on the bundle $G_k(V)$ is given by the formula
$$
\Lambda^{(k)}(\nu_k)= \lambda^{V,\mu}_1 + \dots +\lambda^{V,\mu}_k\,.
$$
Thus formula~\eqref{eq:partial:sum:equals:Pi:k1} is indeed a particular case of
formula~\eqref{eq:partial:sum:V}\,.
\end{proof}

In the particular case of the full Hodge bundle we derive below a result first proved in \cite{Forni2},
Corollary 5.5, for the canonical absolutely continuous invariant measures on connected components
of strata of the moduli space.

 \begin{corollary}
\label{cor:partialsum2}
Let $\mu$ be any $\SL$-invariant Borel probability ergodic
measure on the moduli space $\cH_g^{(1)}$ of normalized Abelian
differentials. Assume that there exists  $k\in\{1, \dots, g-1\}$ such that
$\lambda_k^{\mu}  > \lambda_{k+1}^{\mu} \geq 0$. Then the following formula holds:
\begin{equation}
\label{eq:partial:sum:equals:Pi:k2}
\lambda^{\mu}_1 + \dots +\lambda^{\mu}_k =
 \int_{\cH_g^{(1)}}  \Phi_k\left(\omega,E^+_k(\omega)\right) d \mu(\omega) \,.
\end{equation}
\end{corollary}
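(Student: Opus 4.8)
The plan is to obtain this corollary as the special case $V = H^1_\R$ of Corollary~\ref{cor:partialsum1}. First I would note that $V = H^1_\R$ is (trivially) an $\SL$-invariant measurable subbundle of the Hodge bundle, so the standing hypotheses of Corollary~\ref{cor:partialsum1} on $\mu$ and on $V$ hold automatically; moreover, for this choice of $V$ one has $\lambda^{V,\mu}_j = \lambda^\mu_j$ for every $j$, the Oseledets pieces $V^\mu_i$ coincide with the subspaces $E^\mu_{\lambda^\mu_{(i)}}$, and hence the subbundle $V^+_k$ defined before Corollary~\ref{cor:partialsum1} coincides with the $k$-th unstable Oseledets subbundle $E^+_k$ defined after Lemma~\ref{l.symp-orth}. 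The hypothesis $\lambda^\mu_k > \lambda^\mu_{k+1}$ is then exactly the gap hypothesis $\lambda^{V,\mu}_k > \lambda^{V,\mu}_{k+1}$ required there.

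The one hypothesis of Corollary~\ref{cor:partialsum1} that still needs to be checked is that $V^+_k = E^+_k$ is a bundle of $k$-dimensional isotropic subspaces, i.e.\ that it defines a measurable section of the Grassmannian bundle $G_k(H^1_\R)$. This is precisely where the assumption $\lambda^\mu_{k+1} \geq 0$ enters: under this assumption the exponents $\lambda^\mu_1, \dots, \lambda^\mu_k$ are all strictly positive (since $\lambda^\mu_k > \lambda^\mu_{k+1} \geq 0$), so every Oseledets subspace occurring in $E^+_k = E^\mu_{\lambda^\mu_{(1)}} \oplus \dots \oplus E^\mu_{\lambda^\mu_{(i(k))}}$ corresponds to a positive exponent, whence $E^+_k$ is contained in the unstable Oseledets subbundle, which is isotropic by Lemma~\ref{l.symp-orth} because the Kontsevich--Zorich cocycle is symplectic. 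Since any subspace of an isotropic subspace is isotropic, $E^+_k(\omega)$ is a $k$-dimensional isotropic subspace of $H^1(S,\R)$ for $\mu$-almost every $\omega$, as required (this is also noted explicitly in the remark preceding Corollary~\ref{cor:partialsum1}).

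With both hypotheses of Corollary~\ref{cor:partialsum1} verified for $V = H^1_\R$, formula~\eqref{eq:partial:sum:equals:Pi:k1} becomes $\lambda^\mu_1 + \dots + \lambda^\mu_k = \int_{\cH_g^{(1)}} \Phi_k(\omega, E^+_k(\omega))\, d\mu(\omega)$, which is exactly formula~\eqref{eq:partial:sum:equals:Pi:k2}. I do not expect a serious obstacle: the entire analytic content is already carried by Theorem~\ref{thm:partialsum} and Corollary~\ref{cor:partialsum1} (whose proof in turn rests on the Laplacian formula of Lemma~\ref{lemma:var2} and the Bogolyubov--Krylov averaging of Lemma~\ref{lemma:inv_meas_Grass}), and the only step that genuinely needs spelling out is the elementary isotropy/measurability observation above, which converts the extra hypothesis imposed in Corollary~\ref{cor:partialsum1} into a consequence of the single assumption $\lambda^\mu_{k+1} \geq 0$ made here.
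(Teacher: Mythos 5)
Your proposal is correct and is exactly the derivation the paper intends: Corollary~\ref{cor:partialsum2} is obtained from Corollary~\ref{cor:partialsum1} with $V=H^1_\R$, and the hypothesis $\lambda^\mu_k>\lambda^\mu_{k+1}\geq 0$ is used precisely as you say — via the remark preceding Corollary~\ref{cor:partialsum1} and Lemma~\ref{l.symp-orth} — to guarantee that $E^+_k$ is a $k$-dimensional isotropic (hence Grassmannian-valued) subbundle. Nothing is missing.
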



By Remark ~\ref{rm:Phi:g}, Corollary \ref{cor:partialsum2} for $k=g$ holds
without any assumptions on the Lyapunov exponents and provides a version of the Kontsevich
formula for their sum  (see \cite{Kontsevich} and \cite{Forni2}, Corollary 5.3):

\begin{corollary}
\label{cor:KZformula}
Let $\mu$ be any $\SL$-invariant Borel probability ergodic
measure on the moduli space $\cH_g^{(1)}$ of normalized Abelian
differentials. The following formula holds:
\begin{equation}
\label{eq:sum:all}
\lambda^\mu_1 + \dots +\lambda^\mu_g=
\int_{\cH_g^{(1)}} (\Lambda_1+ \dots + \Lambda_g) d\mu \,.
\end{equation}
\end{corollary}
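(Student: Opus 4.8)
\noindent\textit{Proof proposal.} The plan is to specialize the machinery developed above to the full Hodge bundle $V=H^1_\R$ (which is trivially $\SL$-invariant) with $k=g$, exactly as in the proof of Corollary~\ref{cor:partialsum1}, and to observe that the spectral gap hypothesis $\lambda_g^\mu>\lambda_{g+1}^\mu$ required in Corollary~\ref{cor:partialsum2} is \emph{not} needed when $k=g$. First I would fix an $\SO$-invariant probability measure $\nu\in\mathcal O^{H^1_\R}_g(\mu)$ whose conditional measures on the fibers $G_g(H^1_\R)_\omega$ (the Lagrangian Grassmannians of the symplectic spaces $H^1(S,\R)$) lie in the Lebesgue class: such a $\nu$ is produced by trivializing the bundle off a $\mu$-null set as in Lemma~\ref{lemma:bundle_meas}, taking a fiberwise Lebesgue-class probability measure, and averaging over the compact group $\SO$, which preserves the Lebesgue class. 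By Lemma~\ref{lemma:inv_meas_Grass} every $\hat\nu\in I(\nu)$ is then a $G^{KZ}_t$-invariant probability measure projecting onto $\mu$ on $\cH_g^{(1)}$.

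For the right-hand side of \eqref{eq:partial:sum:V}, the key point is Remark~\ref{rm:Phi:g}: for $k=g$ the function $\Phi_g(\omega,I_g)=\Lambda_1(\omega)+\dots+\Lambda_g(\omega)$ does not depend on the Lagrangian subspace $I_g$, so it is the pull-back of a function on $\cH_g^{(1)}$. Consequently, for \emph{any} $\hat\nu\in I(\nu)$,
\[
\int_{G_g(H^1_\R)}\Phi_g(\omega,I_g)\,d\hat\nu
=\int_{\cH_g^{(1)}}(\Lambda_1+\dots+\Lambda_g)\,d\mu\,,
\]
since $\hat\nu$ projects onto $\mu$; this is precisely the right-hand side of \eqref{eq:sum:all}, and crucially it required no hypothesis on the Lyapunov spectrum. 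It therefore remains to identify the left-hand side $\Lambda^{(g)}(\nu)$ with $\lambda_1^\mu+\dots+\lambda_g^\mu$. By the definition \eqref{eq:av_Lyap_exp}, $\Lambda^{(g)}(\nu)$ is the $\nu$-average of the Oseledets growth rate $\lim_{T\to+\infty}\frac1T\log\Vert G^{KZ}_T(\omega,I_g)\Vert$, the interchange of limit and integral being legitimate by the uniform bound of Corollary~\ref{cor:universal_2}. For the exterior power cocycle $\Lambda^g G^{KZ}$ on $\Lambda^g H^1_\R$ the top Lyapunov exponent equals $\lambda_1^\mu+\dots+\lambda_g^\mu$ — the largest sum of $g$ of the $2g$ symmetric exponents of a symplectic cocycle — and its top Oseledets subspace contains a decomposable \emph{Lagrangian} polyvector, namely $c_1\wedge\dots\wedge c_g$ spanning $E^+\oplus L$, where $E^+$ is the sum of the positive Oseledets subspaces and $L$ is any Lagrangian subspace of the central subspace $E^\mu_{(0)}$ (by Lemma~\ref{l.symp-orth}, $E^+$ is isotropic and symplectic-orthogonal to $E^\mu_{(0)}$, so $E^+\oplus L$ is Lagrangian, and its growth rate is $\lambda_1^\mu+\dots+\lambda_g^\mu$ since the remaining exponents vanish). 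Hence, for $\mu$-almost every $\omega$, the set of Lagrangian subspaces $I_g$ whose polyvector has strictly smaller growth rate is cut out inside the Lagrangian Grassmannian by the vanishing of the projection onto this top Oseledets subspace; since we have exhibited an $I_g$ where this projection is nonzero, that set is a proper real-algebraic subvariety and therefore null for the Lebesgue-class conditional measures of $\nu$. Thus $\Lambda^{(g)}(\nu)=\lambda_1^\mu+\dots+\lambda_g^\mu$, and combining with the previous display gives \eqref{eq:sum:all}.

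The main obstacle is this last step: verifying that a Lebesgue-generic Lagrangian $g$-plane realizes the top growth rate of $\Lambda^g G^{KZ}$. One must check that the top Oseledets subspace of $\Lambda^g G^{KZ}$ really does contain a decomposable Lagrangian polyvector — so that the exceptional slow set is a genuine proper subvariety of the Lagrangian Grassmannian rather than the whole of it — which is exactly where the symplectic structure and the even-dimensionality of the central subspace $E^\mu_{(0)}$ are used, to guarantee the Lagrangian completion $E^+\oplus L$. Once this is in place, the statement is a direct specialization of the argument proving Corollary~\ref{cor:partialsum1}, with the gap hypothesis dropped thanks to the $I_g$-independence of $\Phi_g$. \qed
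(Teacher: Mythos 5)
Your proposal is correct and follows essentially the same route as the paper: specialize Theorem~\ref{thm:partialsum} to $k=g$, use the $I_g$-independence of $\Phi_g$ from Remark~\ref{rm:Phi:g} to evaluate the right-hand side against any $\hat\nu\in I(\nu)$, and identify $\Lambda^{(g)}(\nu)$ with $\lambda_1^\mu+\dots+\lambda_g^\mu$ for a Lebesgue-class $\SO$-invariant measure on the bundle of Lagrangian Grassmannians. The one step the paper does not spell out — that a Lebesgue-generic Lagrangian plane realizes the top growth rate, which it delegates to \cite{Forni2}, Corollary 5.3 — is exactly the step you supply, and your argument for it (the Lagrangian completion $E^+\oplus L$ with $L$ Lagrangian in $E^\mu_{(0)}$, together with the observation that the slow set is a proper analytic subvariety of the connected Lagrangian Grassmannian and hence Lebesgue-null) is sound.
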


\subsection{Reducibility of the second fundamental form}
\label{ssec:reduc}

Let $V\subset H^1(S,\R)$ be a subspace invariant under the
Hodge operator. Since for any nonzero $c\in V$ one has
$\|c\|=\langle c, \ast c\rangle>0$, this implies that $c$ cannot be
symplectic-orthogonal to $V$. Thus, invariance of $V$ under the
Hodge star-operator implies, in particular, that restriction
of the symplectic form to $V$ is nondegenerate, in particular,
$V$ is even-dimensional.
For any Hodge star-invariant subspace $V\subseteq H^1(S,\R)$,
let us define $V^{1,0}\subset H^{1,0}(S)$ and $V^{0,1}\subset H^{0,1}(S)$
to be the subspaces of cohomology classes of all holomorphic,
respectively, anti-holomorphic forms $\omega$ such that
$[\Re(\omega)] \in V$. Invariance of $V$ under the Hodge
operator implies that the sets $V^{1,0}$ and $V^{0,1}$
are indeed complex vector spaces, that $V_\C=V^{1,0}\oplus V^{0,1}$ and that
$\overline{V^{1,0}} = V^{0,1}$. In particular, $\dim_{\R} V=2\dim_{\C} V^{1,0}$.
Let us denote by $H_\omega\vert_{V^{1,0}}$ and $B_\omega\vert_{V^{1,0}}$ the
restrictions of the forms $H_\omega$ and $B_\omega$
to $V^{1,0}\subseteq H^{1,0}(S)$  and by $H_\omega^\R\vert_V$ and $B_\omega^\R\vert_V$ the
restrictions of the forms $H_\omega^\R$ and $B_\omega^\R$ to $V
\subseteq H^1(S,\R)$ respectively.

\begin{Lemma}
\label{lm:equivalence:Hodge:and:symp:orthogonal:under:star:invariance}
A subspace $V\subset H^1(S,\R)$ is invariant under the
Hodge star-operator if and only if the subspace $V^\perp$,
Hodge-orthogonal to $V$, coincides with the subspace $V^\dagger$,
symplectic-orthogonal to $V$.  In that case the subspace $V^\perp=
V^\dagger$ is  Hodge star-invariant and
$\left(V^{1,0}\right)^\perp=\left(V^\perp\right)^{1,0}$.
\end{Lemma}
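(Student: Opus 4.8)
\textit{Proof plan.} The plan is to deduce everything from the single compatibility identity $(c_1,c_2)=\langle c_1,\ast c_2\rangle$ of formula~\eqref{eq:c1:scalar:c2}, together with the elementary facts that $\ast$ is invertible with $\ast^2=-\mathrm{Id}$ (formula~\eqref{eq:ast:ast:equals:minus:id}), that the symplectic form $\langle\cdot,\cdot\rangle$ on $H^1(S,\R)$ is nondegenerate, and that the Hodge product $(\cdot,\cdot)$ is positive definite. First I would record that for \emph{any} subspace $V$ one has $c\in V^\perp$ if and only if $(c,v)=\langle c,\ast v\rangle=0$ for all $v\in V$, i.e.\ if and only if $c\in(\ast V)^\dagger$; in other words the identity $V^\perp=(\ast V)^\dagger$ holds unconditionally. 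Moreover $\ast$ preserves the Hodge product, since $(\ast c_1,\ast c_2)=\langle\ast c_1,\ast\ast c_2\rangle=-\langle\ast c_1,c_2\rangle=\langle c_1,\ast c_2\rangle=(c_1,c_2)$, so $W\mapsto W^\perp$ commutes with $\ast$; and because $\langle\cdot,\cdot\rangle$ is nondegenerate the map $W\mapsto W^\dagger$ is an involution on subspaces, in particular injective.

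With these preliminaries the equivalence is immediate. If $\ast V=V$ then $V^\perp=(\ast V)^\dagger=V^\dagger$. Conversely, if $V^\perp=V^\dagger$ then $(\ast V)^\dagger=V^\perp=V^\dagger$, and applying the involution $W\mapsto W^\dagger$ gives $\ast V=V$, i.e.\ $V$ is Hodge star-invariant. For the remaining assertions, when $V^\perp=V^\dagger$ the subspace $V^\perp$ is itself $\ast$-invariant: since $\ast$ is a Hodge isometry and $\ast V=V$, we get $\ast(V^\perp)=(\ast V)^\perp=V^\perp$ (alternatively, $(V^\perp)^\perp=V=(V^\dagger)^\dagger=(V^\perp)^\dagger$, so the equivalence just proved, applied to $V^\perp$, gives its $\ast$-invariance).

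To identify $(V^{1,0})^\perp$ inside $H^{1,0}(S)$ with $(V^\perp)^{1,0}$, I would use that $c\mapsto h(c)$ is an $\R$-linear bijection $H^1(S,\R)\to H^{1,0}(S)$ carrying $V$ onto $V^{1,0}$ and, by the $\ast$-invariance of $V^\perp$ just established, carrying $V^\perp$ onto $(V^\perp)^{1,0}$; combined with formula~\eqref{eq:h:c1:h:c2}, which reads $(h(c),h(c'))=(c,c')+i\langle c,c'\rangle$. Thus $h(c)$ is Hodge-orthogonal to $V^{1,0}$ if and only if $(c,v)=0$ and $\langle c,v\rangle=0$ for all $v\in V$, i.e.\ if and only if $c\in V^\perp\cap V^\dagger=V^\perp$ (using $V^\perp=V^\dagger$); hence $(V^{1,0})^\perp=h(V^\perp)=(V^\perp)^{1,0}$. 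None of the steps is genuinely difficult; the only points requiring care are keeping track of which orthogonal complement (symplectic $V^\dagger$, Hodge $V^\perp$ on real cohomology, or Hodge orthogonal inside $H^{1,0}(S)$) is meant at each stage, and invoking the duality involution $W\mapsto W^\dagger$ only because the symplectic form is nondegenerate (and $W\mapsto W^\perp$ only because the Hodge product is definite).
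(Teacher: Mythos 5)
Your proof is correct and follows essentially the same route as the paper's: everything is deduced from the compatibility identity $(c_1,c_2)=\langle c_1,\ast c_2\rangle$ together with $\ast^2=-\mathrm{Id}$ and formula~\eqref{eq:h:c1:h:c2}, exactly as in the paper. The only difference is organizational — you package the elementwise computations into the unconditional identity $V^\perp=(\ast V)^\dagger$ and then invoke the involution $W\mapsto W^\dagger$, which slightly streamlines the converse direction and makes the final identification $(V^{1,0})^\perp=(V^\perp)^{1,0}$ a two-sided argument rather than one inclusion plus a dimension count.
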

\begin{proof}
Assume  that  a subspace $V\subset H^1(S,\R)$ is Hodge star-invariant.
Let $V^\dagger$ be the subspace symplectic-orthogonal to $V$.
By~\eqref{eq:c1:scalar:c2} for any $c_1\in V^\dagger$ and
$c_2\in V$ one has $( c_1, c_2) = \langle c_1, \ast
c_2\rangle$. Since $V$ is Hodge star-invariant, we have $\ast
c_2\in V$ and hence the right expression is equal to zero.
It follows that $V^\dagger \subset V^\perp$. The converse inclusion
is proved similarly. In fact, for any $c_1\in V^\perp$
and $c_2\in V$ one has $ \langle c_1, c_2\rangle= - (c_1, \ast c_2)$.
Again since $V$ is Hodge star-invariant, we have $\ast c_2\in V$,
hence the right  expression is equal to zero. Thus
$V^\perp \subset V^\dagger$ and equality holds. Conversely, assume
that $V^\perp= V^\dagger$. Let $c_1 \in V$
and let $c_2 \in V^\perp$. By~\eqref{eq:c1:scalar:c2}, one has
$(\ast c_1, c_2) = \langle c_1, c_2\rangle=0$, hence $\ast c_1\in
(V^\perp)^\perp= V$. Thus $V$ is Hodge star-invariant.

\smallskip
Let us show that if $V$ is Hodge star-invariant, then $V^\perp$ is
also Hodge star-invariant. Let $c_2\in
V^\perp$ and take any $c_1\in V$. By the same
equation~\eqref{eq:c1:scalar:c2} one has $\langle c_1, \ast
c_2\rangle=(c_1, c_2)$. Since $c_1$ and $c_2$ belong to
Hodge-orthogonal subspaces the right expression is equal to
zero. Hence $\ast c_2$ is symplectic-orthogonal to any $v$ in
$V$, which implies that $\ast c_2\in V^\perp$.

Finally,
for any $\omega_1\in V^{1,0}$ and any
$\omega_2\in \left({V^\perp}\right)^{1,0}$
let $c_1=[\Re(\omega_1)]\in V$ and
let $c_2=[\Re(\omega_2)]\in V^\perp$.
By formula~\eqref{eq:h:c1:h:c2}
$$
\big(\omega_1,\omega_2\big):=\big(h(c_1),h(c_2)\big) = (c_1, c_2) +
i \langle c_1,  c_2\rangle\,,
$$
which is equal to zero since $V$ and $V^\perp$ are
both symplectic-ortho\-go\-nal and Hodge-orthogonal.
This implies that
$\left(V^{1,0}\right)^\perp=\left(V^\perp\right)^{1,0}$.
\end{proof}

\begin{NNRemark}
Note that the above property is not related to either
the $\SL$-action or the Teichm\"uller flow.
\end{NNRemark}

\begin{Proposition}
\label{pr:B:reducible}
Let  $V\subset H^1(S_0,\R)$ be a Hodge star-invariant subspace
in the fiber of the Hodge bundle over $(S_0,\omega_0)\in\cH_g^{(1)}$ and let $V^\dagger \subset
H^1(S_0,\R)$ denote its symplectic orthogonal. Let
$U=\left]-\epsilon,\epsilon\right[$ be any open interval along the trajectory of the Teichm\"uller flow passing through $\omega_0$. Let us identify the fibers of the Hodge bundle over $U$ by parallel transport with respect to the Gauss--Manin connection. The following properties are equivalent:

\noindent
(i) For any $t\in U$ the subspace $V$ stays Hodge star-invariant at $(S_t,\omega_t)$.

\noindent
(ii) For any $t\in U$ the subspaces $V$ and $V^\dagger$ are $B^\R_{\omega_t}$-orthogonal.

\noindent
An analogous equivalence holds when $U$ is replaced by
a small open ball in $\SL$ containing the identity element, or by
a small open neighborhood of the initial point
$(S_0,\omega_0)$ in its Teichm\"uller disc.
\end{Proposition}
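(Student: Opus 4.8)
The plan is to reduce both properties (i) and (ii) to a single assertion: that for every $c_1\in V$ and every $c_2\in V^\dagger$, transported over $U$ by the Gauss--Manin connection, the smooth function $g(t):=(c_1,c_2)_{\omega_t}$ vanishes identically on $U$. Two elementary observations set this up. First, since $V$ is Hodge star-invariant at $\omega_0$ the symplectic form restricted to $V$ is nondegenerate, so $\dim V^\dagger=2g-\dim V$; moreover $V^\dagger$ is a fixed subspace along $U$ because the Gauss--Manin connection preserves the intersection form. On the other hand, for each $t$ the Hodge-orthogonal complement $V^{\perp_t}$ of $V$ at $\omega_t$ has dimension $2g-\dim V$ because the Hodge inner product is positive definite. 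Combining this with Lemma~\ref{lm:equivalence:Hodge:and:symp:orthogonal:under:star:invariance}, $V$ is Hodge star-invariant at $\omega_t$ if and only if $V^{\perp_t}=V^\dagger$, equivalently (by the dimension count) if and only if $V^\dagger\subseteq V^{\perp_t}$, equivalently if and only if $g(t)=0$ for all choices of $c_1\in V$, $c_2\in V^\dagger$. Second, from the Hodge representation theorem one checks on real parts that $h_\omega(\ast c)=-i\,h_\omega(c)$, so by complex bilinearity of $B_\omega$ one gets $B^\R_\omega(\ast c_1,c_2)=-i\,B^\R_\omega(c_1,c_2)$, hence $\Re B^\R_\omega(\ast c_1,c_2)=\Im B^\R_\omega(c_1,c_2)$.

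With these in hand, fix $c_1\in V$ and $c_2\in V^\dagger$ and recall from Lemma~\ref{lemma:varfor2} that $\cL(c_1,c_2)_{\omega_t}=-2\,\Re B^\R_{\omega_t}(c_1,c_2)$, i.e. $g'(t)=-2\,\Re B^\R_{\omega_t}(c_1,c_2)$; also, by the first observation applied at $t=0$, $g(0)=0$. The implication (ii)$\Rightarrow$(i) is then immediate: if $B^\R_{\omega_t}$ vanishes on $V\times V^\dagger$ for every $t\in U$ then $g'\equiv 0$, so $g\equiv g(0)=0$, and hence $V$ is Hodge star-invariant at every $\omega_t$. For (i)$\Rightarrow$(ii): assuming (i), the first observation gives $g\equiv 0$ on $U$, hence $g'\equiv 0$, that is $\Re B^\R_{\omega_t}(c_1,c_2)=0$ for all $t\in U$ and all $c_1\in V$, $c_2\in V^\dagger$. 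Since (i) says $V$ is Hodge star-invariant at $\omega_t$ we have $\ast c_1\in V$, so the vanishing of $\Re B^\R_{\omega_t}$ applies also to the pair $(\ast c_1,c_2)$; by the second observation this yields $\Im B^\R_{\omega_t}(c_1,c_2)=\Re B^\R_{\omega_t}(\ast c_1,c_2)=0$. Thus $B^\R_{\omega_t}(c_1,c_2)=0$, which is (ii).

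For the variants, the key point is that the $\SO$-action multiplies the Abelian differential by a unit complex number ($R_\theta\cdot\omega=e^{i\theta}\omega$), so it fixes the underlying Riemann surface and acts on the bilinear form by $B_{e^{i\theta}\omega}=e^{-2i\theta}B_\omega$; in particular the Hodge norm and the vanishing of $B^\R_\omega$ on $V\times V^\dagger$ are $\SO$-invariant. Hence a small ball in $\SL$ about the identity is carried by the orbit map to a neighborhood of $\omega_0$ in the Teichm\"uller disc $\SO\backslash\SL\cdot\omega_0$, which reduces the $\SL$-ball statement to the Teichm\"uller-disc statement. For the latter I would replace the single trajectory by the two-parameter family of Teichm\"uller geodesics: through any point of the disc and in any direction there is such a geodesic, and along the geodesic through $\omega$ in direction $\theta$ the first-variation formula is governed (via the Teichm\"uller deformation $\dot\omega=\bar\omega$) by $B^\R_{e^{i\theta}\omega}=e^{-2i\theta}B^\R_\omega$. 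Therefore ``$B^\R$ vanishes on $V\times V^\dagger$ at every point of the disc'' is equivalent to ``the differential of $(c_1,c_2)_\omega$ vanishes on the disc for all $c_1\in V$, $c_2\in V^\dagger$'', hence to ``$(c_1,c_2)_\omega$ is locally constant, thus identically $0$ by its value at $\omega_0$'', and the Hodge-star argument of the previous paragraph applies verbatim.

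The main obstacle is the step in (i)$\Rightarrow$(ii) passing from the real part to the full complex value of $B^\R_{\omega_t}$ on $V\times V^\dagger$: the first-variation formula of Lemma~\ref{lemma:varfor2} only controls $\Re B^\R_{\omega_t}$, and it is precisely the Hodge star-invariance of $V$ together with the identity $h_\omega(\ast c)=-i\,h_\omega(c)$ that recovers $\Im B^\R_{\omega_t}$, by substituting $\ast c_1$ for $c_1$; along the single trajectory $U$ this substitution (rather than an $\SO$-rotation) is essential, since rotating $\omega_0$ leaves $U$. Everything else is bookkeeping with Lemma~\ref{lm:equivalence:Hodge:and:symp:orthogonal:under:star:invariance}, a dimension count, and the variational formulas of Lemmas~\ref{lemma:varfor1}--\ref{lemma:varfor2}.
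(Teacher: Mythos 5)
Your proof is correct and follows essentially the same route as the paper's: both directions reduce, via Lemma~\ref{lm:equivalence:Hodge:and:symp:orthogonal:under:star:invariance} and the first-variation formula of Lemma~\ref{lemma:varfor2}, to the vanishing of $(c_1,c_2)_{\omega_t}$ for parallel sections $c_1\in V$, $c_2\in V^\dagger$, with the imaginary part of $B^\R_{\omega_t}$ recovered by substituting $\ast c_1$ for $c_1$ using Hodge star-invariance. Your sign $\Re B^\R_{\omega}(\ast c_1,c_2)=\Im B^\R_{\omega}(c_1,c_2)$ is the correct one (the paper's displayed $+2\Im$ is a harmless sign slip), and your sketch of the $\SO$-ball and Teichm\"uller-disc variants, which the paper leaves unproved, is also sound.
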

\begin{proof}
Suppose that property (i) is satisfied.
By Lemma~\ref{lm:equivalence:Hodge:and:symp:orthogonal:under:star:invariance}
we have a direct sum decomposition $H^1(S_t,\R)=V \oplus V^\dagger$ where
$V$ and $V^\dagger$ are simultaneously symplectic-orthogonal and Hodge-orthogonal
with respect to the Hodge-inner product $(\cdot, \cdot)_{\omega_t} $ on $H^1(S_t, \R)$.
Since the symplectic structure is preserved by the Gauss--Manin connection, $V^\dagger$ is
constant over $U$ under our identification of the real cohomology spaces  $H^1(S_t,\R)$
given by the connection. Hence for any pair $(v,v^\dagger)\in V\times V^\dagger$,  the
Hodge inner products $(v,v^\dagger)_{\omega_t}  = (\ast v,v^\dagger)_{\omega_t}   =0$
for all $t\in U$, so that by Lemma \ref{lemma:varfor2}
\begin{equation}
\label{eq:Hodgeorthog}
\begin{aligned}
\frac{d}{dt} (v,v^\dagger)_{\omega_t}   &= -2 \Re B^\R_{\omega_t} (v, v^\dagger) =0
\quad \text{ \rm and } \\
\frac{d}{dt} (\ast v,v^\dagger)_{\omega_t}   &= -2 \Re B^\R_{\omega_t} (\ast v, v^\dagger)  =
2 \Im  B^\R_{\omega_t} (v, v^\dagger) =0 \,.
\end{aligned}
\end{equation}
It follows that $V$ and $V^\perp$ are $B^\R_{\omega_t}$-orthogonal for all $t\in U$.

\smallskip
Conversely, suppose that property (ii) is satisfied. Let $V^\dagger$ be the symplectic
orthogonal of $V$. Since the Gauss-Manin connection preserves the symplectic structure, the
space $V^\dagger$ is constant over $U$. In addition, since $V$ is Hodge star-invariant,
the symplectic-orthogonal $V^\dagger$ and the Hodge-orthogonal $V^\perp$ coincide
at $t=0$.  It follows from formulas \eqref{eq:Hodgeorthog} that since $V$ and $V^\dagger$ are
$B^\R_{\omega_t}$-orthogonal for all $t\in U$ and they are Hodge-orthogonal for $t=0$,
then they are Hodge-orthogonal for all $t\in U$. Thus the symplectic orthogonal and the Hodge-orthogonal subspaces of $V$ coincide, hence by Lemma~\ref{lm:equivalence:Hodge:and:symp:orthogonal:under:star:invariance}
 the space $V$ is Hodge star-invariant,  for all $t\in U$.
\end{proof}

In the setting of Proposition~\ref{pr:B:reducible} let
$$
\Lambda^V_1(\omega_t) \geq  \dots \geq \Lambda^V_n(\omega_t)\ge 0 \,
$$
be the eigenvalues of the positive-semidefinite Hermitian form
$H_{\omega_t}\big\vert_{V_t^{1,0}}$ restricted to $V_t^{1,0}$,
where $n=\dim_\C V_t^{1,0}$.

\begin{Corollary}
In the setting of Proposition~\ref{pr:B:reducible}
the following sets with multiplicities coincide:
$$
\{\Lambda_1(\omega),\dots,\Lambda_g(\omega)\}=
\{\Lambda^V_1(\omega),\dots,\Lambda^V_n(\omega)\} \sqcup
\{\Lambda^{V^\perp}_1(\omega),\dots,\Lambda^{V^\perp}_{g-n}(\omega)\}
$$
\end{Corollary}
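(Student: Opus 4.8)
The plan is to reduce the assertion to a block decomposition of the matrix of $B_\omega$ on $H^{1,0}(S)$ and then invoke the factorization $H=B\cdot B^\ast$ of \eqref{eq:Hform}. Fix $\omega=\omega_t$ with $t\in U$. By property (i) of Proposition~\ref{pr:B:reducible} the subspace $V$ is Hodge star-invariant at $\omega$, so Lemma~\ref{lm:equivalence:Hodge:and:symp:orthogonal:under:star:invariance} yields $V^\dagger=V^\perp$ and $\left(V^{1,0}\right)^\perp=\left(V^\perp\right)^{1,0}$ inside $H^{1,0}(S)$. Hence $H^{1,0}(S)=V^{1,0}\oplus\left(V^\perp\right)^{1,0}$ is an orthogonal direct sum for the Hodge Hermitian product, with $\dim_\C V^{1,0}=n$ and $\dim_\C\left(V^\perp\right)^{1,0}=g-n$. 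I would then pick a Hodge-orthonormal basis $\{\omega_1,\dots,\omega_g\}$ of $H^{1,0}(S)$ adapted to this splitting, so that $\omega_1,\dots,\omega_n$ span $V^{1,0}$ and $\omega_{n+1},\dots,\omega_g$ span $\left(V^\perp\right)^{1,0}$.

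Next I would use property (ii) of Proposition~\ref{pr:B:reducible} at $\omega$: $V$ and $V^\dagger$ are $B^\R_\omega$-orthogonal. Since for $c\in V$ the holomorphic representative $h(c)$ lies in $V^{1,0}$ by the very definition of $V^{1,0}$, and as $c$ ranges over $V$ these representatives span $V^{1,0}$ over $\R$ (and likewise for $V^\perp$), the definition \eqref{eq:B:R} of $B^\R_\omega$ together with $\C$-bilinearity of $B_\omega$ forces $B_\omega(\alpha,\beta)=0$ for all $\alpha\in V^{1,0}$ and $\beta\in\left(V^\perp\right)^{1,0}$. Because $B_\omega$ is symmetric, the same holds with the two factors interchanged. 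Therefore, in the adapted basis, the matrix $B$ of $B_\omega$ is block-diagonal, $B=\mathrm{diag}(B_V,B_{V^\perp})$, with an $n\times n$ block $B_V$ and a $(g-n)\times(g-n)$ block $B_{V^\perp}$.

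Finally, by \eqref{eq:Hform} the matrix $H$ of $H_\omega$ in this orthonormal basis equals $B\cdot B^\ast$, hence is also block-diagonal: $H=\mathrm{diag}(B_V B_V^\ast,\,B_{V^\perp}B_{V^\perp}^\ast)$. Block-diagonality of $B$ moreover gives, for $j,k\le n$, that $(BB^\ast)_{jk}=\sum_l B_{jl}\overline{B_{kl}}=\sum_{l\le n}B_{jl}\overline{B_{kl}}$, which is exactly the matrix of $H_\omega\vert_{V^{1,0}}$ in the orthonormal basis $\{\omega_1,\dots,\omega_n\}$ of $V^{1,0}$; the same applies to $H_\omega\vert_{\left(V^\perp\right)^{1,0}}$. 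Consequently the spectrum of $H_\omega$ with multiplicities is the disjoint union of the spectra with multiplicities of $H_\omega\vert_{V^{1,0}}$ and $H_\omega\vert_{\left(V^\perp\right)^{1,0}}$, which is precisely the claimed identity $\{\Lambda_1(\omega),\dots,\Lambda_g(\omega)\}=\{\Lambda^V_1(\omega),\dots,\Lambda^V_n(\omega)\}\sqcup\{\Lambda^{V^\perp}_1(\omega),\dots,\Lambda^{V^\perp}_{g-n}(\omega)\}$.

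I do not anticipate a genuine obstacle: the content is linear algebra around the factorization $H=BB^\ast$ and the compatibility of the orthogonal decomposition $H^{1,0}(S)=V^{1,0}\oplus\left(V^\perp\right)^{1,0}$ with restriction of Hermitian forms. The one step deserving care is the translation in the second paragraph of the real-cohomology statement ``$V$ and $V^\dagger$ are $B^\R_\omega$-orthogonal'' into the vanishing of the off-diagonal blocks of the complex matrix $B$ on $H^{1,0}(S)$, where one must use both the symmetry of $B_\omega$ and the fact that $c\mapsto h(c)$ carries $V$ onto an $\R$-spanning set of $V^{1,0}$.
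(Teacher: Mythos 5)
Your proposal is correct and follows essentially the same route as the paper: choose a Hodge-orthonormal basis of $H^{1,0}(S)$ adapted to the splitting $V^{1,0}\oplus\left(V^\perp\right)^{1,0}$, observe that property (ii) of Proposition~\ref{pr:B:reducible} makes the matrix of $B_\omega$ block-diagonal, and conclude via $H=B\cdot B^\ast$ that $H_\omega$ is block-diagonal so its spectrum splits. The only difference is that you spell out the translation of the $B^\R_\omega$-orthogonality of $V$ and $V^\dagger$ into the vanishing of the off-diagonal blocks of the complex matrix $B$, a step the paper leaves implicit.
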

\begin{proof} Let $\{\omega_1,\dots,\omega_g\}$ be an orthonormal basis such that $\{\omega_1,\dots,\omega_n\}$ spans $V$, $\{\omega_{n+1},\dots,\omega_g\}$ spans $V^{\perp}$ and $B_\omega$ has block-diagonal matrix in the basis $\{\omega_1,\dots,\omega_g\}$.
By formula~\eqref{eq:Hform} the matrix $H_{\omega_t}$ is also
block-diagonal in the basis $\omega_1,\dots,\omega_g$.
Hence,
$$
 H_{\omega_t} = H_{\omega_t}\big\vert_{V_t^{1,0}} + H_{\omega_t}\big\vert_{{(V^\perp_t)}^{1,0}}\,,
$$
which is exactly the statement of the Corollary.
\end{proof}

\begin{NNRemark}
Suppose that at some point $(S,\omega)$ of the moduli space of
normalized Abelian differentials $\cH_g^{(1)}$ all the eigenvalues
$\Lambda_1(\omega),\dots,\Lambda_g(\omega)$ are distinct. The
corollary above implies that there is only a finite number of
subspaces (namely $2^g$) which might a priori serve as fibers
of Hodge star-invariant subbundles, namely, those spanned by
$\Re(\omega_{i_j}), \Im(\omega_{i_j})$ for some subcollection
$\{\omega_{i_1},\dots,\omega_{i_k}\}$ of eigenvectors
$\{\omega_1,\dots,\omega_g\}$ of $H_\omega$.

The condition that $B_\omega$ is block-diagonal in the
corresponding basis in a small neighborhood $U$ of the initial
point is a necessary and sufficient condition for extension of
the corresponding subspace to a local Hodge star-invariant
subbundle over $U$.
\end{NNRemark}

Let  $V\subset H^1_\R$ be an $\SL$-invariant and Hodge
star-invariant subbundle of dimension $2n$ over a full measure
set for an $\SL$-invariant ergodic probability measure $\mu$ on
the moduli space $\cH_g^{(1)}$ of normalized Abelian differentials. The Hodge
star-invariance is a very strong property of an $\SL$-invariant
subbundle. In particular, the restriction of the
Kontsevich--Zorich cocycle to $V$ mimics most of the properties
of the cocycle on Hodge bundle $H^1_\R$, where $n$ plays a role of
a ``virtual genus''. Let us give several illustrations of this
general philosophy.

We have seen that the symplectic structure on $H^1_\R$ restricts
to a nondegenerate symplectic structure on $V$, hence the
Kontsevich--Zorich cocycle on $V$ is symplectic. It follows
that for any $\SL$-invariant Borel probability ergodic measure
$\mu$ on $\cH_g^{(1)}$ the Kontsevich--Zorich spectrum on $V$ is
symmetric:
$$
\lambda^{V,\mu}_1 \geq  \dots \geq \lambda^\mu_n \geq -\lambda^{V,\mu}_n \geq \dots
 \geq -\lambda^{V,\mu}_1\,.
$$

We get the following generalization of the Kontsevich formula for the
sum  of  positive  Lyapunov  exponents (compare analogous formulas
in~\cite{Kontsevich},   \cite{Eskin:Kontsevich:Zorich}  and,  for  the
specific case of Teichm\"uller curves, in~\cite{Bouw:Moeller}).

\begin{corollary}
\label{cor:KZformgen}
Let $\mu$ be any $\SL$-invariant ergodic Borel probability measure on
the  moduli  space  $\cH_g^{(1)}$.  The  following  formula  holds  for the
Kontsevich--Zorich  exponents  of  any  subbundle $ V\subset H^1_\R$ (of
dimension  $2n$),  $\mu$-almost  everywhere $\SL$-invariant and Hodge
star-invariant:
\begin{equation}
\label{eq:KZformgen}
\lambda^{V,\mu}_1 + \dots +\lambda^{V,\mu}_n=
\int_{\cH_g^{(1)}} (\Lambda^V_1+ \dots + \Lambda^V_n)\, d\mu \,.
\end{equation}
\end{corollary}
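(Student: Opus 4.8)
The plan is to apply Corollary \ref{cor:partialsum1} to the $\SL$-invariant subbundle $V$ with $k=n$, and to identify the resulting integrand $\Phi_n(\omega, V_n^+(\omega))$ with the trace $\Lambda_1^V(\omega)+\dots+\Lambda_n^V(\omega)$ of the restricted curvature form $H_\omega\big|_{V^{1,0}}$. First I would observe that since $V$ is Hodge star-invariant, the restriction of the symplectic form to the fibers of $V$ is nondegenerate (as noted in the text before Lemma \ref{lm:equivalence:Hodge:and:symp:orthogonal:under:star:invariance}), so in the notation of \S\ref{ssec:formulas} we have $r=2n$ and $p=l=n$; thus $n$ is the maximal dimension of an isotropic subspace in a fiber of $V$, and $G_n(V)$ is a bundle of \emph{Lagrangian} subspaces of $V$. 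By the symmetry of the Kontsevich--Zorich spectrum on the symplectic bundle $V$, the $n$-th unstable Oseledets bundle $V_n^+$ (which collects all Oseledets subspaces with strictly positive exponent, together with enough of the central subbundle to reach dimension $n$) is an isotropic — in fact Lagrangian — subbundle of $V$. So the hypotheses of Corollary \ref{cor:partialsum1} are met with $k=n$ whenever $\lambda_n^{V,\mu}>\lambda_{n+1}^{V,\mu}=-\lambda_n^{V,\mu}$, i.e.\ whenever $\lambda_n^{V,\mu}>0$; the boundary case $\lambda_n^{V,\mu}=0$ is handled by a limiting argument or, more directly, by the same observation as in Remark \ref{rm:Phi:g} below.

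Next I would compute $\Phi_n(\omega,I_n)$ for an arbitrary Lagrangian subspace $I_n$ of $V_\omega$. Exactly as in Remark \ref{rm:Phi:g}, when the ambient ``genus'' is $n$ and $k=n$, the Lagrangian completion term in formula \eqref{eq:Phi:alternative} is empty, so the value is independent of $I_n$: choosing a Hodge-orthonormal basis $\{c_1,\dots,c_n\}$ of $I_n$ and representing it by holomorphic forms $\{\omega_1,\dots,\omega_n\}\subset V^{1,0}(S)$, one has
\begin{equation*}
\Phi_n(\omega,I_n) = 2\sum_{i=1}^n H_\omega(\omega_i,\omega_i) - \sum_{i,j=1}^n |B_\omega(\omega_i,\omega_j)|^2 = \sum_{i=1}^n H_\omega(\omega_i,\omega_i) = \Tr\big(H_\omega\big|_{V^{1,0}}\big),
\end{equation*}
where the middle equality uses the identity $H_\omega(\omega_i,\omega_i)=\sum_{j=1}^n |B_\omega(\omega_i,\omega_j)|^2$ (this is formula \eqref{eq:Hii:in:terms:of:B}, valid here because Proposition \ref{pr:B:reducible} guarantees that $B_\omega$ is block-diagonal with respect to the decomposition $H^{1,0}=V^{1,0}\oplus (V^\perp)^{1,0}$ along the whole orbit, so $B_\omega(\omega_i,\omega_j)=0$ whenever $\omega_j\notin V^{1,0}$). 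The trace of $H_\omega\big|_{V^{1,0}}$ is by definition the sum of its eigenvalues $\Lambda_1^V(\omega)+\dots+\Lambda_n^V(\omega)$. Plugging this into formula \eqref{eq:partial:sum:equals:Pi:k1} with $k=n$ yields formula \eqref{eq:KZformgen}.

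The only subtlety — and the point I would treat carefully — is the block-diagonality of $B_\omega$ needed for the middle equality above: $H^1_\R$ being $\SL$-invariant is not enough, one needs $V$ to be Hodge star-invariant for $\mu$-almost every point \emph{and along the orbit}, which is precisely what Proposition \ref{pr:B:reducible} provides (equivalence of (i) and (ii)), since an $\SL$-invariant Hodge star-invariant subbundle is Hodge star-invariant at every point of the orbit of a full-measure set. Granting this, $H_\omega=B_\omega B_\omega^\ast$ is block-diagonal, the restriction $H_\omega\big|_{V^{1,0}}$ equals $(B_\omega\big|_{V^{1,0}})(B_\omega\big|_{V^{1,0}})^\ast$, and everything goes through formally as in the full-bundle case. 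The case $\lambda_n^{V,\mu}=0$ requires no separate argument: since $\Phi_n(\omega,I_n)$ is independent of the Lagrangian subspace $I_n$ of $V_\omega$, one may take $\nu$ in Theorem \ref{thm:partialsum} to be the push-forward of $\mu$ under \emph{any} measurable Lagrangian section of $G_n(V)$ and still obtain $\Lambda^{(n)}(\nu)=\lambda_1^{V,\mu}+\dots+\lambda_n^{V,\mu}$ by Oseledets' theorem applied to $\Lambda^n V$ (the top exterior power of a symplectic bundle, whose single exponent is the sum of the non-negative exponents), which completes the proof in all cases.
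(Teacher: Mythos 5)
Your core computation coincides with the paper's: Proposition~\ref{pr:B:reducible} block-diagonalizes $B_\omega$ with respect to $H^{1,0}=V^{1,0}\oplus(V^\perp)^{1,0}$ along the orbit, so that formula~\eqref{eq:Hii:in:terms:of:B} collapses $\Phi_n(\omega,I_n)$ to $\Tr\bigl(H_\omega\big\vert_{V^{1,0}}\bigr)=\Lambda^V_1(\omega)+\dots+\Lambda^V_n(\omega)$ for \emph{every} Lagrangian subspace $I_n\subset V_\omega$. The paper then concludes by noting that, $\Phi_n$ being constant on the fibers of $G_n(V)$, the Kontsevich-type argument (Lemma~\ref{lemma:var2}, equivalently Theorem~\ref{thm:partialsum} applied to any $\SO$-invariant measure with Lebesgue conditionals) yields the formula with no hypothesis on the spectrum; you instead route the generic case through Corollary~\ref{cor:partialsum1} with $k=n$, which is fine when $\lambda_n^{V,\mu}>0$. (Your appeal to Remark~\ref{rm:Phi:g} is a red herring: for $n<g$ the Lagrangian-completion term in~\eqref{eq:Phi:alternative} is \emph{not} empty, since the completion lives in the full $H^1(S,\R)$; but the calculation you actually carry out from~\eqref{eq:Phi_k} is the correct one, so no harm is done.)

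The genuine defect is your treatment of the boundary case $\lambda_n^{V,\mu}=0$. The push-forward of $\mu$ under an \emph{arbitrary} measurable Lagrangian section of $G_n(V)$ is in general not $\SO$-invariant, so Theorem~\ref{thm:partialsum} does not apply to it; and even granting that, for such a $\nu$ the quantity $\Lambda^{(n)}(\nu)$ is the volume growth rate of that particular section, which need not equal $\lambda_1^{V,\mu}+\dots+\lambda_n^{V,\mu}$ — the section given by the stable Lagrangian subbundle has growth rate $-(\lambda_1^{V,\mu}+\dots+\lambda_n^{V,\mu})$. Relatedly, $\Lambda^n V$ is not the top exterior power of $V$ and carries many Lyapunov exponents, of which $\lambda_1^{V,\mu}+\dots+\lambda_n^{V,\mu}$ is only the largest. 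The repair is exactly the paper's device: take $\nu\in\Cal O^V_n(\mu)$ to be the $\SO$-invariant measure whose conditionals on the fibers $G_n(V)_\omega$ are Lebesgue; Theorem~\ref{thm:partialsum} then applies, the right-hand side equals $\int(\Lambda^V_1+\dots+\Lambda^V_n)\,d\mu$ because $\Phi_n$ is fiber-constant, and a Lebesgue-generic Lagrangian subspace of $V_\omega$ is in general position with respect to the Oseledets flag, so $\Lambda^{(n)}(\nu)=\lambda_1^{V,\mu}+\dots+\lambda_n^{V,\mu}$. With that substitution the argument closes in all cases, with no case distinction needed.
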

\begin{proof}
Let us consider a maximal isotropic subspace $I_n$ of $V_\omega$, and
some Hodge-orthonormal basis $\{c_1, \dots, c_n\}$ of $I_n$. Let
$\{\omega_1,\dots,\omega_n\}$ be Abelian differentials in
$H^{1,0}(S)$ such that $c_j=[\Re(\omega_j)]$ for $j=1,\dots,n$.
Since $V$ is Hodge star-invariant, we get $\omega_j\in V^{1,0}$
for $j=1,\dots,n$. Since $\{c_1, \dots, c_n\}$ are
symplectic-orthogonal and Hodge-orthonormal, the collection
$\{\omega_1,\dots,\omega_n\}$ is orthonormal,
see~\eqref{eq:h:c1:h:c2}. Complete the latter collection
of Abelian differentials to an
orthonormal basis $\{\omega_1,\dots,\omega_g\}$ in $H^{1,0}(S)$.
By construction $\{\omega_{n+1},\dots,\omega_g\}$ is an
orthonormal basis in $(V^\perp)^{1,0}$.  Finally, let
$c_j=[\Re(\omega_j)]$ for $j=n+1,\dots,g$. We have constructed a
Hodge-orthonormal basis $\{c_1, \dots, c_g\}$ of a
Lagrangian subspace in $H^1(S,\R)$ which completes the initial
Hodge-orthonormal basis in the isotropic subspace $I_n\subset V$.
By formula~\eqref{eq:Hii:in:terms:of:B} we have
$$
\sum_{i=1}^n H^\R_\omega(c_i,c_i)\,:=\,
\sum_{i=1}^n H_\omega(\omega_i,\omega_i)
\,=\,
\sum_{i=1}^n\sum_{j=1}^g |B_\omega(\omega_i,\omega_j)|^2
$$
By Proposition~\ref{pr:B:reducible} the matrix $B_\omega$ is block-diagonal
in the chosen basis, so we obtain the following relations:
\begin{multline*}
\Lambda^V_1+\dots+\Lambda^V_n=
\Tr \left(H_\omega\big|_{V^{1,0}}\right)=
\sum_{i=1}^n H_\omega(\omega_i,\omega_i)
\,=\\=\,
\sum_{i=1}^n\sum_{j=1}^g |B_\omega(\omega_i,\omega_j)|^2
\,=\,
\sum_{i,j=1}^n |B_\omega(\omega_i,\omega_j)|^2
\end{multline*}
Plugging the latter formula in definition~\eqref{eq:Phi:alternative}
we finally obtain the following expression for $\Phi_n(\omega, I_n)$:
\begin{equation}
\Phi_n(\omega, I_n) = \sum_{i=1}^n \Lambda^V_i(\omega) \,,
\quad \text{for $\mu$-almost all }\,  \omega\in
\cH_g^{(1)}\,.
\end{equation}
Since the function $\Phi_n$ has no dependence on the maximal
isotropic subspace $I_n \subset V$ and the subbundle $V$ is
$\SL$-invariant, the statement follows from the variational
formula given in Lemma~\ref{lemma:var2}
  %
  %
and mimics
the      proof      of      the      Kontsevich      formula     (see
Corollary~\ref{cor:KZformula} above and~\cite{Forni2}, Corollary 5.3).
Alternatively, the statement can be now immediately obtained from the
more  general  formula~\eqref{eq:partial:sum:equals:Pi:k2} on partial
sums    of   exponents   in   Corollary~\ref{cor:partialsum2}   above
(see~\cite{Forni2}, Corollary~\mbox{5.5} for the proof).
\end{proof}

\section{Degenerate Kontsevich--Zorich spectrum}
\label{compldegspectrum}

In this section we collect several results which address the occurrence of
zero Kontsevich--Zorich exponents. In particular, we prove that all
the exponents are zero on a given $\SL$-invariant subbundle if and only if the
cocycle is isometric and that happens whenever the second fundamental
form vanishes on that subbundle. In all known
examples the vanishing of the second fundamental form can be derived
from symmetries (automorphisms) of (almost) all Abelian differentials
in the support of an $\SL$-invariant measure. We conclude with a
partial converse which gives a lower bound on the number of strictly
positive exponents in terms of the rank of the second fundamental form.

\subsection{Isometric   subbundles}   By  the  variational  formulas,
whenever  the  second  fundamental  form  vanishes identically on any
flow-invariant  subbundle  then  the  Kontsevich--Zorich cocycle acts
isometrically,  hence  all  of its exponents are zero. We prove below
partial  converse  results  in  the  special case of $\SL$-invariant,
Hodge star-invariant subbundles.

\begin{lemma}
\label{lemma:degeneratespectrum}
Let $V \subset  H^1_\R$ be a flow-invariant subbundle over a full
measure set for a flow-invariant ergodic Borel probability measure $\mu$
on the moduli space $\cH_g^{(1)}$ of normalized Abelian differentials.
Consider the following two properties:
\begin{enumerate}
\item the bilinear form $B_\omega^\R\vert_V$ vanishes
for $\mu$-almost all $\omega \in \cH_g^{(1)}$;
\item the restriction of the Kontsevich--Zorich  cocycle  to $V$ is
isometric with respect to the Hodge norm.
 \end{enumerate}
 Then, one has that $(1)$ implies $(2)$. Moreover, if one also assumes that $V$ is Hodge star-invariant, then $(2)$ implies $(1)$.
\end{lemma}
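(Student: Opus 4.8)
The plan is to extract both implications from the first variation formula of Lemma~\ref{lemma:varfor2}, which states that the Lie derivative along the Teichm\"uller flow of the Hodge inner product of parallel sections $c_1,c_2$ equals $\cL(c_1,c_2)_\omega=-2\Re B^\R_\omega(c_1,c_2)$. Two things are worth keeping in mind throughout: a parallel section for the Gauss--Manin connection along a Teichm\"uller orbit is the same as the image of a fixed vector under the Kontsevich--Zorich cocycle, and the form $B^\R_\omega$ depends (real) analytically on $\omega\in\cH_g$, so all the functions of $t$ that appear below are at least $C^1$ along orbits (indeed locally Lipschitz, by the a priori bound of Corollary~\ref{cor:universal_1}).

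\textbf{The implication $(1)\Rightarrow(2)$.} Fix $\mu$-almost any $\omega\in\cH_g^{(1)}$ and vectors $c_1,c_2\in V_\omega$, and set $f(t):=(G^{KZ}_t c_1,\,G^{KZ}_t c_2)_{\omega_t}$. Since $t\mapsto G^{KZ}_t c_i$ is a parallel section, Lemma~\ref{lemma:varfor2} gives $f'(t)=-2\Re B^\R_{\omega_t}(G^{KZ}_t c_1,G^{KZ}_t c_2)$. By flow-invariance of $V$ one has $G^{KZ}_t c_i\in V_{\omega_t}$, so $f'(t)=0$ whenever $\omega_t$ lies in the full-measure set $E:=\{\omega:\ B^\R_\omega|_{V_\omega}=0\}$ furnished by hypothesis $(1)$. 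Because $\mu$ is flow-invariant, Fubini's theorem shows that for $\mu$-almost every $\omega$ the orbit $\{\omega_t\}_{t\in\R}$ meets $E$ for Lebesgue-almost every $t$; hence $f'=0$ a.e., and, $f$ being $C^1$, $f$ is constant. Thus the restriction of the cocycle to $V$ preserves the Hodge inner product, and in particular it is isometric, which is $(2)$.

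\textbf{The implication $(2)\Rightarrow(1)$, assuming $V$ Hodge star-invariant.} If the cocycle on $V$ is isometric then, by polarization, for $\mu$-almost every $\omega$ and all $c_1,c_2\in V_\omega$ the function $t\mapsto(G^{KZ}_t c_1,G^{KZ}_t c_2)_{\omega_t}$ is constant, so its derivative at $t=0$, namely $\cL(c_1,c_2)_\omega$, vanishes; by Lemma~\ref{lemma:varfor2} this yields $\Re B^\R_\omega(c_1,c_2)=0$ for all $c_1,c_2\in V_\omega$ and $\mu$-almost every $\omega$. To kill the imaginary part as well I would invoke that for $\mu$-almost every $\omega$ the fibre $V_\omega$ is Hodge star-invariant, so $\ast c_1\in V_\omega$; since under the Hodge representation the star operator corresponds to multiplying holomorphic forms by $\pm i$, one has $B^\R_\omega(\ast c_1,c_2)=\mp i\,B^\R_\omega(c_1,c_2)$ --- this is precisely the relation underlying the computation~\eqref{eq:Hodgeorthog} in the proof of Proposition~\ref{pr:B:reducible} --- and hence the identity $\Re B^\R_\omega(\ast c_1,c_2)=0$ already established becomes $\Im B^\R_\omega(c_1,c_2)=0$. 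Therefore $B^\R_\omega|_{V_\omega}=0$ for $\mu$-almost every $\omega$, which is $(1)$.

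The only mildly delicate technical point is the passage from ``$f'=0$ a.e.'' to ``$f$ constant'' in $(1)\Rightarrow(2)$, which rests on the local Lipschitz (or $C^1$) regularity of $f$ along orbits, available from Corollary~\ref{cor:universal_1}; everything else is a direct application of Lemma~\ref{lemma:varfor2}. Conceptually, the asymmetry of the statement is unavoidable: isometry of the cocycle only controls $\Re B^\R_\omega$ on $V$, and Hodge star-invariance of $V$ is exactly the ingredient that promotes this to the vanishing of the full complex-bilinear form $B^\R_\omega|_V$.
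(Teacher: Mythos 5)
Your proof is correct and follows essentially the same route as the paper: both implications are read off from the first variation formula of Lemma~\ref{lemma:varfor2}, and the Hodge star-invariance is used in exactly the way the paper uses it (the paper rotates by $e^{i\phi}$ inside the complex space $V^{1,0}$, you apply $\ast$ on $V$ via the identity $B^\R_\omega(\ast c_1,c_2)=-i\,B^\R_\omega(c_1,c_2)$ — the same computation). The only cosmetic difference is in $(1)\Rightarrow(2)$, where the paper upgrades ``$\mu$-a.e.\ vanishing'' to vanishing on the support of $\mu$ by continuity, while you use a Fubini/a.e.-along-orbits argument; both work, and yours is arguably the more careful option given that $V$ is only a measurable subbundle.
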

\begin{proof}
By Lemma~\ref{lemma:varfor2}, the real part of the bilinear form
$B_\omega^\R$ gives the derivative of the Hodge inner product
under the action of the Kontsevich--Zorich cocycle at any
$\omega\in \cH_g^{(1)}$.  Whenever $B_\omega^\R\vert_V$ vanishes for
$\mu$-almost all $\omega\in \cH_g^{(1)}$, by continuity it vanishes
identically on the support  of the measure, and, hence, it
follows from the variational
formula~\eqref{eq:variational:formulas} that the
Kontsevich--Zorich cocycle acts isometrically on $V$ with
respect to the Hodge inner product.

Let us show now that $(2)$ implies $(1)$ (assuming also that $V$ is Hodge star-invariant). By the variational formula~\eqref{eq:varfor2} the
real part of the  bilinear form $B$ vanishes on $V^{1,0}$, i.e.  $\Re B(\alpha, \beta)=0$ for all
$\alpha, \beta \in V^{1,0}$. Note that, since $B_\omega$ is complex bilinear,
$$
B_\omega(e^{i\phi}\alpha,e^{i\phi}\beta)=
e^{2i\phi}B_\omega(\alpha,\beta)\,.
$$
Since the Hodge star-invariance of $V$ implies that $V^{1,0}$ is a complex space, we see that if  $ \Re B(\alpha, \beta)=0$ for all $\alpha, \beta \in V^{1,0}$, then
$B_\omega(\alpha,\alpha)=0$.
\end{proof}

\begin{remark}
Under either condition $(1)$ or condition $(2)$ of Lemma~\ref{lemma:degeneratespectrum} above,
all Lyapunov exponents of the restriction of the Kontsevich--Zorich  cocycle to $V$ are equal to zero.
In fact, any isometric cocycle has a Lyapunov spectrum reduced to the single exponent zero.
\end{remark}

Under the extra assumption that the invariant subbundle $V$ and
the ergodic measure $\mu$ are invariant not only with respect
to the Teichm\"uller flow, but with respect to the action of
$\SL$, one can prove a converse statement and prove
that vanishing of all Lyapunov exponents of an invariant
subbundle implies vanishing of the second fundamental
form on this subbundle, see Theorem~\ref{th:Ann}
below.

\smallskip
In the particular case that $V$ is Hodge star-invariant, the converse
result   becomes  a  straightforward  corollary  of  the  generalized
Kontsevich     formula     (see    formula~\eqref{eq:KZformgen}    in
Corollary~\ref{cor:KZformgen}).

\begin{corollary}
\label{cr:rank:0}
Let $V \subset  H^1_\R$ be an $\SL$-invariant, Hodge star-invariant subbundle over a full
measure set for an $\SL$-invariant ergodic Borel probability measure $\mu$
on the moduli space $\cH_g^{(1)}$ of Abelian differentials.
The following properties are equivalent:
\begin{enumerate}
\item the bilinear form $B_\omega^\R\vert_V$ vanishes
for $\mu$-almost all $\omega \in \cH_g^{(1)}$;
\item the restriction of the Kontsevich--Zorich  cocycle  to $V$ is
isometric with respect to the Hodge norm;
\item the non-negative Lyapunov spectrum of $V$ has the form
$$
\lambda^{V,\mu}_1 =  \dots = \lambda^{V,\mu}_n =0 \,.
$$
\end{enumerate}
\end{corollary}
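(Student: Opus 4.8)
The plan is to close the cycle of implications $(1)\Rightarrow(2)\Rightarrow(3)\Rightarrow(1)$, observing that under the Hodge star-invariance hypothesis all three conditions are controlled by the restricted forms $B_\omega\vert_{V^{1,0}}$ and $H_\omega\vert_{V^{1,0}}$. The implication $(1)\Rightarrow(2)$ is already contained in Lemma~\ref{lemma:degeneratespectrum} (the direction that holds for any flow-invariant subbundle), and in fact the converse $(2)\Rightarrow(1)$ is also available there, since its proof uses exactly that $V$ is Hodge star-invariant, so that $V^{1,0}$ is a complex subspace and the vanishing of $\Re B_\omega$ on $V^{1,0}$ forces $B_\omega\vert_{V^{1,0}}\equiv 0$ through the phase-rotation identity $B_\omega(e^{i\phi}\alpha,e^{i\phi}\beta)=e^{2i\phi}B_\omega(\alpha,\beta)$. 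The implication $(2)\Rightarrow(3)$ is immediate: a cocycle acting isometrically with respect to a continuous family of norms has Lyapunov spectrum reduced to the single exponent $0$, which is the content of the Remark following Lemma~\ref{lemma:degeneratespectrum}.

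The only substantive step is $(3)\Rightarrow(1)$, for which I would invoke the generalized Kontsevich formula of Corollary~\ref{cor:KZformgen}. Since $V$ is $\SL$-invariant and $\mu$-almost everywhere Hodge star-invariant, formula~\eqref{eq:KZformgen} gives
$$
\lambda^{V,\mu}_1+\dots+\lambda^{V,\mu}_n=\int_{\cH_g^{(1)}}\big(\Lambda^V_1(\omega)+\dots+\Lambda^V_n(\omega)\big)\,d\mu(\omega)\,.
$$
Each $\Lambda^V_i(\omega)$ is an eigenvalue of the positive-semidefinite Hermitian form $H_\omega\vert_{V^{1,0}}$, hence non-negative, and likewise $\lambda^{V,\mu}_i\ge 0$. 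Thus property $(3)$ forces the left-hand side to vanish, and by non-negativity of the integrand we conclude that $\Lambda^V_1(\omega)+\dots+\Lambda^V_n(\omega)=0$, and therefore $H_\omega\vert_{V^{1,0}}=0$, for $\mu$-almost every $\omega\in\cH_g^{(1)}$.

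It then remains to return from $H_\omega\vert_{V^{1,0}}\equiv 0$ to $B^\R_\omega\vert_V\equiv 0$. For this I would use that, by Proposition~\ref{pr:B:reducible}, applied along $\mu$-almost every $\SL$-orbit (where $V$ stays Hodge star-invariant), the matrix of $B_\omega$ is block-diagonal in an orthonormal basis of $H^{1,0}(S)$ adapted to the splitting $H^{1,0}(S)=V^{1,0}\oplus(V^\perp)^{1,0}$, so that by formula~\eqref{eq:Hform} the restriction satisfies $H_\omega\vert_{V^{1,0}}=(B_\omega\vert_{V^{1,0}})\cdot(B_\omega\vert_{V^{1,0}})^\ast$; exactly as in the computation \eqref{eq:Hii:in:terms:of:B} used in the proof of Corollary~\ref{cor:KZformgen}, taking the trace shows that $H_\omega\vert_{V^{1,0}}=0$ implies $\sum_{i,j}|B_\omega(\omega_i,\omega_j)|^2=0$ over the $V^{1,0}$-block, hence $B_\omega\vert_{V^{1,0}}\equiv 0$. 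Since $h_\omega(c)\in V^{1,0}$ for every $c\in V$ when $V$ is Hodge star-invariant, definition~\eqref{eq:B:R} of $B^\R_\omega$ then yields $B^\R_\omega\vert_V\equiv 0$, which closes the cycle. I do not expect a genuine obstacle here: the argument is an assembly of Lemma~\ref{lemma:degeneratespectrum}, Corollary~\ref{cor:KZformgen}, and the positive-semidefiniteness of $H_\omega$; the one point deserving a line of care is verifying that $\SL$-invariance together with $\mu$-almost everywhere Hodge star-invariance legitimately triggers the block-diagonality conclusion of Proposition~\ref{pr:B:reducible} on a full-measure set of orbits.
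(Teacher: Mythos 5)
Your proposal is correct and follows essentially the same route as the paper: $(1)\Leftrightarrow(2)$ via Lemma~\ref{lemma:degeneratespectrum}, $(2)\Rightarrow(3)$ by definition of Lyapunov exponents, and $(3)\Rightarrow(1)$ by Corollary~\ref{cor:KZformgen} together with the positive-semidefiniteness of $H_\omega$ and formula~\eqref{eq:Hform}. The only difference is that your detour through Proposition~\ref{pr:B:reducible} in the last step is unnecessary: by the identity $H_\omega(\omega_i,\omega_i)=\sum_{j=1}^g|B_\omega(\omega_i,\omega_j)|^2$ (formula~\eqref{eq:Hii:in:terms:of:B}), the vanishing of $H_\omega\vert_{V^{1,0}}$ already forces $B_\omega(\alpha,\cdot)\equiv 0$ on all of $H^{1,0}(S)$ for every $\alpha\in V^{1,0}$, with no block-diagonality required.
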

\begin{proof} The first two statements are equivalent by Lemma~\ref{lemma:degeneratespectrum}.
The second statement implies the third by the definition of Lyapunov exponents. All the above
statements hold for any flow-invariant subbundle. If $V$ is $\SL$-invariant and Hodge star-invariant,
the third statement implies the first statement by the generalized Kontsevich formula. In fact,
by that formula (see Corollary~\ref{cor:KZformgen}) the vanishing of all Lyapunov
exponents of $V$ implies that
$$
\Lambda^V_1(\omega) =\dots  =\Lambda^V_n(\omega)=0 \,, \quad \text{ for $\mu$-almost all }\,
\omega\in \cH_g^{(1)}\,.
$$
It follows that $H_\omega\vert_{V^{1,0}}$ vanishes on the support of the measure $\mu$ in $\cH_g^{(1)}$.
By formula~\eqref{eq:Hform} this implies that the bilinear forms $B_\omega\vert_{V^{1,0}}$ and, thus,
$B_\omega^\R\vert_V$ also vanish for $\mu$-almost all $\omega \in \cH_g^{(1)}$.
\end{proof}

An important particular case of the above Corollary~\ref{cr:rank:0} is given
below. In Appendix~\ref{s:rk:B:cyclic:covers} we shall see other
examples.

\smallskip
The Hodge bundle $H^1_\R$ over $\cH_g^{(1)}$ splits into a direct sum of two subbundles. The first one
(the \textit{tautological subbundle} ) has dimension two; its fiber
are spanned by cohomology classes $[\Re(\omega)]$ and
$[\Im(\omega)]\in H^1(S, \R)$. The second subbundle,
$W$, is the orthogonal complement to the first one with respect
to the symplectic intersection form (and with respect to the Hodge
inner product) on the Hodge bundle $H^1_\R$.
Clearly, both the tautological subbundle and its
orthogonal complement $W$ are $\SL$-invariant and  Hodge star-invariant.
In particular, for all $\omega \in \cH_g^{(1)}$ the space $W^{1,0}_\omega$ is  the  orthogonal
complement to $\omega$ in $H^{1,0}(S)$ with respect to the Hermitian
form~\eqref{eq:Intform}.

By Corollary~\ref{cr:rank:0} we have the following result (see Corollary 7.1  in~\cite{ForniSurvey}):
\begin{corollary}
\label{cr:rank:1}
Let $\mu$ be an $\SL$-invariant ergodic Borel probability measure
on the moduli space $\cH_g^{(1)}$ of normalized Abelian differentials.
The second fundamental form $B_\omega$ has rank equal to $1$ for $\mu$-almost all
$\omega \in \cH_g^{(1)}$ if and only if all the non-trivial Lyapunov
exponents of the Kontsevich--Zorich cocycle with respect to
$\mu$ vanish, that is, if and only if
$$
\lambda_2^\mu = \dots = \lambda_g^\mu =0\,.
$$
\end{corollary}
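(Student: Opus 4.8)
The plan is to derive Corollary~\ref{cr:rank:1} from Corollary~\ref{cr:rank:0} applied to the subbundle $V=W$, where $W$ is the $\SL$-invariant and Hodge star-invariant orthogonal complement of the tautological plane subbundle, supplemented by a short linear-algebra step identifying rank-one-ness of $B_\omega$ with the vanishing of its restriction to $W^{1,0}_\omega=\langle\omega\rangle^\perp$. Throughout one uses that $B_\omega$ is a complex-symmetric bilinear form on $H^{1,0}(S)$ with $B_\omega(\omega,\omega)=\Vert\omega\Vert^2=1$ by Lemma~\ref{lemma:apriori_B_bounds}, so $B_\omega$ has rank at least one at every point of $\cH_g^{(1)}$; thus ``$\rk B_\omega=1$'' is equivalent to ``$\rk B_\omega\le 1$''.

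The key step will be to prove that, at every $\omega\in\cH_g^{(1)}$,
$$
\rk B_\omega = 1 \quad\Longleftrightarrow\quad B_\omega\big|_{\langle\omega\rangle^\perp}=0 \quad\Longleftrightarrow\quad B^\R_\omega\big|_W=0\,.
$$
To see this I would fix a Hodge-orthonormal basis $\{\omega_1=\omega,\omega_2,\dots,\omega_g\}$ of $H^{1,0}(S)$ with $\langle\omega_2,\dots,\omega_g\rangle=\langle\omega\rangle^\perp$. Let $B$ denote the matrix of $B_\omega$ in this basis; by formula~\eqref{eq:Hform} the matrix of $H_\omega$ is $H=BB^\ast$, so its $(1,1)$-entry equals $\sum_{j=1}^g|B_{1j}|^2=|B_\omega(\omega,\omega)|^2+\sum_{j\ge2}|B_\omega(\omega,\omega_j)|^2=1+\sum_{j\ge2}|B_\omega(\omega,\omega_j)|^2$. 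Since $H_\omega(\omega,\omega)=\Vert\omega\Vert^2=1$ by Lemma~\ref{lemma:apriori_curv_bounds}, the ``mixed'' terms $B_\omega(\omega,\omega_j)$ with $j\ge2$ all vanish; as $B_\omega$ is symmetric, $B$ is block-diagonal for the splitting $\langle\omega\rangle\oplus\langle\omega\rangle^\perp$, with blocks $(1)$ and the matrix of $B_\omega\vert_{\langle\omega\rangle^\perp}$. Hence $\rk B_\omega=1+\rk\big(B_\omega\vert_{\langle\omega\rangle^\perp}\big)$, which gives the first equivalence; the second is just the definition~\eqref{eq:B:R} of $B^\R_\omega$ together with the facts that $W^{1,0}_\omega=\langle\omega\rangle^\perp$ and that $h_\omega$ maps $W$ onto $W^{1,0}_\omega$.

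Having this, I would apply Corollary~\ref{cr:rank:0} with $V=W$ (which is legitimate, $W$ being $\SL$-invariant and Hodge star-invariant) to conclude that $B^\R_\omega\vert_W=0$ for $\mu$-almost every $\omega$ if and only if the non-negative Lyapunov spectrum of the restriction of the Kontsevich--Zorich cocycle to $W$ is identically zero, i.e. $\lambda^{W,\mu}_1=\dots=\lambda^{W,\mu}_{g-1}=0$. To finish, I would use the $\SL$-invariant splitting $H^1_\R=(\text{tautological})\oplus W$: on the tautological plane bundle, spanned by $[\Re\omega]$ and $[\Im\omega]$, the cocycle acts as $\operatorname{diag}(e^t,e^{-t})$ along the Teichm\"uller flow, so its Lyapunov exponents are exactly $+1$ and $-1$; consequently the non-negative spectrum $\lambda^\mu_1\ge\dots\ge\lambda^\mu_g$ of $H^1_\R$ is the union with multiplicities of $\{1\}$ and $\{\lambda^{W,\mu}_1,\dots,\lambda^{W,\mu}_{g-1}\}$. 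Therefore $\lambda^\mu_2=\dots=\lambda^\mu_g=0$ if and only if $\lambda^{W,\mu}_1=\dots=\lambda^{W,\mu}_{g-1}=0$, and chaining this with the two equivalences above yields the Corollary.

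The step I expect to be the main obstacle — a mild one — is the linear-algebra equivalence $\rk B_\omega=1\Leftrightarrow B^\R_\omega\vert_W=0$: the point is that one must use \emph{both} a priori identities $B_\omega(\omega,\omega)=\Vert\omega\Vert^2$ and $H_\omega(\omega,\omega)=\Vert\omega\Vert^2$ (the latter via $H=BB^\ast$) to force the matrix of $B_\omega$ to be block-diagonal with respect to $\langle\omega\rangle\oplus\langle\omega\rangle^\perp$, so that no hidden contribution to the rank can arise from the coupling between $\omega$ and $\langle\omega\rangle^\perp$. Once this is in place, the rest is a formal consequence of Corollary~\ref{cr:rank:0} and of the elementary behaviour of Lyapunov spectra under $\SL$-invariant orthogonal direct sums.
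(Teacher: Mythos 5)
Your proposal is correct and takes essentially the same route as the paper's proof: split $H^1_\R$ into the tautological plane (with spectrum $\{1,-1\}$) and its complement $W$, observe that $\rk B_\omega=1$ exactly when $B_\omega\vert_{W^{1,0}}$ vanishes, and apply Corollary~\ref{cr:rank:0} to $W$. The only difference is that you spell out the block-diagonality of $B_\omega$ with respect to $\langle\omega\rangle\oplus\langle\omega\rangle^\perp$ (forced by $B_\omega(\omega,\omega)=H_\omega(\omega,\omega)=1$ together with $H=BB^\ast$), a step the paper merely asserts; this is a correct and welcome elaboration.
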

\begin{proof} It can be verified explicitly that the Lyapunov spectrum of the tautological bundle
is $\{1, -1\}$. It follows that the non-negative Lyapunov spectrum of its symplectic
orthogonal complement  $W\subset H^1_\R$ is $\{\lambda_2^\mu, \dots, \lambda_g^\mu\}$.
Since by definition $B_\omega(\omega, \omega)=1$, the rank of $B_\omega$ is $1$ if and
only if the rank of $B_\omega\vert_{W^{1,0}}$ is zero, for all $\omega\in \cH_g^{(1)}$. The statement
then follows from Corollary~\ref{cr:rank:0} for the $\SL$-invariant, Hodge star-invariant bundle
$W\subset H^1_\R$.

\end{proof}

\subsection{A symmetry criterion}
\label{symmetry}
We recall below a simple symmetry criterion for the vanishing of the second
fundamental form found in \cite{ForniSurvey}, \S 7.  Let $\omega\in \cH_g^{(1)}$
be an Abelian differential on a Riemann surface $S$.
Suppose that $S$ has a holomorphic automorphism $T$ and
that the holomorphic 1-form $\omega$ is an eigenvector of the
induced action $T^\ast: H^{1,0}(S)\to H^{1,0}(S)$. Denote by
$u(T)$ the corresponding eigenvalue, $T^\ast\omega=u(T)\omega$.
Note that the action $T^\ast: H^{1,0}(S)\to H^{1,0}(S)$
preserves the restriction of the Hermitian intersection form
(\ref{eq:Intform}) which is positive-definite on $H^{1,0}(S)$,
which implies that $\vert u(T)\vert=1$ and
$T^\ast\vert_{H^{1,0}(S)}$ is diagonalizable. Consider a basis
$\{\omega_1,\dots,\omega_g\}$ of eigenvectors of $T^\ast$ in
$H^{1,0}(S)$ and denote the corresponding eigenvalues by
$u_1(T),\dots,u_g(T)$. The following statement is a simplified
version of Lemma 7.2 in~\cite{ForniSurvey}.

\begin{Theorem}
\label{thm:degenerateB}
Let $\cM$ be an $\SL$-invariant suborbifold in some stratum of Abelian
differentials in genus $g$. Let $\cM$ be endowed with an ergodic
probability measure. Suppose that almost every flat surface
$(S,\omega)$ in $\cM$ is endowed with a holomorphic automorphism
$T:S\to S$, and that $\omega$ is an eigenvector of $T^\ast$ with an
eigenvalue $u(T)$. Denote by $u_1(T),\dots, u_g(T)$ all eigenvalues of
$\ T^\ast:H^{1,0}(S)\to H^{1,0}(S)$.

If for all but one couple of indices $(i,j)$, where $1\le i\le j\le g$,
one has $u_i(T)u_j(T)\neq u^2(T)$, then the rank of the bilinear
form $B_\omega$ on $H^{1,0}$ is equal to $1$ for all $\omega \in \cM$,
and, hence, all the non-trivial Lyapunov exponents of the Hodge bundle
with respect to the Teichm\"uller geodesic flow on $\cM$
vanish:
$$
\lambda_2=\dots=\lambda_g=0\ .
$$
\end{Theorem}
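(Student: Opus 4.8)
The plan is to evaluate the symmetric bilinear form $B_\omega$ of~\eqref{eq:B} in a basis of $T^{*}$-eigenvectors of $H^{1,0}(S)$ and to show that the automorphism $T$ forces every entry to vanish except the one associated with the pair $(\omega,\omega)$, which is nonzero by~\eqref{eq:B_max}.

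First I would fix an orthonormal basis $\{\omega_1,\dots,\omega_g\}$ of $H^{1,0}(S)$ consisting of eigenvectors of $T^{*}$, with $T^{*}\omega_i=u_i(T)\,\omega_i$ and all $|u_i(T)|=1$; since $\omega$ is itself an eigenvector, with eigenvalue $u(T)$, I may arrange that $\omega_1=\omega$ and $u_1(T)=u(T)$. The key point is the following pullback identity. Since $T\colon S\to S$ is holomorphic, hence an orientation-preserving diffeomorphism, one has $\int_S\eta=\int_S T^{*}\eta$ for every $2$-form $\eta$ on $S$. Applying this to $\eta=(\omega_i\,\omega_j/\omega)\,\bar\omega$ and using $T^{*}\omega_i=u_i(T)\,\omega_i$, $T^{*}\omega=u(T)\,\omega$, $T^{*}\bar\omega=\overline{T^{*}\omega}=\overline{u(T)}\,\bar\omega=u(T)^{-1}\bar\omega$ (here we use $|u(T)|=1$), together with the multiplicativity of the pullback on products and quotients of tensors, one obtains
$$
B_\omega(\omega_i,\omega_j)=\frac{i}{2}\int_S T^{*}\!\left(\frac{\omega_i\,\omega_j}{\omega}\,\bar\omega\right)=\frac{u_i(T)\,u_j(T)}{u(T)^{2}}\;B_\omega(\omega_i,\omega_j)\,.
$$
Hence $B_\omega(\omega_i,\omega_j)=0$ whenever $u_i(T)\,u_j(T)\neq u(T)^{2}$.

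Next, by~\eqref{eq:B_max} we have $B_\omega(\omega_1,\omega_1)=B_\omega(\omega,\omega)=\|\omega\|^{2}\neq 0$, while $u_1(T)\,u_1(T)=u(T)^{2}$; so the pair $(1,1)$ is one pair with $u_i(T)u_j(T)=u(T)^{2}$, and by hypothesis it is the \emph{only} one (in particular $u(T)$ is then a simple eigenvalue of $T^{*}\vert_{H^{1,0}(S)}$, since higher multiplicity would produce further solutions of $u_iu_j=u^2$). Therefore $B_\omega(\omega_i,\omega_j)=0$ for every pair $(i,j)$ with $i\le j$ except $(1,1)$, and since $B_\omega$ is symmetric its matrix in the basis $\{\omega_1,\dots,\omega_g\}$ equals $\operatorname{diag}(\|\omega\|^{2},0,\dots,0)$. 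Thus $\rk B_\omega=1$ for every $(S,\omega)\in\cM$ carrying such an automorphism $T$, i.e. for a full-measure set. The vanishing $\lambda_2=\dots=\lambda_g=0$ then follows from Corollary~\ref{cr:rank:1}; equivalently, since a rank-one $B_\omega$ with $B_\omega(\omega,\omega)\neq0$ must vanish identically on $W^{1,0}_\omega=\langle\omega\rangle^{\perp}$, the first variation formula~\eqref{eq:varfor2} shows that the Kontsevich--Zorich cocycle is isometric on the symplectic-orthogonal complement $W$ of the tautological subbundle, whose non-negative Lyapunov spectrum is $\{\lambda_2,\dots,\lambda_g\}$.

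There is no serious obstacle here; the one point that requires care is the bookkeeping around the eigenvalue $u(T)$, namely the observation that $\omega$ may be taken as a basis vector so that the pair $(\omega,\omega)$ saturates the relation $u_i(T)u_j(T)=u(T)^{2}$ and is thereby forced to be the single exceptional pair allowed by the hypothesis. Everything else reduces to the one-line pullback identity displayed above.
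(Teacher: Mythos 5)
Your proof is correct and follows essentially the same route as the paper's: the same change-of-variables identity $B_\omega(\omega_i,\omega_j)=\bigl(u_i(T)u_j(T)/u^2(T)\bigr)B_\omega(\omega_i,\omega_j)$ forcing all entries to vanish except the one for the pair $(\omega,\omega)$, which is nonzero by~\eqref{eq:B_max}, followed by an appeal to Corollary~\ref{cr:rank:1}. Your extra bookkeeping (that $(1,1)$ must be the unique exceptional couple) is a welcome explicit statement of what the paper leaves implicit.
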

\begin{proof}
Consider  a holomorphic automorphism $T:S\to S$. For any two
holomorphic differentials  $\omega_i,\omega_j$ of our basis of
eigenvectors of the linear map $T^\ast$ in $H^{1,0}(S)$, by
definition~\eqref{eq:B} of the form $B_\omega$
and by change of coordinates, we get
\begin{equation*}
\begin{aligned}
B_\omega(\omega_i,\omega_j)&=   \frac{i}{2} \int_S
\frac{\omega_i \omega_j }{\omega}  \bar \omega=  \frac{i}{2} \int_S
\frac{T^\ast\omega_i T^\ast\omega_j }{T^\ast\omega} T^\ast \bar \omega=  \\
&=  \frac{i}{2} \int_S \frac{ u_i(T) u_j(T)}{u^2(T) } \frac{\omega_i \omega_j }{\omega}  \bar \omega=
 \frac{ u_i(T) u_j(T)}{u^2(T) } B_\omega(\omega_i, \omega_j)\,.
\end{aligned}
\end{equation*}
Hence, for every pair of indices $i,j$ such that $u_i(T)u_j(T)\neq u^2(T)$,
the element $B_\omega(\omega_i,\omega_j)$ of the matrix of the form
$B_\omega$ is equal to zero. The result
now follows from Corollary~\ref{cr:rank:1}.
\end{proof}

In Appendix \ref{s:rk:B:cyclic:covers} (more precisely, Subsection~\ref{ss:FMt} below), we will give an application of this symmetry criterion (for the vanishing of exponents) provided by Theorem~\ref{thm:degenerateB} in a particular interesting case (of an arithmetic Teichm\"uller disk of a square-tiled cyclic cover in genus $4$).

\subsection{The central Oseledets subbundle}
\label{ssec:central}

An important example of a Hodge star-invariant subspace is given by the kernel of the
second fundamental form. In fact, the following elementary result holds.
For any $(S,\omega)\in \cH_g$, let  $\Ann(B_\omega^\R)$  denote
 the kernel of the form $B^\R_\omega$ on $H^1(S, \R)$, that is:
$$
\Ann(B_\omega^\R):=\{ c\in H^1(S,\R)\ |\
B_\omega^\R(c,c')=0\quad \forall c'\in H^1(S,\R)\}\ .
$$

\begin{Lemma}
\label{lemma:Ann:Hodgeinvariance}
For any $(S,\omega)\in\cH_g$ the subspace $\Ann(B_\omega^\R)$
in $H^1(S,\R)$ is Hodge star-invariant. Moreover,
${(\Ann(B_\omega^\R))}^{1,0}=\Ann B_\omega$.
\end{Lemma}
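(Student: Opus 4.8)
The plan is to reduce both assertions to two elementary facts about the Hodge representation map $h=h_\omega\colon H^1(S,\R)\to H^{1,0}(S)$. First, $h$ is an $\R$-linear isomorphism whose inverse sends a holomorphic form $\eta$ to $[\Re\eta]$, and it satisfies $[\Re h(c)]=c$ and $[\Im h(c)]=\ast c$; in particular $h(\ast c)=-i\,h(c)$, since $-i\,h(c)$ is holomorphic with $\Re(-i\,h(c))=\Im h(c)$, whose class is $\ast c$. Second, $B_\omega$ is complex bilinear and symmetric, so that $B^\R_\omega(c_1,c_2)=B_\omega(h(c_1),h(c_2))$ is $\R$-bilinear and symmetric and $\Ann(B^\R_\omega)$ is both the left and the right kernel of $B^\R_\omega$. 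I also recall that $\Ann B_\omega$ denotes the kernel $\{\eta\in H^{1,0}(S):B_\omega(\eta,\beta)=0\ \text{for all}\ \beta\in H^{1,0}(S)\}$.

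For the Hodge star-invariance, the single computation I need is
$$B^\R_\omega(\ast c_1,c_2)=B_\omega\big(h(\ast c_1),h(c_2)\big)=B_\omega\big(-i\,h(c_1),h(c_2)\big)=-i\,B^\R_\omega(c_1,c_2)\,,$$
valid for all $c_1,c_2\in H^1(S,\R)$ by complex linearity of $B_\omega$. From it, $c\in\Ann(B^\R_\omega)$ forces $B^\R_\omega(\ast c,c')=-i\,B^\R_\omega(c,c')=0$ for every $c'$, hence $\ast c\in\Ann(B^\R_\omega)$; so $\ast\big(\Ann(B^\R_\omega)\big)\subseteq\Ann(B^\R_\omega)$, and since $\ast\ast c=-c$ this inclusion is an equality. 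Therefore $\Ann(B^\R_\omega)$ is Hodge star-invariant, and $(\Ann(B^\R_\omega))^{1,0}$ is well defined.

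For the identity $(\Ann(B^\R_\omega))^{1,0}=\Ann B_\omega$, I would write $V:=\Ann(B^\R_\omega)$ and check the two inclusions directly, using the definition of $V^{1,0}$ (the holomorphic forms $\eta$ with $[\Re\eta]\in V$, for which $h([\Re\eta])=\eta$) and the fact that $h(c')$ runs over all of $H^{1,0}(S)$ as $c'$ runs over $H^1(S,\R)$. If $\eta\in V^{1,0}$, set $c:=[\Re\eta]\in V$, so $\eta=h(c)$; then for any $\beta=h(c')\in H^{1,0}(S)$ one gets $B_\omega(\eta,\beta)=B^\R_\omega(c,c')=0$, so $\eta\in\Ann B_\omega$. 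Conversely, if $\eta\in\Ann B_\omega$, set $c:=[\Re\eta]$, so again $\eta=h(c)$; then $B^\R_\omega(c,c')=B_\omega(\eta,h(c'))=0$ for every $c'\in H^1(S,\R)$, hence $c\in V$ and $\eta=h(c)\in V^{1,0}$. I do not anticipate any genuine obstacle: the whole argument is bookkeeping, and the only points that require care are to apply the Hodge representation map to the correct argument (the identity $h([\Re\eta])=\eta$ is precisely the Hodge representation theorem, with no spurious $\omega$-dependence), to keep straight whether an annihilator is taken inside $H^1(S,\R)$ or inside $H^{1,0}(S)$, and to record $h(\ast c)=-i\,h(c)$ with the right sign and factor of $i$ — this relation, which says that $\ast$ corresponds to multiplication by $-i$ on the holomorphic side, is exactly what drives the star-invariance.
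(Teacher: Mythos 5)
Your proof is correct and follows essentially the same route as the paper's: the key identity $h(\ast c)=-i\,h(c)$ (equivalently $\ast c=[\Re(-i\,h(c))]$) combined with the complex bilinearity of $B_\omega$ gives $B^\R_\omega(\ast c_1,c_2)=-i\,B^\R_\omega(c_1,c_2)$, from which star-invariance is immediate, and the identity $(\Ann(B_\omega^\R))^{1,0}=\Ann B_\omega$ follows from unwinding the definitions. The paper dismisses the second assertion as a "direct corollary of the definition"; your explicit two-inclusion check is the same bookkeeping spelled out.
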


\begin{proof}
Let $c_1,c_2\in H^1(S,\R)$ be any two cohomology classes, and
let $\omega_1,\omega_2$ be holomorphic $1$-forms such that
$c_1=[\Re(\omega_1)]$ and $c_2=[\Re(\omega_2)]$. By definition
$\ast c_1=[\Im(\omega_1)]$, so we have $\ast
c_1=[\Re(-i\omega_1)]$. Thus, by the definition of
$B^\R_\omega$ given in
Section~\ref{ss:eval:sec:fund:form} and by bilinearity of
the form $B_\omega$ defined in~\eqref{eq:B} one gets
$$
B^\R_\omega(\ast c_1,c_2):=B_\omega(-i\omega_1,\omega_2)
=-i B_\omega(\omega_1,\omega_2)
=-i B^\R_\omega(c_1,c_2)\,.
$$
Hence, if for some $c_1\in H^1(S,\R)$ one has the identity
$B_\omega^\R(c_1,c_2)=0$ for all $c_2\in H^1(S,\R)$, one also
has $B_\omega^\R(\ast c_1,c_2)=0$ for all $c_2\in H^1(S,\R)$.
The last statement is a direct corollary of the definition
of $B^\R_\omega$.
\end{proof}

Another remarkable, however simple, property of the bundle  $\Ann(B_\omega^\R)$ is
described below:

\begin{lemma}
\label{lemma:invcompl}
For any  flow-invariant (resp., $\SL$-invariant) measurable subbundle $V \subset \Ann(B_\omega^\R)$ the Hodge orthogonal splitting $H^1_\R= V \oplus V^\perp$ is flow-invariant (resp.,
$\SL$-invariant).
\end{lemma}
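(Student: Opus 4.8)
The plan is to use the variational formula of Lemma~\ref{lemma:varfor2}, which expresses the derivative of the Hodge inner product along the Teichm\"uller flow in terms of the bilinear form $B^\R_\omega$, exactly as in the proof of Proposition~\ref{pr:B:reducible}. The key point is that $V\subset \Ann(B^\R_\omega)$ means precisely that $B^\R_\omega(v,c)=0$ for every $v$ in a fiber of $V$ and every $c\in H^1(S,\R)$, so in particular $B^\R_\omega(v,w)=0$ for all $v\in V$ and all $w\in V^\perp$. By Lemma~\ref{lemma:Ann:Hodgeinvariance} the bundle $\Ann(B^\R_\omega)$ is Hodge star-invariant, and more importantly for us $V$ is a flow-invariant subbundle contained in it; since parallel transport by the Gauss--Manin connection preserves the symplectic form, the symplectic-orthogonal of a flow-invariant subbundle is flow-invariant. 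The subtlety is that we want the \emph{Hodge}-orthogonal complement $V^\perp$ to be flow-invariant, which a priori is only a measurable subbundle moving with the complex structure.

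First I would fix a point $(S_0,\omega_0)$ in the support of $\mu$ and an interval $U=\left]-\epsilon,\epsilon\right[$ of the Teichm\"uller trajectory through it, and identify the fibers of $H^1_\R$ over $U$ by parallel transport, so that $V$ and its symplectic-orthogonal $V^\dagger$ are \emph{constant} subspaces over $U$ (flow-invariance of $V$ gives that $V$ is constant under this identification, and preservation of the symplectic form gives the same for $V^\dagger$). Next I would observe that at $t=0$ we may assume, by Lemma~\ref{lm:equivalence:Hodge:and:symp:orthogonal:under:star:invariance} applied to the Hodge star-invariant subspace $V$ (which is Hodge star-invariant at every $t$ because it lies in $\Ann(B^\R_{\omega_t})$ and that bundle is flow-invariant as a submodule of the kernel, by Lemma~\ref{lemma:Ann:Hodgeinvariance}), that $V^\perp(\omega_t)=V^\dagger$ for all $t$. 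Hence $V^\perp$ is constant over $U$, which is exactly the assertion of flow-invariance. Alternatively, and more directly in the spirit of the proof of Proposition~\ref{pr:B:reducible}: for any $v\in V$ and any $w\in V^\dagger$ (constant over $U$), Lemma~\ref{lemma:varfor2} gives
$$
\frac{d}{dt}(v,w)_{\omega_t} = -2\Re B^\R_{\omega_t}(v,w)=0
\quad\text{and}\quad
\frac{d}{dt}(\ast v,w)_{\omega_t}= 2\Im B^\R_{\omega_t}(v,w)=0\,,
$$
because $v\in V\subset\Ann(B^\R_{\omega_t})$ for every $t$. Since $V^\perp(\omega_0)=V^\dagger$ by Hodge star-invariance of $V$ at $t=0$ (Lemma~\ref{lm:equivalence:Hodge:and:symp:orthogonal:under:star:invariance}), and the Hodge inner products $(v,w)_{\omega_t}$ and $(\ast v,w)_{\omega_t}$ vanish at $t=0$ and have vanishing derivative, they vanish for all $t\in U$; that is, $V^\dagger\subset V^\perp(\omega_t)$, and by dimension count $V^\perp(\omega_t)=V^\dagger$ for all $t\in U$. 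Thus $V^\perp$ is constant over $U$, i.e.\ flow-invariant.

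For the $\SL$-invariant case one runs the identical argument with $U$ replaced by a small ball in $\SL$ about the identity, exactly as in the last sentence of Proposition~\ref{pr:B:reducible}: the analogue of Lemma~\ref{lemma:varfor2} holds for the derivatives of the Hodge inner product along the $\SL$-directions (these are the first-variation formulas governed by $B^\R_\omega$, which are $\SO$-invariant by Remark~\ref{rm:SO:invariant} and extend over the whole $\SL$-orbit), so the same vanishing of derivatives forces $V^\perp$ to remain the symplectic-orthogonal $V^\dagger$ throughout the $\SL$-orbit, hence $\SL$-invariant. The main obstacle — really the only thing to be careful about — is to make sure that $V$ genuinely stays inside $\Ann(B^\R_{\omega_t})$ along the deformation, so that the right-hand sides above vanish \emph{identically in $t$} and not merely at $t=0$; this is exactly guaranteed by the flow-invariance (resp.\ $\SL$-invariance) of $V$ together with Lemma~\ref{lemma:Ann:Hodgeinvariance} applied fiberwise, since $\Ann(B^\R_{\omega_t})$ varies with $t$ but contains the (constant, under parallel transport) subspace $V$ by hypothesis.
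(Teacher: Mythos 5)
Your central computation --- that for a parallel section $v$ of $V$ and any parallel section $w$ of $H^1_\R$ one has $\frac{d}{dt}(v,w)_{\omega_t}=-2\Re B^\R_{\omega_t}(v,w)=0$, because flow-invariance of $V$ keeps $v$ inside $\Ann(B^\R_{\omega_t})$ for every $t$ --- is exactly the paper's proof, which consists of nothing more than the observation that the equation $(v,w)=0$ is then preserved by parallel transport. However, the scaffolding you build around it contains a genuine error. You assert that $V$ is Hodge star-invariant ``because it lies in $\Ann(B^\R_{\omega_t})$'' and Lemma~\ref{lemma:Ann:Hodgeinvariance} makes that bundle star-invariant. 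Containment in a Hodge star-invariant subspace does not make a subspace star-invariant (a real line inside a star-invariant plane is the standard counterexample, since $\ast$ has no real eigenvectors), and the lemma under discussion makes no star-invariance hypothesis on $V$; indeed it is applied in the proof of Theorem~\ref{th:Ann} to bundles such as $F^\mu$ for which no such property is established. Consequently the identification $V^\perp(\omega_0)=V^\dagger$ via Lemma~\ref{lm:equivalence:Hodge:and:symp:orthogonal:under:star:invariance}, on which both of your routes rest, is unjustified, and the detour through the symplectic orthogonal $V^\dagger$ collapses.

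The repair is to drop $V^\dagger$ entirely: take $w_0\in V^\perp(\omega_0)$, parallel transport it to $w_t$, note that for every parallel $v\in V$ the function $(v,w_t)_{\omega_t}$ vanishes at $t=0$ and has identically vanishing derivative by the computation above (you do not even need the second identity involving $\ast v$, since $B^\R_{\omega_t}(v,\cdot)\equiv 0$ already kills both the real and imaginary parts), conclude that $w_t\in V^\perp(\omega_t)$, and finish by a dimension count. The same remark applies verbatim to the $\SL$-invariant case.
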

\begin{proof} By  the variational formula (\ref{eq:varfor2}) of Lemma \ref{lemma:varfor2} for the Hodge
innner product, the condition  $V \subset \Ann(B_\omega^\R)$ implies that the Hodge product $(v,w)$
is constant for any parallel (locally constant) sections $v\in V$ and $w\in H^1_\R$.  In particular, the
equation $(v,w)=0$ is invariant under parallel transport.
\end{proof}

We prove below our strongest result on the central Oseledets bundle.

\begin{Theorem}
\label{th:Ann}
Let $\mu$ be a flow-invariant ergodic probability measure on the moduli space $\cH_g^{(1)}$ of normalized Abelian differentials.

If $V \subset H^1_\R$ is a flow-invariant subbundle of the Hodge bundle such that  $\Ann(B^\R\vert V)$ is $\mu$-almost everywhere flow-invariant, then the dimension
$\dim\Ann(B^\R\vert V_\omega)$ is the same for $\mu$-almost all $\omega\in \cH_g^{(1)}$ (so
$\Ann(B^\R\vert V)$ defines a vector bundle over the support of $\mu$)
and
$$\Ann(B^\R\vert V ) \subseteq E^\mu_{(0)} \cap V \quad (\mu\text{-almost everywhere})\,.$$

If $\mu$ is $\SL$-invariant and $W$ is any $\SL$-invariant measurable subbundle of the Oseledets (measurable) bundle
$E^\mu_{(0)}$, then
$$W \subseteq \Ann(B^\R)\quad (\mu\text{-almost everywhere})\,.$$
In particular, if $\mu$ is $\SL$-invariant and $E^\mu_{(0)}\cap V$ is $\mu$-almost everywhere $\SL$-invariant, then
$E^\mu_{(0)}\cap V \subseteq \Ann(B^\R)\quad (\mu\text{-almost everywhere})\,.$

Consequently, if $\mu$ is $\SL$-invariant and $V\subset H^1_\R$ is a Hodge star-invariant and $\SL$-invariant subbundle of the Hodge bundle such that
$\Ann(B^\R\vert V)$ is $\mu$-almost everywhere
flow-invariant and $E^\mu_{(0)}\cap V$ is
$\mu$-almost everywhere $\SL$-invariant, then
$$E^\mu_{(0)}\cap V= \Ann(B^\R\vert V) =  \Ann(B^\R)\cap V \quad (\mu\text{-almost everywhere})\,.$$
\end{Theorem}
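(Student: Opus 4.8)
The plan is to obtain the final statement of Theorem~\ref{th:Ann} as a formal consequence of the three preceding assertions of the theorem, combined with two elementary inclusions. Under the stated hypotheses ($\mu$ is $\SL$-invariant, $V$ is $\SL$-invariant and Hodge star-invariant, $\Ann(B^\R\vert V)$ is $\mu$-a.e.\ flow-invariant, and $E^\mu_{(0)}\cap V$ is $\mu$-a.e.\ $\SL$-invariant) I would first record that the three objects $\Ann(B^\R\vert V)$, $E^\mu_{(0)}\cap V$ and $\Ann(B^\R)\cap V$ are genuine measurable subbundles over the support of $\mu$, of $\mu$-a.e.\ constant fiber dimension: for $\Ann(B^\R\vert V)$ this is part of the content of the first assertion of Theorem~\ref{th:Ann}; for $E^\mu_{(0)}\cap V$ it follows because $E^\mu_{(0)}$ and $V$ are both flow-invariant, so their intersection is flow-invariant and has $\mu$-a.e.\ constant dimension by ergodicity; and for $\Ann(B^\R)\cap V$ it follows a posteriori from the chain of equalities established below.

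Next I would assemble the chain of inclusions. By the first assertion of Theorem~\ref{th:Ann}, applied to the flow-invariant subbundle $V$ (whose $\Ann(B^\R\vert V)$ is by hypothesis $\mu$-a.e.\ flow-invariant), one has $\Ann(B^\R\vert V)\subseteq E^\mu_{(0)}\cap V$ $\mu$-a.e. By the third assertion of Theorem~\ref{th:Ann} (itself deduced from the second applied to $W=E^\mu_{(0)}\cap V$), using that $\mu$ is $\SL$-invariant and that $E^\mu_{(0)}\cap V$ is $\mu$-a.e.\ $\SL$-invariant, one has $E^\mu_{(0)}\cap V\subseteq \Ann(B^\R)$ $\mu$-a.e.; intersecting with $V$ gives $E^\mu_{(0)}\cap V\subseteq \Ann(B^\R)\cap V$. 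Finally, the inclusion $\Ann(B^\R)\cap V\subseteq \Ann(B^\R\vert V)$ holds unconditionally, since a class annihilating $B^\R_\omega$ against every class in $H^1(S,\R)$ in particular annihilates it against every class in $V$. Concatenating,
$$\Ann(B^\R\vert V)\ \subseteq\ E^\mu_{(0)}\cap V\ \subseteq\ \Ann(B^\R)\cap V\ \subseteq\ \Ann(B^\R\vert V)\,,$$
so all three subbundles agree $\mu$-a.e., which is the claimed identity.

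I do not expect a real obstacle in this last step: the substantive work sits entirely in the first three assertions of the theorem, in particular in the variational argument behind the second assertion, which is where the $\SL$-invariance is genuinely used, through the non-negativity of $\Phi_k$ and the vanishing of all Lyapunov exponents on $E^\mu_{(0)}$. The only points needing a word of care are the bookkeeping of ``$\mu$-a.e.\ equality of measurable subbundles'' — handled by ergodicity together with the constancy-of-dimension statement in the first assertion — and the remark (using $B^\R_\omega(\ast c_1,c_2)=-iB^\R_\omega(c_1,c_2)$, exactly as in the proof of Lemma~\ref{lemma:Ann:Hodgeinvariance}) that the Hodge star-invariance of $V$ makes $\Ann(B^\R\vert V)$ itself Hodge star-invariant, so that the conclusion stays within the class of Hodge star-invariant subbundles; this observation is consistent with, though not logically required for, the chain of equalities above.
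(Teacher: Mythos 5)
Your concatenation
$$\Ann(B^\R\vert V)\ \subseteq\ E^\mu_{(0)}\cap V\ \subseteq\ \Ann(B^\R)\cap V\ \subseteq\ \Ann(B^\R\vert V)$$
is correct, and it is exactly how the paper disposes of the final assertion, which it explicitly describes as a trivial combination of the earlier ones; your side remarks (that the Hodge star-invariance of $V$ is not logically needed for this chain, and that $\Ann(B^\R\vert V)$ is itself Hodge star-invariant) are also accurate. But this is only the bookkeeping layer of Theorem~\ref{th:Ann}. You take the first three assertions as given, and those --- above all the second --- are the substance of the theorem; a proof that cites them rather than establishing them has a genuine gap.

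Concretely, what is missing is the following. The first assertion requires ergodicity (to make $\dim\Ann(B^\R\vert V_\omega)$ constant $\mu$-a.e.) together with Lemma~\ref{lemma:degeneratespectrum}, which converts the vanishing of $B^\R$ on a flow-invariant subbundle into isometry of the cocycle and hence into vanishing of the exponents. The second assertion --- that an $\SL$-invariant $W\subseteq E^\mu_{(0)}$ lies in $\Ann(B^\R)$ --- is the hard core. The paper proves it by producing $\SO$-invariant probability measures on the Grassmannian bundles $G_k(W)$ (Lemma~\ref{lemma:bundle_meas} plus $\SO$-averaging), applying Theorem~\ref{thm:partialsum} to get $\int_{G_k(W)}\Phi_k\,d\hat\nu=0$ for every weak limit $\hat\nu\in I(\nu)$ (since all exponents on $W$ vanish), and using $\Phi_k\geq 0$ to conclude that $I_k\subseteq\Ann(B^\R_\omega)$ for $\hat\nu$-almost every $(\omega,I_k)$. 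The step you do not address at all is the passage from this statement about $\hat\nu$-generic isotropic subspaces to the fiberwise inclusion $W_\omega\subseteq\Ann(B^\R_\omega)$ for $\mu$-a.e.\ $\omega$: the paper builds the flow-invariant measurable subbundle $F^\mu\subseteq\Ann(B^\R)\cap W$ spanned by all such supports, invokes the flow-invariant Hodge-orthogonal splitting of Lemma~\ref{lemma:invcompl}, and runs a contradiction argument that $\SO$-saturates $F^\mu$ and manufactures an $\SO$-invariant measure supported on $G_k((F^\mu)^\perp\cap W)$. This is where the $\SL$-invariance of $W$ (not merely of $\mu$) is genuinely used, and naming the ingredients (``non-negativity of $\Phi_k$'', ``vanishing of the exponents on $E^\mu_{(0)}$'') does not substitute for this argument.
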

\begin{proof}
Since $\dim\Ann(B^\R\vert V)$ is, by hypothesis, a flow-invariant integer-valued
function on $\cH_g^{(1)}$, the ergodicity of $\mu$ implies that it is constant
$\mu$-almost everywhere. (The dimension can jump and become
larger, say, on suborbifolds of nontrivial codimension). Thus,
$\Ann(B^\R\vert V)$ defines a flow-invariant vector bundle over the support of $\mu$ and we can apply
Lemma~\ref{lemma:degeneratespectrum} to this bundle to prove the first
statement.

The last statement is a trivial combination of the first two,
so the essential part of the Theorem is the second statement,
which is proved below.

Let  ${\mathcal  N}^W_k(\mu)$ be  the space of all probability
measures  on  the  Grassmannian  bundle  $G_k (W)$ of $k$-dimensional
isotropic  subspaces projecting on $\cH_g^{(1)}$ to some $\bar{\mu}$ absolutely continuous with respect to $\mu$.
Let us denote  by $2p$ the rank of the restriction of the symplectic form to
the subbundle $W$  and  by $l$ the difference $l=\dim    W-p$.   By
Lemma~\ref{lemma:bundle_meas},  for  any  $k\in \N$ satisfying the relations
$1\le k\le l$, the  space  ${\mathcal  N}^W_k(\mu)$ is non-empty.
Let  ${\mathcal  O}^W_k(\mu)  \subset  {\mathcal  N}^W_k(\mu)$ be the
subset  of  all  $\SO$-invariant  measures.
  As we have already seen in section~\ref{ssec:formulas},
since   $\SO$  is  a  compact  amenable  group,  the  set  ${\mathcal
O}^W_k(\mu)$  is  non-empty  whenever  ${\mathcal  N}^W_k(\mu)  $  is
non-empty:  the  $\SO$-average  of  any  measure  in  ${\mathcal
N}^W_k(\mu) $ is a measure in ${\mathcal O}^W_k(\mu)$.

Since by assumption $W\subset E_{(0)}^\mu$, all the Lyapunov exponents
of the restriction of Kontsevich--Zorich cocycle to $W$ are zero, hence
by the Oseledets theorem the average Lyapunov exponent $\Lambda^{(k)}(\nu)$
of any probability measure $\nu\in\mathcal N^W_k(\mu)$ is equal to zero.
By Theorem~\ref{thm:partialsum} it follows that, for any measure $\nu
\in  {\mathcal  O}^W_k(\mu)$  and  for  any  weak limit $\hat \nu \in I(\nu)$,
\begin{equation}
\label{eq:integral3}
 0 = \int_{G_k(W)}   \Phi_k d\,\hat \nu\,.
\end{equation}
Since  $\Phi_k$  is  by  definition  non-negative,  the above formula
implies that $\Phi_k$ vanishes $\hat\nu$-almost everywhere.
Hence, for $\hat\nu$-almost all $(\omega,  I_k)\in G_k(W)$
 we  get  the following conclusion. By applying the
 identity in the middle of formula~\eqref{eq:Pi_k:various:formulae}
 to any Lagrangian  Hodge-orthonormal completion $\{c_1,\dots,c_k, c_{k+1}, \dots, c_g\}$
of  a  Hodge-orthonormal  basis $\{c_1, \dots, c_k\}$ of the $k$-dimensional
isotropic subspace $I_k\subset W_\omega$ we  get
$$
0=\Phi_k (\omega, I_k) =
\sum_{i=1}^k\sum_{j=1}^g |B_\omega(\omega_i,\omega_j)|^2+
\sum_{i=1}^k\sum_{j=k+1}^g |B_\omega(\omega_i,\omega_j)|^2
\,.
$$
By the definition~\eqref{eq:B:R} of  the  form  $B^\R$, it follows that $B^\R_\omega(c_i, c_j) =0$
for all $i\in  \{1,  \dots,  k\}  \text{  \rm and } j\in\{1, \dots, g\}$, or, equivalently,   that
 $I_k  \subset  \text{  \rm  Ann}(B^\R_\omega)$.  Thus  we have proved that, for any
 Borel probability measure  $\nu \in {\mathcal O}^W_k(\mu)$ and for any weak limit
 $\hat \nu \in I(\nu)$,
 $$
I_k \subset  \text{ \rm Ann} (B^\R_\omega) \,, \quad \text{ for $\hat \nu$-almost all } \, (\omega, I_k)
\in G_k(W)\,.
$$
For  every Borel probability measure $\nu \in {\mathcal O}^W_k(\mu)$,
we then define, over  the  support  of  the  measure $\mu$ on $\cH_g^{(1)}$,
a flow-invariant Borel measurable subbundle $F(\nu) \subset W$ as follows.
For $\mu$-almost all Abelian differentials $\omega \in \cH_g^{(1)}$ we let
the fiber $F_\omega(\nu)$ be the linear span of all isotropic subspaces
$I_k\subset W_\omega$ such that $(\omega,  I_k)$ belongs to the
essential  support  of  at least one measure $\hat \nu \in I(\nu)$ on
the  Grassmannian $G_k(W)$. Since $I(\nu)$ is a compact set of probability measures, and, by Rokhlin's disintegration theorem \cite{Rokhlin},
for each $\hat{\nu}\in I(\nu)$, the conditional measures $\hat{\nu}_{\omega}$ of $\hat{\nu}$ on the fibers $G_k(H^1_{\R})_{\omega}$ depend measurably
on $\omega\in\cH_g^{(1)}$, one can check that $F_{\omega}(\nu)$ depends measurably on $\omega\in\cH_g^{(1)}$. By  construction, since all measures
$\hat \nu \in I(\nu)$ are flow-invariant, the family of subspaces $F_{\omega}(\nu)$
is defined $\mu$-almost everywhere and  flow-invariant. Since the measure
$\mu$ is  ergodic,  this implies that the dimension $\dim F_{\omega}(\nu)$ is
$\mu$-almost  everywhere constant, hence $F(\nu) \subset W$ is a $\mu$-measurable
flow-invariant subbundle. By  construction,  every such $F(\nu)$ is a subbundle of
$\text{  \rm Ann} (B^\R)$, since at $\mu$-almost all $\omega\in \cH_g^{(1)}$ the
fiber $F_\omega(\nu)$ is spanned by subspaces $I_k$ of $\text{ \rm
Ann} (B^\R_\omega)$.

We  then  define,  for  every $1\leq k\leq l$ one more flow-invariant
measurable  subbundle  $F^\mu_k\subset  W$ over the support of $\mu$ on $\cH_g^{(1)}$.
For  $\mu$-almost  any  $\omega\in \cH_g^{(1)}$, the fiber $(F^\mu_k)_\omega$ is
defined   as   a   linear   span  of  the  family  of  vector  spaces
$\{F_{\omega}(\nu)\}_{\nu\in  {\mathcal  O}^W_k(\mu)}$. Finally, we let
$F^\mu \subset W$ be the flow-invariant measurable subbundle, defined
$\mu$-almost everywhere,   such   that  the  fiber  $F^\mu_\omega$  over
$\mu$-almost  any $\omega\in \cH_g^{(1)}$ is the linear span of vector subspaces
$\{(F^\mu_k)_\omega\}_{1\le  k\le l}$. As above, since  $\mu$ is ergodic,
the bundles $F^\mu_k$ and $F^\mu$ are indeed measurable subbundles.
By  construction,  the  measurable  bundle  $F^\mu$ is a subbundle of
$\text{  \rm  Ann} (B^\R)$, since its fiber at $\mu$-almost any Abelian
differential $\omega\in \cH_g^{(1)}$  is spanned by subspaces of $\text{ \rm Ann}
(B^\R_\omega)$.


 Let us argue by contradiction. Let us assume that the subset $\mathcal P \subset \cH_g^{(1)}$
of all $\omega \in \cH_g^{(1)}$ such that $W_\omega \not \subseteq \text{\rm  Ann}(B^\R_\omega)$
has positive measure (with respect to the $\SL$-invariant ergodic probability measure  $\mu$ on
$\cH_g^{(1)}$). Since the subbundles $\text{\rm Ann}(B^\R)$ and $W$ are $\SO$-invariant, it follows
that the set $\mathcal P$ is $\SO$-invariant.
For $\mu$-almost all $\omega\in \cH_g^{(1)}$, let $\SO F^\mu_\omega \subset  W_\omega$  denote
the  smallest $\SO$-invariant linear subspace which contains the subspace $F^\mu_\omega \subset
W_\omega$. The vector space $\SO F^\mu_\omega$ can be defined  as
the intersection of the (non-empty) family of all $\SO$-invariant subspaces of the vector space
$W_\omega$ which contain $F^\mu_\omega$ or , equivalently, as the span of the union
of the family $\{F^\mu _{\omega'}\vert  \omega' \in \SO \cdot \omega\}$.
By construction $F^\mu_\omega \subset \text{\rm Ann} (B^\R_\omega)$, hence
$\SO F^\mu_\omega \subset \text{\rm  Ann} (B^\R_\omega)$, which implies that
$\SO F^\mu_\omega \not = W_\omega$, for all $\omega\in \mathcal P$.
Note that it  is not restrictive to assume that the dimension of the vector space $\SO F^\mu_\omega$ is constant for all $\omega \in  \mathcal P$. In fact, the positive measure set $\mathcal P$ has a finite partition
$$
 \mathcal P := \bigcup_{d=1}^{\text{dim}(W)-1}  \{ \omega\in \mathcal P \vert
\text{dim}(\SO F^\mu_\omega)=d \}\,,
$$
hence at least one of these sets has positive measure. We can therefore assume that the collection
of vector spaces $\{\SO F^\mu_\omega \vert \omega \in \mathcal P\}$ forms a  proper measurable $\SO$-invariant subbundle $\SO F^\mu$ of the restriction $W\vert  \mathcal P$ of the subbundle $W$ to
$\mathcal P$. By the above construction, it follows that the bundle $(\SO F^\mu)^\perp \cap W\vert {\mathcal P}$ is a non-trivial  $\SO$-invariant subbundle of the restriction  of the subbundle $(F^\mu)^\perp \cap W$ to $\mathcal P$.

We claim that there exists $k_0\in \{1, \dots, l \}$  such that for all $k\leq k_0$ there exists an $\SO$-invariant probability measure  $\nu^\ast_{\mathcal P}\in {\mathcal O}^W_k(\mu)$  essentially supported on  the Grassmannian $G_k((F^\mu)^\perp \cap W)$.  In fact, let $\mu_{\mathcal  P}$ be the restriction of the  measure $\mu$ on $\cH_g^{(1)}$  to the positive measure set $\mathcal P$, normalized to have unit total mass. There exists $k_0 \in \N\setminus\{0\}$ such that the Grassmannian of $k_0$-dimensional isotropic subspaces of the (non-trivial) subbundle $(\SO F^\mu)^\perp \cap W_{\mathcal  P}$ is non-empty. By Lemma~\ref{lemma:bundle_meas},  for any $k\leq k_0$ there
exists a probability measure   $\nu_{\mathcal P}$ on $G_k(H^1)$ essentially supported on
$G_k((\SO F^\mu)^\perp \cap W)$, which projects onto the probability measure
$\mu_{\mathcal  P}$ on $\cH_g^{(1)}$ under the canonical projection. Let $\nu^\ast_{\mathcal P}$ be the
$\SO$-average of the measure $\nu_{\mathcal P}$. By construction  $\nu^\ast_{\mathcal P}$
belongs to the set ${\mathcal O}^W_k(\mu)$: in fact, it is $\SO$-invariant, it projects onto the probability measure $\mu_{\mathcal P}$, which is absolutely continuous with respect to $\mu$ on $\cH_g^{(1)}$,
and it is essentially supported on $G_k((F^\mu)^\perp \cap W)$. The above claim is therefore proved.

Since $F^\mu \subset \text{Ann}(B^\R)$ and by construction it is flow-invariant, by
Lemma \ref{lemma:invcompl} there is a flow-invariant  Hodge-orthogonal splitting
$$
H^1_\R =  F^\mu \oplus   (F^\mu)^\perp\,.
$$
It follows that, on the one hand, $F(\nu^\ast_{\mathcal P}) \subset (F^\mu)^\perp$ by construction,
since $\nu^\ast_{\mathcal P}$ is essentially supported on $G_k((F^\mu)^\perp \cap W)$ and the
bundle $(F^\mu)^\perp \cap W$ is flow invariant;
on the other hand, $F(\nu^\ast_{\mathcal P}) \subset F^\mu$ by the definition of the bundle $F^\mu$ given above. Of course, this is a contradiction since $F^\mu \cap   (F^\mu)^\perp=\{0\}$. The argument
is therefore complete.
\end{proof}

\begin{Remark}
Concerning the invariance assumptions (under Teichm\"uller flow and/or $\SL$) in the previous theorem, we would like to stress out that they are really necessary. More precisely, while in Appendix~\ref{s:rk:B:cyclic:covers} below we introduce a class of ergodic $\SL$-invariant probability measures supported on the $\SL$-orbit of \textit{square-tiled cyclic covers} such that we can show $E^{\mu}_{(0)}=\textrm{Ann}(B^\R)$ is $\SL$-invariant  (see Theorem~\ref{thm:cyclic:covers} below), we construct in
Appendix~\ref{s:rk:B:Z} below an example of a closed $\SL$-invariant locus $\mathcal{Z}$ supporting an ergodic $\SL$-invariant probability measure such that $E_{(0)}^{\mu}$ is not $\SL$-invariant , $\textrm{Ann}(B^\R)$ is not Teichm\"uller invariant and hence $E_{(0)}^{\mu}\neq\textrm{Ann}(B^{\R})$ \textit{even though} they have the same dimension! In other words, although $\textrm{Ann}(B^\R)$ doesn't coincide with $E_{(0)}^{\mu}$ in the case of the locus $\mathcal{Z}$, the number of zero exponents is still predicted by the corank of $B^{\R}$. Partly motivated by the features of these examples, we pose below the problem to establish whether the corank of $B^{\R}$ always give the correct number of central exponents (see Problem~\ref{pb:rank} below).
\end{Remark}

Note that invariance of a Hodge star-invariant vector subbundle
$V\subset H^1_\R$ of the Hodge bundle under $\SL$ does not imply
invariance of the corresponding subbundle $V^{1,0}\subset
H^{1,0}$ under $\SL$ or under the flow. For example, the
tautological bundle, spanned by $[\Re\omega]$ and $[\Im\omega]$
is $\SL$-invariant , while the line bundle $\C\cdot\omega\subset
H^{1,0}$ is not. The Lemma below shows that $\Ann(B^\R)$ is special
in this sense.

\begin{Proposition}
Assume that the kernel $\Ann(B^\R)$ of the bilinear form
$B^\R$ on $H^1_\R$ is invariant over an open interval $U=\left]-\epsilon,\epsilon\right[$ along
the trajectory of the Teichm\"uller flow passing through $(S_0,\omega_0)\in \cH_g^{(1)}$. Then the kernel
$\Ann(B)$ of the form $B$ on $H^{1,0}$ is also invariant over $U$.

An analogous statement holds when $U$ is replaced by
a small open ball in $\SL$ containing the identity element, or by
a small open neighborhood of the initial point
$(S_0,\omega_0)$ in the Teichm\"uller disc of $(S_0,\omega_0)$.
\end{Proposition}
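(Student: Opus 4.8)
The plan is to show that, under the hypothesis, the holomorphic subbundle $\Ann(B)\subset H^{1,0}$ is carried into itself by the Gauss--Manin covariant derivative over $U$, which forces it to be $D_{H^1_\C}$-parallel, i.e.\ invariant. This rests on two facts: that $\Ann(B^\R)$ being invariant over $U$ makes its complexification a \emph{flat} subbundle of $H^1_\C$ over $U$, and that the second fundamental form $A_{\omega'}$ vanishes identically on $\Ann(B_{\omega'})$.

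First I would set $V:=\Ann(B^\R)$; by hypothesis $V$ is a constant subspace under the Gauss--Manin identifications over $U$, so $V=\Ann(B^\R_{\omega'})$ at every $(S',\omega')$ over $U$. By Lemma~\ref{lemma:Ann:Hodgeinvariance}, $V$ is Hodge star-invariant at each such point and $\Ann(B_{\omega'})=V^{1,0}$, so $V_\C=V^{1,0}\oplus V^{0,1}$ with $\dim_\C V^{1,0}=\tfrac12\dim_\R V$ constant over $U$. Consequently $\mathcal V:=V^{1,0}$ is a smooth subbundle of $H^1_\C$ over $U$ (the form $B_{\omega'}$ varies real-analytically), $V_\C$ is a flat subbundle over $U$ since $V$ is constant, and $\mathcal V_{\omega'}=\Ann(B_{\omega'})$ for every $\omega'$ over $U$. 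I would then record that $A_{\omega'}$ kills $\mathcal V_{\omega'}$: if $\alpha\in\mathcal V_{\omega'}=\Ann(B_{\omega'})$, then $B_{\omega'}(\alpha,\omega_j)=0$ for the members of any orthonormal basis $\{\omega_j\}$ of $H^{1,0}(S')$, so $A_{\omega'}(\alpha)=\sum_j B_{\omega'}(\alpha,\omega_j)\bar\omega_j=0$ by Lemma~\ref{lemma:secfundform}.

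Now fix a smooth local section $s$ of $\mathcal V$ and a real tangent direction $w$ to $U$. By the definition~\eqref{eq:sec:fund:form} of $A_{H^{1,0}}$ and the fact that it is a form of type $(1,0)$, the $H^{0,1}$-component of the Gauss--Manin derivative of $s$ in the direction $w$ is $A_{H^{1,0}}(w^{1,0})(s)$, with $w^{1,0}$ the $(1,0)$-part of $w$. When $U$ is an interval of the Teichm\"uller flow or a neighbourhood of $(S_0,\omega_0)$ in its Teichm\"uller disc, the image of $w^{1,0}$ in $T\cM_g$ lies in the complex line spanned by the Teichm\"uller-flow direction, so $A_{H^{1,0}}(w^{1,0})(s)$ is a scalar multiple of $A_{\omega'}(s)=0$; when $U$ is a ball in $\SL$ the only additional directions are the $\SO$-directions, whose image in $T\cM_g$ is zero and which therefore annihilate the pulled-back form $A_{H^{1,0}}$. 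Hence the Gauss--Manin derivative of $s$ in the direction $w$ has vanishing $H^{0,1}$-component, so it lies in $H^{1,0}$; it also lies in $V_\C$ because $V_\C$ is flat; therefore it lies in $H^{1,0}\cap V_\C=\mathcal V$. Thus $\mathcal V$ is a $D_{H^1_\C}$-invariant subbundle over $U$ (its sections are differentiated into sections of $\mathcal V$ along every direction of $U$), hence $\mathcal V=\Ann(B)$ is parallel over $U$, which is the claimed invariance.

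I expect the main obstacle to be not a hard estimate but the bookkeeping of the penultimate paragraph: pinning down the $H^{0,1}$-component of the covariant derivative as a value of the $(1,0)$-form $A_{H^{1,0}}$, and treating the three shapes of $U$ uniformly --- in particular noting that over the $\SL$-orbit the directions transverse to the Teichm\"uller disc are exactly the $\SO$-directions, along which $\Ann(B)$ does not move (indeed $B_{e^{i\theta}\omega}=e^{-2i\theta}B_\omega$, so $\Ann(B_{e^{i\theta}\omega})=\Ann(B_\omega)$), so that $\mathcal V$ is in fact constant along those directions as well.
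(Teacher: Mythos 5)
Your argument is correct and follows essentially the same route as the paper's proof: use Lemma~\ref{lemma:Ann:Hodgeinvariance} to get the flat subbundle $V_\C$ with $V^{1,0}_t=\Ann(B_{\omega_t})$, observe that the second fundamental form annihilates $\Ann(B_{\omega_t})$ so the Gauss--Manin derivative of a section of $V^{1,0}$ has no $H^{0,1}$-component and hence stays in $V_\C\cap H^{1,0}=V^{1,0}$. The only difference is that you spell out the tensoriality of $A_{H^{1,0}}$ as a $(1,0)$-form and treat the three shapes of $U$ (flow interval, Teichm\"uller disc, $\SL$-ball with its $\SO$-directions) explicitly, which the paper leaves implicit.
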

\begin{proof}
Since by Lemma~\ref{lemma:Ann:Hodgeinvariance} the subspace $V:=\Ann(B_\omega^\R)$ is
Hodge star-invariant, the subspace $V_\C$ is invariant over the interval $U$ and admits a (a priori
non-invariant) decomposition $V_\C=V_t^{1,0}\oplus V_t^{0,1}$, for all $t\in U$. By Lemma~\ref{lemma:Ann:Hodgeinvariance} $V_t^{1,0}=\Ann(B_{\omega_t})$, hence by the definition (\ref{eq:sec:fund:form}) of the second fundamental form the projection of the covariant derivative $D_{H^1} (\omega)$ onto $V_t^{0,1}$ vanishes for every $\omega \in V^{1,0}_t $ and for all $t\in U$. It follows that the subspace $V^{1,0}_t$ is constant over $t\in U$, as stated.
\end{proof}


\subsection{Non-vanishing of the Kontsevich--Zorich exponents}

In this subsection we prove a general lower bound on the number of
strictly positive Lyapunov exponents of the Kontsevich--Zorich
cocycle in terms of the rank of $B$. Such a bound is considerably weaker than a
sharp estimate (roughly by a factor $2$) and no upper bound other than
Lemma~\ref{lemma:degeneratespectrum} is known.

  \begin{theorem}
\label{thm:lowerbound}
Let $\mu$ be any $\SL$-invariant ergodic probability measure on
the moduli space of Abelian differentials.  Let $V\subset H^1_\R$ be any
 Hodge star-invariant, $\SL$-invariant subbundle
(of dimension $n\in \N$)  defined $\mu$-almost everywhere.
If  for some $k\in\{1, \dots, n-1\}$ the bottom $n-k$ exponents of the restriction of the
Kontsevich--Zorich cocycle to $V$ with respect to the measure $\mu$
vanish, that is,
\begin{equation}
\label{eq:partialvanishing1}
\lambda^{V,\mu}_{k+1}= \dots = \lambda^{V,\mu}_n  =0\,,
\end{equation}
then the rank of the bilinear form $B\vert V^{1,0}_\omega$
satisfies the inequality:
$$
\rk(B\vert V^{1,0}_\omega) \leq 2k \,, \quad \text{ for }
\mu\text{-almost all }\omega\in \cH_g^{(1)}\,.
$$
\end{theorem}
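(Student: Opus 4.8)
The plan is to use the partial-sum formula of Theorem~\ref{thm:partialsum} applied to the subbundle $V$, exactly as in the proof of Theorem~\ref{th:Ann}, combined with a careful bookkeeping of the quantity $\Phi_k$ in the representation~\eqref{eq:Pi_k:various:formulae}. Since $V$ is Hodge star-invariant, by Corollary~\ref{cor:KZformgen} (or by the argument in its proof) for a maximal isotropic $I_n\subset V_\omega$ one has $\Phi_n(\omega,I_n)=\sum_{i=1}^n\Lambda_i^V(\omega)=\Tr(H_\omega|_{V^{1,0}})$, which equals the sum of $|B_\omega(\omega_i,\omega_j)|^2$ over an orthonormal basis of $V^{1,0}$ adapted to the block-diagonal structure. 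More generally, I would apply Theorem~\ref{thm:partialsum} to the Grassmannian bundle $G_k(V^+_k)$, where $V^+_k$ is the $k$-dimensional unstable Oseledets subbundle of $V$ associated to the positive exponents $\lambda^{V,\mu}_1\ge\dots\ge\lambda^{V,\mu}_k$ (note that under hypothesis~\eqref{eq:partialvanishing1} all these exponents are $\ge0$, and if some of them were zero the rank bound would only improve, so one may assume $\lambda^{V,\mu}_k>0=\lambda^{V,\mu}_{k+1}$, in particular $\lambda^{V,\mu}_k>\lambda^{V,\mu}_{k+1}$, so that $V^+_k$ defines a measurable section of $G_k(V)$, hence of $G_k(H^1_\R)$). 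Then Corollary~\ref{cor:partialsum1} gives
$$
\lambda^{V,\mu}_1+\dots+\lambda^{V,\mu}_k=\int_{\cH_g^{(1)}}\Phi_k(\omega,V^+_k(\omega))\,d\mu\,.
$$

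The key step is then to compare this with the full sum $\lambda^{V,\mu}_1+\dots+\lambda^{V,\mu}_n$, which by Corollary~\ref{cor:KZformgen} equals $\int \Tr(H_\omega|_{V^{1,0}})\,d\mu$. Using hypothesis~\eqref{eq:partialvanishing1} the two integrals are equal, so
$$
\int_{\cH_g^{(1)}}\left(\Tr(H_\omega|_{V^{1,0}})-\Phi_k(\omega,V^+_k(\omega))\right)d\mu=0\,.
$$
Now I would pick, for $\mu$-almost every $\omega$, an orthonormal basis $\{\omega_1,\dots,\omega_n\}$ of $V^{1,0}_\omega$ with $\{c_1,\dots,c_k\}:=\{[\Re\omega_1],\dots,[\Re\omega_k]\}$ a Hodge-orthonormal basis of (a Lagrangian representative inside $V$ of) $V^+_k(\omega)$, completed to a Lagrangian of $H^1(S,\R)$ as in Corollary~\ref{cor:KZformgen}. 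Applying the middle identity of~\eqref{eq:Pi_k:various:formulae} together with~\eqref{eq:Phi:alternative} yields
$$
\Tr(H_\omega|_{V^{1,0}})-\Phi_k(\omega,V^+_k(\omega))
=\sum_{i,j=k+1}^{n}|B_\omega(\omega_i,\omega_j)|^2\,,
$$
because the off-$V^{1,0}$ entries $B_\omega(\omega_i,\omega_j)$ with $i\le n<j\le g$ vanish by the block-diagonal structure coming from Hodge star-invariance of $V$ (Proposition~\ref{pr:B:reducible}), while the cross terms with $i\le k<j\le n$ appear with the same sign in both expressions and cancel. (This is the routine but slightly fiddly linear-algebra identity, entirely parallel to~\eqref{eq:Pi_k:various:formulae} and to the computation in the proof of Corollary~\ref{cor:KZformgen}.) Since the integrand is a non-negative function that integrates to zero, it vanishes $\mu$-almost everywhere:
$$
B_\omega(\omega_i,\omega_j)=0\qquad\text{for all }i,j\in\{k+1,\dots,n\}\,,\ \mu\text{-a.e.}
$$

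It remains to convert this into the rank bound. The matrix $B:=(B_\omega(\omega_i,\omega_j))_{1\le i,j\le n}$ of $B_\omega|_{V^{1,0}_\omega}$ in the basis $\{\omega_1,\dots,\omega_n\}$ is symmetric, and we have just shown its lower-right $(n-k)\times(n-k)$ block vanishes. Hence every column (and every row) of $B$ is supported on the first $k$ indices together with whatever is forced by symmetry; more precisely the column space of $B$ is spanned by the first $k$ coordinate vectors plus the image of the $(n-k)\times k$ lower-left block, so $\rk(B)\le k+k=2k$. This gives $\rk(B_\omega|_{V^{1,0}})\le 2k$ for $\mu$-almost all $\omega$, which is the assertion. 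The only point requiring a little care — and the main (mild) obstacle — is the measurable choice of the adapted basis $\{\omega_1,\dots,\omega_n\}$ simultaneously diagonalizing the block structure and aligning with $V^+_k(\omega)$; this is handled exactly as in the proofs of Corollary~\ref{cor:partialsum1} and Theorem~\ref{th:Ann} by taking the $\SO$-invariant measure $\nu_k\in\mathcal O^V_k(\mu)$ whose conditional measures are Dirac at $V^+_k(\omega)$ and invoking Proposition~\ref{pr:B:reducible} for the block-diagonalization along the support of $\mu$, so no genuinely new difficulty arises.
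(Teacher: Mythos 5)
Your proposal is correct and follows essentially the same route as the paper's proof: reduce to the case $\lambda^{V,\mu}_k>\lambda^{V,\mu}_{k+1}=0$, apply Corollary~\ref{cor:partialsum1} to $V^+_k$, subtract the generalized Kontsevich formula of Corollary~\ref{cor:KZformgen} using the vanishing hypothesis, invoke Proposition~\ref{pr:B:reducible} for the block-diagonal reduction, and conclude that the $(n-k)\times(n-k)$ lower-right block of the symmetric matrix of $B_\omega\vert_{V^{1,0}}$ vanishes $\mu$-almost everywhere, whence the rank is at most $2k$. The identity $\Tr(H_\omega\vert_{V^{1,0}})-\Phi_k(\omega,V^+_k(\omega))=\sum_{i,j=k+1}^{n}\vert B_\omega(\omega_i,\omega_j)\vert^2$ and the final linear-algebra step are exactly as in the paper.
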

\begin{proof}
The proof follows closely the argument given in \cite{Forni2},
Corollary 5.4, which was stated for the Hodge bundle with respect to
the canonical absolutely continuous invariant measures only. In that case,
the bilinear form $B$ has maximal rank (equal to $g$) on a subset of
positive (Lebesgue) measure on each connected component of
every stratum, as proved in \cite{Forni2}, \S 4, which implies
that
$$
\lambda_1^\mu >\lambda^\mu_2 \geq \dots  \geq \lambda^\mu_{[(g+1)/2]} >0\,.
$$
The generalized argument proceeds as follows.  If all Lyapunov
exponents vanish, then the $\rk(B\vert V^{1,0}_\omega)$ also vanishes
by Corollary~\ref{cr:rank:0}, hence the statement holds in this case.
It follows that without loss of generality we may assume that $\lambda^{V,\mu}_k>0$:
if this assumption only holds for a smaller value of $k\in \{1, \dots, n-1\}$, we would
prove a statement even stronger than the claim.

Let us assume then that $\lambda^{V,\mu}_k>\lambda^{V,\mu}_{k+1}=0$. We are in the
setting of Corollary~\ref{cor:partialsum1} and we can apply formula~\eqref{eq:partial:sum:equals:Pi:k1}.
For $\mu$-almost all $\omega\in \cH_g^{(1)}$, let $\{c_1, \dots, c_k\} \subset V^+(\omega)$ be a Hodge
orthonormal basis.  Let $\{c_1, \dots, c_g\} \subset H^1(S, \R)$ be a completion of the system
$\{c_1, \dots, c_k\}$ to a orthonormal basis of any Lagrangian subspace of $H^1(S, \R)$.
By formula~\eqref{eq:partial:sum:equals:Pi:k1} and by the definition of the function $\Phi_k$
in formula~\eqref{eq:Phi_k} we have
\begin{multline*}
\lambda^{V,\mu}_1 + \dots +\lambda^{V,\mu}_k =
 \int_{\cH_g^{(1)}}  \Phi_k\left(\omega,V^+_k(\omega)\right) d \mu(\omega)
=\ \\ \ =
\int_{\cH_g^{(1)}}\left(\sum_{i=1}^g \Lambda_i(\omega) - \sum_{i,j=k+1}^g
\vert B^\R_\omega(c_i, c_j) \vert ^2\right)d \mu(\omega) \,.
\end{multline*}
Since $V$ is Hodge star-invariant, by the reducibility of the second fundamental form
(see Proposition~\ref{pr:B:reducible}) it follows that
$$
\lambda^{V,\mu}_1 + \dots +\lambda^{V,\mu}_k=
\int_{\cH_g^{(1)}}\left(\sum_{i=1}^n \Lambda^V_i(\omega) - \sum_{i,j=k+1}^n
\vert B^\R_\omega(c_i, c_j) \vert ^2\right)d \mu(\omega) \,.
$$

By the generalized Kontsevich formula (see Corollary~\ref{cor:KZformgen}) we get
$$
(\lambda^{V,\mu}_1 + \dots +\lambda^{V,\mu}_k)+
(\lambda^{V,\mu}_{k+1}+\dots+\lambda^{V,\mu}_n)=
\int_{\cH_g^{(1)}} \sum_{i=1}^n \Lambda^V_i(\omega)\, d\mu \,.
$$
Since $\lambda^{V,\mu}_{k+1}=\dots=\lambda^{V,\mu}_n=0$ the above two expressions
coincide, which implies that
$$
\int_{\cH_g^{(1)}}\
\sum_{i,j=k+1}^n
\vert B^\R_\omega(c_i, c_j) \vert ^2\,d \mu(\omega)\, =\,0\,.
$$
   %
  %
This means that $\mu$-almost everywhere we have
$B^\R_\omega(c_i, c_j)=0$ for any pair $(i,j)$ such that $i,j\ge k+1$.
Thus, for $\mu$-almost all $\omega\in \cH_g^{(1)}$
there exists an orthonormal basis
$\{\omega_1, \dots, \omega_n\}$ of the subspace $V^{1,0}_\omega$
of holomorphic differentials on the Riemann surface $S$ such that
$$
B_\omega(\omega_i, \omega_j) =0 \,,
\quad \text{ \rm for all }\, i,j \in \{k+1, \dots, n\}\,.
$$
Hence, the matrix of the form $B\vert V^{1,0}_\omega$ with respect to the
basis $\{\omega_1, \dots, \omega_n\}$ has a $(n-k)\times (n-k)$
zero diagonal block. It follows that $B\vert V^{1,0}_\omega$ has rank at
most $2k$.
\end{proof}

\begin{corollary} [see \cite{Forni2}, Cor. 5.4]
\label{cor:lowerbound}
Let $\mu$ be any $\SL$-invariant ergodic probability measure on
the moduli space of Abelian differentials. If  for some $k\in
\{1, \dots, g-1\}$ the bottom $g-k$ exponents of the
Kontsevich--Zorich cocycle with respect to the measure $\mu$
vanish, that is,
\begin{equation}
\label{eq:partialvanishing2}
\lambda^\mu_{k+1}= \dots = \lambda^\mu_g  =0\,.
\end{equation}
then the rank of the bilinear form $B_\omega$ on $H^{1,0}$
satisfies the inequality:
$$
\rk(B_\omega) \leq 2k \,, \quad \text{ for }
\mu\text{-almost all }\omega\in \cH_g^{(1)}\,.
$$
\end{corollary}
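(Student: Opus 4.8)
The plan is to deduce Corollary~\ref{cor:lowerbound} from Theorem~\ref{thm:lowerbound} applied to the subbundle $V=H^1_\R$: the full real Hodge bundle is (trivially) Hodge star-invariant and $\SL$-invariant, of ``virtual genus'' $g$; the restriction of the Kontsevich--Zorich cocycle to it is the cocycle itself, with non-negative spectrum $\lambda_1^\mu\ge\dots\ge\lambda_g^\mu$; and $B\vert(H^1_\R)^{1,0}_\omega$ coincides with $B_\omega$ on $H^{1,0}(S)$. Hence hypothesis~\eqref{eq:partialvanishing2} becomes hypothesis~\eqref{eq:partialvanishing1} for this $V$, and the conclusion $\rk(B\vert V^{1,0}_\omega)\le 2k$ becomes $\rk(B_\omega)\le 2k$, as claimed. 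For the reader's convenience I would also record the direct argument (essentially the one of \cite{Forni2}, Cor.~5.4, which avoids the reducibility step of Proposition~\ref{pr:B:reducible}). If all the exponents $\lambda_1^\mu,\dots,\lambda_g^\mu$ vanish, then $\rk(B_\omega)=1\le 2k$ by Corollary~\ref{cr:rank:1} and there is nothing to prove; otherwise I would let $k_0\le k$ be the largest index with $\lambda_{k_0}^\mu>0$, so that $\lambda_{k_0}^\mu>\lambda_{k_0+1}^\mu=\dots=\lambda_g^\mu=0$, and it suffices to establish the stronger bound $\rk(B_\omega)\le 2k_0$.

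Since $\lambda_{k_0}^\mu>\lambda_{k_0+1}^\mu\ge 0$, the unstable Oseledets subbundle $E^+_{k_0}$ is a $\mu$-measurable section of the Grassmannian $G_{k_0}(H^1_\R)$ of isotropic $k_0$-planes (isotropy being a consequence of Lemma~\ref{l.symp-orth}), so Corollary~\ref{cor:partialsum2} is applicable and yields
\[
\lambda_1^\mu+\dots+\lambda_{k_0}^\mu=\int_{\cH_g^{(1)}}\Phi_{k_0}\big(\omega,E^+_{k_0}(\omega)\big)\,d\mu(\omega)\,.
\]
For $\mu$-almost every $\omega$ I would pick a Hodge-orthonormal basis $\{c_1,\dots,c_{k_0}\}$ of $E^+_{k_0}(\omega)$ together with a Hodge-orthonormal Lagrangian completion $\{c_1,\dots,c_g\}$, so that by the alternative formula~\eqref{eq:Phi:alternative}
\[
\Phi_{k_0}\big(\omega,E^+_{k_0}(\omega)\big)=\sum_{i=1}^g\Lambda_i(\omega)-\sum_{i,j=k_0+1}^g|B^\R_\omega(c_i,c_j)|^2\,.
\]
Comparing with the Kontsevich formula of Corollary~\ref{cor:KZformula}, namely $\sum_{i=1}^g\lambda_i^\mu=\int_{\cH_g^{(1)}}\sum_{i=1}^g\Lambda_i\,d\mu$, whose left-hand side equals $\lambda_1^\mu+\dots+\lambda_{k_0}^\mu$ because $\lambda_{k_0+1}^\mu=\dots=\lambda_g^\mu=0$, and subtracting, I would obtain $\int_{\cH_g^{(1)}}\sum_{i,j=k_0+1}^g|B^\R_\omega(c_i,c_j)|^2\,d\mu=0$, hence $B^\R_\omega(c_i,c_j)=0$ for all $i,j\in\{k_0+1,\dots,g\}$ and $\mu$-almost every $\omega$. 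Representing $c_i=[\Re\omega_i]$ with $\omega_i\in H^{1,0}(S)$, the basis $\{\omega_1,\dots,\omega_g\}$ of $H^{1,0}(S)$ is orthonormal and $B_\omega(\omega_i,\omega_j)=B^\R_\omega(c_i,c_j)$ by~\eqref{eq:B:R}, so the matrix of $B_\omega$ in this basis has a vanishing $(g-k_0)\times(g-k_0)$ diagonal block and therefore rank at most $2k_0\le 2k$.

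The computational steps above are routine; the only point I would check carefully at the outset is that, under $\lambda_{k_0}^\mu>\lambda_{k_0+1}^\mu\ge 0$, the bundle $E^+_{k_0}$ genuinely is a measurable bundle of isotropic $k_0$-dimensional subspaces, so that $\Phi_{k_0}(\omega,E^+_{k_0}(\omega))$ is well defined and Corollaries~\ref{cor:partialsum2} and~\ref{cor:KZformula} are legitimately applicable: its fibre has dimension exactly $k_0$ because of the spectral gap $\lambda_{k_0}^\mu>\lambda_{k_0+1}^\mu$, it is isotropic because it is spanned by Oseledets spaces attached to strictly positive exponents and Lemma~\ref{l.symp-orth} forbids any symplectic pairing among them, and its measurable dependence on $\omega$ is part of the Oseledets theorem.
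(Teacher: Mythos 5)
Your proposal is correct and matches the paper's (implicit) argument: the corollary is stated without a separate proof precisely because it is the special case $V=H^1_\R$ (so $n=g$, $V^{1,0}=H^{1,0}$) of Theorem~\ref{thm:lowerbound}, which is exactly your first paragraph. Your supplementary direct argument is just the proof of that theorem unrolled for the full Hodge bundle (where the reducibility step is vacuous), and it is also sound — the only cosmetic point being that the "all exponents vanish" case never occurs, since $\lambda_1^\mu=1$.
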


\begin{remark}
In \cite{ForniSurvey} the first author introduced analytic subvarieties
$\cR_g^{(1)}(k)$ of the moduli space $\cH_g^{(1)}$, defined as follows:
$$
\cR_g^{(1)}(k)  := \{ \omega\in \cH_g^{(1)} \vert  \rk (B_\omega)  \leq k \} \,, \quad
\text{ \rm where } k\in \{1, \dots, g-1\} \,.
$$
In terms of such subvarieties Theorem \ref{thm:lowerbound} can be formulated as follows:
if the support of the $\SL$-invariant measure $\mu$ on $\cH_g^{(1)}$ is {\it not }contained in the
subvariety $\cR_g^{(1)}(2k)$, then formula~\eqref{eq:partialvanishing2} does not hold, that is,
$$
\lambda_1^\mu > \lambda^\mu_2 \geq \dots \geq \lambda^\mu_{k+1} >0\,.
$$
\end{remark}

 \begin{remark} A lower bound for the number of strictly positive exponents which holds
 in general for $\SL$-invariant probability measures supported on \textit{regular} orbifolds
 is proved in \cite{Eskin:Kontsevich:Zorich}. Such a bound cannot be derived from the
 above Corollary~\ref{cor:lowerbound} without assumptions on the minimal rank of the
 fundamental form $B$ on the moduli space $\cH_g^{(1)}$. However, to the authors best knowledge,
 lower bounds on the rank of the second fundamental forms for $g\geq 5$ are not
 available. It could be conjectured that it grows linearly with respect to the genus of the surface.
 \end{remark}

\begin{remark}
The argument given in the proof of Theorem \ref{thm:lowerbound}
is {\it not} sufficient to prove the non-vanishing of {\it all
}the Kontsevich--Zorich exponents. In fact, any improvement on
Theorem \ref{thm:lowerbound} based on the formulas for sums of
Lyapunov exponents, given in Corollary \ref{cor:partialsum2},
seems to require some control \textit{a priori} on the position
of the unstable bundle $E^+_g$ of the cocycle on a set of
positive measure of Abelian differentials. In the case of a
canonical absolutely continuous invariant measure such a set
can be found near the boundary of the moduli space and the full
non-vanishing of the Lyapunov spectrum can thus be proved (see
\cite{Forni2},  \S 4 and \S 8.2). Later A.~Avila and M.~Viana
\cite{Avila:Viana} proved the simplicity of the Lyapunov
spectrum, that is, that all the exponents are non-zero and
distinct. The simplicity of the top exponent, that is, the
strict inequality $\lambda_1^\mu >\lambda^\mu_2$  is much
simpler.  W.~Veech \cite{Veech} proved it for the canonical
absolutely continuous invariant measures on connected
components of strata. This result was generalized by the first
author in \cite{Forni2} to an arbitrary  Teichm\"uller invariant
ergodic probability measure.

Recently, Forni has developed his method from \cite{Forni2}
to give a general criterion for the non-uniform hyperbolicity of the
Kontsevich--Zorich spectrum for a wide class of  $\SL$-invariant
measures \cite{Fornicriterion}. The criterion is based on a topological
condition on completely periodic directional foliations contained in the
support of the measure.
\end{remark}

Closing the considerations of this section, we observe that the following fundamental question is wide open:

\begin{Problem}
\label{pb:rank}
Does there exist any finite $\SL$-invariant ergodic measure such
that the number of strictly positive exponents for the
Kontsevich--Zorich cocycle differs from the maximal rank of the
bilinear form $B_\omega$ at a positive measure set in the space
of Abelian differentials?
\end{Problem}



\appendix

\section{Lyapunov spectrum of square-tiled cyclic covers}
\label{s:rk:B:cyclic:covers}

Arithmetic Teichm\"uller curves of square-tiled cyclic covers (see e.g. the subsection below and/or \cite{Forni:Matheus:Zorich1} for definitions) provide a basic model for the discussion of relations between the Lyapunov spectrum and the geometry of the Hodge bundle. In particular, we will see that, \textit{in the case of square-tiled cyclic covers}, the annihilator $\textrm{Ann}(B^{\mathbb{R}})$ of the second fundamental form $B^{\mathbb{R}}$ \textit{coincides} with the neutral Oseledets bundle $E^{\mu}_{(0)}$. Therefore, in the context of square-tiled cyclic covers, the rank of the second fundamental form (at the generic point with respect to an $\SL$-invariant measure) equals the number of strictly positive Lyapunov exponents, and the corank of the second fundamental form is always completely explained by the symmetries of the underlying surfaces.

However, we advance that these relations \textit{are not true in general}, as an example in Appendix~\ref{s:rk:B:Z} below will show: actually, the best \textit{general} results available (on the relationship between $\textrm{Ann}(B^{\mathbb{R}})$ and $E^{\mu}_{(0)}$) were already given above in Theorem~\ref{th:Ann}, Theorem~\ref{thm:lowerbound} and Theorem~\ref{thm:degenerateB}.

\subsection{Square-tiled cyclic covers}
\label{ss:Cyclic:covers}
In the sequel, we recall the definition of square-tiled cyclic covers and some of its basic properties. For more details and proofs of the statements below, see e.g. \cite{Forni:Matheus:Zorich1}.

Let $N>1$ be an integer, and consider $(a_1,\dots,a_4)$ be a $4$-tuple of integers satisfying the following conditions:
\begin{equation}
\label{eq:a1:a4}
0<a_i\le N\,;\quad
\gcd(N, a_1,\dots,a_4) =1\,;\quad
\sum\limits_{i=1}^4 a_i\equiv 0 \modN\ .
\end{equation}
Let $z_1,z_2,z_3,z_4\in \C$ be four distinct points.
Conditions~\eqref{eq:a1:a4} imply that, possibly after a desigularization,
a Riemann surface defined by equation
$$
w^N=(z-z_1)^{a_1}(z-z_2)^{a_2}(z-z_3)^{a_3}(z-z_4)^{a_4}
$$
is closed, connected and nonsingular. We denote this Riemann surface
by $M=M_N(a_1,a_2,a_3,a_4)$. 
   %
   %
   %
   %
Puncturing
the ramification points we obtain a regular $N$-fold cover over
$\Proj^1(\C)\setminus\{z_1,z_2,z_3,z_4\}$. The group of deck
transformations of this cover is the cyclic group $\Z/N\Z$ with a generator
$T:M\to M$ given by
\begin{equation}
\label{eq:T}
T(z,w):=(z,\zeta w) \,,
\end{equation}
where
$\zeta$
is a primitive $N$th root of unity, $\zeta^N=1$.





Next, we recall that any meromorphic quadratic differential
$q(z)(dz)^2$ with at most simple poles
on a Riemann surface defines a flat metric
$g(z)=|q(z)|$ with conical
singularities at zeroes and poles of $q$.
Consider a meromorphic quadratic differential
\begin{equation}
\label{eq:q:on:CP1}
q_0=\frac{(dz)^2}{(z-z_1)(z-z_2)(z-z_3)(z-z_4)}
\end{equation}
on $\Proj^1(\C)$. 
For a convenient choice
of parameters $z_1,\dots,z_4$ the
rectangles
become unit squares.
Therefore, given $\MNa$ as above and denoting by
$p:\MNa\to\Proj^1(\C)$ the canonical projection $p(z,w)=z$, we have that the quadratic differential
$q=p^\ast q_0$ on $\MNa$ induces a flat structure naturally tiled by unit squares (by construction).
In other words, we get in this way a \textit{square-tiled surface} (or \textit{origami} or \textit{arithmetic translation surface}) $(M_N(a_1,\dots,a_4),q=p^\ast q_0)$.




During the present discussion, we will focus \textit{exclusively} on the orientable case $q=\omega^2$ for an Abelian differential $\omega$. By
Lemma 2 of \cite{Forni:Matheus:Zorich1}, this amounts to assume that $N$ is \textit{even}, and $a_i$ is \textit{odd}, $i=1,2,3,4$ in what follows. Finally, closing this preliminary subsection on square-tiled cyclic covers, we recall (for later use) the following property:

\begin{Lemma}[Lemma 5 of \cite{Forni:Matheus:Zorich1}]
\label{lm:minus:omega}
In the case $q:=p^\ast q_0 = \omega^2$ for an Abelian differential $\omega$, one has that
the form $\omega$ is anti-invariant
with respect to the action of a generator of the group of deck
transformations,
\begin{equation}
\label{eq:omega:antiinvariant}
T^\ast\omega=-\omega\,.
\end{equation}
\end{Lemma}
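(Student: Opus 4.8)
The plan is to use the fact that $q=p^\ast q_0$ is pulled back from the base $\Proj^1(\C)$, so it is automatically invariant under the group of deck transformations, and then to pin down the sign of the induced action on $\omega$ by a genus argument on the quotient curve.

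First I would observe that, by the definition~\eqref{eq:T} of the generator $T$ of the deck group (which only rescales the second coordinate $w$), one has $p\circ T=p$, hence
$$
T^\ast q=T^\ast p^\ast q_0=(p\circ T)^\ast q_0=p^\ast q_0=q\,.
$$
Writing $q=\omega^2$ with $\omega$ a holomorphic $1$-form on $\MNa$, this gives $(T^\ast\omega)^2=\omega^2$, so the meromorphic function $T^\ast\omega/\omega$ squares to the constant $1$; since $\MNa$ is connected and $\omega\not\equiv 0$, it equals a single global constant $\epsilon\in\{+1,-1\}$ (and $\epsilon$ is clearly independent of which of the two square roots of $q$ is taken as $\omega$). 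Thus $T^\ast\omega=\epsilon\,\omega$.

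It then remains to exclude $\epsilon=+1$. Suppose $T^\ast\omega=\omega$. Then $\omega$ is invariant under the whole cyclic group $\langle T\rangle\cong\Z/N\Z$ of deck transformations of the Galois cover $p\colon\MNa\to\Proj^1(\C)$, hence it descends to a meromorphic $1$-form $\eta$ on the quotient $\MNa/\langle T\rangle\cong\Proj^1(\C)$ with $p^\ast\eta=\omega$. A local computation at each point of $\MNa$ (writing $z=z_i+t^{k_i}$ near a point over a branch point $z_i$, with $k_i$ the ramification index, and $k_i=1$ at an unramified point) shows that a pole of $\eta$ of order $m\ge 1$ would pull back to a form of local order $k_i-1-m k_i<0$, contradicting the holomorphy of $\omega$; hence $\eta$ is a holomorphic $1$-form on $\Proj^1(\C)$. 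Since $\Proj^1(\C)$ has genus zero it carries no nonzero holomorphic $1$-form, so $\eta=0$ and therefore $\omega=0$, a contradiction. Consequently $\epsilon=-1$, that is, $T^\ast\omega=-\omega$.

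The argument is short once the standing hypotheses are granted; the only mildly delicate point is the very existence of the holomorphic square root $\omega$ with $q=p^\ast q_0=\omega^2$ and the holomorphy of the descended form, both of which rest on the parity assumptions ($N$ even, every $a_i$ odd) built into the orientable case, i.e. on Lemma~2 of \cite{Forni:Matheus:Zorich1}. An alternative, more computational route would be to exhibit an explicit holomorphic representative $\omega=w^{-c}\prod_i(z-z_i)^{b_i}\,dz$ in the affine coordinates $(z,w)$, with integers $c,b_i$ dictated by $\sum_i a_i\equiv 0\pmod N$, and to read the eigenvalue of $T^\ast$ directly off $T(z,w)=(z,\zeta w)$; I expect the descent argument above to be the cleaner of the two.
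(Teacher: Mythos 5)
Your proof is correct. Note that the paper does not actually prove this statement: it is quoted verbatim as Lemma~5 of \cite{Forni:Matheus:Zorich1}, so there is no in-text argument to compare against. Your two steps are both sound: $T^\ast q=q$ because $p\circ T=p$, so $(T^\ast\omega/\omega)^2=1$ and connectedness forces $T^\ast\omega=\epsilon\omega$ with $\epsilon=\pm1$; and the case $\epsilon=+1$ is excluded because a $\langle T\rangle$-invariant holomorphic $1$-form would descend to a holomorphic $1$-form on the genus-zero quotient (your local order count $k_i-1-mk_i<0$ correctly rules out poles of the descended form), which must vanish. The route taken in the cited source, and implicitly in this paper's own computations (e.g.\ the basis $\{(z-z_4)^jdz/w^k\}$ for $M_6(1,1,1,3)$ in Appendix~\ref{s:rk:B:cyclic:covers}), is the ``more computational'' alternative you mention: one writes $\omega=\prod_i(z-z_i)^{(a_i-1)/2}\,dz\big/w^{N/2}$, which is well defined precisely because all $a_i$ are odd and $N$ is even, and reads off $T^\ast\omega=\zeta^{-N/2}\omega=-\omega$ directly from $T(z,w)=(z,\zeta w)$. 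Your descent argument buys independence from the explicit formula (and in particular does not require re-deriving the holomorphy of that expression at the desingularized branch points), while the explicit formula buys the eigenvalue for free as part of exhibiting $\omega$; both are complete proofs.
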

   %


   %
   %
   %
   %


\subsection{Maximallly degenerate spectrum in genus four}
\label{ss:FMt}

Before entering into the discussion of the relation between the annihilator $\textrm{Ann}(B^{\mathbb{R}})$ of the second fundamental form and the neutral Oseledets bundle $E^{\mu}_0$ in the context of square-tiled cyclic covers, we recall below one of the \textit{only} two examples of Teichm\"uller curves of square-tiled cyclic covers with maximally degenerate Kontsevich--Zorich spectrum. The first
example to be discovered, in genus three, is the Teichm\"uller curve of the well-known Eierlegende Wollmilchsau (see~\cite{ForniSurvey}, \S 7, and \cite{Herrlich:Schmithuesen}). The second example,
in genus four,  was announced in~\cite{Forni:Matheus} and will be presented below.

These examples were the motivation for a full investigation of the spectrum of square-tiled cyclic covers carried out in \cite{Eskin:Kontsevich:Zorich:cyclic} and,
from a slightly different perspective, in the next subsections of this appendix.
In \cite{Forni:Matheus:Zorich1} we showed that there are no other (maximally degenerate) examples
among square-tiled cyclic covers (see \cite{Moeller}, for a stronger result in the class of all
Veech surfaces and \cite{Aulicino} for progress on the general case).  Both examples were discovered as an application of the symmetry criterion given by Theorem~\ref{thm:degenerateB} to the arithmetic Teichm\"uller curves of the square-tiled cyclic covers $M_4(1,1,1)$ and $M_6(1,1,1,3)$ respectively.

Below, we will follow the presentation in~\cite{Forni:Matheus} and apply the symmetry criterion
to the arithmetic Teichm\"uller curve of the square-tiled cyclic cover $M_6(1,1,1,3)$ to derive that it  is  maximally degenerate.

\begin{corollary}
The Lyapunov spectrum of the Hodge bundle over the geodesic flow
on the Teichm\"uller curve of cyclic covers $M_6(1,1,1,3)$ is
maximally degenerate, $\lambda_2=\lambda_3=\lambda_4=0$.
\end{corollary}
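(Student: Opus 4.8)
The plan is to obtain the corollary from the symmetry criterion of Theorem~\ref{thm:degenerateB}, applied to the holomorphic automorphism $T(z,w)=(z,\zeta w)$, with $\zeta=e^{2\pi i/6}$, generating the cyclic deck group $\Z/6\Z$ of the square-tiled cyclic cover $M=M_6(1,1,1,3)$.

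First I would settle the bookkeeping. The tuple $(1,1,1,3)$ satisfies the conditions~\eqref{eq:a1:a4} with $N=6$; the degree $6$ cover $M\to\Proj^1$ is branched over $z_1,\dots,z_4$ with $\gcd(6,1)=1,\ \gcd(6,1)=1,\ \gcd(6,1)=1,\ \gcd(6,3)=3$ preimages over these points respectively, so the Euler characteristic count $\chi(M)=6(2-4)+(1+1+1+3)=-6$ gives genus $g=4$; hence the non-negative Kontsevich--Zorich spectrum is $\lambda_1\ge\lambda_2\ge\lambda_3\ge\lambda_4$ and the assertion to prove is $\lambda_2=\lambda_3=\lambda_4=0$. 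Since $N=6$ is even and every $a_i$ is odd we are in the orientable case $q:=p^\ast q_0=\omega^2$, and Lemma~\ref{lm:minus:omega} gives $T^\ast\omega=-\omega$; so in the notation of Theorem~\ref{thm:degenerateB} the eigenvalue associated to $\omega$ is $u(T)=-1$, whence $u(T)^2=1$. (Equivalently: from $w^6=(z-z_1)(z-z_2)(z-z_3)(z-z_4)^3$ one finds $\omega=(z-z_4)\,dz/w^3$, so that $T^\ast\omega=\zeta^{-3}\omega=-\omega$.)

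The core of the argument is the determination of the complete list of eigenvalues of $T^\ast$ acting on $H^{1,0}(M)$. I would decompose $H^{1,0}(M)=\bigoplus_{n=1}^{5}V_n$ into $T^\ast$-eigenspaces, with $T^\ast$ acting on $V_n$ by $\zeta^{-n}$ (the trivial character contributes $H^{1,0}(\Proj^1)=0$), and compute the dimensions $\dim V_n=-1+\sum_{i=1}^4\langle n a_i/6\rangle$ (fractional parts) — either from this Chevalley--Weil type formula, or directly from the classical basis $\bigl\{\,x^k\prod_i(x-z_i)^{\lfloor n a_i/6\rfloor}\,dx/w^n\,\bigr\}$ of $V_n$. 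A short computation gives $\dim V_n=0,0,1,1,2$ for $n=1,\dots,5$, so the non-zero summands are $V_5,V_4,V_3$ and the eigenvalues of $T^\ast$ on $H^{1,0}(M)$, listed with multiplicity, are $\{\zeta,\zeta,\zeta^2,\zeta^3\}$, with the one-dimensional eigenspace for $\zeta^3=-1$ spanned precisely by $\C\cdot\omega$.

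Finally I would run the product test of Theorem~\ref{thm:degenerateB} over the couples $(i,j)$ with $1\le i\le j\le 4$ and $\{u_1,u_2,u_3,u_4\}=\{\zeta,\zeta,\zeta^2,\zeta^3\}$: the ten products $u_i(T)u_j(T)$ are $\zeta^2,\ \zeta^2,\ \zeta^3,\ \zeta^4,\ \zeta^2,\ \zeta^3,\ \zeta^4,\ \zeta^4,\ \zeta^5,\ \zeta^6=1$, so the \emph{only} couple with $u_i(T)u_j(T)=u(T)^2=1$ is the one involving the eigenvalue $\zeta^3$ twice, i.e.\ the couple $(\omega,\omega)$. Thus the hypothesis of Theorem~\ref{thm:degenerateB} holds, and the theorem yields $\rk B_\omega=1$ for every $\omega$ on the Teichm\"uller curve of $M_6(1,1,1,3)$; by Corollary~\ref{cr:rank:1} this gives $\lambda_2=\lambda_3=\lambda_4=0$. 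The only step that is not purely mechanical is the eigenvalue computation of the preceding paragraph — i.e.\ pinning down the character decomposition of $H^{1,0}(M)$ under the cyclic symmetry (in particular checking that $\zeta^3$ has multiplicity one and that no two eigenvalues are mutually inverse except this one with itself); once that is in place, the symmetry criterion closes the argument.
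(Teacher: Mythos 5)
Your proposal is correct and follows essentially the same route as the paper: identify $u(T)=-1$ via Lemma~\ref{lm:minus:omega}, determine that the eigenvalues of $T^\ast$ on $H^{1,0}(M_6(1,1,1,3))$ are $\{\zeta,\zeta,\zeta^2,\zeta^3\}$, check that the only couple with $u_i(T)u_j(T)=u^2(T)=1$ is the one coming from $\omega$ itself, and conclude by the symmetry criterion of Theorem~\ref{thm:degenerateB}. The only (harmless) difference is that you obtain the eigenvalue multiplicities from the Chevalley--Weil/Bouw dimension count $\dim V_n=-1+\sum_i\langle na_i/6\rangle$, whereas the paper simply exhibits the explicit eigenbasis $(z-z_4)^2dz/w^5$, $(z-z_4)^3dz/w^5$, $(z-z_4)dz/w^4$, $(z-z_4)dz/w^3$.
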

\begin{proof}
By Lemma~\ref{lm:minus:omega}, the generator $T$ of the group of deck
transformations of $M_6(1,1,1,3)$ acts on $\omega$ as
$T^\ast\omega=-\omega$, see~\eqref{eq:omega:antiinvariant}; in
particular $u(T)=-1$, and $u^2(T)=1$. We have an explicit
basis
$$\frac{(z-z_4)^2 dz}{w^5}, \, \frac{(z-z_4)^3 dz}{w^5}, \, \frac{(z-z_4)dz}{w^4} \, \textrm{ and }\, \omega:=\frac{(z-z_4)dz}{w^3}$$
in the space of holomorphic
$1$-forms on $M_6(1,1,1,3)$; the corresponding eigenvalues are
$$
\{u_1(T),u_2(T),u_3(T),u_4(T)\}=
\{\zeta,\zeta,\zeta^2,\zeta^3\}\,\quad\text{where }\zeta=\sqrt[3]{-1}\ .
$$
It is easy to see that for all couples of indices $1\le i\le j\le 4$ except
$(i,j)=(4,4)$ one has $u_i(T) u_j(T)\neq 1=u^2(T)$. By Theorem \ref{thm:degenerateB}, the proof is complete.
\end{proof}

In both cases of $M_4(1,1,1,1)$ and $M_6(1,1,1,3)$, the symmetries given by the deck transformations force the rank of the second fundamental
form to be equal to $1$ (its lowest possible value) and there is exactly $1$ strictly positive exponent,
the top exponent of the tautological bundle. In the sequel, we will extend this picture to the \textit{whole} class of square-tiled cyclic covers.

\subsection{Rank of the second fundamental form and positive exponents}

The identity between the rank of the second fundamental form and
the number of strictly positive Kontsevich--Zorich exponents always holds for square tiled-cyclic covers.
In fact, we have the following result.

\begin{Theorem}
\label{th:n:of:exp:equals:rk}
Let $\MNa$ be a square-tiled cyclic cover with $N$
even and all $a_i$ odd. The rank of $B_\omega$ is
constant for all $\omega$ in the corresponding $\SL$-orbit and
it is equal to the number of strictly positive Kontsevich--Zorich exponents
on the Hodge bundle $H^1_\R$.
\end{Theorem}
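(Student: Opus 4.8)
The plan is to combine the general machinery developed in Section~3 and Section~4 with the very rigid structure of square-tiled cyclic covers coming from the cyclic deck group $T$ acting on $H^1(M,\C)$. The key point is that the cohomology of $M_N(a_1,\dots,a_4)$ decomposes into $T^\ast$-eigenspaces $H^1_\C = \bigoplus_{k} H^1_{(k)}$ indexed by characters of $\Z/N\Z$, and that this decomposition is $\SL$-invariant (since $T$ is a holomorphic automorphism commuting with the Kontsevich--Zorich cocycle) and Hodge star-invariant, so it is simultaneously Hodge-orthogonal and symplectic-orthogonal after grouping conjugate eigenspaces. Moreover, by the computation in Theorem~\ref{thm:degenerateB} (the change-of-variables identity $B_\omega(\alpha,\beta) = \frac{u_i(T)u_j(T)}{u^2(T)} B_\omega(\alpha,\beta)$ applied with $\alpha,\beta$ in the various eigenspaces, recalling $u(T)=-1$ so $u^2(T)=1$), the matrix of $B_\omega$ is block-structured with respect to this eigenspace decomposition: $B_\omega(\omega_i,\omega_j)=0$ unless $u_i(T)u_j(T)=1$, i.e. unless the two eigenvalues are inverse to each other.

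First I would make precise the eigenspace decomposition and check, using Lemma~\ref{lm:minus:omega} and the vanishing rule above, that $\mathrm{Ann}(B_\omega)$ is exactly the sum of those eigenspaces $H^{1,0}_{(k)}$ for which \emph{no} pairing survives, i.e. for which $H^{1,0}_{(k)}$ and $H^{1,0}_{(k')}$ fail to be $B$-paired for every $k'$ (here one has to track which holomorphic eigenspace is inverse to which); in particular $\mathrm{Ann}(B_\omega)$, and hence $\mathrm{Ann}(B^\R_\omega)$ by Lemma~\ref{lemma:Ann:Hodgeinvariance}, is a union of $T^\ast$-eigenspaces. Since this union is cut out by an algebraic (in fact combinatorial) condition on $N,a_1,\dots,a_4$ that does not vary along the $\SL$-orbit, $\mathrm{Ann}(B^\R_\omega)$ has constant dimension along the orbit, which gives the first assertion of the theorem: $\rk B_\omega$ is constant. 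Crucially, because $\mathrm{Ann}(B^\R)$ is a union of eigenspaces of the \emph{holomorphic} automorphism $T^\ast$ preserved by the whole $\SL$-action, it is an $\SL$-invariant \emph{and} Hodge star-invariant subbundle, hence in particular flow-invariant; so the hypotheses of Theorem~\ref{th:Ann} (applied with $V=H^1_\R$) are satisfied.

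Next I would invoke Theorem~\ref{th:Ann}: since $\mathrm{Ann}(B^\R)$ is flow-invariant we get $\mathrm{Ann}(B^\R)\subseteq E^\mu_{(0)}$; and since $\mathrm{Ann}(B^\R)$ is Hodge star-invariant, $\SL$-invariant, and equal to its own intersection with itself, one applies the "consequently" part with $V=H^1_\R$ to conclude $E^\mu_{(0)} \cap H^1_\R = \mathrm{Ann}(B^\R\vert H^1_\R) = \mathrm{Ann}(B^\R)$, i.e.
\begin{equation}
\label{eq:cyclic:E0:equals:AnnB}
E^\mu_{(0)} = \mathrm{Ann}(B^\R_\omega) \qquad \text{for $\mu$-a.e.\ }\omega
\end{equation}
(this is exactly the statement advertised in the Remark after Theorem~\ref{th:Ann}). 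The one thing that requires care here is verifying the hypothesis of the second statement of Theorem~\ref{th:Ann}, namely that $E^\mu_{(0)}\cap H^1_\R = E^\mu_{(0)}$ is $\SL$-invariant; I would deduce this \emph{a posteriori} is not circular by instead arguing directly that any $\SL$-invariant subbundle $W\subseteq E^\mu_{(0)}$ lies in $\mathrm{Ann}(B^\R)$ (the genuine content of Theorem~\ref{th:Ann}), together with the inclusion $\mathrm{Ann}(B^\R)\subseteq E^\mu_{(0)}$ from the first part and the fact that $\mathrm{Ann}(B^\R)$ is itself $\SL$-invariant — so taking $W=\mathrm{Ann}(B^\R)$ forces $E^\mu_{(0)}\supseteq \mathrm{Ann}(B^\R)$ and we need the reverse inclusion, which is the already-established flow-invariance case. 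Care must be taken that $E^\mu_{(0)}$ is a priori only measurable, not $\SL$-invariant; the cleanest route is: $\mathrm{Ann}(B^\R)$ is $\SL$-invariant and flow-invariant, so by the first part it sits inside $E^\mu_{(0)}$; conversely apply the "$W\subseteq\mathrm{Ann}(B^\R)$" statement to the largest $\SL$-invariant subbundle of $E^\mu_{(0)}$ and then bootstrap using that $E^\mu_{(0)}$ has the same dimension as a suitable $\SL$-invariant piece — this is where I expect the genuine subtlety, and I would likely need a short lemma showing that for cyclic covers $E^\mu_{(0)}$ is automatically a union of $T^\ast$-eigenspaces (hence $\SL$-invariant) because the cocycle respects the eigenspace splitting and on each eigenspace either all exponents vanish or none do, the latter following from Corollary~\ref{cr:rank:0} applied to each Hodge star-invariant eigen-subbundle.

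Finally, the number of strictly positive Kontsevich--Zorich exponents on $H^1_\R$ equals $\tfrac12\big(\dim_\R H^1_\R - \dim_\R E^\mu_{(0)}\big) = \tfrac12\big(2g - \dim_\R \mathrm{Ann}(B^\R_\omega)\big)$ by the symmetry of the symplectic spectrum and Lemma~\ref{l.symp-orth}, and by the discussion after formula~\eqref{eq:B:R} this equals $\mathrm{rk}(H^\R_\omega)/2 \cdot$ — more precisely $2g - \dim\mathrm{Ann}(B^\R_\omega) = \mathrm{rk}\,B^\R_\omega = 2\,\mathrm{rk}\,B_\omega$, so the count of positive exponents is exactly $\mathrm{rk}\,B_\omega$. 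Combining with \eqref{eq:cyclic:E0:equals:AnnB} and the constancy established in the second paragraph finishes the proof. The main obstacle, as indicated, is the $\SL$-invariance of $E^\mu_{(0)}$ needed to feed Theorem~\ref{th:Ann}; everything else is bookkeeping with the eigenspace decomposition and the change-of-variables formula for $B_\omega$ already used in the proof of Theorem~\ref{thm:degenerateB}.
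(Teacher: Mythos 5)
Your overall architecture coincides with the paper's: the $T^\ast$-eigenspace decomposition into $\SL$-invariant, Hodge star-invariant, $B^\R$-orthogonal blocks (Lemma~\ref{lemma:realsplitting}), the vanishing rule for $B_\omega$ on eigenvectors (Lemma~\ref{lm:B:i:j}), the identification $\Ann(B^\R)=H^1_{\cI_0}$ (Lemma~\ref{cor:HP1}) which gives constancy of $\rk B_\omega$ along the orbit, the inclusion $\Ann(B^\R)\subseteq E^\mu_{(0)}$ via Theorem~\ref{th:Ann} (Lemma~\ref{lemma:inclusion}), and the final count $\#\{\lambda>0\}=\tfrac12\rk B^\R_\omega=\rk B_\omega$. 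The gap is in the reverse inclusion $E^\mu_{(0)}\subseteq\Ann(B^\R)$. You correctly flag that feeding $E^\mu_{(0)}$ into the second part of Theorem~\ref{th:Ann} requires its $\SL$-invariance, which is not known a priori, and you propose to close the circle by claiming that on each block $H^1_k$ ``either all exponents vanish or none do, the latter following from Corollary~\ref{cr:rank:0}''. Corollary~\ref{cr:rank:0} does not yield this dichotomy: it says that \emph{all} exponents of a Hodge star-invariant, $\SL$-invariant subbundle vanish if and only if $B^\R$ vanishes on it. For $k\in\cI_1$ this only shows that \emph{at least one} exponent of $H^1_k$ is nonzero; since $\dim_\R H^1_k=4$ for $k\in\cI_1\setminus\{N/2\}$, a spectrum of the form $\{\lambda_k>0=0>-\lambda_k\}$ is not excluded by anything you have established, and if it occurred the central Oseledets bundle would be strictly larger than $\Ann(B^\R)$.

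This is not a hypothetical worry: the locus $\mathcal Z$ of Appendix~\ref{s:rk:B:Z} provides an $\SL$-invariant, Hodge star-invariant block $H^1_2(C_6,\R)$ on which $B^\R$ is \emph{not} identically zero yet which carries zero exponents, so the all-or-nothing dichotomy genuinely fails for such subbundles in general. The paper closes the gap with Theorem~\ref{lemma:Qexp} (imported from Theorem 2.6(iii) of \cite{Eskin:Kontsevich:Zorich:cyclic}), which asserts that for $k\in\cI_1$ the cocycle on $H^1_k$ has \emph{no} zero exponents. If you want a self-contained substitute, the special feature of cyclic covers you must use is Corollary~\ref{cor:Q}: for $k\in\cI_1$ one has $\dim_\C V^{1,0}_k=\dim_\C V^{1,0}_{N-k}=1$, so $H^1_k$ has ``virtual genus'' at most $2$; for $k\neq N/2$ the non-real eigenvalue $\zeta^k$ of $T^\ast$ forces the two non-negative exponents of $H^1_k$ to coincide (the multiplicity-two argument of Appendix~\ref{s:rk:B:Z}), while the generalized Kontsevich formula (Corollary~\ref{cor:KZformgen}) together with the nondegeneracy of $B^\R\vert_{H^1_k}$ makes their sum strictly positive, hence both are positive (and the case $k=N/2$ is a single exponent handled the same way). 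Without some such argument the proof is incomplete.
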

The result was inspired by~\cite{Eskin:Kontsevich:Zorich:cyclic}, and it can be obtained
as a corollary of the results of that paper. For the sake of completeness,
we present in the remaining part of this appendix a proof of Theorem~\ref{th:n:of:exp:equals:rk}.
Such a proof is based on Theorem~\ref{th:Ann}
and a remarkable property of square-tiled cyclic covers,
namely, the existence of an explicit $\SL$-invariant ,
$B^\R$-orthogonal splitting of the Hodge bundle $H^1_\R$ over a
Teichm\"uller curve of a square-tiled cyclic cover into
subbundles of small dimension ($2$ or $4$).
   %
We start with the description of the splitting, see~\cite{Bouw:Moeller} and~\cite{Eskin:Kontsevich:Zorich:cyclic} for more details.

Consider a generator $T$ of the group of deck transformations
of the cyclic cover, see~\eqref{eq:T}. The induced
linear map
$$
T^\ast: H^1(S, \C)\to H^1(S, \C)
$$
verifies $(T^\ast)^N=\operatorname{Id}$, hence its eigenvalues  are $N$-th roots of unity, that is,
they form a subset of the set $\{1,\zeta,\dots,\zeta^{N-1}\}$, where $\zeta$ is an $N$-th primitive root
of unity. For all $k\in \{1, \dots, N-1\}$, let
$$
V_k:=\operatorname{Ker}(T^\ast-\zeta^k\operatorname{Id})
\subset H^1(S, \C)\ .
$$
Since the deck transformation $T$ commutes with the $\SL$-action, each $V_k$ is $\SL$-invariant subbundle of $H^1_\C$.
    %
  %
  %
   %

On the other hand, $T^\ast$ has a well-defined restriction to $H^{1,0} (S, \C)=H^{1,0}$, and $(T^\ast\vert_{H^{1,0} (S, \C)})^N=\textrm{Id}$, so that we can also define
$$
V^{1,0}_k:=\operatorname{Ker}(T^\ast-\zeta^k\operatorname{Id})
\subset H^{1,0}(S, \C)\ .
$$
   %
   %
   %
In \textit{general}, the subspaces $V^{1,0}_k$ do \textit{not} form
$\SL$-invariant subbundles of the complex cohomology bundle.

Let us consider
the decomposition $$H^1(S,\C) = H^{1,0}(S) \oplus H^{0,1}(S)$$
Since the operator $T^\ast$ preserves
the subspace $H^{1,0}(S)$ and commutes with complex
conjugation, it follows that
$$
H^1(S,\C) = \bigoplus_{k=1}^{N-1}  V_k =
\bigoplus_{k=1}^{N-1}  \left(V^{1,0}_k \oplus V^{0,1}_k\right) =
\bigoplus_{k=1}^{N-1} \left(V^{1,0}_k \oplus
  \overline{V^{1,0}_{N-k}}\right) \,.
$$
In particular, by defining
$$
W_k:=\begin{cases}
V_k\oplus V_{N-k} &\text{ for }k=1,\dots,\frac{N}{2}-1\\
V_{N/2} &\text{ for } k=\frac{N}{2}
\end{cases}\,.
$$
and
$$
H^1_k(S,\R) := W_k  \cap H^1(S, \R) \, ,
$$
we see that, for any $k< N/2$, one has
\begin{equation}
\label{eq:W:k}
W_k= \left(V^{1,0}_k\oplus V^{1,0}_{N-k}\right) \oplus
\left(\overline{V^{1,0}_k\oplus V^{1,0}_{N-k}}\right)
\end{equation}
and, for $k=N/2$, one gets
\begin{equation}
\label{eq:W:N:over:2}
W_{N/2}= V^{1,0}_{N/2}\oplus \overline{V^{1,0}_{N/2}}\,.
\end{equation}
Thus,
each $H^1_k(S,\R)$ is $\SL$-invariant  and Hodge star-invariant.

Concerning the dimensions of these subbundles, we have the following lemma. Denote
\begin{equation}
\label{eq:tk}
t(k):=
\sum_{i=1}^4\left\{\frac{k a_i }{N}\right\}\ , \quad \text{for }k=1,\dots,N-1\, ,
\end{equation}
where $\{x\}$ denotes a fractional part of $x$.
Conditions~\eqref{eq:a1:a4} imply that $t(k)$ is integer, hence,
clearly, $t(k)\in\{1,2,3\}$.
\begin{NNLemma}[\cite{Bouw}, Lemma 4.3]
For any $k\in\{1,\dots,N-1\}$, one has
$$
\dim_{\C}V^{1,0}_k=t(N-k)-1 \in \{0, 1, 2\}\ .
$$
\end{NNLemma}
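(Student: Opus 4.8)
The plan is to reduce the computation of $\dim_{\C} V^{1,0}_k$ to a Riemann--Roch count of polynomials on $\Proj^1$. First I would record the ramification structure of the cyclic cover $p\colon M=\MNa\to\Proj^1$. Setting $d_i=\gcd(N,a_i)$, over each branch value $z_i$ there are exactly $d_i$ points, each with ramification index $N/d_i$; since the hypotheses force $\sum_i a_i=mN$ with $m\in\{1,2,3\}$, there are $N$ unramified points over $\infty$. A local computation then gives that $w$ has a zero of order $a_i/d_i$ at each point over $z_i$ and a pole of order $m$ at each point over $\infty$, and that $p^{\ast}dz$ has a zero of order $N/d_i-1$ over $z_i$ and a double pole over $\infty$; as a sanity check one verifies $\deg\operatorname{div}(w)=0$ and $\deg\operatorname{div}(p^{\ast}dz)=2g-2$.

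Next I would identify all eigenforms. The polynomial $X^N-\prod_i(z-z_i)^{a_i}$ is irreducible over $\C(z)$: if it factored, some prime $p\mid N$ would divide every $a_i$, contradicting $\gcd(N,a_1,\dots,a_4)=1$. Hence $\C(M)=\bigoplus_{j=0}^{N-1}\C(z)\,w^{j}$, and, because $dz$ is $T^{\ast}$-invariant while $T^{\ast}w=\zeta w$, this is exactly the eigenspace decomposition of $T^{\ast}$, with $T^{\ast}$ acting on $\C(z)w^{j}$ by $\zeta^{j}$. Therefore every holomorphic form in the $\zeta^{k}$-eigenspace has the shape $\eta=f(z)\,w^{k}\,dz=\tilde f(z)\,dz/w^{\,N-k}$ with $\tilde f(z)=f(z)\prod_i(z-z_i)^{a_i}\in\C(z)$. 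Writing $j:=N-k\in\{1,\dots,N-1\}$ and imposing $\operatorname{div}(\eta)\ge 0$ at the points over the $z_i$ and over $\infty$ using the divisor data above, holomorphy of $\eta$ translates into: $\tilde f$ has no poles away from the $z_i$ and, since the prescribed vanishing order there is $\ge 0$, no poles at the $z_i$ either, so $\tilde f$ is a polynomial; $\tilde f$ vanishes to order at least $\lfloor ja_i/N\rfloor$ at each $z_i$ (here one uses the elementary identity $\lceil (ja_i+d_i)/N\rceil-1=\lfloor ja_i/N\rfloor$, valid because $d_i$ divides both $a_i$ and $N$); and $\deg\tilde f\le jm-2$.

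The dimension count then finishes the argument. Such $\tilde f$ are precisely $\prod_i(z-z_i)^{\lfloor ja_i/N\rfloor}\cdot g(z)$ with $g$ an arbitrary polynomial of degree at most $jm-2-\sum_i\lfloor ja_i/N\rfloor$, so the eigenspace has dimension $\max\{0,\;jm-1-\sum_i\lfloor ja_i/N\rfloor\}$. Since $jm=\sum_i ja_i/N$, this equals $\sum_i\{ja_i/N\}-1=t(j)-1=t(N-k)-1$. To conclude I would verify that $t(j)$ is always a positive integer: it is an integer because $\sum_i ja_i=j\sum_i a_i\equiv 0\pmod N$; it is positive because if $N\mid ja_i$ for every $i$ then, with $N''=N/\gcd(j,N)$, one gets $N''\mid a_i$ for all $i$ and $N''\mid N$, so $N''=1$ by coprimality and hence $N\mid j$, impossible for $1\le j\le N-1$; and $t(j)<4$ since each $\{ja_i/N\}<1$, whence $t(j)\in\{1,2,3\}$. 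Thus the maximum is realized by its second argument and $\dim_{\C}V^{1,0}_k=t(N-k)-1\in\{0,1,2\}$.

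I expect the main obstacle to be bookkeeping rather than conceptual: getting the local orders of $w$ and $p^{\ast}dz$ and the number of preimages at the $z_i$ right when $\gcd(N,a_i)>1$, and carrying the floor/ceiling identity through cleanly. The one place where a genuine (if very short) argument, rather than a routine computation, is needed is the positivity $t(j)\ge 1$, which is exactly where the hypothesis $\gcd(N,a_1,\dots,a_4)=1$ is used.
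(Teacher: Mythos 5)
The paper gives no proof of this lemma---it is quoted verbatim from [Bouw, Lemma~4.3]---so the only comparison available is with the standard argument behind the citation, and yours is exactly that: a Chevalley--Weil-type computation identifying the $\zeta^k$-eigenspace of meromorphic $1$-forms with $\C(z)\,dz/w^{N-k}$ and counting holomorphic sections via the ramification data of $p\colon M\to\Proj^1(\C)$. I checked the details (the $d_i$ points of index $N/d_i$ over $z_i$, the ceiling/floor identity $\lceil (ja_i+d_i)/N\rceil-1=\lfloor ja_i/N\rfloor$, the degree bound $jm-2$ at infinity, and the positivity $t(j)\ge 1$ coming from $\gcd(N,a_1,\dots,a_4)=1$) and they are all correct; as a sanity check, your formula reproduces the genus count and the eigenform basis for $M_6(1,1,1,3)$ displayed in Appendix~\ref{ss:FMt}, and in fact corrects an evident typo there, where $(z-z_4)dz/w^4$ should read $(z-z_4)^2dz/w^4$.
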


Define the following two complementary subsets $\cI_0$ and $\cI_1$ of the set
$\{1,\dots,N/2\}$:
$$
\cI_0:=\{k \ |\ 1\le k\le N/2,
\text{ and at least one of }
V^{1,0}_k \text{ or }  V^{1,0}_{N-k}
\text{ vanish}\}
\, ,
$$
$$
\cI_1:=\{1,\dots,N/2\}\setminus \cI_0\ ,
$$
and consider the subspaces
$$
H_{\cI_0}^1(S, \R):=\bigoplus_{k\in \cI_0} H^1_k(S,\R) \,,\qquad
H_{\cI_1}^1(S,\R):=\bigoplus_{k\in \cI_1} H^1_k(S,\R)\ .
$$
of the real cohomology.
By definition, $$H^1(S,\R)=H_{\cI_0}^1(S,\R) \oplus H_{\cI_1}^1(S,\R)\,.$$
In this language, a consequence of the previous lemma is:
\begin{corollary}
\label{cor:Q}
An integer $k$, such that $1\le k\le N-1$,
belongs to the subset $\cI_1$ if and only if
$$
\dim V^{1,0}_k = \dim V^{1,0}_{N-k} = 1\,.
$$
\end{corollary}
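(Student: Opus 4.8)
The plan is to read everything off the dimension formula $\dim_{\C}V^{1,0}_k=t(N-k)-1$ of the preceding lemma (Lemma~4.3 of \cite{Bouw}), combined with the complementary inequality $t(k)+t(N-k)\le 4$. Throughout $k$ ranges over $\{1,\dots,N/2\}$, as in the definition of $\cI_0$ and $\cI_1$. The ``if'' direction is immediate: if $\dim V^{1,0}_k=\dim V^{1,0}_{N-k}=1$ then neither space vanishes, so $k\notin\cI_0$, i.e.\ $k\in\cI_1$.

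For the ``only if'' direction, suppose $k\in\cI_1$, so that $V^{1,0}_k\neq 0$ and $V^{1,0}_{N-k}\neq 0$. Applying the dimension formula to $k$ and to $N-k$ gives $\dim V^{1,0}_k=t(N-k)-1$ and $\dim V^{1,0}_{N-k}=t(k)-1$, so non-vanishing forces $t(N-k)\ge 2$ and $t(k)\ge 2$. For the reverse bound I would use that $\{y\}+\{-y\}\le 1$ for every real $y$ (with equality unless $y\in\Z$); since $(N-k)a_i/N\equiv -ka_i/N\pmod 1$ this gives $\{ka_i/N\}+\{(N-k)a_i/N\}\le 1$ for each $i$, and summing over $i=1,\dots,4$ yields $t(k)+t(N-k)\le 4$. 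Together with $t(k),t(N-k)\ge 2$ this forces $t(k)=t(N-k)=2$, hence $\dim V^{1,0}_k=\dim V^{1,0}_{N-k}=1$, as claimed.

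There is no genuine obstacle in this argument; the only mild points are keeping track of whether $k$ or $N-k$ enters the dimension formula and the elementary fact about fractional parts just used. One could instead quote $t(k)\in\{1,2,3\}$, already observed after \eqref{eq:tk}, together with $t(k)+t(N-k)\le 4$, and reach the same conclusion, but this is not needed.
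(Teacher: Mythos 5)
Your proof is correct and follows essentially the same route as the paper: both rest on the dimension formula $\dim V^{1,0}_k=t(N-k)-1$ together with the bound $t(k)+t(N-k)\le 4$, which forces both dimensions to equal $1$ once neither space vanishes. The only difference is cosmetic — the paper simply asserts $t(k)+t(N-k)\in\{2,3,4\}$, whereas you spell out the fractional-part identity $\{y\}+\{-y\}\le 1$ that justifies the upper bound.
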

\begin{proof} The definition~\eqref{eq:tk} of the integers $t(k)$ and conditions~\eqref{eq:a1:a4}
on the sum of $a_i$ imply that $t(k)+t(N-k)\in\{2,3,4\}$, or, equivalently,
$$
\dim V_k^{1,0}+\,\dim V_{N-k}^{1,0}\in\{0,1,2\} \,.
$$
It follows that if $k\in \cI_1$ then the integers $\dim V_k^{1,0}$ and $\dim V_k^{1,0}$ are both different
from zero, hence they have to be both equal to $1$, as claimed.
\end{proof}

In any case, for a square-tiled cyclic cover, we can use these subbundles to compute the complex bilinear form $B$
on $H^{1,0}$ as follows.

\begin{Lemma}
\label{lm:B:i:j}
Let $\omega_j$, $\omega_k$ be eigenvectors of the linear map
$T^\ast\vert_{H^{1,0}(S,\C)}$ with eigenvalues $\zeta^j$ and $\zeta^k$
respectively. The following formula holds:
$$
B_\omega(\omega_j,\omega_k)
\begin{cases}
\neq 0&\text{ if }j=N-k\\
=0    &\text{ otherwise}\ .
\end{cases}
$$
\end{Lemma}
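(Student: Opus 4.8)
The plan is to use the eigenvalue equivariance of $B_\omega$ under the deck transformation $T$, exactly as in the proof of the symmetry criterion (Theorem~\ref{thm:degenerateB}). Recall that $T^\ast \omega = -\omega$ by Lemma~\ref{lm:minus:omega}, so in the notation of Section~\ref{symmetry} the "base" eigenvalue is $u(T) = -1$ and hence $u(T)^2 = 1$. Now let $\omega_j, \omega_k \in H^{1,0}(S)$ be eigenvectors of $T^\ast$ with $T^\ast\omega_j = \zeta^j \omega_j$ and $T^\ast\omega_k = \zeta^k \omega_k$, where $\zeta$ is the chosen primitive $N$-th root of unity. First I would apply the change-of-variables computation already carried out in the proof of Theorem~\ref{thm:degenerateB}: since $T$ is a holomorphic automorphism of $S$ and $B_\omega$ is defined by the integral $\frac{i}{2}\int_S \frac{\omega_j\omega_k}{\omega}\bar\omega$, pulling back by $T$ gives
$$
B_\omega(\omega_j,\omega_k) = \frac{\zeta^j \zeta^k}{u(T)^2}\, B_\omega(\omega_j,\omega_k) = \zeta^{j+k}\, B_\omega(\omega_j,\omega_k)\,.
$$
Therefore $B_\omega(\omega_j,\omega_k) = 0$ unless $\zeta^{j+k} = 1$, i.e.\ unless $j + k \equiv 0 \pmod N$; since $j,k \in \{1,\dots,N-1\}$ this forces $j = N-k$. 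This already yields the "$=0$ otherwise" half of the statement.

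For the remaining claim --- that $B_\omega(\omega_j,\omega_k) \neq 0$ when $j = N-k$ --- the approach is to exhibit explicitly that the integrand in $B_\omega(\omega_j,\omega_{N-j})$ is, up to a nonzero constant, a positive (or at least non-identically-vanishing) density. The key observation is that when $k = N-j$ the ratio $\frac{\omega_j\,\omega_{N-j}}{\omega}$ has a definite equivariance type: it is $T^\ast$-invariant (its eigenvalue is $\zeta^{j}\zeta^{N-j}/(-1) \cdot (-1) = \ldots$, more precisely $\frac{\omega_j \omega_{N-j}}{\omega}$ transforms with eigenvalue $\zeta^j\zeta^{N-j} u(T)^{-1} = u(T)^{-1} = -1$, so $\frac{\omega_j\omega_{N-j}}{\omega}\bar\omega$ is $T$-invariant). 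More usefully, I would work with the explicit monomial bases of $V^{1,0}_k$ available on $M_N(a_1,\dots,a_4)$: the holomorphic forms are spanned by forms of the shape $\frac{P(z)\,dz}{w^m}$ for suitable polynomials $P$ and integers $m$, and $\omega = \frac{(z-z_4)\,dz}{w^{N/2 - \text{something}}}$ (cf.\ the genus-four example). Then $\frac{\omega_j\,\omega_{N-j}}{\omega}\,\bar\omega$ becomes an explicit $(1,1)$-form on $S$, and one checks by direct computation that its integral over $S$ does not vanish. Since $\dim_\C V^{1,0}_j \le 1$ by Bouw's lemma (and Corollary~\ref{cor:Q} identifies exactly when both $V^{1,0}_j$ and $V^{1,0}_{N-j}$ are one-dimensional), the form $\omega_j$ is, when nonzero, unique up to scaling, so it suffices to check non-vanishing for one explicit representative.

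The main obstacle I expect is precisely this non-vanishing step: showing $B_\omega(\omega_j,\omega_{N-j})\neq 0$. The cheapest route is likely not a brute-force residue computation but rather a reinterpretation via the relation between $B_\omega$ and the period matrix (the Lemma on $\Cal L \Pi_{ij} = B_\omega(\theta_i,\theta_j)$) together with the factorization $H_\omega = B_\omega \cdot B_\omega^\ast$ (formula~\eqref{eq:Hform}): since $B_\omega$ is block-diagonal with respect to the $T^\ast$-eigenbasis (by the first half of the argument, the only possibly nonzero entries pair $V^{1,0}_j$ with $V^{1,0}_{N-j}$), if the off-diagonal block $B_\omega\vert_{V^{1,0}_j \times V^{1,0}_{N-j}}$ vanished then $H_\omega$ would vanish on $V^{1,0}_j \oplus V^{1,0}_{N-j}$, forcing the corresponding curvature eigenvalues $\Lambda_i(\omega)$ to be zero on this two-dimensional piece; but one can rule this out by a direct flat-geometry/period computation, or by invoking the fact that the relevant pieces $H^1_k(S,\R)$ with $k \in \cI_1$ carry strictly positive exponents. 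Alternatively --- and this is probably the cleanest --- one verifies directly on the explicit monomial forms that $\frac{i}{2}\int_S \frac{\omega_j\omega_{N-j}}{\omega}\bar\omega$ reduces to a Beta-function-type integral over $\Proj^1$ with a manifestly nonzero value, using that $p_\ast$ of the relevant density is a nonvanishing multiple of $q_0$. I would carry out the equivariance argument first (it is immediate), then set up the explicit basis, and finally do the non-vanishing integral computation.
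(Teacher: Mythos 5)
The vanishing half of your argument is correct and is exactly the paper's: the change of variables under $T$ gives $B_\omega(\omega_j,\omega_k)=\zeta^{j+k}B_\omega(\omega_j,\omega_k)$ because $u(T)=-1$ by Lemma~\ref{lm:minus:omega}, so the value is zero unless $j+k\equiv 0 \pmod N$. The problem is the non-vanishing half, which you correctly identify as the main obstacle but never actually close. Of the routes you sketch, the soft one (via $H_\omega=B_\omega B_\omega^\ast$ and the positivity of exponents on $H^1_k$ for $k\in\cI_1$) is backwards relative to the logic of the paper --- the non-degeneracy of $B$ on these blocks is precisely what is used downstream (Lemma~\ref{cor:HP1}) to explain the positive exponents --- and in any case an exponent/curvature argument could at best give non-vanishing on a set of positive measure, not the pointwise statement $B_\omega(\omega_k,\omega_{N-k})\neq 0$ that the Lemma asserts. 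The explicit ``Beta-function integral'' route would presumably work but is left entirely uncomputed, so as written the proposal does not prove the claim.

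What is frustrating is that you walk right past the one-line argument. You compute that $\omega_k\,\omega_{N-k}/\omega$ transforms under $T^\ast$ with eigenvalue $\zeta^k\zeta^{N-k}/(-1)=-1=\zeta^{N/2}$, and then discard this observation as less useful than an explicit basis. But this is the whole point: this anti-invariant form lies in $V^{1,0}_{N/2}$, and that eigenspace is one-dimensional and spanned by $\omega$ itself (by the dimension count $\dim_\C V^{1,0}_{N/2}\le 1$ together with $T^\ast\omega=-\omega$; cf.\ Corollary~\ref{cor:Q}). Since $\omega_k$ and $\omega_{N-k}$ are nonzero holomorphic forms on a connected surface, their product is a nonzero quadratic differential, so $\omega_k\,\omega_{N-k}=\const\cdot\omega^2$ with $\const\neq 0$, and then directly from the definition~\eqref{eq:B},
$$
B_\omega(\omega_k,\omega_{N-k})=\const\cdot\frac{i}{2}\int_S\omega\wedge\bar\omega=\const\cdot\Vert\omega\Vert^2\neq 0\,.
$$
No residue or Beta-function computation is needed, and no appeal to Lyapunov exponents. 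To repair your proof, replace the entire second half with this observation.
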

\begin{proof}
Symmetry arguments analogous to the ones in the proof of
Theorem~\ref{thm:degenerateB} show that, in our case, if the
eigenvalues $u_j=\zeta^j$ and $u_k=\zeta^k$ are not complex-conjugate, the
value $B_\omega(\omega_j,\omega_k)$ on the corresponding
eigenvectors is equal to zero. Since
$\zeta^k=\overline{\zeta^{N-k}}=1/\zeta^{N-k}$, one has
that $j\neq N-k$ implies $B_\omega(\omega_j,\omega_k)=0$.

Now, consider the action of $T^\ast$ on the Abelian differential
$(\omega_k\cdot\omega_{N-k})/\omega$. By Lemma \ref{lm:minus:omega}, we get
$$
T^\ast \left(\frac{\omega_k\,\omega_{N-k}}{\omega}\right)=
\frac{\zeta^k\omega_k\cdot\zeta^{N-k} \omega_{N-k}}{(-\omega)}=
-\omega_k\,\omega_{N-k}\ ,
$$
that is, $(\omega_k\cdot\omega_{N-k})/\omega$ is $T^\ast$ anti-invariant. In other words, since $\zeta^{N/2}=-1$, we have that $(\omega_k\cdot\omega_{N-k})/\omega\in V^{1,0}(N/2)$. By Corollary \ref{cor:Q}, this implies that $(\omega_k\cdot\omega_{N-k})$ is proportional to $\omega$ with a nonzero constant coefficient
$const$. Thus,
$$
B_\omega(\omega_k,\omega_{N-k}):=
\frac{i}{2}\int_S \frac{\omega_k\, \omega_{N-k}}{\omega}\,\bar\omega=
const\cdot\frac{i}{2}\int_S \omega\bar\omega=const\cdot 1\neq 0
$$
This completes the proof of the lemma.
\end{proof}

   %
   %
   %
Our discussion so far can be summarized by the following lemma:
\begin{Lemma}
\label{lemma:realsplitting}
The real Hodge bundle $H^1_\R$ over an arithmetic Teichm\"uller
curve of a square-tiled cyclic cover splits into a direct sum
\begin{equation}
\label{eq:realsplitting}
H^1(S,\R) = \bigoplus_{k=1}^{N/2} H^1_k(S,\R)\,.
\end{equation}
of\, $\SL$-invariant , $B^\R$-orthogonal, Hodge star-invariant
subbundles.
\end{Lemma}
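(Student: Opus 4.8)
The plan is to deduce the three asserted properties of the summands $H^1_k(S,\R)$ from the $T^\ast$-eigenspace description recorded above, the only genuinely new input being the $B^\R$-orthogonality, which comes directly from Lemma~\ref{lm:B:i:j}. First I would note that the map $T^\ast$ on $H^1(S,\C)$ is the complexification of the real automorphism of $H^1(S,\R)$ induced by the deck transformation $T$, hence it commutes with complex conjugation; thus $\overline{V_k}=V_{N-k}$, so each $W_k$ (for $1\le k<N/2$) and $W_{N/2}=V_{N/2}$ is conjugation-stable and is therefore the complexification of the real subspace $H^1_k(S,\R)=W_k\cap H^1(S,\R)$. Grouping the summands of $H^1(S,\C)=\bigoplus_{k=1}^{N-1}V_k$ into the packets $W_k$ and passing to real points yields the direct sum $H^1(S,\R)=\bigoplus_{k=1}^{N/2}H^1_k(S,\R)$. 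Since $T$ commutes with the affine $\SL$-action on the square-tiled surface (and with the Gauss--Manin connection), $T^\ast$ commutes with the Kontsevich--Zorich cocycle, so each $V_k$, each $W_k$, and each $H^1_k(S,\R)$ is a parallel, $\SL$-invariant subbundle over the Teichm\"uller curve.

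For Hodge star-invariance I would use that $T$ is a holomorphic automorphism, so $T^\ast$ preserves the Hodge decomposition $H^1(S,\C)=H^{1,0}(S)\oplus H^{0,1}(S)$ and hence commutes with the Hodge star operator (multiplication by $-i$ on $H^{1,0}$ and by $+i$ on $H^{0,1}$); consequently $\ast$ preserves every eigenspace $V_k$, hence every $W_k$, hence every $H^1_k(S,\R)$. Equivalently, one may invoke the formulas $W_k=U_k^{1,0}\oplus\overline{U_k^{1,0}}$ with $U_k^{1,0}=V^{1,0}_k\oplus V^{1,0}_{N-k}$ (and $U^{1,0}_{N/2}=V^{1,0}_{N/2}$), together with the characterization of Hodge star-invariant real subspaces discussed in Section~\ref{ssec:reduc}.

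For the $B^\R$-orthogonality, fix distinct $j,k\in\{1,\dots,N/2\}$ and take $c_1\in H^1_j(S,\R)$, $c_2\in H^1_k(S,\R)$. Since $c_i$ lies in the $T^\ast$-invariant space $W_j$ (respectively $W_k$) and the Hodge projection $\pi_1\colon H^1_\C\to H^{1,0}$ commutes with $T^\ast$, the holomorphic representative $h_\omega(c_i)=2\pi_1(c_i)$ lies in $W_j^{1,0}=V^{1,0}_j\oplus V^{1,0}_{N-j}$ (respectively $W_k^{1,0}=V^{1,0}_k\oplus V^{1,0}_{N-k}$). Expanding $h_\omega(c_1)$ and $h_\omega(c_2)$ into their $T^\ast$-eigencomponents and using bilinearity, $B^\R_\omega(c_1,c_2)=B_\omega(h_\omega(c_1),h_\omega(c_2))$ becomes a sum of terms $B_\omega(\alpha_a,\beta_b)$ with $a\in\{j,N-j\}$ and $b\in\{k,N-k\}$; by Lemma~\ref{lm:B:i:j} each such term vanishes unless $a+b=N$. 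For distinct $j,k$ with $1\le j,k\le N/2$, none of the four sums $j+k$, $j+(N-k)$, $(N-j)+k$, $(N-j)+(N-k)$ equals $N$: the first and the last would force $j+k=N$, impossible since $j,k\le N/2$ are distinct, while the middle two would force $j=k$. Hence $B^\R_\omega(c_1,c_2)=0$, which is exactly the pairwise $B^\R_\omega$-orthogonality of the summands.

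I expect the point requiring the most care to be the claim, used in the last paragraph, that a real class $c\in H^1_j(S,\R)$ has its holomorphic representative $h_\omega(c)$ inside the packet $W_j^{1,0}$; this is precisely the assertion that the Hodge projection $\pi_1$ commutes with $T^\ast$, and it holds exactly because $T$ is holomorphic (so $T^\ast$ preserves $H^{1,0}$ and $H^{0,1}$ separately). Everything else is bookkeeping with the eigenvalue grading and an appeal to the already-established Lemma~\ref{lm:B:i:j}, so no further obstacle is anticipated.
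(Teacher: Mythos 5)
Your argument is correct and follows essentially the same route as the paper, which presents this lemma as a summary of the preceding discussion: $\SL$-invariance from the commutation of $T^\ast$ with the Gauss--Manin monodromy, Hodge star-invariance from the compatibility of the packets $W_k$ with the Hodge decomposition, and $B^\R$-orthogonality from Lemma~\ref{lm:B:i:j}. Your eigenvalue bookkeeping in the last paragraph (checking that none of the sums $j+k$, $j+(N-k)$, $(N-j)+k$, $(N-j)+(N-k)$ equals $N$ for distinct $j,k\le N/2$) correctly fills in the step the paper leaves implicit.
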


\begin{Remark}
By Lemma~\ref{lm:equivalence:Hodge:and:symp:orthogonal:under:star:invariance}
and Lemma~\ref{lemma:realsplitting}  the subspaces
$H^1_k(S,\R)$ in the splitting~\eqref{eq:realsplitting}
are symplectic-orthogonal and Hodge-orthogonal.
Of course, it can be also immediately seen directly.
\end{Remark}

   %
   %
   %
  %
   %
Given a Teichm\"uller curve $\mathcal{C}$ associated to a square-tiled cyclic cover, we denote $H_{\cI_0}^1 \subset H^1_\R$ the bundle over $\mathcal{C}$ formed by the
subspaces $H_{\cI_0}^1(S, \R)$, and $E_0 \subset H^1_\R$
be the Oseledets bundle corresponding to the zero Lyapunov
exponents (with respect to the unique $\SL$-invariant probability supported on $\mathcal{C}$).
The Lemma below is a consequence of Theorem~\ref{th:Ann} and Lemma~\ref{lm:B:i:j}.
\begin{Lemma}
\label{lemma:inclusion}
The following inclusions hold:
$$
H_{\cI_0}^1 \subseteq \Ann(B^\R) \quad
\text{ and } \quad H_{\cI_0}^1 \subseteq  E_0 \,.
$$
\end{Lemma}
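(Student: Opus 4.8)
The plan is to establish the two inclusions in Lemma~\ref{lemma:inclusion} separately, each as a short consequence of the structural results already in hand. The whole point is that the splitting~\eqref{eq:realsplitting} reduces everything to an index-by-index computation, and the set $\cI_0$ was defined precisely to capture the indices where the relevant pieces of the second fundamental form vanish.

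\textbf{The inclusion $H_{\cI_0}^1\subseteq\Ann(B^\R)$.}
First I would fix $k\in\cI_0$ and show that $B^\R_\omega$ vanishes identically on $H^1_k(S,\R)$, more precisely that $B^\R_\omega(c,c')=0$ for every $c\in H^1_k(S,\R)$ and every $c'\in H^1(S,\R)$. By the definition~\eqref{eq:B:R} of $B^\R$ and the Hodge representation theorem it suffices to show $B_\omega(\alpha,\beta)=0$ for all $\alpha\in W_k^{1,0}:=V^{1,0}_k\oplus V^{1,0}_{N-k}$ and all $\beta\in H^{1,0}(S)$. Decomposing $\beta$ along the eigenspaces $V^{1,0}_j$ of $T^\ast$ and using bilinearity of $B_\omega$, the matter reduces to the case where $\alpha$ is an eigenvector with eigenvalue $\zeta^j$, $j\in\{k,N-k\}$, and $\beta$ an eigenvector with eigenvalue $\zeta^m$. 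By Lemma~\ref{lm:B:i:j} the value $B_\omega(\alpha,\beta)$ can be nonzero only when $m=N-j\in\{N-k,k\}$; but $k\in\cI_0$ means that at least one of $V^{1,0}_k$, $V^{1,0}_{N-k}$ is the zero space, so one of the two potentially nonzero pairings is forced to vanish because one of the two eigenspaces is empty, and the other is handled by symmetry ($j$ and $N-j$ play interchangeable roles). Hence $B_\omega|_{W_k^{1,0}\times H^{1,0}(S)}\equiv 0$, so $W_k^{1,0}\subseteq\Ann B_\omega$ and, by Lemma~\ref{lemma:Ann:Hodgeinvariance}, $H^1_k(S,\R)\subseteq\Ann(B^\R_\omega)$ for $\mu$-almost every $\omega$. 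Summing over $k\in\cI_0$ gives $H_{\cI_0}^1\subseteq\Ann(B^\R)$.

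\textbf{The inclusion $H_{\cI_0}^1\subseteq E_0$.}
By Lemma~\ref{lemma:realsplitting} the subbundle $V:=H_{\cI_0}^1$ is $\SL$-invariant, Hodge star-invariant, and a $B^\R$-orthogonal summand. From the first part we have $B^\R_\omega|_V\equiv 0$, so in particular $B^\R_\omega|_{V^{1,0}}\equiv 0$ and hence, by Corollary~\ref{cr:rank:0} applied to the $\SL$-invariant, Hodge star-invariant subbundle $V$ (conditions (1) and (2) there), the restriction of the Kontsevich--Zorich cocycle to $V$ is isometric with respect to the Hodge norm. An isometric subcocycle has all Lyapunov exponents equal to zero, so every nonzero vector of $V_\omega$ lies in the Oseledets subspace $E^\mu_{(0)}=E_0$ for $\mu$-almost every $\omega$; that is, $H_{\cI_0}^1\subseteq E_0$. (Alternatively one can invoke directly the implication $(1)\Rightarrow(2)$ of Lemma~\ref{lemma:degeneratespectrum}, which needs only flow-invariance and already gives the isometry from the vanishing of $B^\R|_V$.)

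\textbf{Main obstacle.}
The only genuinely delicate point is the case analysis in the first part: one must check carefully that for $k\in\cI_0$ \emph{every} product of eigenvectors $\alpha_j\otimes\beta_m$ with $j\in\{k,N-k\}$ contributes zero to $B_\omega$, keeping straight that the constraint from Lemma~\ref{lm:B:i:j} is $m=N-j$, not $m=N-k$, and that the defining condition of $\cI_0$ ("at least one of $V^{1,0}_k$, $V^{1,0}_{N-k}$ vanishes") kills exactly the surviving pairing. Once that bookkeeping is done, both inclusions are immediate from the cited corollaries, and there is no further analytic difficulty.
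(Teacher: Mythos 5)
Your proof is correct and follows essentially the same route as the paper: the first inclusion comes from Lemma~\ref{lm:B:i:j} together with the defining property of $\cI_0$ (one of $V^{1,0}_k$, $V^{1,0}_{N-k}$ vanishes), and the second from the fact that vanishing of $B^\R$ on the $\SL$-invariant, Hodge star-invariant subbundle $H^1_{\cI_0}$ forces the restricted cocycle to be isometric, hence contained in $E_0$. The only cosmetic difference is that you verify the off-block vanishing of $B_\omega$ against all of $H^{1,0}(S)$ explicitly and invoke Corollary~\ref{cr:rank:0} directly, whereas the paper leans on the $B^\R$-orthogonality of the splitting from Lemma~\ref{lemma:realsplitting} and cites Theorem~\ref{th:Ann}, whose relevant part is itself proved via the same Lemma~\ref{lemma:degeneratespectrum}.
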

\begin{proof}
For each $k\in \cI_0$, one has $V^{1,0}_k
\oplus V^{1,0}_{N-k} =V_{N-k}^{1,0} \textrm{ or } V^{1,0}_k $.
Hence  $(B\vert_{V^{1,0}_k \oplus V^{1,0}_{N-k}})=0$ by Lemma
\ref{lm:B:i:j}. It follows that $B^\R\vert_{H^1_{\cI_0}} =0$, that
is, $H^1_{\cI_0} \subseteq \Ann(B^\R)$.  On the other hand, we have that, by construction, $H^1_{\cI_0}$ is $\SL$-invariant  and
Hodge star-invariant. Thus, the inclusion $H_{\cI_0}^1 \subseteq  E_0$ follows from Theorem~\ref{th:Ann} (and the inclusion
$H^1_{\cI_0} \subseteq \Ann(B^\R)$).
\end{proof}

   %
   %
%
   %
   %
%
%
In what follows, we want to show that the inclusions in the previous lemma are actually equalities. The first step is to prove the following fact:
\begin{lemma}
\label{cor:HP1}
The following identity holds:
$$
H_{\cI_0}^1 = \Ann(B^\R)\,.
$$
\end{lemma}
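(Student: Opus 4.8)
The plan is to prove the reverse inclusion $\Ann(B^\R) \subseteq H^1_{\cI_0}$, since the inclusion $H^1_{\cI_0} \subseteq \Ann(B^\R)$ is already contained in Lemma~\ref{lemma:inclusion}. The key structural fact at our disposal is the $B^\R$-orthogonal, $T^\ast$-eigenspace splitting $H^1(S,\R) = \bigoplus_{k=1}^{N/2} H^1_k(S,\R)$ from Lemma~\ref{lemma:realsplitting}, together with the precise vanishing pattern of $B_\omega$ on $T^\ast$-eigenvectors in Lemma~\ref{lm:B:i:j}. Because the splitting is $B^\R$-orthogonal, the annihilator decomposes accordingly: $\Ann(B^\R) = \bigoplus_{k=1}^{N/2} \Ann(B^\R\vert_{H^1_k(S,\R)})$. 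So it suffices to examine, for each $k\in\{1,\dots,N/2\}$, whether $B^\R$ is degenerate or nondegenerate on the block $H^1_k(S,\R)$; the claim will follow once we show $B^\R\vert_{H^1_k(S,\R)} = 0$ for $k\in\cI_0$ and $B^\R\vert_{H^1_k(S,\R)}$ is \emph{nondegenerate} for $k\in\cI_1$.

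First I would recall that $k\in\cI_0$ already gives $B^\R\vert_{H^1_k} = 0$ by the proof of Lemma~\ref{lemma:inclusion} (when $k\in\cI_0$, $V^{1,0}_k\oplus V^{1,0}_{N-k}$ is just one of $V^{1,0}_k$ or $V^{1,0}_{N-k}$, and Lemma~\ref{lm:B:i:j} forces $B_\omega$ to vanish on it since the only nonzero pairings are between $\zeta^j$- and $\zeta^{N-j}$-eigenvectors). For $k\in\cI_1$, Corollary~\ref{cor:Q} tells us $\dim_\C V^{1,0}_k = \dim_\C V^{1,0}_{N-k} = 1$; pick generators $\omega_k\in V^{1,0}_k$ and $\omega_{N-k}\in V^{1,0}_{N-k}$. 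By Lemma~\ref{lm:B:i:j} the only nonzero entry of the matrix of $B_\omega$ restricted to $V^{1,0}_k\oplus V^{1,0}_{N-k}$ is $B_\omega(\omega_k,\omega_{N-k})\neq 0$ (and its symmetric counterpart), so $B_\omega\vert_{V^{1,0}_k\oplus V^{1,0}_{N-k}}$ has matrix $\left(\begin{smallmatrix} 0 & b \\ b & 0\end{smallmatrix}\right)$ with $b\neq 0$, hence is nondegenerate. Passing to the real form $B^\R$ via the Hodge representation theorem (recall $H^\R_\omega$ and $B^\R_\omega$ have rank equal to twice the common rank of $H_\omega$ and $B_\omega$ on the holomorphic side, as discussed in \S\ref{ss:eval:sec:fund:form}), nondegeneracy of $B_\omega$ on $V^{1,0}_k\oplus V^{1,0}_{N-k}$ translates to nondegeneracy of $B^\R_\omega$ on the real block $H^1_k(S,\R)$. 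Therefore $\Ann(B^\R\vert_{H^1_k(S,\R)}) = \{0\}$ for $k\in\cI_1$.

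Combining these two cases, $\Ann(B^\R_\omega) = \bigoplus_{k\in\cI_0} H^1_k(S,\R) = H^1_{\cI_0}(S,\R)$ pointwise for every $\omega$ in the Teichm\"uller curve, which together with Lemma~\ref{lemma:inclusion} gives the desired identity $H^1_{\cI_0} = \Ann(B^\R)$. The main obstacle I anticipate is bookkeeping the passage between the complex form $B_\omega$ on $H^{1,0}$ and the complex-valued real form $B^\R_\omega$ on $H^1(S,\R)$: one must be careful that the $2\times 2$ antidiagonal block structure on the holomorphic side genuinely yields a nondegenerate (complex-bilinear) form on the corresponding $2\dim_\C$-dimensional real block, rather than merely a form of full rank over some intermediate space. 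This is handled by the general relation between the ranks of $B_\omega$ and $B^\R_\omega$ recorded earlier, but it is worth stating explicitly since $B^\R_\omega$ is complex-valued rather than a genuine quadratic form. A secondary point requiring a line of justification is that the decomposition $\Ann(B^\R) = \bigoplus_k \Ann(B^\R\vert_{H^1_k})$ is legitimate, which is immediate from the $B^\R$-orthogonality of the splitting in Lemma~\ref{lemma:realsplitting}.
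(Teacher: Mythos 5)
Your proof is correct and follows essentially the same route as the paper: exploit the $B^\R$-orthogonality of the eigenspace splitting from Lemma~\ref{lemma:realsplitting}, show via Corollary~\ref{cor:Q} and Lemma~\ref{lm:B:i:j} that $B^\R$ is nondegenerate on each block $H^1_k(S,\R)$ with $k\in\cI_1$, and combine with the inclusion from Lemma~\ref{lemma:inclusion}. The only cosmetic point is that for $k=N/2$ the block $V^{1,0}_{N/2}$ is one-dimensional and the relevant matrix is the $1\times 1$ matrix $(B_\omega(\omega_{N/2},\omega_{N/2}))\neq 0$ rather than the $2\times 2$ antidiagonal one, but the nondegeneracy conclusion is unchanged.
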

\begin{proof}
For any $k\in \cI_1$, take $\omega_k\in V^{1,0}_k-\{0\}$ and $\omega_{N-k}\in V^{1,0}_{N-k}-\{0\}$. It follows from
Corollary~\ref{cor:Q} that such $\omega_k$ and $\omega_{N-k}$ exist, and $V^{1,0}_k=\mathbb{C}\omega_k$, $V^{1,0}_{N-k}=\mathbb{C}\omega_{N-k}$.
By~\eqref{eq:W:k} and~\eqref{eq:W:N:over:2}, we obtain that $\Re\omega_k$, $\Re\omega_{N-k}$, $\Im\omega_k$, $\Im\omega_{N-k}$
is a basis of $H^1_k(S,\R)$ when $k\neq N/2$, and
$\Re\omega_{N/2}$, $\Im\omega_{N/2}$ is a basis of $H^1_{N/2}(S,\R)$.
By Lemma \ref{lm:B:i:j}, we deduce that the $B^\R\vert_{H^1_k(S,\R)}$ is non-degenerate for any $k\in \cI_1$. Since $H^1_{\cI_1}(S,\R)$ is a $B^\R$-orthogonal sum
(see Lemma \ref{lemma:realsplitting}), it
follows that the restriction of $B^\R$ to $H^1_{\cI_1}(S,\R)$
is non-degenerate. Finally, since $H^1(S,\R)$ splits as a
$B^\R$-orthogonal sum of $H^1_{\cI_1}(S,\R)$ and
$H^1_{\cI_0}(S,\R)$ (see again Lemma \ref{lemma:realsplitting}), it follows that $\Ann(B^\R) \subseteq
H_{\cI_0}^1$. Because the converse inclusion was proved as part of
Lemma~\ref{lemma:inclusion}, the proof of this lemma is complete.
\end{proof}

Next, we invoke the following key result:
\begin{Theorem}[Theorem 2.6, item (iii), of \cite{Eskin:Kontsevich:Zorich:cyclic}]
\label{lemma:Qexp}
For every $k\in \cI_1$ the Kontsevich--Zorich cocycle has no zero exponents
on the $\SL$-invariant subbundle $H^1_k\subset H^1_\R$. Moreover, for each $k\in\cI_1-\{N/2\}$, the Lyapunov spectrum of Kontsevich-Zorich cocycle
restricted to $H^1_k$ has the form $\{\lambda_k, \lambda_k,-\lambda_k,-\lambda_k\}$ with $\lambda_k>0$ (i.e., there is only one double positive Lyapunov exponent).
\end{Theorem}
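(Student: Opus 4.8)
\medskip

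The plan is to derive Theorem~\ref{lemma:Qexp} directly from the structural results of this appendix together with the rigidity statements of \S\ref{ssec:reduc}, rather than from the cyclic-cover machinery of \cite{Eskin:Kontsevich:Zorich:cyclic}. Let $\mu$ denote the unique $\SL$-invariant ergodic probability measure on the Teichm\"uller curve of the square-tiled cyclic cover. The two ingredients I would combine are: (a) the $\SL$-invariant, Hodge star-invariant, $B^\R$-orthogonal splitting of Lemma~\ref{lemma:realsplitting}, which allows one to treat each $H^1_k$ separately; and (b) the equivalence, from Corollary~\ref{cr:rank:0}, that on an $\SL$-invariant, Hodge star-invariant subbundle the vanishing of the whole non-negative Lyapunov spectrum is equivalent to the vanishing of $B^\R$ on that subbundle.

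First I would fix $k\in\cI_1$ and record that, by Corollary~\ref{cor:Q}, $\dim_\C V^{1,0}_k=\dim_\C V^{1,0}_{N-k}=1$; choosing nonzero generators $\omega_k\in V^{1,0}_k$ and $\omega_{N-k}\in V^{1,0}_{N-k}$, the holomorphic part of $H^1_k$ is spanned by $\omega_k,\omega_{N-k}$ when $k\neq N/2$ (by~\eqref{eq:W:k}) and by $\omega_{N/2}$ alone when $k=N/2$ (by~\eqref{eq:W:N:over:2}). Lemma~\ref{lm:B:i:j} then shows that, in this basis, $B_\omega$ restricted to the holomorphic part of $H^1_k$ is anti-diagonal with nonzero off-diagonal entry $B_\omega(\omega_k,\omega_{N-k})$ (respectively the nonzero $1\times 1$ form $B_\omega(\omega_{N/2},\omega_{N/2})$ when $k=N/2$); hence $B^\R_\omega\vert_{H^1_k}$ is nondegenerate, and in particular not identically zero on the support of $\mu$. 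By Corollary~\ref{cr:rank:0} applied to $V=H^1_k$, the non-negative spectrum of the restriction of the Kontsevich--Zorich cocycle to $H^1_k$ is therefore not identically zero; in particular $\lambda^{H^1_k,\mu}_1>0$. When $k=N/2$ the bundle $H^1_{N/2}$ has real rank $2$ with symplectic spectrum $\{\lambda_{N/2},-\lambda_{N/2}\}$, so this already proves the first assertion of the theorem in that case.

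It remains, for $k\in\cI_1\setminus\{N/2\}$, to upgrade ``$\lambda_1>0$'' to ``$\lambda_1=\lambda_2>0$'' for the rank-$4$ symplectic bundle $H^1_k$, whose spectrum has the shape $\{\lambda_1,\lambda_2,-\lambda_2,-\lambda_1\}$ with $\lambda_1\ge\lambda_2\ge 0$. Here the plan is to exploit the deck transformation: complexifying, $H^1_k\otimes\C=V_k\oplus V_{N-k}$ is a Kontsevich--Zorich-invariant decomposition (the cocycle commutes with $T^\ast$), and complex conjugation is a cocycle-equivariant, norm-preserving antilinear bijection carrying $V_k$ onto $V_{N-k}=\overline{V_k}$; hence the cocycle on $V_{N-k}$ has the same Lyapunov spectrum, with multiplicities, as the cocycle on $V_k$. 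Consequently every Lyapunov exponent of $H^1_k$ occurs with even multiplicity (equivalently: since $\zeta^k$ is non-real for $1\le k\le N/2-1$, the operator $T^\ast\vert_{H^1_k}$ induces a complex structure on $H^1_k$ commuting with the cocycle). Combined with the symplectic symmetry, even multiplicity forces $\lambda_1=\lambda_2=:\lambda_k$, and $\lambda_k>0$ by the previous paragraph; thus the spectrum on $H^1_k$ is $\{\lambda_k,\lambda_k,-\lambda_k,-\lambda_k\}$, as claimed.

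The step I expect to require genuine care — and which I would regard as the crux — is precisely this last upgrade: Corollary~\ref{cr:rank:0} only forbids \emph{all} exponents of $H^1_k$ from vanishing, and a priori a rank-$4$ symplectic cocycle could have spectrum $\{\lambda_1,0,0,-\lambda_1\}$; it is the even-multiplicity constraint coming from the $T^\ast$-symmetry that excludes this and simultaneously produces the equality of the two positive exponents. Everything else reduces to bookkeeping with the splitting~\eqref{eq:realsplitting} and the explicit form of $B_\omega$ given by Lemma~\ref{lm:B:i:j}. (Alternatively, the statement is contained in \cite{Eskin:Kontsevich:Zorich:cyclic}, Theorem~2.6, but the route above is self-contained given the results of \S\ref{ssec:reduc}.)
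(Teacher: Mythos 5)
Your proposal is correct, but it is worth being clear that it does not follow the paper's route at all, because the paper gives \emph{no} proof of Theorem~\ref{lemma:Qexp}: the statement is imported wholesale from \cite{Eskin:Kontsevich:Zorich:cyclic}, where it is obtained by actually computing the exponents (via degrees of the line bundles $V^{1,0}_k$, yielding the precise value $\lambda_k=2\min_j\{t_j(k),1-t_j(k)\}$). What you have written is essentially the ``direct'' argument that the authors' remark immediately after the theorem alludes to and deliberately omits. Your two steps are both sound: (a) Lemma~\ref{lm:B:i:j} together with Corollary~\ref{cor:Q} shows $B^\R\vert_{H^1_k}$ is nondegenerate for $k\in\cI_1$, and since $H^1_k$ is $\SL$-invariant and Hodge star-invariant (Lemma~\ref{lemma:realsplitting}), Corollary~\ref{cr:rank:0} forbids the whole non-negative spectrum of $H^1_k$ from vanishing, so $\lambda^{H^1_k,\mu}_1>0$; (b) for $k\neq N/2$, the complexification $H^1_k\otimes\C=V_k\oplus V_{N-k}$ is cocycle-invariant and complex conjugation carries the Oseledets spaces of $V_k$ onto those of $V_{N-k}=\overline{V_k}$, so every exponent of $H^1_k$ has even multiplicity; combined with the symplectic symmetry $\{\lambda_1,\lambda_2,-\lambda_2,-\lambda_1\}$ and $\lambda_1>0$ this rules out both $\lambda_1>\lambda_2$ and $\lambda_2=0$, forcing $\lambda_1=\lambda_2>0$. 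You correctly identify the even-multiplicity step as the crux; note that the paper itself uses the same symmetry trick in Appendix~\ref{s:rk:B:Z} (via $T^\ast$ having non-real eigenvalues on $H^1_2(C_6,\R)$), so the idea is consistent with the authors' toolkit. The trade-off between the two approaches is exactly what the authors' remark states: your argument is self-contained given \S\ref{ssec:reduc} and proves everything the theorem as quoted asserts (positivity and double multiplicity), whereas the proof in \cite{Eskin:Kontsevich:Zorich:cyclic} buys the exact value of $\lambda_k$, which your soft argument cannot produce.
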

\begin{remark}This theorem could be deduced \textit{directly} from the properties of $B^{\mathbb{R}}\vert_{H^1_k}$ discussed in this paper. However, we prefer to skip the presentation of this proof because, contrary to the arguments in \cite{Eskin:Kontsevich:Zorich:cyclic}, it doesn't yield the \textit{precise} value $\lambda_k = 2\cdot\min\{t_j(k), 1-t_j(k): j=1,2,3,4\}$.
\end{remark}

At this stage, we are ready to get the following result stating in particular that for square-tiled cyclic covers the central Oseledets
subbundle indeed coincides with the kernel of the second fundamental form:
\begin{theorem}
\label{thm:cyclic:covers}
Suppose that $N$ is even
and all $a_i$ are odd. Consider the $\SL$-orbit of the square-tiled cyclic cover $\MNa$.  The following identities hold:
$$
E_0= \Ann(B^\R)=H_{\cI_0}^1 \,.
$$
In particular the number of strictly positive Lyapunov exponents of the Kontsevich--Zorich
cocycle is given by the following formula:
$$
\#\{ k\in \{1, \dots, N-1\}  \vert \dim_{\C}V^{1,0}_k=\dim_{\C}V^{1,0}_{N-k}=1\}\,.
$$
\end{theorem}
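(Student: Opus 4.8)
The plan is to assemble the statement from the pieces already in place: the identity $\Ann(B^\R)=H_{\cI_0}^1$ is exactly Lemma~\ref{cor:HP1}, so the genuinely new content is the identification $E_0=H_{\cI_0}^1$ together with the enumeration of strictly positive exponents. The inclusion $H_{\cI_0}^1\subseteq E_0$ has already been recorded in Lemma~\ref{lemma:inclusion}, so I would concentrate on the reverse inclusion $E_0\subseteq H_{\cI_0}^1$, and the only external input I would use beyond the machinery of this paper is the non-vanishing result Theorem~\ref{lemma:Qexp} imported from \cite{Eskin:Kontsevich:Zorich:cyclic}.

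First I would invoke Lemma~\ref{lemma:realsplitting}, which gives the $\SL$-invariant (in particular flow-invariant), $B^\R$-orthogonal, Hodge star-invariant splitting $H^1_\R=\bigoplus_{k=1}^{N/2}H^1_k$. Since the Oseledets decomposition of a flow-invariant measurable subbundle is obtained by intersecting with the Oseledets decomposition of the ambient bundle (as used around \eqref{eq:Oseledets:direct:sum:V}), one gets $E_0=\bigoplus_{k=1}^{N/2}(E_0\cap H^1_k)$. For $k\in\cI_0$ one has $H^1_k\subseteq H_{\cI_0}^1\subseteq E_0$ by Lemma~\ref{lemma:inclusion}, hence $E_0\cap H^1_k=H^1_k$; for $k\in\cI_1$ the restriction of the Kontsevich--Zorich cocycle to $H^1_k$ has no zero exponent by Theorem~\ref{lemma:Qexp}, hence $E_0\cap H^1_k=\{0\}$. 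Therefore $E_0=\bigoplus_{k\in\cI_0}H^1_k=H_{\cI_0}^1$, which together with Lemma~\ref{cor:HP1} establishes the chain $E_0=\Ann(B^\R)=H_{\cI_0}^1$.

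For the formula counting strictly positive exponents, I would argue as follows. On $E_0=H_{\cI_0}^1$ all exponents vanish, so the strictly positive exponents on $H^1_\R$ are precisely those coming from the restrictions to the $H^1_k$ with $k\in\cI_1$. Each such $H^1_k$ is Hodge star-invariant, hence the symplectic form restricts nondegenerately to it (as noted at the start of \S\ref{ssec:reduc}), so the restricted cocycle is symplectic; since it has no zero exponent, exactly half of its exponents are strictly positive, that is, $H^1_k$ contributes $\tfrac12\dim_\R H^1_k$ positive exponents. For $k\in\cI_1$ with $k\neq N/2$ formula~\eqref{eq:W:k} and Corollary~\ref{cor:Q} give $\dim_\R H^1_k=4$ (both $V^{1,0}_k$ and $V^{1,0}_{N-k}$ are one-dimensional), hence two positive exponents, matching the two distinct indices $k,N-k\in\{1,\dots,N-1\}$ satisfying $\dim_\C V^{1,0}_k=\dim_\C V^{1,0}_{N-k}=1$; for $k=N/2\in\cI_1$ formula~\eqref{eq:W:N:over:2} gives $\dim_\R H^1_{N/2}=2$, hence one positive exponent, matching the single index $N/2$. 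Summing over $\cI_1$ and rewriting ``$k\in\cI_1$'' as ``$\dim_\C V^{1,0}_k=\dim_\C V^{1,0}_{N-k}=1$'' via Corollary~\ref{cor:Q} yields the stated count.

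The main obstacle is that essentially all the weight of the argument rests on Theorem~\ref{lemma:Qexp}, i.e.\ on the non-vanishing of the exponents on each $H^1_k$ with $k\in\cI_1$. If one wished to avoid citing \cite{Eskin:Kontsevich:Zorich:cyclic}, one would have to prove directly that for $k\in\cI_1$ the maximal-rank property of $B^\R\vert_{H^1_k}$ (which does follow from Lemma~\ref{lm:B:i:j}, as in the proof of Lemma~\ref{cor:HP1}) forces all exponents on $H^1_k$ to be nonzero; the general lower bound of Theorem~\ref{thm:lowerbound} is off by roughly a factor of two and is not sufficient for this, so one would instead have to exploit the very special $2$- or $4$-dimensional structure of the $H^1_k$ together with the $B^\R$-orthogonality of the splitting --- which is precisely the delicate point handled in loc.\ cit. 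Everything else is a direct application of Lemmas~\ref{lemma:realsplitting}, \ref{lemma:inclusion}, \ref{cor:HP1}, Corollary~\ref{cor:Q}, and Theorem~\ref{th:Ann}.
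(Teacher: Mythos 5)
Your proposal is correct and follows essentially the same route as the paper: the $\SL$-invariant, $B^\R$-orthogonal splitting of Lemma~\ref{lemma:realsplitting} reduces everything to the individual $H^1_k$, the inclusions come from Lemmas~\ref{lemma:inclusion} and~\ref{cor:HP1}, the reverse inclusion and the count come from Theorem~\ref{lemma:Qexp} combined with Corollary~\ref{cor:Q}. Your write-up of the counting step via the symplectic symmetry of each restricted cocycle is just a more explicit version of what Theorem~\ref{lemma:Qexp} already asserts about the form of the spectrum on each $H^1_k$.
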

\begin{proof}
The decomposition~\eqref{eq:realsplitting} of the Hodge bundle
into a direct sum of $\SL$-invariant subbundles implies that
the Lyapunov spectrum of the Hodge bundle is the union of the
spectra of the subbundles. Lemmas~\ref{lemma:inclusion} and ~\ref{cor:HP1} show that
$\Ann(B^\R)=H_{\cI_0}^1\subseteq E_0$, while
Theorem~\ref{lemma:Qexp}, shows that the inclusion $H_{\cI_0}^1\subseteq E_0$ is, actually
an equality. Finally, the formula counting the number of
nonnegative Lyapunov exponents is obtained by a combination of
Theorem~\ref{lemma:Qexp} with Corollary~\ref{cor:Q}.
\end{proof}

Theorem~\ref{th:n:of:exp:equals:rk} is now an immediate corollary
of the more precise Theorem~\ref{thm:cyclic:covers}.

\section{Lyapunov spectrum of a higher-dimensional $\SL$-invariant locus $\mathcal{Z}$}\label{s:rk:B:Z}
This appendix is devoted to the description of an $\SL$-invariant locus $\mathcal{Z}\subset\mathcal{H}(8,2^5)$ supporting an ergodic $\SL$-invariant probability $\mu$ with the following properties:
\begin{itemize}
\item there are precisely $6$ vanishing exponents out of $10$ non-negative  Kontsevich--Zorich exponents (with respect to $\mu$),
\item the corank of $B_{\omega}$ is $6$ for every $\omega\in\mathcal{Z}$, but
\item $E^{\mu}_{(0)}\neq\textrm{Ann}(B^\R)$, $E_{(0)}^\mu$ is not $\SL$-invariant  and $\textrm{Ann}(B^\R)$ is not flow-invariant.
\end{itemize}
The construction of $\mathcal{Z}$ is partly motivated by McMullen's paper~\cite{McMullen}.

Below, we will only \textit{sketch} the proof of these properties. The details are part of a forthcoming paper \cite{FMZ-III}.

\subsection{Description of the locus $\mathcal{Z}$} Denote by $\mathcal{Z}$ the family of Riemann surfaces
$$C_6=\{w^6=(z-z_1)\dots(z-z_6)\}$$
equipped with the Abelian differentials
$$\omega=\frac{(z-z_1)dz}{w^3}.$$
Here, $z_1,\dots,z_6\in\Proj^1(\C)$ are $6$ pairwise distinct points of the Riemann sphere. A quick inspection reveals that
these Riemann surfaces have genus 10, and any such $\omega$ has a zero of order $8$ at the branch point over $z_1$ and a double zero at the branch point over $z_j$, $j=2,\dots,6$. In other words, $\mathcal{Z}\subset\mathcal{H}(8,2^5)$.

Observe that $\omega^2=g^*q$ where $g:C_6\to\Proj^1(\C)$, $g(z,w)=z$, and $q$ is the following quadratic differential with a simple zero and 5 simple poles of the Riemann sphere:
$$q=\frac{(z-z_1) dz^2}{(z-z_2)\dots(z-z_6)}\in\mathcal{Q}(1,-1^5)$$
Alternatively, $\omega=h^*\widehat{\omega}$ where $h:C_6\to C_2$, $h(z,w)=(z,w^3)$, $C_2=\{y^2=(z-z_1)\dots(z-z_6)\}$ is a genus two Riemann surface and $\widehat{\omega}$ is the following Abelian differential on $C_2$:
$$\widehat{\omega}=\frac{(z-z_1)dz}{y}\in\mathcal{H}(2).$$
Since $\mathcal{Z}$, $\mathcal{Q}(1,-1^5)$ and $\mathcal{H}(2)$ are $4$-dimensional loci, it follows that $\mathcal{Z}\simeq\mathcal{Q}(1,-1^5)\simeq\mathcal{H}(2)$ and $\mathcal{Z}$ is the closure of the $\textrm{GL}^+(2,\mathbb{R})$-orbit of $(C_6,\omega)$ for a generic choice of $z_1,\dots, z_6$.

\subsection{Decomposition of Hodge bundle over $\mathcal{Z}$} Similarly to the discussion of the case of square-tiled cyclic covers in Appendix \ref{s:rk:B:cyclic:covers} above, we notice that the group of deck transformations of the cover $g:C_6\to\Proj^1(\C)$ (ramified over $z_1,\dots,z_6$) is generated by
$$T(z,w)=(z,\varepsilon w)$$
where $\varepsilon$ is a primitive $6$th root of unity. For sake of concreteness, we take $\varepsilon=\exp(2\pi i/6)$. Again, we denote by $T^*:H^1(C_6,\C)\to H^1(C_6,\C)$ the induced linear map. Of course, since $(T^*)^6=\textrm{Id}$, its eigenvalues are a subset of $\{1,\varepsilon,\dots,\varepsilon^5\}$. For every $k=1,\dots,5$, we put
$$V_k = \textrm{Ker}(T^*-\varepsilon^k\textrm{Id}).$$
Again, because $\SL$ acts by monodromy of the flat Gauss-Manin connection, these eigenspaces $V_k$ form $\SL$-invariant subbundles of the Hodge bundle over $\mathcal{Z}$.

Next, we consider the restriction of $T^*$ to $H^{1,0}(C_6,\C)$. Because there is no Abelian differential on $\Proj^1(\C)$, we see that the eigenvalues of $T^*|_{H^{1,0}(C_6,\C)}$ form a subset of $\{\varepsilon^j: j=1,\dots,5\}$ and we denote $$V_k^{1,0}=\textrm{Ker}(T^*|_{H^{1,0}(C_6,\C)}-\varepsilon^k\textrm{Id}),$$
so that
$$H^{1,0}(C_6,C)=\bigoplus\limits_{k=1}^{5}V_k^{1,0}.$$
A quick computation shows that
$$\left\{\frac{z^j dz}{w^k}: 0<j<k<5\right\}$$
is a basis of holomorphic differentials on the genus 10 Riemann surface $C_6$. In particular,
$$\textrm{dim}_{\C}V_k^{1,0}=k-1$$
for each $k=1,\dots,5$.

Finally, we form the subspaces
$$W_k=\left\{\begin{array}{cc}V_k\oplus V_{6-k} & \textrm{if } k\neq 3 \\ V_3 & \textrm{if }k=3\end{array}\right.$$
and
$$H^1_k(C_6,\R):=W_k\cap H^1(C_6,\R).$$

Since $V_k\oplus V_{6-k}$ and $V_3$ are invariant under complex conjugation, each $V_k$ is $\SL$-invariant  and $V_k=V_k^{1,0}\oplus V_k^{0,1} = V_k^{1,0}\oplus \overline{V_{6-k}^{1,0}}$, we have the following Hodge-$\ast$ and $\SL$-invariant  splitting
$$H^1(C_6,\R)=\bigoplus\limits_{k=1}^3 H^1_k(C_6,\R).$$
Moreover, a direct application of the arguments of the proof of Theorem~\ref{thm:degenerateB} shows that the $H^1_k(C_6,\R)$ are pairwise $B^{\R}$-orthogonal.

In the sequel, we will use these decompositions to analyze $\textrm{Ann}(B^{\R})$ and the neutral Oseledets bundle of $\SL$-invariant ergodic probabilities supported on $\mathcal{Z}$.

\subsection{Neutral Oseledets bundle versus $\textrm{Ann}(B^{\R})$ over $\mathcal{Z}$} In the present case, we just saw that $\textrm{dim}_{\C}V_k^{1,0}=k-1$, so that $V_1=\overline{V_5^{1,0}}$, $V_5=V_5^{1,0}$ and, \textit{a fortiori}, $V_5^{1,0}\subset\textrm{Ann}(B)$ and $H_1^1(C_6,\R)\subset\textrm{Ann}(B^{\R})$ (compare with Theorem \ref{thm:degenerateB} and Lemma \ref{lm:B:i:j}). Furthermore, from Theorem~\ref{th:Ann}, it follows also that $H_1^1(C_6,\R)\subset E_{(0)}^{\mu}$ for any ergodic $\SL$-invariant $\mu$ supported on $\mathcal{Z}$, that is, any such measure automatically possesses at least $4$ vanishing exponents among the $10$ non-negative exponents of Kontsevich-Zorich cocycle restricted to the Hodge bundle over $\mathcal{Z}$.

\begin{remark}The fact $V_5^{1,0}=V_5$ says that the Jacobian of $C_6$ has a \textit{fixed part} (of dimension $4=\textrm{dim}_{\C}V_5$), i.e., the complex torus $A=V_5^{1,0}/V_5^{1,0}(\Z)$ obtained from the quotient of $V_5^{1,0}$ by the lattice $V_5^{1,0}(\Z)=V_5^{1,0}\cap H^1(C_6,\Z)$ is a fixed part (rigid factor) of the Jacobian $\textrm{Jac}(C_6)$ of $C_6$ in the sense that we have a isogeny $\textrm{Jac}(C_6)\to J(C_6)\times A$. The fact that $V_5^{1,0}$ is a fixed part of the Jacobian was already known by C. McMullen~\cite[Theorem 8.3]{McMullen} and it was our starting point to study the locus $\mathcal{Z}$.
\end{remark}

Next, we pass to the analysis of $H_3^1(C_6,\R)$. Because $h^*(H^{1,0}(C_2,\C))$ equals $V_3^{1,0}$ (where $h:C_6\to C_2$, $h(z,w)=(z,w^3)$ is the covering map used above to construct the isomorphism $\mathcal{Z}\simeq\mathcal{H}(2)$), by the results of Bainbridge~\cite{Bainbridge} and Eskin, Kontsevich and Zorich~\cite{Eskin:Kontsevich:Zorich}, we conclude that the non-negative exponents of the Kontsevich-Zorich cocycle restricted to $H_3^1(C_6,\R)$ are $1$ and $1/3$ for any ergodic $\SL$-invariant probability supported on $\mathcal{Z}$ and the rank of the second fundamental form $B$ restricted to $V_3^{1,0}$ has full rank (equal to $2$).

Therefore, it remains to study the restriction of the second fundamental form $B$ to $W_2^{1,0}:=V_2^{1,0}\oplus V_4^{1,0}$ and the restriction of the Kontsevich--Zorich cocycle to $H_2^1(C_6,\R)$.

\begin{remark}For sake of concreteness, we observe that $H_2^1(C_6,\R)$ is a copy of the first homology group $H^1(C_3,\R)$ of the genus $4$ Riemann surface
$$C_3:=\{x^3=(z-z_1)\dots(z-z_6)\}=C_6/\langle T^2\rangle$$
Moreover, the square of the Abelian differential $\omega$ on $C_6$ projects into the quadratic differential $q_{(4)}=(z-z_1)^2 dz^2/x^3$. In other words, the study of the restriction of the Kontsevich--Zorich cocycle over $\mathcal{Z}$ to $H_2^1(C_6,\R)$ is equivalent to the study of the Kontsevich--Zorich cocycle over the locus determined by the family of quadratic differentials $(C_3,q_{(4)})\in\mathcal{Q}(7,1^5)$ of genus $4$. In fact, in the forthcoming paper \cite{FMZ-III}, we will adopt the latter point of view (i.e., we will study directly this family of quadratic differentials of genus $4$).
\end{remark}

\begin{lemma}\label{lemma:rk:B:Z} The restriction of the form $B$ to $W_2^{1,0}$ has rank  $2$. In particular, $\textrm{Ann}(B^{\R})\cap H_2^1(C_6,\R)$ is a $4$-dimensional (real) subspace of the $8$-dimensional space $H_2^1(C_6,\R)$.
\end{lemma}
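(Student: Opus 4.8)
The plan is to combine the deck--group symmetry of $B_\omega$ with a single explicit computation at a well chosen point of $\mathcal Z$. First I would record that the form $\omega=(z-z_1)\,dz/w^3$ is anti--invariant, $T^\ast\omega=-\omega$ (so $\omega\in V_3^{1,0}$; this is the analogue of Lemma~\ref{lm:minus:omega}), and hence that the $T^\ast$-eigenvalue of $\omega$ squares to $1$. Performing the change of variables $(z,w)\mapsto(z,\varepsilon w)$ in the integral~\eqref{eq:B} defining $B_\omega$, exactly as in the proofs of Theorem~\ref{thm:degenerateB} and Lemma~\ref{lm:B:i:j}, one gets $B_\omega(\alpha,\beta)=0$ whenever $\alpha\in V_i^{1,0}$ and $\beta\in V_j^{1,0}$ have $T^\ast$-eigenvalues whose product is not $1$, i.e.\ unless $i+j\equiv 0\pmod 6$. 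Since $W_2^{1,0}=V_2^{1,0}\oplus V_4^{1,0}$ with $\dim_\C V_2^{1,0}=1$, $\dim_\C V_4^{1,0}=3$, and since $2+2$ and $4+4$ are $\not\equiv 0\pmod 6$ while $2+4\equiv 0$, the restriction $B_\omega|_{W_2^{1,0}}$ is carried entirely by the pairing $V_2^{1,0}\times V_4^{1,0}$. As $B_\omega$ is symmetric, in a basis adapted to $V_2^{1,0}\oplus V_4^{1,0}$ its matrix is block--antidiagonal with single off--diagonal block the $1\times 3$ matrix $M$ of that pairing, so $\rk\bigl(B_\omega|_{W_2^{1,0}}\bigr)=2\rk(M)\in\{0,2\}$. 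Everything then reduces to showing $M\neq 0$.

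To pin this down I would evaluate at the point of $\mathcal Z$ given by $z_1=0$ and $\{z_2,\dots,z_6\}$ equal to the five fifth roots of unity, so that $\prod_{i=1}^6(z-z_i)=z(z^5-1)$ and $|w|^6=|z|\,|z^5-1|$. Taking $\alpha=dz/w^2$ spanning $V_2^{1,0}$ and $\beta_j=z^j\,dz/w^4$ ($j=0,1,2$) for a basis of $V_4^{1,0}$, one computes $\alpha/\omega=w/(z-z_1)$ and $\beta_j/\omega=z^j/((z-z_1)w)$, whence, the integrand descending to $\Proj^1$ along the degree--$6$ projection,
\[ B_\omega(\alpha,\beta_j)=\frac{i}{2}\int_{C_6}\frac{z^j\,\overline{z-z_1}}{(z-z_1)\,|w|^6}\,dz\wedge d\bar z \;=\; 3i\int_{\C}\frac{z^{\,j-1}\,\bar z}{|z|\,|z^5-1|}\,dz\wedge d\bar z\,. \]
The substitution $z\mapsto\zeta_5 z$, with $\zeta_5$ a primitive fifth root of unity, multiplies the last integral by $\zeta_5^{\,j-2}$, which forces it to vanish for $j=0,1$; for $j=2$ the integrand simplifies to the strictly positive function $|z|/|z^5-1|$, so $B_\omega(\alpha,\beta_2)=6\int_{\C}|z|/|z^5-1|\,dx\,dy>0$, a convergent, strictly positive integral ($|z^5-1|^{-1}$ is locally integrable near the fifth roots of unity, and the integrand decays like $|z|^{-4}$ at infinity). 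Hence $M\neq 0$ and $\rk\bigl(B_\omega|_{W_2^{1,0}}\bigr)=2$ at this point.

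To conclude that the rank equals $2$ at every $\omega\in\mathcal Z$ I would invoke the homogeneity argument underlying Theorem~\ref{th:n:of:exp:equals:rk}: the eigenspace splitting and the block structure of $B_\omega$ are $\SL$-invariant, the function $\rk\bigl(B_\omega|_{W_2^{1,0}}\bigr)$ is $\SO$-invariant, lower semicontinuous and $\le 2$, so by connectedness of $\mathcal Z\simeq\mathcal H(2)$ it is identically $2$ once it is $2$ somewhere (equivalently, under $W_2^{1,0}\simeq H^{1,0}(C_3)$ it is the second fundamental form of the genus--$4$ cyclic cover $(C_3,q_{(4)})\in\mathcal Q(7,1^5)$, whose rank is constant). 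For the final assertion, a symmetric form of rank $2$ on the $4$-dimensional space $W_2^{1,0}$ has a $2$-dimensional complex annihilator; by Lemma~\ref{lemma:Ann:Hodgeinvariance} the real subspace $\Ann\bigl(B^\R_\omega\vert_{H^1_2(C_6,\R)}\bigr)$ is Hodge star--invariant with $(1,0)$-part $\Ann\bigl(B_\omega\vert_{W_2^{1,0}}\bigr)$, hence of real dimension $4$; and since the blocks $H^1_k(C_6,\R)$ are pairwise $B^\R$-orthogonal, this subspace is precisely $\Ann(B^\R)\cap H^1_2(C_6,\R)$.

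The symmetry bookkeeping of the first step is essentially free, and the constancy in the last step is routine given the $\SL$-invariant $B^\R$-orthogonal splitting; the one genuinely substantive point is the nonvanishing of $M$, where choosing a configuration with a cyclic symmetry of order $5$ is what turns an a priori opaque oscillatory integral into a one--line positivity statement. An alternative route would be to note that if $B_\omega\vert_{W_2^{1,0}}$ vanished identically then, by Corollary~\ref{cr:rank:0}, the Kontsevich--Zorich cocycle would be isometric on the $\SL$-invariant, Hodge star--invariant subbundle $H^1_2$, contradicting the existence of a positive exponent there; but substantiating that positive exponent is exactly the genus--$4$ analysis deferred to \cite{FMZ-III}, so the self--contained computation above is the natural choice here.
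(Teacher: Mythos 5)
Your block-structure analysis (via the deck symmetry, as in Theorem~\ref{thm:degenerateB} and Lemma~\ref{lm:B:i:j}) and the choice of the entry $b_2=B_\omega(\alpha,\beta_2)$ as the one to test are exactly the paper's. The one place you diverge is in how you certify $b_2\neq 0$, and that is where a gap appears. You evaluate at the single symmetric configuration $z_1=0$, $\{z_2,\dots,z_6\}$ the fifth roots of unity, and then propagate ``rank $=2$'' over all of $\mathcal Z$ by saying the rank is lower semicontinuous, bounded above by $2$, and $\mathcal Z$ is connected. That inference is not valid: lower semicontinuity only makes the set $\{\rk=2\}$ open, and an integer-valued lower semicontinuous function on a connected set can perfectly well drop on a nonempty closed subset (the rank of the $1\times 1$ matrix $(t)$ for $t\in\R$ is $1$ off the origin and $0$ at it). The parenthetical alternative --- that under $W_2^{1,0}\simeq H^{1,0}(C_3)$ the rank ``is constant'' --- is precisely what is being proved, so it cannot be invoked. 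As written, your argument yields rank $2$ only on an open neighborhood of one point (or, using the real-analytic dependence of $B_\omega$ recorded after \eqref{eq:B}, off a proper analytic subset), whereas the statement is used later in the form ``the corank of $B_\omega$ is $6$ for \emph{every} $\omega\in\mathcal Z$.''

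The repair is immediate and is what the paper does: take $\beta_2=(z-z_1)^2\,dz/w^4$ at an \emph{arbitrary} point of $\mathcal Z$. Then $\alpha\beta_2/\omega=\omega$ identically, so
$b_2=\tfrac{i}{2}\int_{C_6}|z-z_1|^2|w|^{-6}\,dz\wedge d\bar z=\Vert\omega\Vert^2>0$
pointwise --- no special configuration, no continuity argument, and no convergence discussion are needed. Your computation at the symmetric point is exactly this integral with $z_1=0$ pushed down to $\Proj^1(\C)$, so you were one substitution away from the uniform statement; the extra order-$5$ symmetry you exploit only serves to kill $b_0$ and $b_1$ at that particular point, which the rank count does not require. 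The remaining bookkeeping (rank $2\rk(M)$ from the block-antidiagonal shape, and the passage to the $4$-dimensional real annihilator inside $H^1_2(C_6,\R)$ via Lemma~\ref{lemma:Ann:Hodgeinvariance} and the $B^\R$-orthogonality of the blocks) is correct.
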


\begin{proof}Let $\alpha\in V_2^{1,0}$ and $\beta_0,\beta_1,\beta_2\in V_4^{1,0}$ be a basis of $W_2^{1,0}$. By the arguments in the proof of Theorem~\ref{thm:degenerateB}, we have that $B_\omega(\alpha,\alpha)=0$ and $B_\omega(\beta_j,\beta_l)=0$ for all $0\leq j,l\leq 3$. Hence, the matrix of $B|_{W_2^{1,0}}$ in the basis $\{\alpha,\beta_0,\beta_1,\beta_2\}$ is
$$\left(
\begin{array}{cccc}
0 & b_0 & b_1 & b_2 \\
b_0 & 0 & 0 & 0 \\
b_1 & 0 & 0 & 0 \\
b_2 & 0 & 0 & 0
\end{array}
\right)$$
where $b_j:=B_{\omega}(\alpha,\beta_j)$, $j=0,1,2$. Hence, it suffices to prove that one of the entries $b_j$ is non-zero to conclude that the rank of $B|_{W_2^{1,0}}$ is $2$. To do so, we make the following choice of basis $\alpha=dz/w^2$, $\beta_j=(z-z_1)^j dz/w^4$, and we compute
\begin{eqnarray*}
B_\omega(\alpha,\beta_2)=\int\frac{\alpha\beta_2}{\omega}\overline{\omega}=\int \frac{|z-z_1|^2}{|w|^6}dz\overline{dz}\neq 0,
\end{eqnarray*}
that is $b_2\neq 0$.
\end{proof}


\begin{lemma}\label{lemma:rk:Ec:Z}The non-negative part of the Lyapunov spectrum of the restriction of the Kontsevich-Zorich cocycle to $H_2^1(C_6,\R)$ with respect to any $\SL$-invariant ergodic probability $\mu$ has the form
$$\{\lambda^{\mu}=\lambda^{\mu}>0=0\}.$$
In particular, the neutral Oseledets bundle $E_0^\mu$ intersects $H_2^1(C_6,\R)$ in a subspace of real dimension $4$.
\end{lemma}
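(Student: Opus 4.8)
The plan is to combine three ingredients: the symmetry of the eigenbundle decomposition $H^1_2(C_6,\R)\otimes\C = V_2\oplus V_4$, the signature of the restricted intersection form on these eigenbundles, and the non-vanishing of $B$ on $W_2^{1,0}$ already recorded in Lemma~\ref{lemma:rk:B:Z}.

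First I would fix the multiplicity structure. Since $H_2^1(C_6,\R)$ is Hodge star-invariant and $\SL$-invariant, the Kontsevich--Zorich cocycle restricted to it is symplectic, hence its spectrum is symmetric about $0$, of the form $\lambda^{V,\mu}_1\ge\lambda^{V,\mu}_2\ge\lambda^{V,\mu}_3\ge\lambda^{V,\mu}_4\ge 0$ on the non-negative side (here $V=H_2^1(C_6,\R)$). Complexifying, $V\otimes\C=V_2\oplus V_4$, where $V_2,V_4$ are flow-invariant and $\SL$-invariant (eigenbundles of the deck transformation $T^\ast$, which commutes with the $\SL$-action), $\overline{V_2}=V_4$, and — because $T^\ast$ preserves the intersection form — the subbundles $V_2$ and $V_4$ are each isotropic for the symplectic form and paired non-degenerately with one another. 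Therefore the Lyapunov spectrum on $V_4$ equals that on $V_2$ (conjugate cocycles) and equals the negative of that on $V_2$ (symplectic duality $V_4\cong V_2^\ast$), so the spectrum on $V_4$ is symmetric about $0$, and the real spectrum on $V$ is the union of the two, with every exponent of even multiplicity; in particular $\lambda^{V,\mu}_1=\lambda^{V,\mu}_2$ and $\lambda^{V,\mu}_3=\lambda^{V,\mu}_4$.

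The key point is next. The pseudo-Hermitian intersection form $(\cdot,\cdot)$ restricts to $V_4=V_4^{1,0}\oplus V_4^{0,1}$ as a non-degenerate form which is positive-definite on $V_4^{1,0}$ and negative-definite on $V_4^{0,1}$; since $\dim_\C V_4^{1,0}=3$ and $\dim_\C V_4^{0,1}=\dim_\C\overline{V_2^{1,0}}=1$, this form has signature $(3,1)$, and it is preserved by the cocycle (the Gauss--Manin connection preserves $(\cdot,\cdot)$ and preserves $V_4$). Exactly as in the proof of Lemma~\ref{l.symp-orth}, using that $|(c_1,c_2)|\le\|c_1\|_\omega\|c_2\|_\omega$ uniformly, the unstable Oseledets subbundle of $V_4$ is isotropic for $(\cdot,\cdot)$; a signature-$(3,1)$ Hermitian form has maximal isotropic subspaces of dimension $1$, so the cocycle on $V_4$ has at most one strictly positive exponent, and likewise at most one strictly negative one. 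Combined with the symmetry of its spectrum about $0$, the four exponents on $V_4$ are $\{\mu,0,0,-\mu\}$ for some $\mu\ge 0$; taking the union with the equal spectrum on $V_2$, the non-negative part of the spectrum on $H_2^1(C_6,\R)$ is $\{\mu,\mu,0,0\}$.

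It remains to see $\mu>0$. By Lemma~\ref{lemma:rk:B:Z} the form $B^\R_\omega\vert_{H_2^1(C_6,\R)}$ has rank $2\cdot\rk(B_\omega\vert_{W_2^{1,0}})=4\neq 0$ for every $\omega$ in the locus, so it does not vanish $\mu$-almost everywhere; since $H_2^1(C_6,\R)$ is Hodge star-invariant and $\SL$-invariant, Corollary~\ref{cr:rank:0} then forbids the cocycle on it from being isometric, hence $\mu=\lambda^{V,\mu}_1>0$. (Equivalently, apply the generalized Kontsevich formula of Corollary~\ref{cor:KZformgen}: $\lambda^{V,\mu}_1+\dots+\lambda^{V,\mu}_4=\int(\Lambda^V_1+\dots+\Lambda^V_4)\,d\mu>0$ because $\Lambda^V_1(\omega)>0$ identically.) Thus the non-negative spectrum on $H_2^1(C_6,\R)$ is $\{\lambda^\mu=\lambda^\mu>0=0\}$, and $E^\mu_{(0)}\cap H_2^1(C_6,\R)$, whose real dimension equals the multiplicity of the exponent $0$ in the $8$-dimensional symplectic spectrum $\{\mu,\mu,0,0,0,0,-\mu,-\mu\}$, is $4$-dimensional, as claimed. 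The only genuinely delicate step is the signature computation for $(\cdot,\cdot)\vert_{V_4}$ together with the verification that $V_2$ and $V_4$ are honest flow-invariant subbundles on which the cocycle is unitary for an \emph{indefinite} form; once the transversality of $V_4^{1,0}$ and $V_4^{0,1}$ inside $V_4$ (which follows from $T^\ast$ preserving the Hodge decomposition) is stated carefully, the rest is bookkeeping with the Oseledets theorem.
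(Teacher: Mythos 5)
Your proof is correct and follows the same overall strategy as the paper's (which in this appendix is only a sketch): even multiplicities from the deck symmetry, strict positivity of the top exponent from the non-vanishing of $B$ on $W_2^{1,0}$ together with Corollary~\ref{cr:rank:0} (the paper invokes Theorem~\ref{th:Ann} for the same purpose), and two zero exponents from the signature $(3,1)$ of the intersection form on $V_4$. The two places where you diverge are both improvements in completeness rather than in strategy. First, for the multiplicities the paper argues that $T^\ast v$ is never collinear with $v$ because the eigenvalues $\varepsilon^2,\varepsilon^4$ are non-real, whereas you use the conjugate isotropic pair $V_2,V_4$ and symplectic duality; both are standard and your version has the added benefit of directly establishing the symmetry of the spectrum \emph{on $V_4$ alone}, which you then need. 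Second, and more substantially, the paper defers the key step (Lemma~\ref{lemma:zero:exp}) to the forthcoming paper \cite{FMZ-III}, merely citing the general principle that a $U(p,q)$-valued cocycle has at least $|p-q|$ zero exponents; you actually prove this in the case at hand by showing, via the argument of Lemma~\ref{l.symp-orth} applied to the preserved pseudo-Hermitian form and the uniform bound $|(c_1,c_2)|\le\Vert c_1\Vert_\omega\Vert c_2\Vert_\omega$, that the unstable Oseledets subspace of $V_4$ is isotropic and hence at most $\min(3,1)=1$-dimensional. This is precisely the mechanism behind the cited principle, so your write-up is a self-contained version of what the paper outsources.
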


We begin the proof of this lemma by following~\cite{Eskin:Kontsevich:Zorich:cyclic} to see that the Lyapunov exponents of $G_t^{KZ}|_{H^1_2(C_6,\R)}$ have multiplicity $2$ (at least). Indeed, given a vector $v\in H_2^1(C_6,\R)$ corresponding to a Lyapunov exponent $\lambda$, we have that $T^*v$ corresponds to the same Lyapunov exponent (as $T^*$ commutes with the monodromy). On the hand, since the eigenvalues of $T^*|_{H_2^1(C_6,\R)}$ are not real (i.e., $\varepsilon^2,\varepsilon^4\notin\R$), it follows that $T^*v$ is not collinear to $v$, that is, $\lambda$ has multiplicity $2$ at least.

Consequently, the non-negative part of the Lyapunov spectrum of $G_t^{KZ}$ restricted to $H_2^1(C_6,\R)$ (with respect to $\mu$) has the form
$$\{\lambda^{\mu}=\lambda^{\mu}\geq \theta^{\mu}=\theta^{\mu}\}.$$

On the other hand, we know that $B^\R$ restricted to $H_2^1(C_6,\R)$ is not degenerate (see Lemma~\ref{lemma:rk:B:Z}). Hence, from Theorem~\ref{th:Ann}, we conclude that $$\lambda^{\mu}>0.$$
In other words, the proof of the previous lemma is reduced to show that $\theta^{\mu}=0$, that is,
\begin{lemma}\label{lemma:zero:exp}
The non-negative part of the Lyapunov spectrum of $G_t^{KZ}$ restricted to $H_2^1(C_6,\R)$ has two vanishing exponents.
\end{lemma}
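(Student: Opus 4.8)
The plan is to show that, after complexification, $H^1_2(C_6,\R)\otimes\C$ splits under the deck transformation $T^\ast$ into two eigen-subbundles $V_2$ and $V_4=\overline{V_2}$, each of complex dimension $4$ with a one-dimensional holomorphic part, so that the Hermitian intersection form restricts to $V_2$ as a nondegenerate form of signature $(1,3)$ and the Kontsevich--Zorich cocycle restricted to $V_2$ takes values in the rank-one group $\mathrm{U}(1,3)$ (equivalently: this sub-variation of Hodge structure is the one associated with a complex ball $\mathrm{U}(1,3)/(\mathrm{U}(1)\times\mathrm{U}(3))$, as in the ball-quotient literature). Since a cocycle with values in a real rank-one group has at most one pair of opposite nonzero Lyapunov exponents, this forces $\theta^\mu=0$.

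In detail: $H^1_2(C_6,\C)=W_2=V_2\oplus V_4$, with $V_2=\ker(T^\ast-\varepsilon^2\mathrm{Id})$ and $V_4=\ker(T^\ast-\varepsilon^4\mathrm{Id})=\overline{V_2}$. From $\dim_\C V_k^{1,0}=k-1$ we get $\dim_\C V_2^{1,0}=1$, and since $V_2^{0,1}=\overline{V_4^{1,0}}$ also $\dim_\C V_2^{0,1}=\dim_\C V_4^{1,0}=3$. The first step is to verify that the Hermitian intersection form $(\alpha,\beta)=\tfrac{i}{2}\int_{C_6}\alpha\wedge\bar\beta$ is nondegenerate on $V_2$ of signature $(1,3)$. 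This is pure $T^\ast$-isotypic bookkeeping: for $\alpha\in V_j$, $\beta\in V_k$ one has $T^\ast(\alpha\wedge\bar\beta)=\varepsilon^{\,j-k}\,\alpha\wedge\bar\beta$, so $\int_{C_6}\alpha\wedge\bar\beta=0$ whenever $j\neq k\ (\mathrm{mod}\ 6)$; in particular $V_2$ and $V_4$ are orthogonal for $(\cdot,\cdot)$, and on $V_2$ the form is the usual Hodge--Riemann form — positive-definite on $V_2^{1,0}$, negative-definite on $V_2^{0,1}$, with the two summands orthogonal.

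The second step is the invariance. The cocycle $G^{KZ}_t$ commutes with the parallel automorphism $T^\ast$, hence preserves the subbundle $V_2$ (this is already used in the construction of the splitting). It also preserves $(\cdot,\cdot)$: being defined over $\Z$ it commutes with complex conjugation, and it preserves the topological cup product, so $\int G^{KZ}_t\alpha\wedge\overline{G^{KZ}_t\beta}=\int G^{KZ}_t\alpha\wedge G^{KZ}_t\bar\beta=\int\alpha\wedge\bar\beta$. Therefore $G^{KZ}_t|_{V_2}$ is a cocycle with values in $\mathrm{U}\big((\cdot,\cdot)|_{V_2}\big)\cong\mathrm{U}(1,3)$. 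Since $\mathrm{U}(1,3)$ has real rank one, in a Cartan ($KAK$) decomposition its split torus $A$ is one-dimensional and, in a basis adapted to the Hermitian form, acts by $\mathrm{diag}(e^t,1,1,e^{-t})$; hence every element of $\mathrm{U}(1,3)$ has singular values $\{e^t,1,1,e^{-t}\}$ with $t\ge 0$. By the Oseledets theorem the Lyapunov spectrum of $G^{KZ}_t|_{V_2}$ is then $\{\lambda,0,0,-\lambda\}$ for some $\lambda\ge 0$.

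Finally, $V_4=\overline{V_2}$ carries the complex-conjugate cocycle, with the same spectrum, so the real cocycle on $H^1_2(C_6,\R)$ — whose complexification is $V_2\oplus V_4$ — has Lyapunov spectrum $\{\lambda,\lambda,0,0,0,0,-\lambda,-\lambda\}$. In particular its non-negative part has two zeros, i.e. $\theta^\mu=0$, which is the Lemma. (Combined with the nondegeneracy of $B^\R|_{H^1_2(C_6,\R)}$ from Lemma~\ref{lemma:rk:B:Z} and Theorem~\ref{th:Ann}, this even reproves Lemma~\ref{lemma:rk:Ec:Z} in full, with $\lambda^\mu>0$.) The one genuinely delicate point is the signature computation for $(\cdot,\cdot)|_{V_2}$: one must be sure this restriction is nondegenerate rather than partially isotropic, but that is precisely the $T^\ast$-isotypic form of the Hodge--Riemann bilinear relations. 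Everything else is formal (equivariance of $G^{KZ}_t$) or a standard structural fact about rank-one groups; note, as in the remark after Theorem~\ref{lemma:Qexp}, that this argument does not produce the numerical value of $\lambda^\mu$.
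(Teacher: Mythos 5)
Your argument is correct and is essentially the paper's own proof: identify $W_2$ (equivalently $V_2$ or $V_4=\overline{V_2}$) as carrying an invariant pseudo-Hermitian form of signature $(1,3)$, observe that the restricted cocycle is therefore $U(1,3)$-valued, and conclude that at least $|3-1|=2$ exponents vanish. The only difference is that the paper defers the general principle (a $U(p,q)$-cocycle has at least $|p-q|$ zero exponents) to the forthcoming paper \cite{FMZ-III}, whereas you inline its rank-one instance via the $KAK$ decomposition and also spell out the $T^\ast$-isotypic nondegeneracy check, both of which are fine.
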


In the forthcoming paper \cite{FMZ-III}, we will deduce this lemma along the following lines.  Since $V_4=V_4^{1,0}\oplus \overline{V_2^{1,0}}$, $\textrm{dim}_{\C}(V_4^{1,0})=3$, and $\textrm{dim}_{\C}(V_2^{1,0})=1$, the intersection form~\eqref{eq:Intform} has signature $(3,1)$ and hence the action of $\SL$ through monodromy of the Gauss-Manin connection on the subspace $V_4$ of the complex Hodge bundle $H_{\C}^1$ is represented by $U(3,1)$ matrices. In \cite{FMZ-III}, we will see that, \textit{in general}, a cocycle preserving a pseudo-Hermitian form of signature $(p,q)$ (i.e., with values in the matrix group $U(p,q)$) has $|p-q|$ zero Lyapunov exponents \textit{at least}. By applying this general principle in the context of the previous lemma, we have that the Kontsevich--Zorich cocycle has (at least) $3-1=2$ zero Lyapunov exponents, so that the lemma follows.

\smallskip
In any event, by Lemmas~\ref{lemma:rk:B:Z} and~\ref{lemma:zero:exp}, we can discuss the main result of this appendix. In the following theorem, we denote by $\mu$ the natural $\SL$-invariant ergodic probability fully supported on $\mathcal{Z}$ (obtained from the so-called \textit{Masur-Veech} probability on $\mathcal{H}(2)$ via the isomorphism $\mathcal{Z}\simeq\mathcal{H}(2)$ previously constructed).
\begin{theorem}\label{thm:AnnB:Ec} We have $\textrm{Ann}(B^{\R})\cap H_2^1(C_6,\R)\neq E_{(0)}^{\mu}\cap H_2^1(C_6,\R)$. Consequently, $\textrm{Ann}(B^{\R})\neq E_{(0)}^{\mu}$, hence
$E_{(0)}^{\mu}$ is not $\SL$-invariant .
\end{theorem}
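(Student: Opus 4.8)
The plan is to prove $\textrm{Ann}(B^{\R})\cap H_2^1(C_6,\R)\neq E_{(0)}^{\mu}\cap H_2^1(C_6,\R)$ by a dimension count, exploiting the fact established in the preceding lemmas that \emph{both} subspaces are $4$-dimensional inside the $8$-dimensional space $H_2^1(C_6,\R)$, and then deriving a contradiction from the hypothesis that they coincide. First I would recall from Lemma~\ref{lemma:rk:B:Z} that the restriction $B^{\R}\vert_{H_2^1(C_6,\R)}$ has rank $4$, hence its annihilator $\textrm{Ann}(B^{\R})\cap H_2^1(C_6,\R)$ has real dimension $8-4=4$; and from Lemma~\ref{lemma:zero:exp} (together with the multiplicity-$2$ argument) that the non-negative Lyapunov spectrum of $G_t^{KZ}\vert_{H_2^1(C_6,\R)}$ is $\{\lambda^\mu=\lambda^\mu>0=0\}$, so that $E_{(0)}^{\mu}\cap H_2^1(C_6,\R)$ also has real dimension $4$. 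So the statement is \emph{not} a dimension incompatibility; the two candidate subspaces have the same size, and the content is that they are genuinely different subspaces.

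The key step is therefore to show that $\textrm{Ann}(B^{\R})\cap H_2^1(C_6,\R)$ fails the invariance property forced on $E_{(0)}^{\mu}\cap H_2^1(C_6,\R)$ by Theorem~\ref{th:Ann} --- more precisely, that $\textrm{Ann}(B^\R)$ restricted to this block is not invariant under the Teichm\"uller flow. The argument runs by contradiction: suppose $\textrm{Ann}(B^{\R})\cap H_2^1(C_6,\R) = E_{(0)}^{\mu}\cap H_2^1(C_6,\R)$ on a positive measure set. By Theorem~\ref{th:Ann}, if $\textrm{Ann}(B^\R\vert H_2^1)$ were $\mu$-a.e.\ flow-invariant it would be contained in $E^\mu_{(0)}$, and conversely the $\SL$-invariance of $H_2^1(C_6,\R)$ together with the flow-invariance of $E^\mu_{(0)}\cap H_2^1$ would force $E^\mu_{(0)}\cap H_2^1 \subseteq \textrm{Ann}(B^\R)$; combining these with Hodge star-invariance would give equality $E^\mu_{(0)}\cap H_2^1 = \textrm{Ann}(B^\R\vert H_2^1) = \textrm{Ann}(B^\R)\cap H_2^1$ as in the last display of Theorem~\ref{th:Ann}. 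So the real task is to exhibit a point $(C_6,\omega)\in\mathcal{Z}$ and a flow direction along which $\textrm{Ann}(B^\R\vert H_2^1)$ moves. Using the explicit basis $\alpha=dz/w^2$, $\beta_j=(z-z_1)^j dz/w^4$ of $W_2^{1,0}$ from the proof of Lemma~\ref{lemma:rk:B:Z}, the matrix of $B_\omega\vert_{W_2^{1,0}}$ depends on the three numbers $b_j(\omega) = \frac{i}{2}\int \frac{\alpha\beta_j}{\omega}\bar\omega$; the kernel of $B^{\R}$ in this block is the real trace of $\{\,(x,y_0,y_1,y_2): b_0 y_0 + b_1 y_1 + b_2 y_2 = 0 \text{ and } x\,(b_0,b_1,b_2)=0\,\}$, which rotates as the ratios $b_0:b_1:b_2$ vary along the Teichm\"uller flow. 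One computes the derivative $\mathcal{L}\, b_j$ along the flow via the variation of the period matrix (the Rauch-type formula / Lemma on $\mathcal{L}\Pi_{ij}$) specialized to the cyclic cover, and checks that the directions $(b_0,b_1,b_2)$ and $(\mathcal{L}b_0,\mathcal{L}b_1,\mathcal{L}b_2)$ are not proportional at a generic $\omega\in\mathcal{Z}$, hence the line $\C\cdot(b_0,b_1,b_2)\subset\C^3$, and therefore $\textrm{Ann}(B^\R\vert H_2^1)$, genuinely moves.

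The last two assertions follow immediately: $\textrm{Ann}(B^{\R})\cap H_2^1(C_6,\R)\neq E_{(0)}^{\mu}\cap H_2^1(C_6,\R)$ forces $\textrm{Ann}(B^{\R})\neq E_{(0)}^{\mu}$; and since $H_1^1(C_6,\R)$ and $H_3^1(C_6,\R)$ contribute the same subspaces to both (by the fixed-part computation and Bainbridge--Eskin--Kontsevich--Zorich respectively, as discussed above), the discrepancy lives entirely in the $H_2^1$ block. Finally, if $E_{(0)}^{\mu}$ were $\SL$-invariant then $E_{(0)}^{\mu}\cap H_2^1(C_6,\R)$ would be $\SL$-invariant, and then the last clause of Theorem~\ref{th:Ann} (applied to the Hodge star-invariant, $\SL$-invariant subbundle $H_2^1(C_6,\R)$) would give $E_{(0)}^{\mu}\cap H_2^1(C_6,\R) = \textrm{Ann}(B^\R)\cap H_2^1(C_6,\R)$, contradicting what we just proved; hence $E_{(0)}^{\mu}$ is not $\SL$-invariant. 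The main obstacle I expect is the explicit computation showing that the line $\C\cdot(b_0,b_1,b_2)$ is not fixed by the flow --- i.e.\ producing an honest non-degeneracy/genericity statement for the derivatives $\mathcal{L}b_j$ rather than a formal count; this is the step that \cite{FMZ-III} is cited for, and in the present paper it is only sketched.
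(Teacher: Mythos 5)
Your overall skeleton is the same as the paper's: both subspaces are $4$-dimensional in the $8$-dimensional block $H_2^1(C_6,\R)$, so the content is to show that $\Ann(B^\R)\cap H_2^1(C_6,\R)$ fails the flow-invariance that $E^\mu_{(0)}\cap H_2^1(C_6,\R)$ automatically has, and then to run the $\SL$-invariance claim through Theorem~\ref{th:Ann} plus a dimension count. Where you diverge is in \emph{how} the non-flow-invariance is established. The paper proves the stronger Theorem~\ref{thm:matrix:comp} (there is \emph{no} $2$-dimensional flow- and $\SO$-invariant continuous subbundle of $W_2^{1,0}$ whatsoever) by testing candidate invariant subbundles against the monodromy matrices of pseudo-Anosov periodic orbits through a common point; this works directly with honest parallel transport. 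You instead propose an infinitesimal computation: differentiate the entries $b_j=B_\omega(\alpha,\beta_j)$ along the flow and check that $(b_0,b_1,b_2)$ and $(\cL b_0,\cL b_1,\cL b_2)$ are not proportional. The gap here is that the $b_j$ are coefficients of $B_\omega$ in the \emph{moving} algebraic frame $\{\alpha,\beta_0,\beta_1,\beta_2\}$, which is not parallel for the Gauss--Manin connection, and the kernel you describe sits inside $V_4^{1,0}$, itself a non-parallel subbundle of the flat bundle $V_4$. Non-constancy of the line $\C\cdot(b_0,b_1,b_2)$ in that frame neither implies nor is implied by non-parallelism of the real subspace $\Ann(B^\R_\omega)\cap H_2^1(C_6,\R)$ inside the flat real bundle; to make your variational argument honest you must re-express everything in a parallel frame, which brings in the connection form and the second fundamental form $A_\omega$ and makes the computation substantially less transparent than the monodromy test. (Both the paper and you ultimately defer the hard computation to the companion paper, so this is a difference of sketch, but yours as stated does not yet compute the right quantity.)

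Two smaller logical slips. First, in your contradiction setup you assert that flow-invariance of $E^\mu_{(0)}\cap H_2^1$ ``would force $E^\mu_{(0)}\cap H_2^1\subseteq\Ann(B^\R)$''; the second clause of Theorem~\ref{th:Ann} requires $\SL$-invariance of that subbundle, not mere flow-invariance — indeed the whole point of this example is that flow-invariance alone does not suffice. Second, at the end you invoke the \emph{last} clause of Theorem~\ref{th:Ann}, whose hypotheses include that $\Ann(B^\R\vert V)$ be flow-invariant — exactly the property you have just disproved for $V=H_2^1(C_6,\R)$. The correct route, and the one the paper takes, is the \emph{second} clause: if $E^\mu_{(0)}$ were $\SL$-invariant then $E^\mu_{(0)}\subseteq\Ann(B^\R)$, and equality follows by the dimension count ($6$ zero exponents versus complex corank $6$ of $B_\omega$), contradicting the first assertion of the theorem. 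With these two substitutions your argument closes correctly.
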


Actually, this theorem is a consequence of the following fact:

\begin{theorem}\label{thm:matrix:comp} There is no $2$-dimensional Teichm\"uller-flow and $\SO$-invariant (i.e., $\SL$-invariant ) continuous subbundle $V\subset W_2^{1,0}$.
\end{theorem}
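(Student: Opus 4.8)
The plan is to argue by contradiction, assuming the existence of a $2$-dimensional continuous $\SL$-invariant subbundle $V\subset W_2^{1,0}$, and to derive a contradiction with the fact, established in Lemma~\ref{lemma:rk:B:Z}, that the form $B$ restricted to $W_2^{1,0}$ has rank $2$. First I would recall the explicit structure of the fiber: $W_2^{1,0}=V_2^{1,0}\oplus V_4^{1,0}$ with $\dim_\C V_2^{1,0}=1$ and $\dim_\C V_4^{1,0}=3$, and the intersection form~\eqref{eq:Intform} restricted to $W_2^{1,0}$ is \emph{negative definite} (both $V_2^{1,0}$ and $V_4^{1,0}$ sit inside $H^{1,0}$, where the form is positive definite, so on $W_2^{1,0}$ we have a positive definite Hermitian form; there is no indefiniteness here — the $U(3,1)$ phenomenon lives on $V_4=V_4^{1,0}\oplus\overline{V_2^{1,0}}$, not on $W_2^{1,0}$). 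The operator $T^\ast$ acts on $W_2^{1,0}$ as $\varepsilon^2$ on the line $V_2^{1,0}$ and as $\varepsilon^4$ on $V_4^{1,0}$; since $\SL$ commutes with $T^\ast$, any $\SL$-invariant subbundle $V\subset W_2^{1,0}$ decomposes as $V=(V\cap V_2^{1,0})\oplus(V\cap V_4^{1,0})$. A $2$-dimensional such $V$ must therefore be either $V_2^{1,0}\oplus L$ with $L\subset V_4^{1,0}$ a line, or a $2$-dimensional subbundle $L_2\subset V_4^{1,0}$ (the case $L_2 = V_4^{1,0}$ is impossible by dimension). In both cases I would use the $\SL$-invariance of $L$ (resp. $L_2$) to force a contradiction.

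The key step is then to exploit the monodromy of the $\SL$-action on $V_4^{1,0}$, equivalently on $V_4$. The bundle $V_4$ over $\mathcal Z\simeq\mathcal H(2)$ carries the Kontsevich--Zorich cocycle; by Theorem~\ref{th:Ann} together with Lemma~\ref{lemma:rk:B:Z} (which shows $B^\R$ is nondegenerate on $H^1_2(C_6,\R)$, hence $W_2^{1,0}\not\subset\Ann B$), the restricted cocycle on $H^1_2(C_6,\R)$ is \emph{not} isometric, so by Lemma~\ref{lemma:degeneratespectrum} (and the Hodge star-invariance of $H^1_2$) the cocycle on $H^1_2(C_6,\R)$ has a positive exponent, and in fact by Lemma~\ref{lemma:rk:Ec:Z} the positive part of its spectrum is $\{\lambda^\mu=\lambda^\mu>0=0\}$. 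Suppose $V$ (or rather its underlying real bundle $H^1_V:=\{[\Re\eta]:\eta\in V\oplus\overline V\}$) were a continuous $\SL$-invariant subbundle: being continuous and $\SL$-invariant over a compact locus, the restricted cocycle would be uniformly quasi-conformal / the subbundle would be non-uniformly hyperbolic or isometric according to whether it meets the stable/unstable or the central directions. The point is that a \emph{continuous} $\SL$-invariant splitting of $W_2^{1,0}$ into two pieces would produce a continuous splitting of $H^1_2(C_6,\R)$ compatible with the Oseledets decomposition, which contradicts the non-simplicity / the mixing of positive and zero exponents: the $2$-plane $E^+$ spanned by the positive exponent cannot extend to a continuous subbundle because the positive exponent has multiplicity $2$ and the monodromy group acts irreducibly on the relevant piece. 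Concretely, I would show the monodromy representation of $\SL$ (the Fuchsian group uniformizing $\mathcal H(2)$) on $V_4^{1,0}$ is strongly irreducible — no proper invariant subspace, even after passing to finite covers — using that the $\SL$-orbit closure is all of $\mathcal H(2)$ and invoking either the explicit computation of this monodromy representation (it factors through an arithmetic group acting on a $3$-dimensional space with the Hermitian form, and its Zariski closure is all of $U(3)$ up to the center, or at least acts irreducibly) or the classification of invariant subbundles from \cite{McMullen}, \cite{Bainbridge}, \cite{Eskin:Kontsevich:Zorich}.

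The main obstacle, and the step requiring real work in \cite{FMZ-III}, is precisely establishing the irreducibility of the monodromy of $\SL$ on $V_4^{1,0}$ (equivalently, ruling out an $\SL$-invariant line or $2$-plane inside $V_4^{1,0}$). This is where the detailed analysis of the $U(3,1)$-cocycle enters: one must show that although the pseudo-Hermitian form of signature $(3,1)$ on $V_4$ forces (at least) two zero exponents, the cocycle does not split off an invariant sub-cocycle — in particular the zero-exponent part $E^\mu_{(0)}\cap H^1_2(C_6,\R)$, while $4$-dimensional, is \emph{not} $\SL$-invariant and does not vary continuously. I expect to deduce this from the ergodicity of the $\SL$-action together with a genericity/monodromy argument: if a continuous invariant $V$ existed, then $E^\mu_{(0)}\cap H^1_2$ would have to be $\SL$-invariant (being canonically attached to $V$ via the Oseledets filtration restricted to the invariant subbundle), but then $\Ann(B^\R)\cap H^1_2 = E^\mu_{(0)}\cap H^1_2$ by Theorem~\ref{th:Ann} — contradicting the forthcoming Theorem~\ref{thm:AnnB:Ec}, whose proof is independent (it rests on computing that $\Ann(B^\R)\cap H^1_2$ is $B^\R$-isotropic of a type incompatible with being the full central Oseledets bundle). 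Thus Theorem~\ref{thm:matrix:comp} and Theorem~\ref{thm:AnnB:Ec} are proved together, the genuinely new input being the non-existence of the continuous invariant subbundle, which I would extract from the monodromy computation carried out in detail in \cite{FMZ-III}.
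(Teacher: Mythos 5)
Your proposal shares the paper's underlying intuition (an invariant continuous subbundle is obstructed by the monodromy), but as written it has genuine gaps. The most serious is circularity: your final contradiction appeals to Theorem~\ref{thm:AnnB:Ec}, but in the paper that theorem is \emph{deduced from} Theorem~\ref{thm:matrix:comp} — indeed $\Ann(B)\cap W_2^{1,0}$, which by the computation in Lemma~\ref{lemma:rk:B:Z} is precisely a continuous, $\SO$-invariant $2$-plane inside $V_4^{1,0}$, is shown to be non-flow-invariant \emph{by applying} Theorem~\ref{thm:matrix:comp} to it. The ``independent'' proof of Theorem~\ref{thm:AnnB:Ec} you allude to is neither supplied nor in the paper, and your intermediate claim that the existence of a continuous invariant $V$ would force $E^\mu_{(0)}\cap H^1_2(C_6,\R)$ to be $\SL$-invariant is asserted rather than proved (the Oseledets decomposition of a restricted cocycle does not inherit continuity or $\SL$-invariance from the subbundle). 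A second gap is your reduction of an arbitrary $\SL$-invariant $V\subset W_2^{1,0}$ to a sum of $T^\ast$-eigenspace components: commutation of $T^\ast$ with the monodromy only shows that $T^\ast V$ is \emph{another} invariant subbundle, not that $V$ is $T^\ast$-invariant, so subbundles such as the graph of an equivariant map $V_2^{1,0}\to V_4^{1,0}$ summed with a line escape your case analysis.

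What survives is the assertion that the monodromy acts irreducibly (in the relevant strong sense) on $V_4^{1,0}$ — and that is exactly the content of the theorem, which you defer to references (\cite{McMullen}, \cite{Bainbridge}, \cite{Eskin:Kontsevich:Zorich}) that do not contain it. The paper's own proof, though only sketched with details in \cite{FMZ-III}, identifies the concrete mechanism you are missing: a continuous subbundle invariant under the flow and $\SO$ must be preserved by the monodromy matrix of \emph{every} affine pseudo-Anosov (periodic orbit of the Teichm\"uller flow) based at a fixed surface of $\mathcal{Z}$, so it suffices to exhibit two such pseudo-Anosovs whose monodromies on $W_2$ share no common $2$-dimensional invariant subspace, which is verified by a direct matrix computation. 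Without that computation, or an equivalent proved irreducibility statement, the theorem is not established. (Minor slip: the intersection form on $W_2^{1,0}\subset H^{1,0}$ is positive definite, not negative definite; the signature $(3,1)$ phenomenon lives on $V_4=V_4^{1,0}\oplus\overline{V_2^{1,0}}$, as your own parenthetical correctly notes.)
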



Assuming momentarily this theorem, one can conclude Theorem~\ref{thm:AnnB:Ec} as follows. We have that $\textrm{Ann}(B)\cap W_2^{1,0}$ is a continuous (actually, real-analytic) and $\SO$-invariant $2$-dimensional subbundle of $W_2^{1,0}$. From Theorem~\ref{thm:matrix:comp} it follows that
 $\textrm{Ann}(B)\cap W_2^{1,0}$ is not  flow-invariant, and hence $\textrm{Ann}(B^{\R})\cap H_2^1(C_6,\R)$ is not flow-invariant. Since $E_{(0)}^{\mu}\cap H_2^1(C_6,\R)$ is flow-invariant, it follows that $E_{(0)}^{\mu}\cap H_2^1(C_6,\R)$ cannot coincide with
 $\textrm{Ann}(B^{\R})\cap H_2^1(C_6,\R)$. By Theorem~\ref{th:Ann}, if $E_{(0)}^{\mu}$ were
 $\SL$-invariant , it would be a subbundle of the bundle $\textrm{Ann}(B^{\R})$, hence it would coincide with it by dimensional reasons, thereby contradicting the first part of the statement.

\smallskip
Concerning the proof of Theorem \ref{thm:matrix:comp}, let us just say a few words (for more details see \cite{FMZ-III}): the basic idea is that $\SL$-invariance of a continuous subbundle $V$ can be tested along \textit{pseudo-Anosovs} (i.e., periodic orbits of Teichm\"uller flow); indeed, the existence of $\SL$-invariant  continuous subbundles $V$ implies that the monodromy matrices along pseudo-Anosovs passing by the same Riemann surface should share a common subspace, and (the non-validity of) this last property can be tested by direct calculation.

Closing this appendix, let us mention that the \textit{precise} value of the positive Lyapunov exponent coming from $H_2^1(C_6,\R)$ can be computed from the main formula of \cite{Eskin:Kontsevich:Zorich} and a computation with Siegel-Veech constants related to $\mathcal{Q}(1,-1^5)$ (see \cite{FMZ-III}):
\begin{proposition}\label{pr:exp:value:Z}Let $\nu$ be any $\SL$-invariant ergodic probability supported on $\mathcal{Z}$. Then, the non-negative part of the Lyapunov spectrum of the Kontsevich--Zorich  cocycle with respect to $\nu$ has the form
$$\{1>4/9=4/9>1/3>0=0=0=0=0=0\}.$$
\end{proposition}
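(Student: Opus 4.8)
The final statement is Proposition~\ref{pr:exp:value:Z}, which asserts that for any $\SL$-invariant ergodic probability $\nu$ supported on $\mathcal{Z}$, the non-negative Lyapunov spectrum of the Kontsevich--Zorich cocycle is $\{1>4/9=4/9>1/3>0=0=0=0=0=0\}$.

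\medskip

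\noindent\textbf{Outline of the proof.} The plan is to assemble the spectrum block by block according to the $\SL$-invariant, Hodge-$\ast$-invariant, $B^\R$-orthogonal splitting $H^1(C_6,\R)=H^1_1(C_6,\R)\oplus H^1_2(C_6,\R)\oplus H^1_3(C_6,\R)$ established earlier in this appendix, and then identify the single undetermined exponent via the Eskin--Kontsevich--Zorich sum formula together with a Siegel--Veech computation. First I would record the contributions of the three summands: on $H^1_1(C_6,\R)$ the cocycle is isometric, since $V_5^{1,0}=V_5$ forces $H^1_1(C_6,\R)\subset\Ann(B^\R)$ (hence Lemma~\ref{lemma:degeneratespectrum} applies), giving four zero exponents; on $H^1_3(C_6,\R)\simeq H^1(C_2,\R)$ the pull-back structure $V_3^{1,0}=h^\ast H^{1,0}(C_2)$ identifies this block with the Hodge bundle over a locus in $\mathcal{H}(2)$, and by Bainbridge's computation (equivalently \cite{Eskin:Kontsevich:Zorich}) the non-negative exponents are $1$ and $1/3$; on $H^1_2(C_6,\R)$, Lemma~\ref{lemma:rk:Ec:Z} already gives that the non-negative spectrum has the form $\{\lambda^\mu=\lambda^\mu>0=0\}$, i.e.\ a double positive exponent $\lambda$ and two zero exponents. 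Combining, the non-negative spectrum of the full cocycle over $\mathcal{Z}$ is $\{1>\lambda=\lambda>1/3>0=0=0=0=0=0\}$ (with $\lambda$ between $1/3$ and $1$, or possibly coinciding with one of these --- to be excluded below). So the entire proposition reduces to proving $\lambda=4/9$.

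\medskip

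\noindent\textbf{Computing $\lambda$.} The second step is to evaluate $\lambda$. Since $\mathcal{Z}\simeq\mathcal{H}(2)$ via the covering $h:C_6\to C_2$, the measure $\nu$ is (up to finite covers/normalization) the push-forward of the Masur--Veech measure; moreover, by ergodicity and the $\SL$-invariance of each $H^1_k$, the value of $\lambda$ does not depend on the choice of $\nu$ among ergodic $\SL$-invariant probabilities supported on $\mathcal{Z}$. I would then apply the Eskin--Kontsevich--Zorich formula for the sum of non-negative Lyapunov exponents of the Kontsevich--Zorich cocycle restricted to the $\SL$-invariant subbundle $H^1_2(C_6,\R)$ (equivalently, to the Hodge bundle over the locus of quadratic differentials $(C_3,q_{(4)})\in\mathcal{Q}(7,1^5)$). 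That formula expresses $2\lambda = \lambda^\nu_1+\lambda^\nu_2$ as the sum of an explicit rational ``local'' term coming from the orders of the zeros/poles of the quadratic differential (the $\tfrac{1}{12}\sum\tfrac{d_i(d_i+2)}{d_i+1}$-type contribution from $\mathcal{Q}(1,-1^5)$ after passing through the cover) plus a term proportional to the Siegel--Veech constant $c_{\mathrm{area}}$ of the stratum $\mathcal{Q}(1,-1^5)$. Plugging in the known Siegel--Veech constant for $\mathcal{Q}(1,-1^5)$ --- which can be taken from the tables in \cite{Athreya:Eskin:Zorich} or computed directly --- yields $2\lambda = 8/9$, hence $\lambda=4/9$. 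Finally, $4/9>1/3$ confirms the strict inequalities in the stated spectrum, and $4/9<1$ is automatic.

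\medskip

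\noindent\textbf{Main obstacle.} The genuinely technical point is the Siegel--Veech / sum-formula computation for $\lambda$: one must correctly transport the Eskin--Kontsevich--Zorich formula through the cyclic cover $C_6\to C_3\to\Proj^1$ (keeping track of how the cover multiplies the relevant combinatorial constants and of the normalization of the invariant measure), and one must have the Siegel--Veech constant of $\mathcal{Q}(1,-1^5)$ in hand. Everything else --- the isometry of the $H^1_1$ block, the identification of the $H^1_3$ block with an $\mathcal{H}(2)$ sub-locus, and the multiplicity-two plus two-zeros structure of the $H^1_2$ block --- is already supplied by the lemmas of this appendix (Lemmas~\ref{lemma:rk:B:Z}, \ref{lemma:rk:Ec:Z}, \ref{lemma:zero:exp}) together with Theorem~\ref{th:Ann} and the cited results of Bainbridge and Eskin--Kontsevich--Zorich. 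The full details of the Siegel--Veech computation are carried out in the forthcoming paper \cite{FMZ-III}.
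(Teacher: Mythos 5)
Your proposal follows essentially the same route as the paper: decompose the Hodge bundle over $\mathcal{Z}$ into the three $\SL$-invariant blocks $H^1_1\oplus H^1_2\oplus H^1_3$, read off the four zeros from $H^1_1\subset\Ann(B^\R)$, the exponents $1$ and $1/3$ from the identification of $H^1_3$ with the Hodge bundle over a locus in $\mathcal{H}(2)$ (Bainbridge / Eskin--Kontsevich--Zorich), the pattern $\{\lambda=\lambda>0=0\}$ on $H^1_2$ from Lemma~\ref{lemma:rk:Ec:Z}, and finally pin down $\lambda=4/9$ via the Eskin--Kontsevich--Zorich sum formula together with Siegel--Veech constants for $\mathcal{Q}(1,-1^5)$. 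This is exactly the argument the paper sketches, including the deferral of the Siegel--Veech computation to \cite{FMZ-III}.
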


\subsection*{Acknowledgments}

We would like to heartily thank A.~Avila, A.~Eskin, P.~Hubert, M.~Kontsevich, M.~M\"oller and
J.-C.~Yoccoz for several discussions related to the material of this paper. Also, we are thankful to the anonymous referees for their comments and suggestions leading to the current version of this article.


The authors thank Coll\`ege de France, Institut des Hautes \'Etudes Scientifiques (IHES), Hausdorff Research Institute for Mathematics (HIM) and Max Planck Institute for Mathematics (MPIM) for their
hospitality during the preparation of this paper.

The first author was supported by the National Science Foundation grant DMS~0800673.

The second author was partially supported by the Balzan Research Project of J. Palis and the French ANR grant ``GeoDyM'' (ANR-11-BS01-0004).

\end{document}